\documentclass[11pt]{article}
\usepackage{graphicx,subcaption} 
\usepackage{amsmath,amssymb,color,xcolor,bm,amsthm,enumerate}
\usepackage{epstopdf}
\usepackage{tikz}
\usetikzlibrary{arrows.meta,angles,quotes}
\usetikzlibrary{calc} 
\usepackage[percent]{overpic}
\usepackage{tabularx}
\usepackage{multirow}
\usepackage{booktabs}
\usepackage{mathrsfs}
\usepackage{comment}
\usepackage{float}
\usepackage[normalem]{ulem}

\usepackage[hmargin={25mm,25mm},vmargin={25mm,30mm}]{geometry}

\newcommand{\bb}{\mathbf{b}}

\newcommand{\bx}{\mathbf{x}}
\newcommand{\bz}{\mathbf{z}}
\newcommand{\bn}{\mathbf{n}}

\newcommand{\wt}{\widetilde}

\newtheorem{theorem}{Theorem}[section]
\newtheorem{corollary}{Corollary}[section]
\newtheorem{lemma}[theorem]{Lemma}
\newtheorem{remark}[theorem]{Remark}
\newtheorem{proposition}{Proposition}[section]
\newtheorem{assumption}{Assumption}[section]
\newtheorem{definition}{Definition}[section]

\newtheorem{property}{Property}[section]
\newtheorem{example}{Example}[section]

\makeatletter\@addtoreset{equation}{section}\makeatother

\newcommand{\red}[1]{{\color{red}#1}}
\newcommand{\blue}[1]{{\color{blue}#1}}

\def\mesh{{\cal T}_h}

\def\IVE{{\cal I}_E}

\def\IVh{{\cal I}_h}

\def\Pol{{\mathbb P}}

\def\mesh{{{\cal T}_h}}
\newcommand{\V}{\mathbb{V}}

\newcommand{\VE}{{V_E}}
\newcommand{\WE}{{W_E}}
\newcommand{\VdE}{{B_{\partial E}}}
\newcommand{\edgeE}{{\mathcal{E}_E}}
\newcommand{\nodesE}{{\mathcal{V}_E}}
\newcommand{\PiE}{{\Pi^\nabla_E}}
\newcommand{\Vmesh}{V_{h}}
\newcommand{\Wmesh}{W_{h}}
\newcommand{\edge}{\mathcal{E}}

\newcommand{\enorm}[2][2,\Omega]{\left|\negthinspace\left|\negthinspace\left|{#2}%
\right|\negthinspace\right|\negthinspace\right|_{#1}}

\title{A Virtual Element Method for elliptic problems \\ on trimmed background meshes}
\author{L. Beir\~ao da Veiga\thanks{Dipartimento di Matematica e Applicazioni,
Universit\`a degli Studi di Milano Bicocca,
Via Roberto Cozzi 55 - 20125 Milano, Italy 
  ({\it lourenco.beirao@unimib.it})}
\and C. Canuto  \thanks{Dipartimento di Scienze Matematiche G.L. Lagrange, Politecnico di Torino,
Corso Duca degli Abruzzi 24 - 10129 Torino, Italy
  ({\it claudio.canuto@polito.it})}
\and R. H. Nochetto  \thanks{Department of Mathematics and Institute
for Physical Science and Technology,
University of Maryland, College Park - 20742, MD, USA 
  ({\it rhn@umd.edu})}
\and G. Vacca \thanks{Dipartimento di Matematica,
Universit\`a degli Studi di Bari,
Via Edoardo Orabona 4  - 70125 Bari, Italy 
  ({\it giuseppe.vacca@uniba.it})}
\and M. Verani \thanks{MOX-Laboratory for Modeling and Scientific Computing, Dipartimento di Matematica, Politecnico di Milano, Piazza Leonardo da Vinci 32 - 20133 Milano, Italy
  ({\it marco.verani@polimi.it})}
  }
\date{\today}

\begin{document}

\maketitle

\begin{abstract}
\noindent We consider a two-dimensional piecewise $C^2$ domain that cuts through a quasi-uniform fixed polygonal background mesh, for instance made of quadrilaterals. A simple procedure based on convex hulls gives rise to a rather small number of polygonal boundary elements of various shapes, including elements with small edges and large aspect ratios; this is the computational mesh for a virtual element method (VEM), a trimmed background mesh. We classify all possible geometric configurations and study their stability and approximability properties. This entails deriving robust stabilization mechanisms and interpolation estimates for anisotropic elements and elements with small cuts, as well as a weak maximum principle for enhanced virtual elements; these contributions have intrinsic interest for VEM theory on geometric flexibility. We prove that the resulting VEM is uniformly stable in $H^1$, and also show optimal order-regularity error estimates in $H^1$ and $L^2$. Insightful numerical experiments corroborate and complement our theory. The proposed method is suitable for treating ALE formulations of problems in moving domains.

\noindent {\bf 2020 Mathematics Subject Classification}: 65N30, 65N50
\end{abstract}

\section{Introduction}\label{S:introduction}

Accurately discretizing partial differential equations (PDEs) in domains with complex geometries or interfaces is crucial for many applications. One important example is fluid-structure interaction problems, which are ubiquitous in engineering. They are typically governed by (nonlinear) PDEs posed on domains that evolve in time and exhibit large deformations 
The Arbitrary Lagrangian Eulerian (ALE) formulation is a popular computational technique that approximates the PDEs on a domain-fitted shape-regular mesh (Eulerian approach) but allows the mesh to deform together with the domain (Lagrangian approach). This marriage of conceptually distinct approaches requires extending the velocity of the domain boundary inside the domain so as to avoid mesh distortion. This is a delicate matter because none of the existing techniques can guarantee robust mesh regularity and often require remeshing, especially in dealing with large domain defomations. 

The Virtual Element Method (VEM) is a relatively new discretization paradigm for PDEs which, in contrast to popular Finite Element Methods (FEMs), allows for general polytopal meshes \cite{volley, hitchhiker, vem-acta, vem-special-issue, vem-ep-deformations}. 
Ever since its inception, the VEM has been recognized in both the mathematics and engineering communities for its flexibility and robustness with respect to mesh design and handling \cite{vem-esher, vem-noncov, vem-deformations, wriggers2024virtual, vem-dfn, vem-imati, vem-imati-genova, vem-nn}.

Our overall goal is to exploit this geometric flexibility of VEMs for ALE formulations in 2d. We assume that we have a fixed polygonal background mesh, for instance made of rectangular cells, that is sufficiently fine to resolve features of the domain boundary; we do not include possible adaptive mesh refinement in our study. We further assume that the domain boundary is composed by $C^2$ arcs connected via break points, thereby forming corners, and that their evolution is given a priori by a smooth geometric law; we do not consider topological changes. 
At each time instant, the domain boundary cuts the background mesh into elements of various shapes, including anisotropic and relatively small objects. Such elements give rise to boundary polygons, by a simple procedure based on convex hulls; a finite, rather small, number of distinct configurations may arise, as illustrated in Fig. 1 below. These polygons together with the elements of the background mesh fully contained in $\bar{\Omega}$ form our computational mesh, which we call a {\it trimmed background mesh}.
The mesh nodes fit the $C^2$ parts of the boundary, not necessarily the break points.

The advantage of this VEM-ALE approach, relative to traditional FEM-ALE approaches, is that the only elements undergoing geometric distortions produce boundary elements, which can be classified and handled separately in terms of stability and approximability. This avoids the two bottlenecks of FEM-ALE approaches, namely suitably extending the boundary velocity to maintain mesh regularity and dealing with remeshing. However, the success of our approach hinges crucially on quantitative understanding of the asymptotic behavior of VEM under extreme geometric situations such as anisotropy, small elements and small edges.  
While nowadays there is a strong theoretical background on dealing with small edges \cite{BLRstab, brenner2018, smalledges, vem-steklov},
rigorous results for anisotropic polygons are still limited \cite{Chen_anisotr:2018, Chen_anisotr_nc}, in particular not sufficient for our purposes of studying stability and accuracy of VEM-ALE.


Therefore, in this paper we fill in this gap and examine simpler stationary problems that exhibit the main geometric difficulties of time-dependent ALE formulations, and develop tools applicable to VEM-ALE, which we will be the focus of our paper \cite{NostroMovingDomain}. To this end, we consider the elliptic coercive boundary value problem
\begin{subequations}\label{eq:cpb-intro}
  \begin{align}
    - \nabla \cdot (\mu \nabla u) + \bb\cdot \nabla u + \sigma u &= f \quad \mathrm{in~}\Omega,\label{cpb:01}\\
    u&=0 \quad \mathrm{on~}\partial\Omega,\label{cpb:02}
  \end{align}
\end{subequations}
whose domain boundary $\partial\Omega$ is made of $N$ $C^2$-arcs $\gamma_j$ connected by break points $\bz_j$. The low regularity of $u$ near corners suggests a discretization of the problem by virtual elements of order one.
Our main contributions are as follows:

\begin{itemize}
    \item {\it Cutting rule}: We propose a simple recipe based on convex hulls to cut elements of the structured background mesh traversed by $\partial\Omega$. This rule allows for a sistematic classification of shapes and extends to 3d.

    \item {\it Anisotropic elements}: Mesh cuts can give rise to polygonal elements with very large aspect ratios. Exploiting that at least one node belongs to $\partial\Omega$ we develop novel sharp $H^1$ and $L^2$ interpolation error estimates.

    \item {\it Stabilization}: We develop a new stabilization technique for quadrilaterals which is robust with respect to anisotropies.

    \item {\it Weak maximum principle}: We show that the elementwise $L^\infty$-norm of functions in an `enhanced' VEM space is controlled up to a multiplicative constant $C$ by the $L^\infty$-norm on the element boundary, where $C$ may depend on the element shape. We provide examples of
classes of polygons for which this constant is shape-independent. $L^\infty$ estimates are instrumental for the developments in this paper, but are also of intrinsic general interest for VEM theory.

    \item {\it Realistic regularity}: The $L^2$-based regularity of the solution $u$ of \eqref{eq:cpb-intro} on a 2d domain $\Omega$ with corners $\bz_j$ is known. We exploit asymptotic expressions for $u$ near $\bz_j$ to derive sharp local interpolation estimates in $H^1$ and $L^2$. It is worth stressing that the VEM mesh is not conforming with respect to the break points $\bz_j$, due to the cutting strategy, thereby leading to errors rarely quantified in the literature.
    
    \item {\it Stability and approximation}: We develop an analysis based on abstract assumptions, which are subsequently verified for our VEM. This leads to $L^2$ and $H^1$ error estimates for the Galerkin solutions which turn out to be sharp relative to the underlying regularity of the solution $u$ and boundary $\partial\Omega$.

    \item {\it Numerical experiments}: We present computational results that not only corroborate theory but also extend it. We discuss quantitative estimates of the effect of anisotropy on the error estimates and condition number of the resulting linear systems.
    
\end{itemize}

Dealing with structured (e.g. cartesian) grids cut by $\partial\Omega$ is not a new idea. It is central to {\it unfitted} discretization methods which allow the boundary or interface to be represented independently of the computational mesh. A brief literature overview of unfitted methods follows:
immersed boundary
method (IBM) \cite{McCrackenPeskin1980,Peskin2002} and \cite{BoffiGastaldiHeltai2007,BoffiCavalliniGastaldi2015};
fictitious domain method \cite{GlowinskiPanPeriaux1994,GiraultGlowinski1995};
boundary penalty method: \cite{BarrettElliott1986};
volume penalty method \cite{Maury2009};
extended finite element method (XFEM) \cite{MoesDolbowBelytschko1999,FriesBelytschko2010};
immersed finite element method (IFEM) \cite{ZhangGerstenbergerWangLiu2004};
Cut-FEM \cite{BeckerHansboStenberg2003,
BurmanHansbo2010} 
and the overviews \cite{BurmanHansboLarsonMassing2015,Cut_Fem_Acta:2025};
transfer path method \cite{CockburnSolano2012};
shifted boundary method \cite{MainScovazzi2018,
AtallahCanutoScovazzi:2021}
$\phi$-FEM \cite{DuprezLlerasLozinski2023};
finite cell method \cite{ParvizianDusterRank2007}.
All these methods deal with a background grid, and simplify mesh generation, but bring additional
challenges such as accurate imposition of boundary and interface conditions, integration over
elements intersected by the boundary, and ensuring stability and well-conditioned linear systems.

Our VEM method for \eqref{eq:cpb-intro} is not formally an unfitted discretization because boundary nodes belong to $\partial\Omega$, except for break points $\bz_j$ which generically are not nodes. Our cut elements are admissible within the VEM framework and do not require special treatment regarding integration. Moreover, essential boundary conditions can be imposed on the VEM space thereby bypassing weak imposition of Dirichlet conditions via Nitsche's method, penalty method, or the method of multipliers. However, the robustness of VEM with respect to extreme geometric conditions, such as large anisotropies and small cuts, is essential for its competitiveness and range of applicability. Studying these issues is the principal objective of this paper.

The paper is organized as follows. Section \ref{sec:pde} presents the boundary-value problem of interest, describes the structure of the solution in the presence of corners, and discusses its extension outside the domain where the problem is set, needed to handle the discrepancy between the exact and computational domains. In Sect. \ref{sec:discrete-abstract} we describe how the virtual element mesh is derived from a given background mesh, next we introduce the virtual element spaces and forms, and finally present our discretization method. Sect. \ref{stability-convergence-abstract} is devoted to the study of the stability and convergence of the method, under abstract assumptions on the robustness with respect to the element shapes of the VEM stabilization form and the interpolation and projection errors. Optimal and robust error estimates are established in both $H^1$ and $L^2$ norms; a key ingredient in the analysis is the study of the domain error. The three subsequent sections lead to the verification of the abstract assumptions in the important case of a Cartesian background mesh. Precisely, Sect. \ref{sec:maximum-principle} proves a generalized maximum principle for `enhanced' virtual element functions, a result instrumental to the subsequent analysis but of intrinsic interest in the theory of Virtual Element Methods. Sect.~\ref{sec:stab:X} suggests the choice of shape-robust stabilization forms for all the possible types of elements in our meshes, whereas in Sect. \ref{sec:checking} we prove robust approximation results, in both cases of regular and non-regular solutions. Sect. \ref{sec:numerics} presents various numerical results, which enrich our theoretical analysis with a quantitative insight, and add further information on other features of the method.
Conclusions and perspectives are contained in Sect. \ref{sec:conclusions}.

\section{The Elliptic Problem}\label{sec:pde}

Let $\Omega \subset \mathbb{R}^2$ be a bounded domain whose Lipschitz boundary satisfies the following conditions:
\begin{itemize}
\item $\partial \Omega$ is a piecewise-$C^2$ manifold: precisely, it is the union of $N\geq 1$ $C^2$ arcs of curve $\boldsymbol{\gamma}_j, \ 1 \leq  j \leq N$, such that
\begin{equation}\label{eq:curvature}
B := \max_j \max_s \| \boldsymbol{\gamma}_j'' (s)\|  < \infty,
\end{equation}
where differentiation is taken with respect to the arclength $s \in [0,\ell_j]$ of $\boldsymbol{\gamma}_j$. It is convenient to set $\boldsymbol{\gamma}_{N+1}:=\boldsymbol{\gamma}_1$. If $N>1$ then arcs $\boldsymbol{\gamma}_j$ and $\boldsymbol{\gamma}_{j+1}$ meet at a break point ${\bm z}_j$, forming an angle $\omega_j \in (0,2\pi)$ containing a portion of $\Omega$. If $N=1$ then $\boldsymbol{\gamma}_1$ is a closed arc describing the whole boundary $\partial\Omega$, which may be globally $C^2$ or may contain one break point  ${\bm z}_1$ corresponding to the endpoints of $\boldsymbol{\gamma}_1$. The break points $\boldsymbol{z}_j$ are the only allowed intersections between arcs.


\end{itemize}

\medskip

\noindent
We are interested in solving the following model boundary value problem:
\begin{subequations}\label{eq:cpb}
  \begin{align}
    - \nabla \cdot (\mu \nabla u) + \bb\cdot \nabla u + \sigma u &= f \quad \mathrm{in~}\Omega,\label{cpb:1}\\
    u&=0 \quad \mathrm{on~}\partial\Omega,\label{cpb:2}
  \end{align}
\end{subequations}
with coefficients $\mu \in L^\infty(\Omega)$, $\bb \in (L^\infty(\Omega))^2$ satisfying $\nabla \cdot \bb \in L^\infty(\Omega)$, $\sigma \in L^\infty (\Omega)$, and forcing $f\in L^2(\Omega)$. We make the standard assumptions
\begin{equation}\label{eq:assumptions-coefficients}
\mu \geq \mu_0, \qquad \sigma - \tfrac 1 2 \nabla\cdot \bb \geq 0 \qquad \mathrm{a.e. \  in~}\Omega
\end{equation}
for some constant $\mu_0 > 0$. These assumptions guarantee the well-posedness of the problem in $H^1_0(\Omega)$, since the bilinear form 
\begin{equation}\label{eq:form-B1}
{\cal B}(u, v) := \int_\Omega \mu \nabla  u \cdot \nabla v + \int_\Omega (\bb\cdot \nabla u ) v + \int_\Omega \sigma uv
\end{equation}
turns out to be continuous and coercive in this norm; precisely, the unique solution $u$ satisfies the bound
\begin{equation}\label{eq:bound-u}
C_P^{-1}\| u \|_{0,\Omega} + | u |_{1,\Omega} \leq \frac{C_P}{\mu_0} \| f \|_{0,\Omega},
\end{equation}
where $C_P>0$ denotes the Poincar\'e constant in $H^1_0(\Omega)$.

In view of the numerical discretization, we write the convective term in skew-symmetric form, obtaining
\begin{equation}\label{eq:form-B2}
{\cal B}(u, v) = a(u,v) + b(u,v) + c(u,v)
\end{equation}
with
\[
a(u,v):=\int_\Omega \mu \nabla  u \cdot \nabla v, 
\quad b(u,v):=\tfrac12 \int_\Omega (\bb\cdot \nabla u ) v - \tfrac12 \int_\Omega (\bb\cdot \nabla v ) u, \quad c(u,v) := \int_\Omega (\sigma - \tfrac 1 2 \nabla\cdot \bb)\, u\, v.
\]

Furthermore, we assume $\mu \in W^{1}_\infty(\Omega)$, by which we derive from \eqref{eq:cpb} that $u$ is the solution of the homogeneous Dirichlet problem for the Poisson equation 
\begin{subequations}\label{eq:cpbr}
  \begin{align}
    - \Delta u &= g \quad \mathrm{in~}\Omega,\label{cpbr:1}\\
    u&=0 \quad \mathrm{on~}\partial\Omega,\label{cpbr:2}
  \end{align}
\end{subequations}
with forcing
\begin{equation}\label{eq:def-g}
g:= \frac1{\mu}\left( f + (\nabla \mu -\bb)\cdot\nabla u -\sigma u \right)
\end{equation}
satisfying
\begin{equation}\label{eq:bound-g}
\| g \|_{0,\Omega} \leq \frac1{\mu_0} \left(
1 + \frac{C_P}{\mu_0}(\| \nabla \mu \|_{(L^\infty(\Omega))^d} + \| \bb \|_{(L^\infty(\Omega))^2}) + \frac{C_P^2}{\mu_0} \| \sigma \|_{L^\infty(\Omega)})
\right ) \|f\|_{0,\Omega}.
\end{equation}

\subsection{Structure of the solution}\label{sec:solution-structure}
%
Grisvard's regularity theory (\cite{Grisvard}) applied to \eqref{eq:cpbr} yields $u \in H^2(\Omega)$ if $\partial\Omega$ contains no reentrant corner. On the other hand, if at the break point $\bz_j$ a reentrant corner with $ \pi < \omega_j < 2\pi$ exists, then in a neighborhood ${\cal N}_j \cap \Omega$ of $\bz_j$ which is far away from the other break points, $u$ is the sum of a regular function and a singular function:
\begin{equation}\label{eq:split-u-Nj}
u = \psi_j + \lambda_j \zeta_j
\end{equation}
where  $\psi_j \in H^2({\cal N}_j \cap \Omega)$ and vanishes in a neighborhhod of $\bz_j$, $\lambda_j \in \mathbb{R}$ is the intensity factor of the singularity,
and
\begin{equation}\label{eq:def-zeta-j}
\zeta_j(x) := \zeta_j^0(T_j(x)),
\end{equation}
where $T_j$ is a ${\cal C}^2$-diffeomorphism that maps ${\cal N}_j \cap \Omega$ onto a straight sector pointed at the origin
\[
S_j :=\{ y=r{\rm e}^{i \theta} : 0 < r < r_0, \ 0 < \theta < \omega_j \},
\]
and the singular function $\zeta_j^0$ is given by
\begin{equation}\label{eq:def-zeta_j-0}
\zeta_j^0(y) :=  r^{\gamma_j}  \sin(\gamma_j \theta)  \phi_j(r), \qquad \gamma_j =\tfrac{\pi}{\omega_j} \in \left(\tfrac12, 1 \right).
\end{equation}
Here, $\phi_j$ is a smooth cut-off function satisfying $\phi_j=1$ in a neighborhood of the origin, and $\phi_j=0$ for $r\geq r_0$.
The following result describes the smoothness of the singular part.
\begin{lemma}\label{lem:smooth-zeta_j}
The function $\zeta_j$ satisfies
\[
\zeta_j \in H^1_0({\cal N}_j \cap \Omega) \cap W^2_p({\cal N}_j \cap \Omega) \qquad \text{for all \ } p < \frac2{2-\gamma_j} <2.
\]
\end{lemma}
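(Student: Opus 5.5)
We prove the statement first for the model function $\zeta_j^0$ on the straight sector $S_j$ and then transfer it to ${\cal N}_j\cap\Omega$ through the ${\cal C}^2$-diffeomorphism $T_j$. Write $\gamma=\gamma_j$.

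\emph{Boundary values and $H^1$.} On $S_j$ the function $\zeta_j^0(y)=r^{\gamma}\sin(\gamma\theta)\phi_j(r)$ vanishes on the ray $\theta=0$ trivially. It vanishes on $\theta=\omega_j$ because $\gamma\omega_j=\pi$. It vanishes for $r\ge r_0$ by the cut-off. In polar coordinates $|\nabla \zeta_j^0|\lesssim r^{\gamma-1}$ near the origin. Since
\[
\int_0^{r_0} r^{2\gamma-2}\,r\,dr<\infty
\]
for every $\gamma>0$, we get $\zeta_j^0\in H^1(S_j)$. Being continuous, equal to $0$ at the origin, and vanishing on the two rays and near $r=r_0$, it belongs to $H^1_0(S_j)$. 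This can be seen, for instance, by approximating it with $\zeta_j^0$ times cut-offs that vanish near the origin, whose $H^1$ error tends to zero by the same integrability.

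\emph{Second derivatives.} Away from the origin everything is smooth, so only a neighborhood of $0$ matters, where $\phi_j\equiv 1$. There $\zeta_j^0=\mathrm{Im}(z^{\gamma})$ is harmonic and homogeneous of degree $\gamma$. Every second derivative $\partial^2_{y_ky_l}\zeta_j^0$ is therefore homogeneous of degree $\gamma-2$ with a bounded angular factor, so $|D^2\zeta_j^0|\lesssim r^{\gamma-2}$. Then
\[
\int_{S_j\cap\{r<r_1\}}|D^2\zeta_j^0|^p\,dy\lesssim\int_0^{r_1} r^{p(\gamma-2)+1}\,dr<\infty
\iff p(2-\gamma)<2\iff p<\tfrac{2}{2-\gamma}.
\]
Since $\gamma<1$, this threshold is below $2$. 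Lower-order derivatives are bounded by $r^{\gamma}$ and $r^{\gamma-1}$, which lie in $L^p$ for such $p$. The cut-off region contributes only smooth terms. Hence $\zeta_j^0\in W^2_p(S_j)$ for all $p<2/(2-\gamma)$.

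\emph{Transfer through $T_j$.} We have $\zeta_j=\zeta_j^0\circ T_j$ with $T_j$ a ${\cal C}^2$-diffeomorphism; we take its derivatives up to order two to be bounded and its Jacobian bounded above and below on $\overline{{\cal N}_j\cap\Omega}$, shrinking the neighborhood if necessary. The chain rule gives
\[
D^2\zeta_j=DT_j^{\top}\,(D^2\zeta_j^0\circ T_j)\,DT_j+(\nabla\zeta_j^0\circ T_j)\cdot D^2T_j .
\]
The first term lies in $L^p$ by the change of variables $y=T_j(x)$ with bounded Jacobian. The second term is controlled by $|\nabla\zeta_j^0|\lesssim r^{\gamma-1}$, which is in $L^p$ for $p<2/(1-\gamma)$, a larger range. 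The $H^1_0$ property is preserved because composition with a bi-Lipschitz map preserves $H^1$ and the zero trace: $T_j$ maps boundary to boundary, and approximating functions in $C_c^\infty$ pull back to compactly supported Lipschitz functions.

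The main obstacle is this last step. It relies on $T_j$ being genuinely ${\cal C}^2$ up to the boundary near $\bz_j$, and on the identification of the singular exponent for the curved corner with the straight-sector exponent $\gamma_j$. Both are built into the setup \eqref{eq:def-zeta-j}, so the argument reduces to the elementary radial integrability computations above.
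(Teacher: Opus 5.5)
Your proposal is correct and follows the paper's proof. The paper likewise writes $D^k\zeta_j^0=r^{\gamma_j-k}$ times a bounded angular factor for $k=1,2$, checks $\int_0^{r_0} r^{2(\gamma_j-1)}r\,dr<\infty$ and $\int_0^{r_0} r^{p(\gamma_j-2)}r\,dr<\infty \iff p<2/(2-\gamma_j)$, and transfers the result to ${\cal N}_j\cap\Omega$ by the smoothness of $T_j$. Your version only adds details the paper leaves implicit: the zero trace that gives $H^1_0$, and the chain-rule term with $D^2T_j$, which only needs the larger range $p<2/(1-\gamma_j)$.
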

\proof If $D^k$ denotes any partial derivative of order $k \in \{1,2\}$ with respect to the variables $y$, it is easily seen that
\[
D^1 \zeta^0_j = r^{\gamma_j -1}\xi(r,\theta), \qquad D^2 \zeta^0_j = r^{\gamma_j -2}\eta(r,\theta),
\]
where $\xi$, $\eta$ are smooth functions in $S_j$. Since
\[
\int_0^{r_0} r^{2(\gamma_j -1)} r {\rm d}r <\infty, \qquad \int_0^{r_0} r^{p(\gamma_j -2)} r {\rm d}r <\infty \quad \text{iff} \quad p < \frac2{2-\gamma_j},
\]
we obtain $\zeta^0_j \in H^1(S_j)\cap W^2_p(S_j)$. The thesis follows from the smoothness of the mapping $T_j$. 
\endproof

Going from local to global, the solution $u$ can be represented in $\Omega$ as
\begin{equation}\label{eq:split-u}
u = \psi +\zeta,
\end{equation}
where $\psi \in H^2(\Omega) \cap H^1_0(\Omega)$, whereas $\zeta$ is zero if there are no reentrant corners, or 
\begin{equation}\label{eq:def-zeta}
\zeta = \sum_{j=1}^J \lambda_j \zeta_j
\end{equation}
if there are $J\geq 1$ reentrant corners (that we assume to be numbered first). For convenience, we will set $J=0$, whence $u=\psi$, to include in the notation the case of no reentrant corners.
Furthermore, introducing the norm
\begin{equation}\label{eq:def-norm-u}
\enorm{u} := \| \psi \|_{2, \Omega} + \sum_{j=1}^J |\lambda_j|,
\end{equation}
one can prove \cite{Grisvard,Nochetto_Liao} the regularity bound
\begin{equation}\label{eq:reg-bound-u}
\enorm{u} \leq C_R \| g \|_{0,\Omega},
\end{equation}
for some constant $C_R>0$ depending only on the domain $\Omega$. 

The global regularity result for $u$ follows from Lemma \ref{lem:smooth-zeta_j} and \eqref{eq:split-u}-\eqref{eq:reg-bound-u}.
\begin{proposition}\label{prop:reg-u}
The solution $u$ satisfies $u \in H^1_0(\Omega) \cap W^2_p(\Omega)$, where $p=2$ if $\partial\Omega$ does not contain reentrant corners, or $p$ is any number satisfying
\begin{equation}\label{eq:cond-p-gamma}
1 \leq p < \frac2{2-\bar{\gamma}}, \qquad \bar{\gamma}:= \min_j \gamma_j,
\end{equation}
if $\partial\Omega$ contains $J \geq 1$ reentrant corners with exponents $\gamma_j$. Furthermore, there exists $\bar{C}>0$ depending only on the domain $\Omega$ such that
\[
\| u \|_{W^2_p(\Omega)} \leq \bar{C} \enorm{u}  \lesssim \| g \|_{0,\Omega}.
\]
\end{proposition}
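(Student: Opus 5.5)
We argue directly from the decomposition \eqref{eq:split-u}, $u=\psi+\zeta$ with $\zeta=\sum_{j=1}^J\lambda_j\zeta_j$, and bound each piece in $W^2_p(\Omega)$.

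If there are no reentrant corners, then $J=0$ and $u=\psi\in H^2(\Omega)\cap H^1_0(\Omega)$. The claim with $p=2$ and $\bar C=1$ is then just the definition \eqref{eq:def-norm-u}, and \eqref{eq:reg-bound-u} gives the bound in terms of $\|g\|_{0,\Omega}$.

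If $J\ge1$, fix $p$ satisfying \eqref{eq:cond-p-gamma}. For every $j$ we have $\gamma_j\ge\bar\gamma$, so $p<2/(2-\bar\gamma)\le 2/(2-\gamma_j)$ and Lemma \ref{lem:smooth-zeta_j} applies to each $\zeta_j$ with this same $p$.
\begin{itemize}
\item Lemma \ref{lem:smooth-zeta_j} gives $\zeta_j\in W^2_p({\cal N}_j\cap\Omega)$.
\item The cut-off $\phi_j$ vanishes for $r\ge r_0$, so $\zeta_j$ is supported in ${\cal N}_j\cap\Omega$ and extends by zero to a function in $W^2_p(\Omega)\cap H^1_0(\Omega)$.
\item Hence $\|\zeta_j\|_{W^2_p(\Omega)}=C_j$, a constant depending only on $\omega_j$, $T_j$, $\phi_j$ and $p$, that is, only on $\Omega$ once $p$ is fixed.
\end{itemize}

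Since $\Omega$ is bounded and $p\le 2$, Hölder's inequality gives $\|\psi\|_{W^2_p(\Omega)}\le |\Omega|^{1/p-1/2}\|\psi\|_{2,\Omega}$. The triangle inequality then yields
\[
\|u\|_{W^2_p(\Omega)}\le |\Omega|^{1/p-1/2}\|\psi\|_{2,\Omega}+\max_j C_j\sum_{j=1}^J|\lambda_j|\le \bar C\,\enorm{u},
\]
and the final inequality $\enorm{u}\lesssim\|g\|_{0,\Omega}$ is exactly \eqref{eq:reg-bound-u}. Membership $u\in H^1_0(\Omega)$ holds because both $\psi$ and the $\zeta_j$ lie in $H^1_0(\Omega)$.

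The only point needing care is the support of $\zeta_j$. The norm estimate in Lemma \ref{lem:smooth-zeta_j} is local, on ${\cal N}_j\cap\Omega$, and the zero extension is legitimate only if $\zeta_j$ and its derivatives vanish near the inner boundary of ${\cal N}_j\cap\Omega$. We would secure this by choosing $r_0$ so small that $T_j^{-1}$ maps the sector of radius $r_0$ compactly inside ${\cal N}_j$ (up to $\partial\Omega$). The constant $\bar C$ then also depends on $p$, through $C_j$; we regard this as part of ``depending only on $\Omega$'' for fixed $p$.
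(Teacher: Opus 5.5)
Your proposal is correct and follows the paper's route: the paper derives the proposition directly from Lemma \ref{lem:smooth-zeta_j} combined with the splitting \eqref{eq:split-u}--\eqref{eq:reg-bound-u}. You make explicit what the paper leaves implicit, namely the embedding $H^2\subset W^2_p$ for $p\le 2$ on bounded $\Omega$, the inequality $2/(2-\bar\gamma)\le 2/(2-\gamma_j)$, and the zero extension of $\zeta_j$ (already legitimate thanks to the smooth cut-off $\phi_j$, so your extra care there is harmless), and your remark that $\bar C$ also depends on $p$ is accurate.
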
\endproof


\subsection{Extensions}\label{sec:extension}
Since we wish to numerically solve our problem in a domain not entirely contained in $\Omega$, we need to extend data to a (slightly) larger domain, say $\widetilde{\Omega}$, containing $\overline{\Omega}$; let $\delta_0>0$ be a lower bound for the Hausdorff distance between $\partial\Omega$ and $\partial\widetilde{\Omega}$.

Suppose that the equation data $\mu, \bb, \sigma, f$ are restrictions to $\Omega$ of functions $\widetilde{\mu}, \widetilde{\bb}, \widetilde{\sigma}, \widetilde{f}$ defined in $\widetilde{\Omega}$, and satisfying therein the same hypotheses we have made in $\Omega$. 
To define an extension $\widetilde{u}$ of the solution $u$ to $\widetilde{\Omega}$, 
we use the representation \eqref{eq:split-u}-\eqref{eq:def-zeta} of $u$, and extend each function appearing therein:
\begin{equation}\label{eq:split-u-ext}
\widetilde{u} := \widetilde{\psi} + \widetilde{\zeta} =\widetilde{\psi} + \sum_{j=1}^J \lambda_j \widetilde{\zeta}_j.
\end{equation}
Precisely, $\widetilde{\psi}$ is a Calder\'on extension of $\psi$, satisfying $\| \widetilde{\psi} \|_{2,\widetilde{\Omega}} \lesssim \| {\psi} \|_{2,\Omega}$ (\cite{Adams:1975}), whereas $\widetilde{\zeta}_j$ is defined, according to \eqref{eq:def-zeta-j}, as
\begin{equation}\label{eq:def-zeta-j-ext}
\widetilde{\zeta}_j(x) = \widetilde{\zeta}_j^0(\widetilde{T}_j(x));
\end{equation}
here, $\widetilde{T}_j$ is an extension of $T_j$, which maps ${\cal N}_j$ onto the disk $\widetilde{S}_j:=\{ y=r{\rm e}^{i \theta} : 0 \leq r < r_0, \ 0 \leq \theta < 2\pi \}$ diffeomorphically, while $\widetilde{\zeta}_j^0$ is defined, according to \eqref{eq:def-zeta_j-0}, as
\begin{equation}\label{eq:def-zeta_j-0-ext}
\widetilde{\zeta}_j^0(y) =  r^{\gamma_j}  s_j(\theta)  \phi_j(r),
\end{equation}
where $s_j(\theta)$ is a smooth $2\pi$-periodic function that extends $\sin(\gamma_j \theta)$ outside the interval $[0,\omega_j]$. 

Since the radial singularity in $\widetilde{\zeta}_j^0$ is the same as the one in $\zeta_j^0$, the function $\widetilde{u}$ has the same regularity as $u$, i.e., $\widetilde{u} \in H^1(\wt{\Omega}) \cap W^2_p(\wt{\Omega})$, with 
\begin{equation}\label{eq:reg-u-tilde} 
    \| \widetilde{u} \|_{W^2_p(\wt{\Omega})} \lesssim \| \widetilde{\psi} \|_{2,\widetilde{\Omega}} + \sum_{j=1}^J |\lambda_j| \lesssim
    \enorm{u}
    \lesssim \| g \|_{0,\Omega}.
\end{equation}    

We do not expect $\widetilde{u}$ to satisfy the extended equation in $\widetilde{\Omega}$, so we introduce the function
\begin{equation}\label{eq: def-hatf}
\hat{f} := - \nabla \cdot (\widetilde{\mu}\, \nabla \widetilde{u}) + \widetilde{\bb}\cdot \nabla \widetilde{u} + \widetilde{\sigma} \widetilde{u} \, \in L^p(\widetilde{\Omega}),
\end{equation}
which satisfies $\hat{f}=f$ in $\Omega$. Equivalently, the extension $\wt{u}$ satisfies in $\wt{\Omega}$ the equation
\begin{equation}\label{eq: extended-equation}
- \nabla \cdot (\widetilde{\mu}\, \nabla \widetilde{u}) + \widetilde{\bb}\cdot \nabla \widetilde{u} + \widetilde{\sigma} \widetilde{u} = \widetilde{f} + F,
\end{equation}
with $F:=\hat{f}-\widetilde{f} \in L^p(\widetilde{\Omega})$. Note that $F$ vanishes in $\Omega$, or equivalently
\[
\text{supp}\, F \subset \delta\Omega :=\widetilde{\Omega}\setminus \Omega.
\]

\section{The discrete problem}\label{sec:discrete-abstract}

In this section, we introduce a background mesh of size $h>0$ and we define the elements of an associated polygonal mesh obtained by intersecting the background mesh with the domain $\Omega$. The union of such elements gives rise to the computational domain $\Omega_h$, which may differ from the true domain $\Omega$ by at most a region of width $O(h)$. A space of virtual functions is built on the polygonal mesh in $\Omega_h$, by which we define discrete bilinear and linear forms that are used to set up a Galerkin discretization of Problem \eqref{eq:cpb}.

\subsection{Meshes}\label{sec:meshes}

Given a parameter $h>0$, we consider a conforming tessellation ${\cal T}_h^B$ of $\mathbb{R}^2$ made of convex polygons, and a universal constant $\lambda \in (0,1)$, with the following properties: for any polygon $K \in {\cal T}_h^B$
\begin{itemize}
\item its diameter $h_K$ satisfies  $\lambda h \leq h_K \leq \lambda^{-1} h$;
\item $K$ is star-shaped with respect to a ball of diameter $\geq \lambda h_K$;
\item each edge $e_K$ of $K$ has length $\geq \lambda h_K$;
\end{itemize}
We denote by ${\cal V}_K$ the set of vertices of $K$, and we note that its cardinality is bounded by a constant independent of $h$. We call ${\cal T}_h^B$ a \emph{background mesh} of size $h$.

\begin{example}[Cartesian background mesh]\label{exa:background-mesh} {\rm For any $\mathbf{j} \in \mathbb{Z}^2$, we define the Cartesian polytope $K_{\mathbf{j}}:=h\mathbf{j}+[0,h]^2$. The collection ${\cal T}_h^B :=\{K_{\mathbf{j}} : \mathbf{j} \in \mathbb{Z}^2 \}$ of such elements will be called the \emph{Cartesian background mesh} of size $h$.  $\qquad \square$
}
\end{example}

Next, we restrict ourselves to the \emph{finite} background mesh formed by those elements having at least one vertex inside $\Omega$:
\[
{\cal T}_h^B(\Omega) := \{ K \in {\cal T}_h^B \, | \, {\cal V}_K \cap \Omega \not= \emptyset \};
\]
furthermore, from now on we assume $h$ to satisfy $h \leq \lambda \, \delta_0$ ($\delta_0$ being defined in Sect. \ref{sec:extension}), which implies $K \subset \widetilde{\Omega}$ for all $K \in {\cal T}_h^B(\Omega)$. 

A further restriction on the mesh size guarantees local separation of break points, which have finite cardinality.

\begin{property}[mesh resolution]\label{prop:mesh-restrict} There exists $h_0 \leq \lambda \, \delta_0$ and $R>1$ such that for all $h \leq h_0$ there is at most one break point of $\partial\Omega$ in the neighborhood of radius $R h$ of each $K \in {\cal T}_h^B(\Omega)$.
\end{property}

We are going to define the corresponding mesh ${\cal T}_h$ that will be used for the VEM discretization of problem \eqref{eq:cpb}. 
Given any $h \leq h_0$ and any $K \in {\cal T}_h^B(\Omega)$, we generate a convex polytope $E_K \subseteq K$ as follows.

\begin{definition}[definition of $E_K$]\label{def:E-K}
The set $E_K$ is the convex hull of the collection of points in ${\cal V}_K \cap \Omega$ and in
$\partial K_\# \cap \partial\Omega$, where $\partial K_\#$ denotes the union of the edges of $K$ which have at least one endpoint in $\Omega$.
\end{definition}

Note that $E_K \subseteq K$ because $K$ is convex. Furthermore, since
${\cal V}_K \cap \Omega$ is not empty, $E_K$ contains the intersection of $K$ with a $d$-dimensional ball centered at an internal vertex of $K$, hence it is truly $2$-dimensional.

If we consider an edge $e$ of $K$, three situations may occur: \emph{i)} both endpoints are in $\Omega$, in which case by convexity $e$ is also an edge of $E_K$; \emph{ii)} exactly one endpoint is in $\Omega$, in which case $e$ has non-empty intersection with $\partial\Omega$, thus it contains a closed segment which is an edge of $E_K$; \emph{iii)} no endpoint is in $\Omega$, in which case no edge of $E_K$ is contained in $e$ (regardless of whether $e \cap \partial\Omega$ is empty or not).

\begin{remark}[degrees of freedom in $E_K$]\label{rem:dofs-in-E_K}
{\rm
In view of the subsequent definition of virtual functions in $E_K$, it is worth stressing that only the geometric vertices of $E_K$ (i.e., the extremal points of the convex hull) will carry a degree of freedom. Points in $\partial K \cap \partial\Omega$ sitting inside an edge of $E_K$ will not be considered vertices in the sense of VEMs. 
}    
\end{remark}

\begin{example}[shapes on Cartesian mesh]\label{ex:element-geometries}

Consider the Cartesian background mesh ${\cal T}_h^B$ introduced in Example \ref{exa:background-mesh}, and fix a square element $K \in {\cal T}_h^B(\Omega)$.
Let us investigate which are the possible shapes of the element $E_K$. Five cases may occur, which are illustrated in Fig. \ref{fig:geometries-1}.

\begin{figure}[ht!]
\centering
\resizebox{0.6\textwidth}{!}{ 
\begin{minipage}{\textwidth}
\vspace{-12cm}
\begin{subfigure}{0.12\textwidth}
\centering
\begin{tikzpicture}[scale=1, line cap=round, line join=round]
\useasboundingbox (0,0) rectangle (4,4);
\draw[thick] (0,0) rectangle (4,4);
\fill[gray!15]
(0,0) -- (0,4)
plot[smooth, tension=1.1]
    coordinates {(0,2.8) (1.8,1.6) (3.6,0)}
-- (3.6,0) -- (0,0) -- cycle;
\draw[blue, thick]
    plot [smooth, tension=1]
        coordinates {(-0.4,3.04) (0,2.8) (1.8,1.6) (3.6,0) (4,-0.4)};
\draw[red, thick] (0,0) -- (0,2.8) -- (3.6,0) -- cycle;
\filldraw[red] (0,2.8) circle (0.1);
\filldraw[red] (3.6,0) circle (0.1);
\filldraw[red] (0,0) circle (0.06);
\draw[red, thick] (0,0) circle (0.16);
\end{tikzpicture}
\caption*{(A)}
\end{subfigure}
\hfill
\begin{subfigure}{0.12\textwidth}
\centering
\begin{tikzpicture}[scale=4, line cap=round, line join=round]
\useasboundingbox (0,0) rectangle (4,4);
\draw[thick] (0,0) rectangle (1,1);
\coordinate (TopLeft) at (0,1);
\coordinate (BottomLeft) at (0,0);
\coordinate (Iup) at (0.25,1);   
\coordinate (Idown) at (0.55,0); 

\fill[gray!15]
(0,0) -- (0,1) --
 plot [smooth, tension=1]
        coordinates {(Iup) (0.38,0.45) (Idown) };
 -- (0,0) -- cycle;

\draw[red, thick]
    (TopLeft) -- (Iup) -- (Idown) -- (BottomLeft) -- cycle;

\draw[blue, thick]
    plot [smooth, tension=1]
        coordinates {(0.2,1.2) (Iup) (0.38,0.45) (Idown) (0.69,-0.22)};

\filldraw[red] (TopLeft) circle (0.015);
\filldraw[red] (BottomLeft) circle (0.015);
\filldraw[red] (Iup) circle (0.025);
\filldraw[red] (Idown) circle (0.025);

\draw[red, thick] (TopLeft) circle (0.04);
\draw[red, thick] (BottomLeft) circle (0.04);
\end{tikzpicture}
\caption*{(B)}
\end{subfigure}
\hfill
\begin{subfigure}{0.12\textwidth}
\begin{tikzpicture}[line cap=round,line join=round,thick,scale=1,
  declare function={
    f(\x) = 4.2 - 0.25*(\x-2)^2;
  }]
\useasboundingbox (0,0) rectangle (4,4);
\centering

\fill[gray!15]
(4,0) -- (0,0) -- (0,{f(0)})
plot[domain=0:4,samples=50] (\x,{f(\x)})
-- (4,0) -- cycle;
\draw[black,thick] (0,0) rectangle (4,4);

\draw[blue,thick,domain=-0.3:4.3,samples=50] plot (\x,{f(\x)});

\coordinate (L) at (0,{f(0)});   
\coordinate (R) at (4,{f(4)});   
\coordinate (T1) at (1,{f(1)});     
\coordinate (T2) at (3,{f(3)});     

\coordinate (B1) at (0,0);
\coordinate (B2) at (4,0);

\draw[red,thick] (B1)--(L)--(R)--(B2)--cycle;

\foreach \p in {L,R}
  \fill[red] (\p) circle (3pt);

\foreach \p in {B1,B2}
  \draw[red,thick] (\p) circle (4pt);
\foreach \p in {B1,B2}
  \filldraw[red] (\p) circle (0.05);

\end{tikzpicture}
\caption*{(C)}
\end{subfigure}
\hfill
\begin{subfigure}{0.18\textwidth}
\begin{tikzpicture}[line cap=round,line join=round,thick,scale=1,
  declare function={
    H(\x) = (\x < 2 ? 0 : 1);
    f(\x) = 4.5-0.2*(\x-2)^2-(\x-2)*(1.5*H(\x)-1);
  }]
\useasboundingbox (0,0) rectangle (4,4);
\centering

\fill[gray!15]
(4,0) -- (0,0) -- (0,{f(0)})
plot[domain=0:4,samples=50] (\x,{f(\x)})
-- (4,0) -- cycle;

\draw[black,thick] (0,0) rectangle (4,4);

\draw[blue,thick,domain=-0.3:4.3,samples=50] plot (\x,{f(\x)});

\coordinate (L) at (0,{f(0)});   
\coordinate (R) at (4,{f(4)});   
\coordinate (T1) at (1.55,{f(1.55)});     
\coordinate (T2) at (2.45,{f(2.45)});     

\coordinate (B1) at (0,0);
\coordinate (B2) at (4,0);

\draw[red,thick] (B1)--(L)--(R)--(B2)--cycle;

\foreach \p in {L,R}
  \fill[red] (\p) circle (3pt);

\foreach \p in {B1,B2}
  \draw[red,thick] (\p) circle (4pt);
\foreach \p in {B1,B2}
  \filldraw[red] (\p) circle (0.05);
  
\fill[blue] (2,{f(2)}) circle (3pt);
\end{tikzpicture}
\caption*{(D)}
\end{subfigure}

\vspace{-1cm}

\hspace{-20mm}
\begin{subfigure}{0.001\textwidth}
\centering
\begin{tikzpicture}[scale=2,thick]
\useasboundingbox (0,0) rectangle (4,4);
\tikzset{
  point/.style={fill=red, draw=red, circle, radius=2pt},
  vertex/.style={draw=red, thick, circle, inner sep=3pt},}

\coordinate (A) at (0,0);
\coordinate (B) at (2,0);
\coordinate (C) at (0,2);
\coordinate (D) at (2,2);

\coordinate (P1) at (0.7,2);
\coordinate (P2) at (2,0.5);

\coordinate (P3) at (1,0);
\coordinate (P4) at (0,1);

\fill[gray!15]
  (2,2) --
 plot [smooth, tension=1]
        coordinates {(P2) (1.5,0.35) (P3)};
-- (P3) -- (0,0) -- cycle;

\fill[gray!15]
  (2,2) -- (P3) -- (0,0) -- cycle;
  
\fill[gray!15]
(0,0) --
plot [smooth, tension=0.9]
coordinates {(P4) (0.5,1.5) (P1)};
 (2,2) -- cycle;

\fill[gray!15]
  (0,0) -- (P1) -- (2,2) -- cycle;
\draw[black, thick] (A)--(C)--(D) --(B) --(A);

\draw[red, thick] (A)--(P4)--(P1)--(D)--(P2)--(P3)--(A);

\foreach \p in {A,D}
  \draw[red] (\p) circle (0.080);

\draw[blue, thick]
    plot [smooth, tension=0.9]
        coordinates {(0.67,2.2) (P1) (0.5,1.5) (P4) (-0.2,0.9)};

\draw[blue, thick]
    plot [smooth, tension=1]
        coordinates {(2.2,0.46) (P2) (1.5,0.35) (P3) (0.9,-0.2)};

\filldraw[red] (P1) circle (0.05);
\filldraw[red] (P2) circle (0.05);
\filldraw[red] (P4) circle (0.05);
\filldraw[red] (P3) circle (0.05);

\filldraw[red] (A) circle (0.03);
\filldraw[red] (D) circle (0.03);
\end{tikzpicture}
\caption*{(E)}
\end{subfigure}
\hfill
\begin{subfigure}{0.001\textwidth}
\centering
\begin{tikzpicture}[scale=2,thick]
\useasboundingbox (0,0) rectangle (4,4);
\tikzset{
  point/.style={fill=red, draw=red, circle, radius=2pt},
  vertex/.style={draw=red, thick, circle, inner sep=3pt},}

\coordinate (A) at (0,0);
\coordinate (B) at (2,0);
\coordinate (C) at (0,2);
\coordinate (D) at (2,2);

\coordinate (P1) at (0.7,2);
\coordinate (P2) at (2,0.5);

\coordinate (P3) at (1,0);
\coordinate (P4) at (0,1);

\fill[gray!15]
(0,0) -- 
 plot [smooth, tension=1]
        coordinates {(P4) (0.5,0.5) (P3)};
-- cycle;

   \fill[gray!15]
(0,0) -- 
 plot [smooth, tension=1]
        coordinates {(P4) (0.55,0.55) (P3)};
-- cycle;

 \fill[gray!15]
(D) -- 
 plot [smooth, tension=1]
        coordinates {(P1) (1.25,1.25) (P2)};
-- cycle;

\draw[black, thick] (A)--(C)--(D) --(B) --(A);

\draw[red, thick] (A)--(P4)--(P1)--(D)--(P2)--(P3)--(A);

\foreach \p in {A,D}
  \draw[red] (\p) circle (0.080);

\draw[blue, thick]
    plot [smooth, tension=0.9]
        coordinates {(0.5,2.3) (P1) (1.25,1.25) (P2) (2.1,0.4)};

\draw[blue, thick]
    plot [smooth, tension=1]
        coordinates {(-0.2,1.1) (P4) (0.55,0.55) (P3) (1.1,-0.2)};

\filldraw[red] (P1) circle (0.05);
\filldraw[red] (P2) circle (0.05);
\filldraw[red] (P4) circle (0.05);
\filldraw[red] (P3) circle (0.05);

\filldraw[red] (A) circle (0.03);
\filldraw[red] (D) circle (0.03);
\end{tikzpicture}
\caption*{(F)}
\end{subfigure}
\hfill
\begin{subfigure}{0.001\textwidth}
\centering
\begin{tikzpicture}[scale=2,thick]
\useasboundingbox (0,0) rectangle (4,4);
\tikzset{
  point/.style={fill=red, draw=red, circle, radius=2pt},
  vertex/.style={draw=red, thick, circle, inner sep=3pt},}

\coordinate (A) at (0,0);
\coordinate (B) at (2,0);
\coordinate (C) at (0,2);
\coordinate (D) at (2,2);

\coordinate (P1) at (0.7,2);
\coordinate (P2) at (2,0.5);

\fill[gray!15]
(2,0) -- (0,0) -- (0,2) --
 plot [smooth, tension=1]
        coordinates {(P1) (1.35,1.1) (P2)};
 -- (2,0) --  cycle;
\draw[red, thick] (P1)--(P2)--(B)--(A)--(C)--(P1);

\foreach \p in {A,B,C}
  \draw[red] (\p) circle (0.080);

\draw[red, thick] (P1) -- (P2);

\draw[black, thick] (P1)-- (D) -- (P2);

\draw[blue, thick]
    plot [smooth, tension=0.8]
        coordinates {(0.4,2.4) (P1) (1.35,1.1) (P2) (2.2,0.3)};

\filldraw[red] (P1) circle (0.05);
\filldraw[red] (P2) circle (0.05);

\filldraw[red] (A) circle (0.03);
\filldraw[red] (B) circle (0.03);
\filldraw[red] (C) circle (0.03);
\end{tikzpicture}
\caption*{(G)}
\end{subfigure}
\hfill
\begin{subfigure}{0.001\textwidth}
\begin{tikzpicture}[line cap=round,scale=1,line join=round,thick]
\centering
\useasboundingbox (0,0) rectangle (4,4);
\draw[black,thick] (0,0) rectangle (4,4);
\coordinate (M) at (1.8,0.8); 
\coordinate (Pright) at (4,1); 

\fill[gray!15]
(4,0) -- (0,0) -- (0,4) -- (1.0,4)
.. controls (0.85,4.8) and (1.3,3.8) .. (M)
.. controls (2.8,0.5) and (3.4,0.7) .. (Pright)
-- (4,0) -- cycle;

\draw[blue,thick]
  (0.3,4.8) .. controls (0.85,4.8) and (1.3,3.8) .. (M)
  .. controls (2.8,0.5) and (3.4,0.7) .. (Pright)
  .. controls (4.4,1.2) and (5,1.4) .. (4.1,1.05); 

\coordinate (Ptop) at (1.0,4);

\coordinate (A) at (0,0);  
\coordinate (B) at (4,0);  
\coordinate (D) at (0,4);  

\draw[red,thick] (D)--(A)--(B)--(Pright)--(Ptop)--cycle;

\foreach \p in {Ptop,Pright}
  \fill[red] (\p) circle (3pt);

\foreach \p in {A,B,D}
  \draw[red,thick] (\p) circle (4pt);
  \foreach \p in {A,B,D}
\filldraw[red] (\p) circle (0.05);
 \fill[blue] (M) circle (3pt);
\end{tikzpicture}
\caption*{(H)}
\end{subfigure}
\hfill
\begin{subfigure}{0.001\textwidth}
\begin{tikzpicture}[line cap=round,thick,scale=1,
  declare function={
    f(\x) = sqrt(abs(\x-2))+3;
  }]
\useasboundingbox (0,0) rectangle (4,4);
\centering

\fill[gray!15]
(4,0) -- (0,0) -- (0,4) -- (1,{f(1)}) -- (1.5,{f(1.5)}) -- (1.7,{f(1.7)}) -- (1.9,{f(1.9)})-- (2,{f(2)}) -- (2.1,{f(2.1)})-- (2.5,{f(2.5)})-- (3,{f(3)}) -- (4,4) -- cycle;

\draw[black,thick] (0,0) rectangle (4,4);

\draw[blue,thick,domain=-0.3:4.3,samples=150] plot (\x,{f(\x)});

\coordinate (B1) at (0,0);
\coordinate (B2) at (4,0);

\coordinate (B3) at (0,4);
\coordinate (B4) at (4,4);
\draw[red,thick] (B1)--(B2)--(B4)--(B3)--cycle;

\foreach \p in {B1,B2}
  \draw[red,thick] (\p) circle (4pt);
\foreach \p in {B1,B2}
  \filldraw[red] (\p) circle (0.05);

\foreach \p in {B3,B4}
  \draw[red,thick] (\p) circle (4pt);
\foreach \p in {B3,B4}
  \filldraw[red] (\p) circle (0.05);

\fill[blue] (2,{f(2)}) circle (3pt);
\end{tikzpicture}
\caption*{(I)}
\end{subfigure}
\end{minipage}
}
\caption{Examples of geometries of elements $E_K$ based on a Cartesian background mesh. 
We represent an element $K$ of the background mesh in black, the associated element $E_K$ in red, the boundary $\partial\Omega$ in blue; empty circles denote internal nodes, full circles denote boundary nodes. The grey region 
identifies $\Omega \cap K$.\\
Plots are ordered by increasing number of internal vertices of $K$. Plot A: one internal vertex, triangular $E_K$; plots B-C-D: two consecutive internal vertices, quadrilateral $E_K$; plots E-F: two opposite internal vertices, hexagonal $E_K$; plots G-H: three internal vertices, pentagonal $E_K$; plot I: four internal vertices, $E_K=K$}
\label{fig:geometries-1}
\end{figure}

\begin{itemize}
\item \emph{One vertex of $K$ is in $\Omega$.} Then, each of the two edges of $K$ meeting at such vertex contains an edge of $E_K$, whereas the two other edges of $K$ do not intersect $E_K$. By convexity, $E_K$ is a \emph{triangle} (see Fig. \ref{fig:geometries-1}, plot A). 
    \item \emph{Two consecutive vertices of $K$ are in $\Omega$.} Then, the edge of $K$ containing these vertices is also an edge of $E_K$; each of the two edges of $K$ having a single internal vertex contains an edge of $E_K$, whereas the remaining edge of $K$ does not intersect $E_K$. By convexity, $E_K$ is a \emph{quadrilateral} (see Fig. \ref{fig:geometries-1}, plots B, C, and D: denoting by $\hat{e}$ the edge of $K$ opposite to the edge with two internal endpoints, then $\hat{e} \cap \partial\Omega = \emptyset$ in B and $\hat{e} \cap \partial\Omega \not = \emptyset$ in C and D). 

    \item \emph{Two non-consecutive (i.e., opposite) vertices of $K$ are in $\Omega$.} Then, each edge of $K$ contains an edge of $E_K$, which shares with it one of the two internal vertices of $K$. By convexity, $E_K$ is a \emph{hexagon} (see Fig. \ref{fig:geometries-1}, plots E ($\partial\Omega$ oriented NE-SW) and F ($\partial\Omega$ oriented NW-SE)). In situations like F, where $K \cap \Omega$ is not connected, we allow the possibility of replacing the hexagon by the two disjoint triangles which have a vertex in $\Omega$ and two vertices on $\partial K \cap \partial\Omega$.

    \item\emph{Three vertices of $K$ are in $\Omega$.} Then, two contiguous edges of $K$ are also edges of $E_K$, whereas each of the remaining edges of $K$ carry another edge of $E_K$.  By convexity, $E_K$ is a \emph{pentagon} (see Fig. \ref{fig:geometries-1},  plot G (smooth boundary) and H (corner boundary)).

    \item \emph{All the vertices of $K$ are in $\Omega$.} Then, by convexity $E_K=K$ (see Fig. \ref{fig:geometries-1},  plot I). 
\end{itemize}

We refer to Sect. \ref{sec:checking} for a thorough discussion of the properties of the different types of  elements from the point of view of stability and approximation. \hskip 7.2cm$\square$

\end{example}

At this point, we are able to introduce the mesh ${\cal T}_h$ and the domain $\Omega_h$. Let $h_0$ be defined in Property \ref{prop:mesh-restrict} (mesh resolution).
\begin{definition}[definition of the mesh ${\cal T}_h$ and the domain $\Omega_h$]\label{def:T-h}
For any $h \leq h_0$, we set
\[
{\cal T}_h :=\{E : E=E_K \text{\rm \  for some } K \in {\cal T}_h^B(\Omega)\}
\]
and
\[
\Omega_h := \text{\rm interior} \left(\bigcup \{E : E \in {\cal T}_h \} \right)\,.
\] 
\end{definition}
\noindent It is easily seen that ${\cal T}_h$ is a conforming polygonal partition of $\overline{\Omega}_h$, which is a proper subset of the extended domain $\widetilde{\Omega}$ but need not be contained in $\overline{\Omega}$ (except for specific geometric situations, such as, e.g., $\overline{\Omega}$ convex). Moreover, no element $E\in {\cal T}_h$ contains a break point $\bz_j$ in its interior, in particular not a re-entrant corner of $\partial\Omega$ because $E$ is convex.

Given $E \in {\cal T}_h$, let  $K_E$ denote the unique element of the background mesh ${\cal T}_h^B$ containing $E$. Furthermore,
 let $\nodesE$ be the set of nodes $\nu$ sitting on $\partial E$. The set of all edges $e$ of $E$ is denoted by $\edgeE$.

\subsection{VEM spaces}

In order to define a space of discrete functions in $\Omega_h$ associated with $\mesh$, for each element $E \in \mesh$ let us first introduce the space of continuous, piecewise affine functions on $\partial E$
\begin{equation}\label{eq:cond-VdE}
\VdE:= \{ v \in {\cal C}^0(\partial E) : v_{|e} \in \mathbb{P}_1(e) \ \forall e \in \edgeE \}.
\end{equation}

For the purpose of this paper we introduce the following well-known VEM spaces: the basic VEM space of \cite{volley}
\begin{equation}\label{vem:basic} 
W_E := \big\{ w \in H^1(E) \ : \ w|_{\partial E} \in \VdE , \ \Delta w =0 \big\} 
\end{equation}
and the more advanced ``enhanced'' space from \cite{projectors,BBMR16} 
\begin{equation}\label{vem:choice:2} 
V_E := \big\{ v \in H^1(E) \ : \ v|_{\partial E} \in \VdE , \ \Delta v \in \mathbb{P}_1(E) \, ,
\int_E (v - \PiE v) q_1 = 0 \ \forall q_1 \in \mathbb{P}_1(E)
\big\} \,,
\end{equation}
where the projector $\PiE : H^1(E) \to \mathbb{P}_1(E)$ is defined by the conditions
\begin{equation}\label{eq:def-PinablaE}
(\nabla (v - \PiE v), \nabla q)_E = 0 \quad \forall q \in \mathbb{P}_1(E), \qquad  \int_{\partial E} (v-\PiE v) = 0 \, .
\end{equation}
An important observation is  that, due to the peculiar definition of the enhanced space \cite{projectors}, the operator $\Pi^\nabla_E$ coincides with the $L^2(E)$-projection operator $\Pi_E^0$ from $\VE$ onto $\Pol_1(E)$.

It is easy to check that 
$\VE,\WE \subset {\cal C}^0(E)$ and the following properties hold:
\begin{equation}\label{eq:cond-VE}
\text{dim}\, \VE = \text{dim}\, \WE = |\nodesE|\,, \qquad  \mathbb{P}_1(E) \subseteq \VE,\WE \,, \qquad  \tau_{\partial E}(\VE) = \tau_{\partial E}(\WE)= \VdE \,,
\end{equation}
where $ \tau_{\partial E}$ is the trace operator on the boundary of $E$. Note that  functions in $\VE,\WE$   are uniquely identified by their trace on $\partial E$ (and hence by the values at the vertices of $E$), but their  values in the interior of $E$ must be defined.

For any function $v\in \VE$ (or $\WE$), $\text{dofs}_E(v)$ stands for the vector of degrees of freedom of $v$, i.e., the vector collecting the values of $v$ at the vertices of $E$.

Once the local spaces $\VE,\WE$ are defined, we introduce the global discrete spaces
\begin{equation}\label{eq:def-VT}
\Vmesh := \{ v \in H^1_0(\Omega_h) :  \ v_{|E} \in \VE \ \ \forall E \in \mesh \}\,,
\end{equation} 
\begin{equation}\label{eq:def-WT}
\Wmesh := \{ w \in H^1_0(\Omega_h) :  \ w_{|E} \in \WE \ \ \forall E \in \mesh \}\,.
\end{equation} 
Note that functions in $\Vmesh$ (and $\Wmesh$) are piecewise affine on the skeleton $\edge_h$ of ${\cal T}_h$, and indeed they are uniquely determined by their values therein and are globally continuous. 

 Finally, given a continuous function $w$ defined in $E$, we denote by $\IVE w$ the VEM interpolant of $w$ in $\VE$, i.e., the function $\IVE w \in \VE$ such that $\text{dofs}_E(\IVE w)= \text{dofs}_E(w)$. Coherently, for a continuous function $w$ defined in $\overline{\Omega}_{h}$, $\IVh w$ denotes the continuous piecewise VEM interpolant of $w$, i.e., the function satisfying 
 \begin{equation}\label{eq:def-Ih}
 (\IVh w)_{|E}= \IVE(w_{|E}), \qquad \text{for all \ } E\in \mesh.
 \end{equation}
 Such a function vanishes on $\partial\Omega_h$, hence, $\IVh w \in \Vmesh$. Indeed, by construction any vertex of ${\cal T}_h$ sitting on $\partial\Omega_h$ belongs to $\partial\Omega$, hence $\IVh w$ vanishes therein; furthermore, $\IVh w$ is linear on each edge of ${\cal T}_h$.

\subsection{The discrete method}

\newcommand{\PiNew}{\widetilde{\Pi}}

To define a Galerkin approximation of Problem \eqref{eq:cpb}, let us introduce suitable discrete counterparts of the bilinear and linear forms introduced in Sect. \ref{sec:pde}. We start by posing a crucial assumption on the stabilization form, a typical ingredient of VEM discretizations. 
In order to keep the generality required to deal with anisotropic elements near the boundary, in the following we will denote by 
\begin{equation}\label{eq:def-Pnew}
\PiNew_E : \VE \rightarrow {\cal S}_1(E) \, , \quad E \in \mesh \, ,
\end{equation}
a suitable linear projection operator, with ${\cal S}_1(E) \subseteq \Pol_1(E)$ a linear subspace.
We require that $\PiNew_E$ is computable from the DoF values and continuous in $H^1(E)$, uniformly with respect to $E \in \{ \mesh \}_h$. 
Note that, for the large majority of the elements $E$, such operator will be taken as the standard $\Pi^\nabla_E$ projection.

\begin{assumption}[stabilization form]\label{ass:stab-form}
For any $E \in {\cal T}_h$, there exists  a symmetric bilinear form $s_E: \VE \times \VE \to \mathbb{R}$, depending only upon the values of its arguments on the boundary $\partial E$, which
satisfies the conditions
\begin{equation}\label{eq:prop-stabform}
c_s |v|_{1,E}^2 \leq s_E(v,v) \leq C_s |v|_{1,E}^2 \qquad \forall v \in \VE \textrm{ with } \, \PiNew_E v = 0.
\end{equation}
where $C_s \geq c_s >0$ are constants independent of $E$.
\end{assumption}
\noindent In Sect. \ref{sec:stab:X} we will discuss how to construct such a form in the case of a 2D Cartesian background mesh.

To continue, let $\mu_E \geq \mu_0$ denote the integral average of the diffusion coefficient $\mu$ over $E$, and let us introduce the following local discrete bilinear and linear forms: for all $u,v \in V_h(E)$, set
\begin{equation}\label{eq:dis:forms}
\begin{aligned}
& a_E(u,v) := \int_E \mu \, (\nabla \Pi^\nabla_E u)\cdot (\nabla \Pi^\nabla_E v) \, + \mu_E \, 
s_E(u - \PiNew_E u, v - \PiNew_E v) \, ,   \\
& b_E(u,v) := \tfrac12 \int_E [{\bf b}\cdot(\nabla \Pi^\nabla_E u)] \, \Pi^\nabla_E v - \tfrac12 \int_E [{\bf b}\cdot(\nabla \Pi^\nabla_E v)] \, \Pi^\nabla_E u \,    \\
& c_E(u,v) := \int_E (\sigma-\tfrac12 \nabla \cdot \bb) (\Pi^\nabla_E u) (\Pi^\nabla_E v)  
\\
& {\cal F}_E(v) := \int_E f \, \Pi^\nabla_E v \,.
\end{aligned}
\end{equation}
\begin{remark}[extended data]\label{rem:ext-coeff}{\rm
Since by construction the element $E$ may contain a portion outside $\Omega$, the extensions $\tilde{\mu}, \tilde{\bf b}, \tilde{\sigma}$, $\tilde{f}$ of data $\mu, {\bf \beta}, \sigma$, $f$ introduced in Sec. \ref{sec:extension} are implicitly used therein. Here and in the sequel, we avoid adding the superscript $\wt{\ }$ in order to keep notation simpler.  \qquad \qquad \qquad $\square$
}
\end{remark}

The corresponding global discrete forms
read as follows: for all $u,v \in V_h$, set 
$$
\begin{aligned}
& a_h(u,v) := \sum_{E\in {\cal T}_{h}} a_E(u_{|E},v_{|E}) \,, \qquad b_h(u,v) := \sum_{E\in {\cal T}_{h}} b_E(u_{|E},v_{|E}) \,, \\
& c_h(u,v) := \sum_{E\in {\cal T}_{h}} c_E(u_{|E},v_{|E}) \,, \qquad {\cal F}_h(v) := \sum_{E\in {\cal T}_{h}} {\cal F}_E(v_{|E}) \,.
\end{aligned}
$$
We recall that the assumptions on $\mu$ and \eqref{eq:prop-stabform} imply the bilinear form $a_h(u,v)$ to be uniformly coercive and continuous in the $H^1$-norm (see e.g. \cite{BBMR16}): there exist constants $0 < \mu_\star \leq \mu_0$ and $A_\mu >0$ independent of $h$ such that for any $u,v \in V_h$
\begin{equation}\label{eq:coerc-ah}
\mu_\star | v |_{1,\Omega_{h}}^2 \leq a_h(v,v), \qquad |a_h(u,v)| \leq A_\mu | u |_{1,\Omega_{h}} | v |_{1,\Omega_{h}}.
\end{equation}
Indeed, it is easy to check that the presence of the more general  operator $\PiNew_E$ can be handled by trivial modifications of the classical proof (also exploiting that, from the definition of $\Pi^\nabla_E$, it holds $|v - \Pi^\nabla_E v |_{H^1(E)} \le |v - \PiNew_E v |_{H^1(E)}$ for all $E \in \mesh$ and $v \in H^1(E)$).

Finally, we approximate the cumulative bilinear form ${\cal B}$ defined in \eqref{eq:form-B2} by setting for all $u,v \in V_h$
$$
{\cal B}_h(u, v):= a_h(u,v)+b_h(u,v)+c_h(u,v)\,.
$$
Using the skew-symmetry of the form $b_h$ together with inequalities \eqref{eq:assumptions-coefficients}, which by assumption holds on the extended domain $\wt{\Omega}$ as well, we get the coercivity of the form ${\cal B}_h$ in $V_h$, namely
\begin{equation}\label{eq:discrete-coercivity}
{\cal B}_h(v_h, v_h) \geq a_h(v_h, v_h) \geq \mu_\star|v_h|_{1,\Omega_h}^2, \qquad \forall v_h \in \Vmesh. 
\end{equation}
Then, Problem \eqref{eq:cpb} is approximated as follows.
\begin{definition}[Galerkin discretization]\label{def:Galerkin}
Let $u_h \in \Vmesh$ be the solution of the variational problem
\begin{equation}\label{eq:Galerkin}
{\cal B}_h(u_h, v_h) = {\cal F}_h(v_h) \qquad \forall v_h \in \Vmesh.
\end{equation}
\end{definition}

\section{Stability and convergence analysis under abstract assumptions}
\label{stability-convergence-abstract}
Numerical stability is granted by the following result.
\begin{proposition}[stability]\label{prop:stability}
Let $u_h \in \Vmesh$ be the solution of \eqref{eq:Galerkin}. Then
\begin{equation}\label{eq:stab-Galerkin}
|u_h|_{1,\Omega_h} \leq \frac{\tilde{C}_P}{\mu_\star} \|f\|_{0,\wt{\Omega}},
\end{equation}
where $\mu_\star$ is given in \eqref{eq:coerc-ah}, whereas $\tilde{C}_P>0$ is the Poincar\'e constant of the extended domain $\wt{\Omega}$.
\end{proposition}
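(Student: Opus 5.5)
The plan is to test the discrete problem \eqref{eq:Galerkin} with $v_h=u_h$ and combine coercivity with a bound on the right-hand side. By the discrete coercivity \eqref{eq:discrete-coercivity}, which rests on the skew-symmetry of $b_h$ (so that $b_h(u_h,u_h)=0$) and on the nonnegativity of $\sigma-\tfrac12\nabla\cdot\bb$ on $\wt{\Omega}$, we get
\[
\mu_\star |u_h|_{1,\Omega_h}^2 \le {\cal B}_h(u_h,u_h) = {\cal F}_h(u_h) = \sum_{E\in\mesh}\int_E f\,\Pi^\nabla_E u_h .
\]
Here $f$ denotes the extension $\wt f$, as stated in Remark \ref{rem:ext-coeff}.

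The main step is to bound the right-hand side. For this we use the property of the enhanced space recalled after \eqref{eq:def-PinablaE}: on $\VE$ the projector $\Pi^\nabla_E$ coincides with the $L^2(E)$-projection $\Pi^0_E$ onto $\Pol_1(E)$. Consequently $\|\Pi^\nabla_E u_h\|_{0,E}\le \|u_h\|_{0,E}$ on every element, with constant exactly one and independent of the shape of $E$. This shape-independence is essential, because $\mesh$ contains anisotropic elements and elements with small edges. Applying Cauchy--Schwarz elementwise and then in the sum gives
\[
{\cal F}_h(u_h)\le \sum_E \|f\|_{0,E}\|u_h\|_{0,E}\le \|f\|_{0,\Omega_h}\|u_h\|_{0,\Omega_h}\le \|f\|_{0,\wt\Omega}\|u_h\|_{0,\Omega_h}.
\]
The last inequality uses $\Omega_h\subset\wt\Omega$, which holds since $h\le\lambda\delta_0$.

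Next we need the Poincar\'e inequality with a constant independent of $h$. Since $u_h\in H^1_0(\Omega_h)$, its extension by zero belongs to $H^1_0(\wt\Omega)$ and has the same $H^1$ seminorm. Therefore
\[
\|u_h\|_{0,\Omega_h}\le \tilde C_P|u_h|_{1,\Omega_h},
\]
where $\tilde C_P$ is the Poincar\'e constant of $\wt\Omega$. This avoids any dependence on the varying domain $\Omega_h$.

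Combining the three inequalities yields $\mu_\star|u_h|_{1,\Omega_h}^2\le \tilde C_P\|f\|_{0,\wt\Omega}|u_h|_{1,\Omega_h}$, and dividing by $|u_h|_{1,\Omega_h}$ (the case $u_h=0$ being trivial) gives \eqref{eq:stab-Galerkin}.

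The only delicate point is the bound on $\Pi^\nabla_E$ in $L^2$. It must be shape-independent, and it is precisely the enhanced-space identity $\Pi^\nabla_E=\Pi^0_E$ that provides this. The remaining steps are routine. The coercivity constant $\mu_\star$ is already uniform by Assumption \ref{ass:stab-form} through \eqref{eq:coerc-ah}.
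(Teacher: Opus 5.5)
Your proof is correct and follows essentially the same route as the paper: test with $v_h=u_h$ and use the discrete coercivity. Then apply the identity $\Pi^\nabla_E=\Pi^0_E$ on $\VE$, Cauchy--Schwarz, and the Poincar\'e inequality on $\wt{\Omega}$ for the zero extension of $u_h$. The only cosmetic difference is that the paper moves the projection onto $f$ via the symmetry of $\Pi^0_E$ (writing $\int_E f\,\Pi^0_E u_h=\int_E \Pi^0_E f\,u_h$), whereas you use the $L^2$-stability of $\Pi^0_E$ on $u_h$ directly; the two are equivalent.
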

\proof
 We use inequality \eqref{eq:discrete-coercivity} with $v_h=u_h$ in the right-hand side of \eqref{eq:Galerkin}.
On the other hand, recalling that $\Pi_E^\nabla = \Pi_E^0$ in each $\VE$, we have
\begin{equation*}
\begin{split}
{\cal F}_h(u_h) &= \sum_{ E \in {\cal T}_h} \int_E f \, \Pi_E^0 u_h = \sum_{ E \in {\cal T}_h} \int_E \Pi_E^0 f \, u_h \\
&\, \leq 
\|f\|_{0,\Omega_h}\|u_h\|_{0,\Omega_h} \leq 
\|f\|_{0,\wt{\Omega}}\, \|u_h\|_{0,\wt{\Omega}}
\leq \tilde{C}_P \|f\|_{0,\wt{\Omega}} \, |u_h|_{1,\wt{\Omega}}
= \tilde{C}_P \|f\|_{0,\wt{\Omega}} \, |u_h|_{1,\Omega_h},
\end{split}
\end{equation*}
whence the result.
\endproof

From now on, we will focus on the convergence analysis. To this end, we make a second crucial assumption on the VEM interpolation operator $\IVh$ defined above. Let us introduce the notation 
\begin{equation}\label{eq:domain-error-2}
\gamma := \begin{cases} 1 & \text{if there are no reentrant corners in }\partial\Omega, \\
\bar{\gamma}=\min_{1 \leq j \leq J} \gamma_j <1 & \text{if } \bz_1, \dots, \bz_J \text{ are the reentrant corners of } \partial\Omega.
\end{cases}
\end{equation}
\begin{assumption}[interpolation error]\label{ass:interp} 
Let $u$ be the solution of the Poisson problem \eqref{eq:cpbr} for some forcing $g \in L^2(\Omega)$.
The interpolation operator $\IVh$ satisfies the following error estimate:  there exists a constant $C_\gamma>0$ independent of $u$ and $\mesh$ such that
\begin{equation}\label{eq:interp-err}
h^{-1}\|u-\IVh u \|_{0,\Omega_h} +  | u-\IVh u |_{1,\Omega_h} \leq C_\gamma h^\gamma \enorm{u}\,,
\end{equation}
where the norm $\enorm{u}$ is defined in \eqref{eq:def-norm-u} and satisfies \eqref{eq:reg-bound-u}.
\end{assumption}
\noindent In Sect. \ref{sec:checking}  we will establish this result for the case of a 2D Cartesian background mesh.

We also require the following polynomial approximation result. 
\begin{assumption}[polynomial approximation]\label{ass:polapprox} 
Let $u$ be as in Assumption \ref{ass:interp}.
There exists a constant $C_\gamma'>0$ independent of $u$ and $\mesh$, and a piecewise linear (discontinuous) polynomial function ${\cal P}_h^1 u$ on $\mesh$ such that
\begin{equation}\label{eq:polapprox-err}
h^{-1}\|u-{\cal P}_h^1 u \|_{0,\Omega_h} +  | u-{\cal P}_h^1 u |_{1,\Omega_h} \leq C_\gamma' h^\gamma \enorm{u}\,.
\end{equation}
On the elements $E$ where $\PiNew_E \not= \Pi^\nabla_E$ we require that the restriction ${\cal P}^1_h u|_E$ falls in the subspace ${\cal S}_1(E)$ introduced in \eqref{eq:def-Pnew}.
\end{assumption}

Differently from Assumptions \ref{ass:stab-form} and \ref{ass:interp}, the verification of Assumption \ref{ass:polapprox} is much simpler and will be addressed in Remarks \ref{rem:polapprox:H2} and \ref{rem:polapprox:sing}.

\subsection{Convergence in $H^1(\Omega_h)$}\label{sec:converg-H1}
Let $u$ be the solution of our original problem \eqref{eq:cpb}, extended to $\wt{\Omega}$ as described in Sect. \ref{sec:extension}. To proceed, we consider the extended equation \eqref{eq: extended-equation} (where for simplicity we keep neglecting the superscript $\widetilde{\ }$\,), we multiply it by $v \in H^1_0(\Omega_h)$, integrate over $\Omega_h$ and integrate by parts the diffusion term. Denoting by ${\cal B}_{\Omega_h}(u,v)$ the bilinear form defined as in \eqref{eq:form-B1} but with $\Omega$ replaced by $\Omega_h$, the (extended) exact solution $u$ satisfies
\begin{equation}\label{eq:ext-eqn}
{\cal B}_{\Omega_h}(u,v) = (f,v)_{\Omega_h} + (F, v)_{\delta \Omega_h} \qquad \forall v \in H^1_0(\Omega_h),
\end{equation}
where 
\begin{equation}\label{eq:def-delta-Omegah}
\delta\Omega_h := \Omega_h \setminus \Omega \subset \delta\Omega.
\end{equation}
Note that we have used the property $F \equiv 0$ in $\Omega$. Then,
$\IVh u \in \Vmesh$ satisfies
\begin{equation}\label{eq:equation-Ihu}
{\cal B}_h(\IVh u,v_h) = (f,v_h)_{\Omega_h} + (F, v_h)_{\delta \Omega_h} + {\cal B}_h(\IVh u,v_h) -{\cal B}_{\Omega_h}(u,v_h) \qquad \forall v_h \in \Vmesh.
\end{equation}
On the other hand, the Galerkin solution satisfies
\[
{\cal B}_h(u_h, v_h) = (f,v_h)_{\Omega_h} + {\cal F}_h(v_h) -(f,v_h)_{\Omega_h} \qquad \forall v_h \in \Vmesh.
\]
By subtracting this equation from \eqref{eq:equation-Ihu}, we see that the discrete error $e_h := \IVh u - u_h \in \V_h$ satisfies the equations
\begin{equation}\label{eq:error-eq}
{\cal B}_h(e_h, v_h) = {\cal E}_h(v_h) \qquad \forall v_h \in \Vmesh,    
\end{equation}
where the consistency error on the right-hand side is made up of three contributions
\begin{equation}
  \begin{split}
 {\cal E}_h(v_h) &= \underbrace{(F, v_h)_{\delta \Omega_h}}_{:={\cal E}_{h,1}(v_h)} - (\underbrace{{\cal B}_{\Omega_h}(u,v_h)- {\cal B}_h(\IVh u,v_h)}_{:={\cal E}_{h,2}(v_h)})
 + \underbrace{(f,v_h)_{\Omega_h} - {\cal F}_h(v_h)}_{:={\cal E}_{h,3}(v_h)}
  \end{split}
\end{equation}
corresponding to the approximation of the domain, the bilinear form, and the forcing term.

We now provide bounds on the three addends ${\cal E}_{h,i}$.

\begin{proposition}[domain error]\label{prop:domain-error}
There exists a constant $C_D>0$ independent of $u$ and $\mesh$ such that
\begin{equation}\label{eq:domain-error}
 | {\cal E}_{h,1}(v_h) | \leq C_D h^\gamma {\cal N}_D({\mu}, {\bb}, {\sigma},{f}) | v_h |_{1,\Omega_h}  \qquad \forall v_h \in \Vmesh,
\end{equation}
where
\begin{equation}\label{eq:def-ND}
{\cal N}_D({\mu}, {\bb}, {\sigma}, {f}) := \left(\| {\mu} \|_{W^1_\infty(\wt{\Omega})} + \| {\bb} \|_{(L^\infty(\wt{\Omega}))^d} + \| {\sigma}\|_{L^\infty(\wt{\Omega})}\right)\|g\|_{0,\Omega} + \|{f} \|_{0,\wt{\Omega}}.
\end{equation}

\end{proposition}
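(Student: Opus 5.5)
We need to bound $|(F,v_h)_{\delta\Omega_h}|$, where $\delta\Omega_h=\Omega_h\setminus\Omega$ is a thin strip of width $O(h)$ along $\partial\Omega$ and $v_h$ vanishes on $\partial\Omega_h$. The plan is to use Hölder's inequality on the strip, with $F\in L^p$ and $v_h$ in the dual Lebesgue space. The smallness of the strip then enters through a Poincaré-type bound for $v_h$ near $\partial\Omega_h$.

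\emph{Bound on $F$.} From the definition of $F$,
\[
F=\hat f-\widetilde f = -\widetilde\mu\,\Delta\widetilde u-\nabla\widetilde\mu\cdot\nabla\widetilde u+\widetilde\bb\cdot\nabla\widetilde u+\widetilde\sigma\widetilde u-\widetilde f .
\]
Take $p=2$ if $\gamma=1$, and otherwise $p<2/(2-\gamma)$, chosen close to that threshold. Then \eqref{eq:reg-u-tilde} and \eqref{eq:reg-bound-u} give
\[
\|F\|_{L^p(\delta\Omega_h)}\lesssim \big(\|\mu\|_{W^1_\infty}+\|\bb\|_{L^\infty}+\|\sigma\|_{L^\infty}\big)\|g\|_{0,\Omega}+\|f\|_{L^p(\delta\Omega_h)} .
\]
The last term is bounded by $|\delta\Omega_h|^{1/p-1/2}\|f\|_{0,\wt\Omega}$, so this is $\lesssim{\cal N}_D$ up to a harmless factor.

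\emph{Bound on $v_h$ in the strip.} Every point of $\delta\Omega_h$ lies within distance $\lesssim h$ of $\partial\Omega_h$, where $v_h=0$. A one-dimensional Poincaré argument along segments transversal to the boundary, using Property~\ref{prop:mesh-restrict}, the curvature bound \eqref{eq:curvature}, and the fact that boundary nodes lie on $\partial\Omega$, gives
\[
\|v_h\|_{0,\delta\Omega_h}\lesssim h\,|v_h|_{1,\Omega_h}.
\]
For a sharper version I would use the trace-type estimate $\|v\|_{L^q(S_\epsilon)}\lesssim \epsilon^{1/q}\|v\|_{1}$ on a strip $S_\epsilon$ of width $\epsilon\simeq h$ for $q<\infty$. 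I expect this to be provable via the standard Lemma $\|v\|_{L^2(S_\epsilon)}\lesssim \epsilon^{1/2}\|v\|_{1,\wt\Omega}$, combined with Sobolev embedding $H^1\subset L^r$ and interpolation, and Poincaré in $\wt\Omega$.

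\emph{Combining.} Hölder's inequality with the conjugate exponent $p'$ gives
\[
|{\cal E}_{h,1}(v_h)|\le \|F\|_{L^p(\delta\Omega_h)}\,\|v_h\|_{L^{p'}(\delta\Omega_h)} .
\]
When $\gamma=1$ we have $p=p'=2$, and the factor $h$ follows directly from the strip Poincaré bound. When $\gamma<1$, $p'$ is large but finite. Using $\|v_h\|_{L^{p'}(\delta\Omega_h)}\lesssim h^{1/p'}\,|v_h|_{1}$ yields the exponent $1/p'=1-1/p$. Since $1/p$ can be taken arbitrarily close to $(2-\gamma)/2$, this exponent approaches $\gamma/2$, which is not $\gamma$.

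This shortfall is the main obstacle, so I would refine the argument by exploiting that $F$ is singular only near the reentrant corners $\bz_j$, and is otherwise in $L^2$ with norm $\lesssim{\cal N}_D$. I would therefore split $\delta\Omega_h$ into two parts.

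\begin{itemize}
\item \emph{Away from the corners.} Use the $L^2$ bound on $F$ together with $\|v_h\|_{0,\delta\Omega_h}\lesssim h|v_h|_1$, which gives order $h$.
\item \emph{Near $\bz_j$, inside a ball of radius $Rh$.} The strip there has area $O(h^2)$, and $|F|\lesssim |\lambda_j| r^{\gamma_j-2}$. Compute $\int |F|\,|v_h|$ using $|v_h|\lesssim$ its $L^\infty$ bound or the local Poincaré bound $\|v_h\|_{0,B_{Rh}\cap\delta\Omega_h}\lesssim h|v_h|_1$. Also use that $\delta\Omega_h$ near $\bz_j$ has width $\lesssim r$ at distance $r$ from the corner, because the mesh fits the $C^2$ arcs up to $O(h^2)$ away from the corners.
\end{itemize}

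The key computation near the corner is the radial integral
\[
\int_0^{Rh} r^{\gamma-2}\cdot r\cdot(\text{local }v_h\text{ size})\,dr,
\]
which should yield $h^\gamma|v_h|_1$. Carrying this out carefully is where I expect the real difficulty.
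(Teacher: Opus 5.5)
\paragraph{A genuine gap at the reentrant corner.} The gap is exactly where you place the ``real difficulty'': the estimate inside an $O(h)$-neighbourhood of a reentrant corner $\bz_j$ is never carried out, and none of the tools you list can close it.

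Near $\bz_j$ you have $F=-\wt{\mu}\,\Delta\wt{\zeta}+\phi$. In general $|F|\sim|\lambda_j|\,r^{\gamma_j-2}$ there, because the extension $\wt{\zeta}_j$ is not harmonic outside $\Omega$. Moreover $\delta\Omega_h$ contains a full wedge of opening $2\pi-\omega_j$ with vertex $\bz_j$, namely the part of the convex-hull corner element lying outside $\Omega$. This rules out each of your options:
\begin{itemize}
\item $F\notin L^2$ near $\bz_j$, since $r^{2(\gamma_j-2)}\,r=r^{2\gamma_j-3}$ is not integrable at $r=0$ when $\gamma_j<1$. So Cauchy--Schwarz with the local Poincar\'e bound is unavailable.
\item An $L^1$--$L^\infty$ pairing would need $\|v_h\|_{L^\infty}\lesssim|v_h|_1$. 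This fails for $H^1$ functions in two dimensions, and for virtual functions it would need element-level arguments (maximum principle, norm equivalences on the corner element) that you do not supply.
\item The remark that $\delta\Omega_h$ has width $\lesssim r$ at distance $r$ from $\bz_j$ does not help. The function $v_h$ vanishes only on $\partial\Omega_h$, i.e.\ on the chord of the corner element at distance $O(h)$ from $\bz_j$. It does not vanish on the arcs of $\partial\Omega$ through $\bz_j$, which run through the interior of that element.
\end{itemize}

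\paragraph{The missing idea.} This is Case 3 of the paper's proof: keep your H\"older pairing, but \emph{localize} it to the boundedly many elements $E$ meeting $B_{ah}(\bz_j)$. There the relevant set has area $O(h^2)$ instead of being a strip.
\begin{itemize}
\item The radial integral gives $\|F\|_{L^p(E)}\lesssim(\|\phi\|_{0,E}+|\lambda_j|)\,h^{\gamma_j-2+2/p}$ for $p<2/(2-\gamma_j)$.
\item The scaled Sobolev--Poincar\'e inequality on $E$, using $v_h=0$ on $\partial E\cap\partial\Omega_h$, gives $\|v_h\|_{L^q(E)}\lesssim h^{2/q}|v_h|_{1,E}$ with $1/p+1/q=1$.
\end{itemize}
The exponents then add up to exactly $\gamma_j$. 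Your global attempt lost half the rate for precisely this reason: on a strip of width $h$ you only gain $h^{1/q}$, whereas on a set of diameter $h$ you gain $h^{2/q}$.

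\paragraph{A second, repairable error.} Your ``away from the corners'' step is also wrong as stated. Outside $B_{Rh}(\bz_j)$ the function $F$ is in $L^2$, but its norm is not bounded uniformly in $h$. On the annulus $ah\le|x-\bz_j|\le A$ the radial integral only gives $\|F\|_{0}\lesssim\|\phi\|_{0}+|\lambda_j|\,h^{\gamma_j-1}$, so your claimed order $h$ is false there.

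Combined with your strip bound $\|v_h\|_{0,\delta\Omega_h}\lesssim h\,|v_h|_{1,\Omega_h}$, this still yields $h^{\gamma_j}$. You get $h^{\gamma_j+1}$ if, as the paper does, you use that $\delta\Omega_h$ has width $O(h^2)$ on elements not containing a break point. Either way, the argument needs three regions (far from the corners, the intermediate annulus, and the $O(h)$-ball) rather than two.
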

\proof 
Preliminarly, it is worth investigating the structure of the set $\delta\Omega_h$. To this end, let us introduce the set ${\cal T}_h^b \subset {\cal T}_h$ of elements whose interior intersects $\delta\Omega_h$. If $E \in {\cal T}_h^b$ does not contain a break point in its interior, then $E \cap \partial\Omega$ is a ${\cal C}^2$ arc (or a union of arcs), hence $E\cap \delta\Omega_h$ is a region of width $O(h^2)$ (see, e.g., elements (A)-(D) in Fig. \ref{fig:geometries-1}). Conversely, if $E \in {\cal T}_h^b$ contains a break point in its interior, then $E\cap \delta\Omega_h$ may have width $O(h)$ (see, e.g., element (E) in Fig. \ref{fig:geometries-1};
note that the break point in this figure has a re-entrant corner). It may exists an element $E\in {\cal T}_h^b$ containing in its interior a break point with acute corner, but this is a non-asymptotic situation, namely after enough mesh refinements that break point will sit on $\partial\Omega_h$. For this reason, in the sequel we will only consider the case of break points with re-entrant corners.

It is also important to identify the regions in $\delta\Omega_h$ where $u$ (and consequently $F$) is smooth or not. At first, we define a region $R_A \subseteq \delta\Omega_h$ whose distance from any break point is $\geq A$, where $A>0$ is a fixed constant independent of $h$. Next, we fix a constant $a$ close to $1$ satisfying $ah_0 < A$, and we consider the region $R_{a,j} \subseteq \delta\Omega_h$ of the points with distance from $\bz_j$ smaller than $A$ but larger than $ah$. Finally, we let $R_{h,j}$ be the intersection of $\delta\Omega_h$ with the neighborhood of $\bz_j$ of radius $ah$.

In this proof, for better clarity we prefer to keep the notation $\wt{\ }$ to denote quantities living in the extended domain $\wt{\Omega}$. At first, recalling \eqref{eq:split-u-ext}, we observe that 
\begin{equation*}
\begin{split}
F  &= -\wt{\mu} \Delta \wt{u} + (\wt{\bb} - \nabla \wt{\mu})\cdot \nabla \wt{u} + \wt{\sigma} \wt{u} - \wt{f} \\
& = -\wt{\mu} \Delta \wt{\zeta} + \big( - \wt{\mu} \Delta \wt{\psi} +  (\wt{\bb} - \nabla \wt{\mu})\cdot \nabla \wt{u} + \wt{\sigma} \wt{u} - \wt{f} \big)
 =: -\wt{\mu} \Delta \wt{\zeta} + \phi,
\end{split}
\end{equation*}
with $\phi \in L^2(\wt{\Omega})$ and $\wt{\mu} \Delta \wt{\zeta} \in L^p(\wt{\Omega})$, $p$ being defined in Proposition \ref{prop:reg-u}.

Let $E \in {\cal T}_h^b$. In bounding the quantity, we distinguish three cases.

\smallskip
\noindent \emph{Case 1: $E \cap \delta\Omega_h$ is contained in $R_A$}. Then, $\wt{u}\in H^2(E)$ and $F \in L^2(E)$, with uniformly bounded norms.
Using the Poincar\'e inequality for $v_h$, and the fact that $E \cap \delta\Omega_h$ has width $O(h^2)$, we have
\begin{equation}\label{eq:bound-skin-1}
|(F,v_h)_{E \cap \delta\Omega_h}| \lesssim h_E^2 \| F \|_{0,E} | v_h|_{1,E}.
\end{equation}

\smallskip
\noindent \emph{Case 2: $E \cap \delta\Omega_h$ is contained in $R_{a,j}$ for some reentrant corner $\bz_j$}. Again, \eqref{eq:bound-skin-1} is valid since $F \in L^2(E)$, however this norm may blow-up while $E$ gets closer and closer to $\bz_j$, due to the presence of the term $\wt{\mu} \Delta \wt{\zeta}$. Recalling the structure of the singular part of $\wt{u}$, this norm can be controlled as follows
\[
\| F \|_{0,E} \lesssim \| \phi \|_{0,E} + |\lambda_j| \left( \int_{ah}^A r^{2(\gamma_j -2)} r {\rm d}r \right)^{1/2}
\lesssim \ \| \phi \|_{0,E} + |\lambda_j| \,h_E^{\gamma_j-1},
\]
whence 
\begin{equation}\label{eq:bound-skin-2}
|(F,v_h)_{E \cap \delta\Omega_h}|  \lesssim h_E^{\gamma_j+1} (\| \phi \|_{0,E} + |\lambda_j|) | v_h|_{1,E}.
\end{equation}

\smallskip
\noindent \emph{Case 3: $E \cap \delta\Omega_h$ intersects $R_{h,j}$ for some reentrant corner $\bz_j$}. In this case, we just exploit the $L^p$-smoothness of $F$ in $E$, so we use the bound
\[
|(F,v_h)_{E \cap \delta\Omega_h}| \leq \| F \|_{L^p(E)}\| v_h\|_{L^q(E)}, \qquad \tfrac1p+\tfrac1q=1,
\]
with
\begin{equation}\label{eq:skin-52}
\begin{split}
\| F \|_{L^p(E)} &\lesssim \| \phi \|_{L^p(E)} + |\lambda_j| \left( \int_0^{ah} r^{p(\gamma_j -2)} r {\rm d}r \right)^{1/p}\\
&\lesssim h_E^{\frac2p-1}\| \phi \|_{0,E} + |\lambda_j| \,h_E^{\gamma_j-2 +\frac2p}
\ \lesssim \ (\| \phi \|_{0,E} + |\lambda_j|) \,h_E^{\gamma_j-2 +\frac2p}.
\end{split}
\end{equation}
and the scaled Poincar\'e inequality for $v_h$
\[
\| v_h\|_{L^q(E)} \lesssim h_E^{\frac12+\frac1q} | v_h|_{1,E}.
\]
Since $\gamma_j -2 +\frac2p + \frac12 +\frac1q > \gamma_j$,
we get
\begin{equation}\label{eq:bound-skin-3}
|(F,v_h)_{E \cap \delta\Omega_h}|  \lesssim h_E^{\gamma_j} (\| \phi \|_{0,E} + |\lambda_j|) | v_h|_{1,E}.
\end{equation}
Putting together the bounds for the three cases yields the thesis.
\endproof

\begin{proposition}[error on bilinear and linear forms]\label{prop:forms-error} Assume that $\bb \in (W^1_\infty(\wt{\Omega}))^d$.
There exist constants $C_B>0$ and $C_F>0$ independent of $u$ and $\mesh$ such that for all $v_h \in \Vmesh$
\begin{align}
 | {\cal E}_{h,2}(v_h) | &\leq C_B h^\gamma {\cal N}_B(\mu,\bb, \sigma) )\enorm{u} | v_h |_{1,\Omega_h}, \\
 | {\cal E}_{h,3}(v_h) | & \leq C_F h \, \| f \|_{0,\Omega_h}\, | v_h |_{1,\Omega_h},
\end{align} 
with 
${\cal N}_B({\mu}, {\bb}, {\sigma}) := \left(\| {\mu} \|_{W^1_\infty(\wt{\Omega})} + \| {\bb} \|_{(W^1_\infty(\wt{\Omega}))^d} + \| {\sigma}\|_{L^\infty(\wt{\Omega})}\right)$.
\end{proposition}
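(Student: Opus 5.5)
The plan is to handle $\mathcal{E}_{h,3}$ first, since it is the simplest term, and then split $\mathcal{E}_{h,2}$ into the diffusion, convection and reaction parts and treat each elementwise.

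\emph{Forcing term.} Since $\Pi^\nabla_E=\Pi^0_E$ on $V_E$, on each element
\[
\int_E f\,(v_h-\Pi^0_E v_h)=\int_E (f-\Pi^0_E f)(v_h-\Pi^0_E v_h)\le \|f\|_{0,E}\,\|v_h-\Pi^0_Ev_h\|_{0,E}.
\]
I then use a Poincar\'e-type bound $\|v-\Pi^0_E v\|_{0,E}\lesssim h_E|v|_{1,E}$. This holds for any convex $E$ with constant independent of the aspect ratio, via the Payne--Weinberger inequality, because $\Pi^0_E$ is the best $L^2$ approximation by $\mathbb P_1\supset\mathbb P_0$. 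Summing over $E$ and applying Cauchy--Schwarz gives the bound for $\mathcal E_{h,3}$.

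\emph{Diffusion part of $\mathcal{E}_{h,2}$.} Let $u_I:=\mathcal I_h u$ and $p:=\mathcal P^1_h u$, the function from Assumption~\ref{ass:polapprox}. On each $E$ I write
\[
\int_E\mu\nabla u\cdot\nabla v_h - a_E(u_I,v_h)
=\int_E\mu\nabla(u-p)\cdot\nabla v_h + \Big[\int_E\mu\nabla p\cdot\nabla v_h - a_E(p,v_h)\Big] + a_E(p-u_I,v_h).
\]
The first and third terms are bounded using continuity \eqref{eq:coerc-ah} together with Assumptions~\ref{ass:interp} and~\ref{ass:polapprox}. The third term needs $|p-u_I|_{1,E}\le|p-u|_{1,E}+|u-u_I|_{1,E}$, and $a_E$ is continuous elementwise because $|v-\widetilde\Pi_E v|_{1,E}\lesssim|v|_{1,E}$. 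For the middle term, $\widetilde\Pi_E p=p$, since $p|_E\in\mathcal S_1(E)$ where $\widetilde\Pi_E\ne\Pi^\nabla_E$, so the stabilization term vanishes. What remains is $\int_E\mu\nabla p\cdot\nabla(v_h-\Pi^\nabla_E v_h)$. I insert $\mu_E$: the part $\mu_E\int_E\nabla p\cdot\nabla(v_h-\Pi^\nabla_E v_h)$ is zero by \eqref{eq:def-PinablaE}. The leftover is bounded by $\|\mu-\mu_E\|_{L^\infty(E)}|p|_{1,E}|v_h|_{1,E}\lesssim h\|\mu\|_{W^1_\infty}|p|_{1,E}|v_h|_{1,E}$. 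Finally, $\sum_E|p|_{1,E}^2\lesssim |u|_{1,\Omega_h}^2+(h^\gamma\enorm{u})^2\lesssim\enorm{u}^2$, which closes the diffusion estimate.

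\emph{Convection and reaction parts.} The lower-order terms compare $\int_E(\bb\cdot\nabla u)v_h$ with $\int_E(\bb\cdot\nabla\Pi^\nabla_Eu_I)\Pi^\nabla_Ev_h$, and similarly for the other pieces. I will write each difference telescopically:
\[
\int_E (\bb\cdot\nabla u)(v_h-\Pi^0_Ev_h)+\int_E \bb\cdot\nabla(u-\Pi^\nabla_E u_I)\,\Pi^0_Ev_h .
\]
The first piece is $\lesssim\|\bb\|_\infty|u|_{1,E}\,h|v_h|_{1,E}$. The second piece is bounded by $|u-u_I|_{1,E}+|u_I-\Pi^\nabla_Eu_I|_{1,E}$ times $\|\Pi^0_Ev_h\|_{0,E}$. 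For the projection term, $|u_I-\Pi^\nabla_Eu_I|_{1,E}\le|u_I-p|_{1,E}$, which is controlled by the previous step. The skew term $\int_E(\bb\cdot\nabla v_h)u$ is handled the same way, with the roles of $u$ and $v_h$ exchanged. There I use $\|u-\Pi^0_E u_I\|_{0,E}$, controlled by the $L^2$ parts of Assumptions~\ref{ass:interp} and~\ref{ass:polapprox}. The $W^1_\infty$ regularity of $\bb$ enters only when integrating by parts to rewrite $\int(\bb\cdot\nabla u)v$ in the skew-symmetric form on $\Omega_h$ with $\nabla\cdot\bb$. The reaction term is analogous, and $\|v_h\|_{0,\Omega_h}\lesssim|v_h|_{1,\Omega_h}$ by Poincar\'e on $\widetilde\Omega$. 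Summing gives the factor $h^\gamma\,\mathcal N_B\,\enorm{u}$.

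I expect the main obstacle to be the shape-robust $h_E$-scaled Poincar\'e bound for $v-\Pi^0_Ev$ on the anisotropic cut elements. Convexity of $E_K$, through Definition~\ref{def:E-K}, is exactly what makes Payne--Weinberger applicable with $h_E\le h_K\lesssim h$. I also need to make sure every bound is used only in terms of $|v_h|_{1,E}$, never through inverse estimates, which would fail on slivers.
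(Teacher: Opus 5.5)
Your treatment of ${\cal E}_{h,3}$ is correct. So is the diffusion part of ${\cal E}_{h,2}$, including the use of ${\cal P}^1_h u|_E\in{\cal S}_1(E)$ to kill the stabilization on anisotropic quads. This is the BBMR16-type argument the paper has in mind. The gap is in the skew part of the convection term, which you say is ``handled the same way, with the roles of $u$ and $v_h$ exchanged.'' Exchanging roles in your telescoping gives
\[
\int_E (\bb\cdot\nabla v_h)(u-\Pi^0_E u_I) \;+\; \int_E \bb\cdot\nabla(v_h-\Pi^\nabla_E v_h)\,\Pi^0_E u_I .
\]
The first piece is fine. For the second, the mirror of your earlier bound is $\|\bb\|_{L^\infty}\,|v_h-\Pi^\nabla_E v_h|_{1,E}\,\|\Pi^0_E u_I\|_{0,E}$, which carries no power of $h$. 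In the first half the smallness came from $|u-\Pi^\nabla_E u_I|_{1,E}$. Here $v_h$ is an arbitrary test function, so $v_h-\Pi^\nabla_E v_h$ is not small in $H^1(E)$. Any other splitting produces the same kind of term: $\nabla(v_h-\Pi^\nabla_E v_h)$ paired with an $O(1)$ function.

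The missing idea is orthogonality. By \eqref{eq:def-PinablaE}, $\int_E\nabla(v_h-\Pi^\nabla_E v_h)=\mathbf{0}$. You may therefore subtract the mean $\overline{\bb\,\Pi^0_E u_I}$ over $E$ and apply Payne--Weinberger componentwise on the convex $E$:
\[
\Big|\int_E \nabla(v_h-\Pi^\nabla_E v_h)\cdot\big(\bb\,\Pi^0_E u_I-\overline{\bb\,\Pi^0_E u_I}\big)\Big|
\lesssim h_E\,|v_h|_{1,E}\,\big(\|\nabla\bb\|_{L^\infty}\|\Pi^0_E u_I\|_{0,E}+\|\bb\|_{L^\infty}|\Pi^0_E u_I|_{1,E}\big).
\]
Then use $\|\Pi^0_E u_I\|_{0,E}\le\|u_I\|_{0,E}$, $|\Pi^0_E u_I|_{1,E}\le|u_I|_{1,E}$, and $\|u_I\|_{1,\Omega_h}\lesssim\enorm{u}$ from Assumption~\ref{ass:interp}. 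This gives a contribution of order $h\le h^\gamma$, and the proof closes.

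This step is exactly where the hypothesis $\bb\in(W^1_\infty(\wt{\Omega}))^d$ of the proposition is needed. Your remark that it enters only through the integration by parts to the skew-symmetric form is inaccurate. That rewriting uses only $\nabla\cdot\bb\in L^\infty$, already assumed in Section~\ref{sec:pde}.
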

\proof
The bound for term ${\cal E}_{h,2}(v_h)$ is derived easily by suitable manipulations, orthogonality plus approximation properties of the projection operators and the triangle inequality. The interested reader can check, for example, the derivations in \cite{BBMR16}, taking into account that the polynomial consistency of the method is now restricted to the piecewise polynomials $\{q  \in L^2(\Omega) \, : \, q|E \in {\cal S}_1(E) \ \forall E \in \mesh \}$. 

Concerning ${\cal E}_{h,3}(v_h)$, the estimate follows immediately from 
\[
{\cal E}_{h,3}(v_h) = \sum_{E \in {\cal T}_h} \int_E f (v_h - \Pi^\nabla_E v_h)
\]
and the bound $\|v_h - \Pi^\nabla_E v_h\|_{0,E} \lesssim h_E |v_h|_{1,E}$.
\endproof

We are ready to estimate the error between $u$ and $u_h$ in the energy norm. We recall that $u$ denotes the solution of Problem \eqref{eq:cpb}, possibly extended from $\Omega$ to $\wt{\Omega}$ as detailed in Sect. \ref{sec:extension}.

\begin{theorem}[Galerkin error in energy]\label{th:energy-error}
Let Assumptions \ref{ass:stab-form}, \ref{ass:interp} and \ref{ass:polapprox} be valid. Suppose that data satisfy $\mu \in W^1_\infty(\wt{\Omega})$, $\bb \in (W^1_\infty(\wt{\Omega}))^d$, $\sigma \in L^\infty(\wt{\Omega})$, and $f \in L^2(\wt{\Omega})$. 
There exists a constant $C_G>0$, depending upon $\mu_0$, $\mu_\star$ and the norms of $\mu$, $\bb$, $\sigma$ in these spaces but independent of $u$ and $\mesh$, such that the Galerkin solution $u_h$ defined in \eqref{eq:Galerkin} satisfies
\begin{equation}\label{eq:errorGal-H1}
| u-u_h |_{1,\Omega_h} \leq C_G h^\gamma \| f \|_{0,\wt{\Omega}}\,.
\end{equation}
\end{theorem}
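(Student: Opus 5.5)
We split the error as $u-u_h = (u-\IVh u) + e_h$ with $e_h:=\IVh u - u_h \in \Vmesh$, and use the triangle inequality
\[
|u-u_h|_{1,\Omega_h} \leq |u-\IVh u|_{1,\Omega_h} + |e_h|_{1,\Omega_h}.
\]
The first term is handled directly by Assumption \ref{ass:interp}, which gives $|u-\IVh u|_{1,\Omega_h}\le C_\gamma h^\gamma\enorm{u}$. Combining \eqref{eq:reg-bound-u} with \eqref{eq:bound-g}, we have $\enorm{u}\lesssim \|g\|_{0,\Omega}\lesssim \|f\|_{0,\Omega}\le \|f\|_{0,\wt\Omega}$, with constants depending only on $\mu_0$, $C_P$, $C_R$ and the $L^\infty$ norms of the data. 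Hence this term is bounded by $C h^\gamma\|f\|_{0,\wt\Omega}$.

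For the discrete error we test the error equation \eqref{eq:error-eq} with $v_h=e_h$. The discrete coercivity \eqref{eq:discrete-coercivity} then gives
\[
\mu_\star |e_h|^2_{1,\Omega_h}\le {\cal B}_h(e_h,e_h) = {\cal E}_{h,1}(e_h)-{\cal E}_{h,2}(e_h)+{\cal E}_{h,3}(e_h).
\]
Each of the three terms is bounded by a previous result, always applied with $v_h=e_h$:
\begin{itemize}
\item Proposition \ref{prop:domain-error} gives $|{\cal E}_{h,1}(e_h)|\le C_D h^\gamma {\cal N}_D\, |e_h|_{1,\Omega_h}$. Here ${\cal N}_D\lesssim \|f\|_{0,\wt\Omega}$, again because $\|g\|_{0,\Omega}\lesssim\|f\|_{0,\Omega}$.
\item Proposition \ref{prop:forms-error} gives $|{\cal E}_{h,2}(e_h)|\le C_B h^\gamma{\cal N}_B\enorm{u}\,|e_h|_{1,\Omega_h}$. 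This bound is where Assumptions \ref{ass:stab-form} and \ref{ass:polapprox} enter: the stabilization must be robust, and the polynomial approximant must lie in ${\cal S}_1(E)$ so that consistency holds with respect to $\PiNew_E$.
\item Proposition \ref{prop:forms-error} also gives $|{\cal E}_{h,3}(e_h)|\le C_F h\|f\|_{0,\Omega_h}|e_h|_{1,\Omega_h}$. Since $\gamma\le1$ and $h\le h_0$, we have $h\le h_0^{1-\gamma}h^\gamma$, so this term is also of order $h^\gamma$.
\end{itemize}
Dividing by $|e_h|_{1,\Omega_h}$ yields $|e_h|_{1,\Omega_h}\le \mu_\star^{-1}C h^\gamma\|f\|_{0,\wt\Omega}$. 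Adding the two contributions proves \eqref{eq:errorGal-H1}, with $C_G$ depending on $\mu_0,\mu_\star$ and the data norms.

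Once the earlier propositions are granted, the argument is routine. The only delicate point is Proposition \ref{prop:forms-error} for ${\cal E}_{h,2}$, whose proof was only sketched. To check it, I would insert ${\cal P}^1_h u$ elementwise. This uses three facts on each $E$: polynomial consistency $a_E(q,v)=\int_E\mu\nabla q\cdot\nabla \Pi^\nabla_E v$ for $q\in{\cal S}_1(E)$, which holds because $\PiNew_E q=q$; the continuity of $a_E$ coming from \eqref{eq:prop-stabform}; and Assumptions \ref{ass:interp} and \ref{ass:polapprox}. The variable-coefficient mismatch $\mu-\mu_E$ and the lower-order terms are handled with $\|\nabla\mu\|_\infty$, $\|\bb\|_{W^1_\infty}$ and the $L^2$ bounds $h\,h^\gamma\enorm{u}$.
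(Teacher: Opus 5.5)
Your proposal is correct and follows essentially the same route as the paper. You split the error into the interpolation error plus $e_h$, test the error equation with $e_h$, use the discrete coercivity and the domain and forms-error propositions, and finally bound $\enorm{u}$ and $\|g\|_{0,\Omega}$ in terms of $\|f\|$ through the regularity and forcing bounds. Your added details, namely absorbing the $O(h)$ forcing term into $h^\gamma$ and sketching how ${\cal P}^1_h u \in {\cal S}_1(E)$ gives consistency for ${\cal E}_{h,2}$, fill in steps the paper leaves implicit.
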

\proof 
Apply the triangle inequality to $u-u_h = (u-\IVh u)+e_h$, invoking the interpolation error estimate \eqref{eq:interp-err} for the first addend. Next,
choose $v_h=e_h$ in \eqref{eq:error-eq}, use the coercivity of the form ${\cal B}_h$ (see \eqref{eq:discrete-coercivity}) to get the inequality
\[
\mu_\star |e_h|_{1,\Omega_h}^2 \leq |{\cal E}_{h}(v_h)|,
\]
and apply Propositions \ref{prop:domain-error} and \ref{prop:forms-error} to bound the right-hand side. Finally, bound the norm $\enorm{u}$ by inequality \eqref{eq:reg-bound-u}, and the norm $\| g \|_{0,\Omega}$ by inequality \eqref{eq:bound-g}. 
\endproof

\subsection{Convergence in $L^2(\Omega_h)$}\label{sec:converg-L2}
Hereafter, we derive an estimate of the error $\|u-u_h\|_{0,\Omega_h}$, by applying a classical Aubin-Nitsche duality argument. To this end, we define 
\begin{equation}
e := \begin{cases} u-u_h & \text{ in } \overline{\Omega}_h, \\
0 & \text{ in } \Omega \setminus \overline{\Omega}_h,
\end{cases}
\end{equation}
which satisfies $\| e \|_{0,\Omega} \leq \| e \|_{0,\Omega_h}$.
Then, we solve the adjoint variational problem
\begin{equation}
z \in H^1_0(\Omega) \quad : \quad   {\cal B}(v,z) = (e,v)_{0,\Omega} \qquad \forall v \in H^1_0(\Omega).  
\end{equation}
It is easily seen that $z$ is the solution of the Dirichlet problem
\begin{subequations}\label{eq:cpbd}
  \begin{align}
    - \nabla \cdot (\mu \nabla z) - \bb\cdot \nabla z + (\sigma- \nabla \cdot \bb)z &= e \quad \mathrm{in~}\Omega,\label{cpbd:1}\\
    z&=0 \quad \mathrm{on~}\partial\Omega,\label{cpbd:2}
  \end{align}
\end{subequations}
hence, in particular, $z$ solves in $\Omega$ the Poisson equation $-\Delta z = \eta$ with  $\eta:= {\mu}^{-1}( e + (\nabla \mu +\bb)\cdot\nabla z - (\sigma - \nabla \cdot \bb) z) \in L^2(\Omega)$. It follows that $z$ (and its extension to $\wt{\Omega}$) share with $u$ the same features in terms of structure and smoothness, as detailed in Sects. \ref{sec:solution-structure} and \ref{sec:extension}. In particular, it holds
\begin{equation}\label{eq:reg-z}
\enorm{z} \lesssim \| \eta \|_{0,\Omega} \lesssim \|e \|_{0,\Omega} \leq \|e \|_{0,\Omega_h}.
\end{equation}

A few words about extensions to the computational domain $\Omega_h$ are in order.  Let $\wt{z}$ be an extension of $z$ which preserves regularity, in analogy with the extension of $u$ in \eqref{eq:split-u-ext}; thus, $\wt{z} \in W^2_p(\Omega_h)$ with $p$ given in Proposition \ref{prop:reg-u}, and $\| \wt{z} \|_{W^2_p(\Omega_h)} \lesssim \| z \|_{W^2_p(\Omega)}$. Setting
\[
\hat{\vartheta} := - \nabla \cdot (\widetilde{\mu}\, \nabla \widetilde{z}) - \widetilde{\bb}\cdot \nabla \widetilde{z} + (\widetilde{\sigma}-\nabla \cdot \wt{\bb}) \widetilde{z} \, \in L^p(\Omega_h),
\]
the extension $\wt{z}$ satisfies in $\Omega_h$ the equation
\[
- \nabla \cdot (\widetilde{\mu}\, \nabla \widetilde{z}) - \widetilde{\bb}\cdot \nabla \widetilde{z} + (\widetilde{\sigma}-\nabla \cdot \wt{\bb}) \widetilde{z} =
e + D,
\]
with $D:=\hat{\vartheta}-e \in L^p(\Omega_h)$, identically vanishing in $\Omega$. 

For simplicity, from now on let us drop the symbol $\wt{\ }$ from all functions. If we multiply this equation by any $v \in H^1(\Omega_h)$, integrate over $\Omega_h$ and integrate by parts, we obtain
\begin{equation*}
\begin{split}
&\int_{\Omega_h} {\mu} \, \nabla {z} \cdot \!\nabla v - \frac12 \int_{\Omega_h} ({\bb} \cdot \!\nabla {z}) v +  \frac12 \int_{\Omega_h} ({\bb} \cdot \!\nabla v) {z} + 
\int_{\Omega_h} (\sigma - \tfrac12 \nabla \cdot \bb) {z} \, v  \\
& \qquad \qquad \qquad + \int_{\partial\Omega_h}(\tfrac12 {\bb} \cdot \bn \, {z} - {\mu} \nabla {z} \cdot \bn)v
= \int_{\Omega_h} {e} \,v + \int_{\delta \Omega_h} D \,v \,.
\end{split} 
\end{equation*}
Setting $S:=(\tfrac12 {\bb} {z} - {\mu} \nabla {z}) \cdot \bn$ and choosing $v=e$ (note that $e$ need not vanish on $\partial\Omega_h$, explaining our choice of test functions in $H^1(\Omega_h)$), we get the following expression for the $L^2$-norm of $e$:
\begin{equation}\label{eq:identity-e}
\| e \|_{0,\Omega_h}^2 ={\cal B}_{\Omega_h}(e,z) - (D, e)_{0,\delta \Omega_h} +(S,e)_{0,\partial\Omega_h},
\end{equation}
where as usual ${\cal B}_{\Omega_h}$ denotes the bilinear form defined as in \eqref{eq:form-B2} but with $\Omega$ replaced by $\Omega_h$.

To proceed, we express ${\cal B}_{\Omega_h}(e,z)$ as done in 
\cite[Theorem 5.2]{BBMR16}, using the equations satisfied by $u$ (see \eqref{eq:ext-eqn}) and by $u_h$ (see \eqref{eq:Galerkin}). We get
\begin{equation}
\begin{split}
{\cal B}_{\Omega_h}(e,z) &= \underbrace{{\cal B}_{\Omega_h}(u-u_h,z-\IVh z)}_{T_1} + \underbrace{{\cal B}_h(u_h,\IVh z) - {\cal B}_{\Omega_h}(u_h,\IVh z)}_{T_2}  \\
& \qquad  + \underbrace{(f,\IVh z)_{0,\Omega_h} - {\cal F}_h(\IVh z)}_{T_3} + \underbrace{(F, \IVh z)_{0,\delta\Omega_h}}_{T_4}
\end{split}
\end{equation}
Setting $T_5 := - (D, e)_{0,\delta \Omega_h}$ and $ T_6 :=(S,e)_{0,\partial\Omega_h}$, identity \eqref{eq:identity-e} implies
\begin{equation}\label{eq:split-norm-e}
\| e \|_{0,\Omega_h}^2 \leq \sum_{i=1}^6 |T_i|\,,
\end{equation}
and we are left with the task of bounding each addend. To this end, we observe that Assumption \ref{ass:interp} (interpolation error) applies to $z$ as well, yielding the error bound 
\begin{equation}\label{eq:interp-z}
h^{-1}\| z-\IVh z \|_{0,\Omega_h} +
     | z-\IVh z |_{1,\Omega_h} \leq C_\gamma h^\gamma \enorm{z} \lesssim h^\gamma \|e \|_{0,\Omega_h}\,.
\end{equation}

\medskip
\noindent \underline{\sl Bound on $|T_1|$.} Continuity of ${\cal B}_{\Omega_h}$ and estimates \eqref{eq:errorGal-H1} and \eqref{eq:interp-z} together with \eqref{eq:reg-z} yield
\begin{equation}\label{eq:bound-T1}
|T_1| \lesssim h^{2 \gamma} \|f \|_{0,\wt{\Omega}} \| e \|_{0,\Omega_h}\,.
\end{equation}
\medskip
\noindent \underline{\sl Bound on $|T_2|$.} Denote by $\Pi_{k,h}^0 u$ the $L^2$-orthogonal projection of $u$ upon the piecewise-$\mathbb{P}_k$ functions on $\mesh$, and observe that ${\cal B}_{\Omega_h}(\Pi_{k,h}^0 u,v_h)$ is meaningful since ${\cal B}_{\Omega_h}$ is defined elementwise on $\mesh$. 
Then, using the identity
\[
T_2 = [{\cal B}_{\Omega_h}(u_h-\Pi_{1,h}^0 u,\IVh z)- {\cal B}_h(u_h-\Pi_{1,h}^0 u,\IVh z)] +
[{\cal B}_{\Omega_h}(\Pi_{1,h}^0 u,\IVh z)- {\cal B}_h(\Pi_{1,h}^0 u,\IVh z)]
\]
and proceeding as in the proof of \cite[Theorem 5.2]{BBMR16}, one gets again
\begin{equation}\label{eq:bound-T2}
|T_2| \lesssim h^{2 \gamma} \|f \|_{0,\wt{\Omega}} \| e \|_{0,\Omega_h}\,.
\end{equation}
\medskip
\noindent \underline{\sl Bound on $|T_3|$.} Let $\Pi_h^\nabla v$ be defined by $(\Pi_h^\nabla v)_{|E}= \Pi_E^\nabla z_{|E}$ for all $E\in \mesh$. Then, recalling that the enhanced $\Pi_h^\nabla$ operator is $L^2$-stable as it locally coincides with the $L^2$-projection operator, one has 
\[
|T_3| = |(f, \IVh z - \Pi_h^\nabla \IVh z)_{0,\Omega_h}| \lesssim \|f\|_{0,\Omega_h} \|\IVh z - \Pi_{0,h}^0 z\|_{0,\Omega_h},
\]
from which (recalling Assumption \ref{ass:polapprox}) the bound
\begin{equation}\label{eq:bound-T3}
|T_3| \lesssim h^{1+\gamma} |f |_{0,\wt{\Omega}} \| e \|_{0,\Omega_h}\
\end{equation}
easily follows. 

\medskip
\noindent \underline{\sl Bound on $|T_4|$.} We go back to the proof of Proposition \ref{prop:domain-error} (domain error), in which we set $v_h=\IVh z$, and observe that in the regions $R_A$ or  $R_{a,j} \subseteq \delta\Omega_h$ therein defined, inequalities \eqref{eq:bound-skin-1} or \eqref{eq:bound-skin-2} imply an error decay of order at least $2\gamma$, provided $\IVh z$ has bounded energy norm. Such boundedness follows from the triangle inequality and \eqref{eq:interp-z}, which yield $|\IVh z|_{1,\Omega_h} \lesssim \enorm{z} \lesssim \|e\|_{0,\Omega_h}$.

It remains to consider the regions $R_{h,j}$ near a re-entrant break point. Let $E\in {\cal T}_h^b$ have non-empty intersection with some $R_{h,j}$. By \eqref{eq:bound-skin-2} we know that $\| F \|_{L^p(E)}\lesssim \ (\| \phi \|_{0,E} + |\lambda_j|) \,h_E^{\gamma_j-2 +2/p}$  provided $p$ satisfies \eqref{eq:cond-p-gamma}. To bound the $L^q$-norm of $\IVh z$ (with $1/q+1/p=1$),
we use the inequality $\| \IVh z \|_{L^q(E)} \leq |E|^{1/q} \| \IVh z \|_{L^\infty(E)}$, together with a generalized maximum principle for virtual element functions, presented in Sect. \ref{sec:maximum-principle}, which gives
\[
\| \IVh z \|_{L^\infty(E)} \lesssim \| \IVh z \|_{L^\infty(\partial E)} \leq \max_{\nu \text{ vertex of } E} |z(\nu)| \leq \| z \|_{L^\infty(E)}.
\]
If $z = \psi^\star + \lambda_j^\star \zeta_j$ is the decomposition of $z$ into regular and singular part, for $h$ small enough we have $z= \lambda_j^\star \zeta_j$ in $E$, whence 
$\| z \|_{L^\infty(E)} \lesssim |\lambda_j^\star| h_E^{\gamma_j} \lesssim h_E^{\gamma_j} \enorm{z}$. This gives
\[
\| \IVh z \|_{L^q(E)} \lesssim h_E^{\gamma_j+2/q} \enorm{z} = h_E^{\gamma_j+2-2/p} \enorm{z},  
\]
from which we get $|(F,\IVh z)_{0,R_{h,j}}| \lesssim h^{2\gamma_j} \enorm{u} \enorm{z}$.  In conclusion, we obtain
\[
|T_4| \lesssim h^{2\gamma} 
\|f \|_{0,\wt{\Omega}} \| e \|_{0,\Omega_h}\,.
\]

\medskip
\noindent \underline{\sl Bound on $|T_5|$.} At first note that structure and properties of $D$ are similar to those of $F$; hence, we can apply Proposition \ref{prop:domain-error} (domain error) to it, after replacing $u$ with $z$ and $f$ with $e$. Splitting $e = \IVh e + (e-\IVh e) = (\IVh u - u_h) + (u-\IVh u)$, and using \eqref{eq:interp-err} and \eqref{eq:errorGal-H1}, we get
\[
|(D,\IVh e)_{0,\delta\Omega_h}| \lesssim h^\gamma \| e \|_{0,\Omega_h} |\IVh u - u_h|_{1,\Omega_h} \lesssim h^{2\gamma} \|f \|_{0,\wt{\Omega}} \| e \|_{0,\Omega_h}\,.
\]
To deal with $|(D,e-\IVh e)_{0,\delta\Omega_h}| = |(D,u-\IVh u)_{0,\delta\Omega_h}|$, we follow again the lines of the proof of Proposition \ref{prop:domain-error}. In the region $R_A \subseteq \delta\Omega_h$, $D$ is in $L^2$ and $u$ is in $H^2$, whence, using again \eqref{eq:interp-err}, we get
\[
|(D,u-\IVh u)_{0,R_A}| \lesssim \| D\|_{0,R_A} \| u-\IVh u\|_{0,R_A}
\lesssim h^2 \| D\|_{0,R_A} | u |_{2,\omega(R_A)} 
\lesssim h^2 \|f \|_{0,\wt{\Omega}} \| e \|_{0,\Omega_h}\,,
\]
where $\omega(R_A)$ is a neighborhood of $R_A$ of thickness $O(h)$.
In a region $R_{a,j}$ close to a re-entrant corner $\bz_j$, the $L^2$-norm of $D$ and the $H^2$-norm of $u$ are finite, but they blow up as $h^{\gamma_j-1}$. Thus,
\[
|(D,u-\IVh u)_{0,R_{a,j}}| \lesssim h^{2\gamma_j} \|f \|_{0,\wt{\Omega}} \| e \|_{0,\Omega_h}\,.
\]
Finally, in a region $R_{h,j}$, we write $|(D,u-\IVh u)_{0,R_{h,j}}| \leq |(D,u)_{0,R_{h,j}}|+ |(D,\IVh u)_{0,R_{h,j}}|$ and proceed as in the bound of $|T_4|$ above, getting $
|(D,u-\IVh u)_{0,R_{h,j}}| \lesssim h^{2\gamma_j} \enorm{u} \enorm{z}$. 

Collecting all these results, we arrive again at the bound
\[
|T_5| \lesssim h^{2\gamma} 
\|f \|_{0,\wt{\Omega}} \| e \|_{0,\Omega_h}\,.
\]

\medskip
\noindent \underline{\sl Bound on $|T_6|$.}
At first, note that $T_6=(S,u)_{0,\partial\Omega_h}$ since $u_h$ vanishes on $\partial\Omega_h$. In analogy with the partition of $\delta\Omega_h$ introduced in the proof of Proposition \ref{prop:domain-error} (domain error), we can split $\partial\Omega_h$ into subsets $R_A^\partial$, $R_{a,j}^\partial$ and $R_{h,j}^\partial$, depending upon the distance from a break point with re-entrant corner (the notation is self-explanatory).
Points in $R_A^\partial \cup R_{a,j}^\partial$ have distance $O(h^2)$ from $\partial\Omega$, where $u$ vanishes. Hence, by Poincar\'e inequality we have $\| u\|_{0,R_A^\partial \cup R_{a,j}^\partial} \lesssim h^2 |u|_{1,\Omega_h}$. On the other hand, there exists a neighborhood $N_A$ of $R_A^\partial$ in $\wt{\Omega}$, such that $z \in H^2(N_A)$ and consequently $\|S\|_{0,R_A^\partial} \lesssim \| z \|_{2,N_A}$ with implied constant independent of $h$. Thus, $|(S,u)_{0,R_A^\partial}| \lesssim h^2 |u|_{1,\Omega_h} \enorm{z}$.

Similarly, one can find a neighborhood $N_{a,h}$ of $R_{a,j}^\partial$ in $\wt{\Omega}$, which extends up to a $O(1)$-distance in the direction perpendicular to $R_{a,j}^\partial$, such that $z \in H^2(N_{a,j})$ and $\|S\|_{0,R_{a,j}^\partial} \lesssim \| z \|_{2,N_{a,j}}$, again with implied constant independent of $h$. However in this region the $H^2$-norm of $z$ blows up as $h^{\gamma_j-1}$, which yields $|(S,u)_{0,R_{a,j}^\partial}| \lesssim h^{1+\gamma_j} |u|_{1,\Omega_h} \enorm{z}$. 

At last, consider the set $R_{h,j}^\partial$, which is at distance $O(h)$ from the break point $\bz_j$. Then, for any $\bx\in R_{h,j}^\partial$ we have
\[
u(\bx) =\lambda_j \, \text{dist}(\bx,\bz_j)^{\gamma_j}, \qquad
|S(\bx)| \lesssim |\lambda_j^\star| \, \text{dist}(\bx,\bz_j)^{\gamma_j-1},
\]
whence $|(S,u)_{0,R_{h,j}^\partial}| 
\lesssim h^{2\gamma_j-1} |R_{h,j}^\partial|\, |\lambda_j|\, |\lambda_j^\star| 
\lesssim h^{2\gamma_j} \enorm{u} \enorm{z}$. In conclusion, we have proven the bound
\[
|T_6| \lesssim h^{2\gamma} 
\|f \|_{0,\wt{\Omega}} \| e \|_{0,\Omega_h}\,.
\]

Building on the splitting \eqref{eq:split-norm-e} and the bounds on the addends $|T_i|$ obtained above, we have proven the following optimal error bound for the Galerkin error in $L^2$.

\begin{theorem}[Galerkin error in $L^2$]\label{th:L2-error}
Let all the hypotheses stated in Theorem \ref{th:energy-error} (error in energy) be valid. 
There exists a constant $\bar{C}_G>0$, depending upon $\mu_0$, $\mu_\star$ and the norms of $\mu$, $\bb$, $\sigma$ in these spaces but independent of $u$ and $\mesh$, such that the Galerkin solution $u_h$ defined in \eqref{eq:Galerkin} satisfies
\begin{equation} 
\| u-u_h \|_{0,\Omega_h} \leq \bar{C}_G h^{2\gamma} \| f \|_{0,\wt{\Omega}}\,.
\end{equation}
\end{theorem}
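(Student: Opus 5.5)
The plan is to assemble the pieces already derived just before the statement into a duality (Aubin--Nitsche) argument. First, extend $e=u-u_h$ by zero to $\Omega$ and let $z$ be the adjoint solution of \eqref{eq:cpbd}. Then $z$ has the same corner structure as $u$, with $\enorm{z}\lesssim\|e\|_{0,\Omega_h}$ by \eqref{eq:reg-z}. Hence Assumption \ref{ass:interp} gives the interpolation bound \eqref{eq:interp-z} for $z$.

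Next, test the extended adjoint equation in $\Omega_h$ with $e\in H^1(\Omega_h)$. This yields the identity \eqref{eq:identity-e}, which involves the bilinear form, the domain residual $D$ on $\delta\Omega_h$, and the boundary flux $S$ on $\partial\Omega_h$. Rewrite ${\cal B}_{\Omega_h}(e,z)$ via \eqref{eq:ext-eqn} and \eqref{eq:Galerkin} as $T_1+\dots+T_4$, which gives \eqref{eq:split-norm-e}. The terms are then bounded as follows.
\begin{itemize}
\item $T_1$: use continuity, Theorem \ref{th:energy-error} and \eqref{eq:interp-z}.
\item $T_2$: insert $\Pi^0_{1,h}u$ and use the polynomial consistency on ${\cal S}_1(E)$ together with Assumption \ref{ass:polapprox}, as in \cite{BBMR16}.
\item $T_3$: use the $L^2$-projection property of $\Pi^\nabla_E$ on $\VE$, which gives $h^{1+\gamma}$.
\item $T_4$, $T_5$: revisit the three-region splitting $R_A$, $R_{a,j}$, $R_{h,j}$ from the proof of Proposition \ref{prop:domain-error}. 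Each factor now contributes $h^\gamma$, giving $h^{2\gamma}$ in total.
\item $T_6$: note that $u_h=0$ on $\partial\Omega_h$, so $T_6=(S,u)_{\partial\Omega_h}$. Away from the corners, $\partial\Omega_h$ lies within $O(h^2)$ of $\partial\Omega$, where $u$ vanishes; near a corner, the explicit asymptotics of $u$ and $z$ give $h^{2\gamma_j}$.
\end{itemize}
Collecting these gives $\|e\|^2_{0,\Omega_h}\lesssim h^{2\gamma}\|f\|_{0,\wt\Omega}\|e\|_{0,\Omega_h}$, and dividing by $\|e\|_{0,\Omega_h}$ yields the claim.

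The main obstacle is the contribution from the tiny region $R_{h,j}$ near a re-entrant break point in $T_4$ and $T_5$. There $F$ and $D$ lie only in $L^p$ with $p<2$, so an $L^q$ bound on the discrete function $\IVh z$ (respectively $\IVh u$) is needed that does not lose powers of $h$. My plan is to bound $\|\IVh z\|_{L^q(E)}\le|E|^{1/q}\|\IVh z\|_{L^\infty(E)}$. The $L^\infty$ norm is then controlled by the weak maximum principle for enhanced virtual functions from Sect. \ref{sec:maximum-principle}, by the maximum of $|z|$ at the vertices. Since $z=\lambda^\star_j\zeta_j$ near $\bz_j$, this maximum is $\lesssim h^{\gamma_j}\enorm{z}$.

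Combining this with $\|F\|_{L^p(E)}\lesssim h^{\gamma_j-2+2/p}$ from \eqref{eq:skin-52} gives exactly $h^{2\gamma_j}$. This relies on the maximum principle constant being uniform over the boundary element shapes that can occur, which is the delicate point I would verify via the classification of Example \ref{ex:element-geometries}.
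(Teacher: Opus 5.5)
Your proposal is correct and follows the paper's proof essentially step by step. It uses the same duality identity \eqref{eq:identity-e}, the same splitting into $T_1,\dots,T_6$ with the same bounds, and the same treatment of $R_{h,j}$ via $\|\IVh z\|_{L^q(E)}\le |E|^{1/q}\|\IVh z\|_{L^\infty(E)}$ plus the generalized maximum principle; the shape-uniformity you flag is exactly what Propositions \ref{prop:shapereg} and \ref{prop:ani:C123} supply for the shape-regular and anisotropic-quadrilateral elements, and on triangles $\VE=\mathbb{P}_1(E)$, so it is trivial there.
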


\section{A generalized maximum principle for virtual functions} \label{sec:maximum-principle}
\def\VhE{V_h(E)} 
\def\Cone{\mathsf{C}_1}  
\def\Ctwo{\mathsf{C}_2}  
\def\Cthr{\mathsf{C}_3}  
\def\Cfour{\mathsf{C}_4}  

Each function $w$ in the `plain' virtual space $\WE$ defined in \eqref{vem:basic} is harmonic in $E$, hence by the maximum principle it satisfies the inequality
\[
\| w \|_{L^\infty(E)} \leq \| w \|_{L^\infty(\partial E)}. 
\]
In this section, we prove that functions $v$ in the `enhanced' space $\VE$ defined in \eqref{vem:choice:2} satisfy a similar property, namely, their $L^\infty$-norm in $E$ can be bounded by the $L^\infty$-norm in $\partial E$ up to a multiplicative constant, which may depend on the shape of the polygon $E$. Moreover, we provide examples of classes of polygons for which this constant can be bounded independently of $E$.

We believe that the formulation of the generalized maximum principle that we present hereafter has an interest per se in the theory of Virtual Elements, beyond the specific application given in the present paper.

\begin{lemma}\label{lemma:CC}
Let  $\Cone,\Ctwo,\Cthr, \Cfour$ be the positive constants (possibly depending on the polygon $E$) defined as follows: 
\begin{enumerate}
    \item[(A1)] $\Cone=\|\widehat{v}\|_{L^\infty(E)}$, where $\widehat{v}\in H^1_0(E)$ satisfies $-\Delta \widehat{v}=1$ in $E$; 
    \item[(A2)] $\Ctwo$ is the smallest positive real number such that 
    \begin{equation}
        \|\Pi^\nabla w\|_{L^\infty(E)}\leq \Ctwo \|w\|_{L^\infty(\partial E)} \qquad \forall w \in \WE;
\end{equation}
\item[(A3)] $\Cthr$ is the smallest positive real number such that
\begin{equation}\label{aux_C_3}
    \|\Delta w \|_{L^2(E)}^2 \leq \Cthr \| \nabla w \|_{L^2(E)}^2
\end{equation}
for all $w\in H_0^1(E) ~\text{such~that}~ \Delta w \in \mathbb{P}_1(E)$.
\item[(A4)] $\Cfour$ is the smallest positive real such that
\begin{equation}\label{eq:falcao}
\| q_1 \|_{L^\infty(E)} \le \Cfour |E|^{-1/2} \| q_1 \|_{L^2(E)}
\qquad \forall q_1 \in \Pol_1(E) \, .
\end{equation}
\end{enumerate}
Then, there holds
\begin{equation}
    \| v \|_{L^\infty(E)} \lesssim [1+ \Cone\Cfour(1+\Ctwo)\Cthr]\, \| v \|_{L^\infty(\partial E)} \qquad \forall v \in \VE.
\end{equation}
\end{lemma}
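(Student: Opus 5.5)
Given $v\in\VE$, I will split it as $v = w + z$, where $w\in\WE$ is the harmonic function with the same trace as $v$ on $\partial E$ and $z := v - w \in H^1_0(E)$. Since $\Delta v\in\Pol_1(E)$ and $\Delta w=0$, we have $\Delta z = \Delta v =: p_1 \in \Pol_1(E)$. The classical maximum principle gives $\|w\|_{L^\infty(E)}\le \|v\|_{L^\infty(\partial E)}$, so the whole task is to bound $\|z\|_{L^\infty(E)}$ by $\Cone\Cfour(1+\Ctwo)\Cthr\,\|v\|_{L^\infty(\partial E)}$.

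\emph{Step 1 ($L^\infty$ bound for $z$ via comparison).} Since $-\Delta z = -p_1$ with $z=0$ on $\partial E$, I compare $\pm z$ with $\|p_1\|_{L^\infty(E)}\,\widehat v$. The function $\|p_1\|_{L^\infty}\widehat v \mp z$ is superharmonic with zero trace, hence nonnegative. This yields
\[
\|z\|_{L^\infty(E)}\le \Cone\|p_1\|_{L^\infty(E)}\le \Cone\Cfour|E|^{-1/2}\|\Delta z\|_{L^2(E)}
\]
by (A1) and (A4). Since $z\in H^1_0(E)$ with $\Delta z\in\Pol_1(E)$, (A3) then gives $\|\Delta z\|_{L^2(E)}\le \Cthr^{1/2}\|\nabla z\|_{L^2(E)}$. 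This exponent $1/2$ is probably absorbed into the stated constant (the statement has $\Cthr$); I may need to handle the exact power carefully, or the claimed form follows in any case if $\Cthr\ge1$.

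\emph{Step 2 (energy bound for $z$ through the enhancement constraint).} This is the main obstacle: bounding $|E|^{-1/2}\|\nabla z\|_{L^2(E)}$ by $(1+\Ctwo)\|v\|_{L^\infty(\partial E)}$ (times powers of $\Cthr$). I will test with $z$ itself:
\[
\|\nabla z\|^2_{L^2(E)} = -\int_E \Delta z\, z = -\int_E p_1 z .
\]
Because $p_1\in\Pol_1$, the enhancement condition gives $\int_E p_1 v=\int_E p_1\PiE v$. Also $\PiE v=\PiE w$: indeed $\PiE z$ satisfies $(\nabla z,\nabla q)=-\int_{E}z\Delta q=0$ for linear $q$, and $z$ has zero boundary mean, so $\PiE z=0$. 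Hence
\[
\int_E p_1 z=\int_E p_1(\PiE w - w),
\]
and therefore
\[
\|\nabla z\|^2\le \|p_1\|_{L^1(E)}\big(\|\PiE w\|_{L^\infty}+\|w\|_{L^\infty}\big)\le |E|^{1/2}\|p_1\|_{L^2(E)}(1+\Ctwo)\|v\|_{L^\infty(\partial E)},
\]
using (A2) and the maximum principle for $w$. Combining this with $\|p_1\|_{L^2}=\|\Delta z\|_{L^2}\le\Cthr^{1/2}\|\nabla z\|$ and dividing gives $\|\nabla z\|\le \Cthr^{1/2}(1+\Ctwo)|E|^{1/2}\|v\|_{L^\infty(\partial E)}$.

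\emph{Step 3 (conclusion).} Inserting the Step 2 bound into Step 1 gives
\[
\|z\|_{L^\infty}\le \Cone\Cfour\Cthr^{1/2}\cdot\Cthr^{1/2}(1+\Ctwo)\|v\|_{L^\infty(\partial E)}=\Cone\Cfour(1+\Ctwo)\Cthr\|v\|_{L^\infty(\partial E)}.
\]
So the powers of $\Cthr$ combine exactly to the stated form. Adding the bound $\|w\|_{L^\infty}\le\|v\|_{L^\infty(\partial E)}$ finishes the proof. The delicate points to check are the identity $\PiE z=0$ and the validity of the comparison argument for $H^1$ functions with polynomial Laplacian on a convex polygon, which holds by the weak maximum principle.
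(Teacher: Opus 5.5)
Your proof is correct and follows the same route as the paper. Both split $v$ into its harmonic part plus an $H^1_0$ remainder, compare $\pm$ the remainder with $\|\Delta v\|_{L^\infty(E)}\,\widehat v$, and control $\Delta v$ through the enhancement constraint together with (A2)--(A4). The only difference is in that last step: the paper expands $\Delta v$ in an $L^2$-orthonormal basis of $\Pol_1(E)$ and uses $\|S^{-1}\|_2\le\mathsf{C}_3$ for the $3\times 3$ matrix $S$, whereas you test with $z$ and take $q_1=\Delta z$ in the constraint. This gives the same power of $\mathsf{C}_3$, without the harmless $\sqrt{3}$ factor, and it also makes explicit the identity $\PiE v=\PiE w$ that the paper uses tacitly when invoking (A2).
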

\begin{proof}
Let us split $v=v_H+v_0$ where
\begin{eqnarray}
v_H\vert_{\partial E}=v\vert_{\partial E}, &&\quad \Delta v_H =0 ~\text{in~} E,\nonumber\\
   v_0\vert_{\partial E}=0,&&\quad\Delta v_0 = \Delta v ~\text{in~} E\nonumber. 
\end{eqnarray}
For $v_H$ the maximum principle holds, hence
\begin{equation}\label{aux:cc:0}
    \|v_H\|_{L^\infty(E)} \leq \|v\|_{L^\infty(\partial E)}.
\end{equation}
Therefore we concentrate on $v_0$. Setting $d=\|\Delta v\|_{L^\infty( E)}$, we have $ -d \leq -\Delta v \leq d~\text{in~} E $.
Let $\overline{v}\in H^1_0(E)$ be such that $-\Delta \overline{v}=d$ in $E$. Then, 
$ -\Delta(-\overline{v})\leq - \Delta v_0 \leq -\Delta \overline{v}~\text{in~}E$,
which gives 
$ -\overline{v}\leq v_0\leq \overline{v}~\text{in~}E$,
i.e., $ \|v_0 \|_{L^\infty(E)}\leq 
\| \overline{v}\|_{L^\infty(E)}$.
Write $\overline{v}=d \, \widehat{v}$, with $\widehat{v}$ defined in \emph{(A1)}.
Then, there holds
\begin{equation}\label{aux:cc:1}
    \|v_0\|_{L^\infty(E)}\leq d \|\widehat{v}\|_{L^\infty(E)}=\| \Delta v \|_{L^\infty(E)}\| \widehat{v}\|_{L^\infty(E)}=
    \Cone 
    \| \Delta v \|_{L^\infty(E)}.
\end{equation}

We now estimate $\| \Delta v \|_{L^\infty(E)}$. Let 
$\{p_j\}_{1\leq j \leq 3}$ be a basis in $\mathbb{P}_1(E)$, orthonormal in $L^2(E)$. Then, there 
exist $\alpha_j\in \mathbb{R}$ such that  $\Delta v = \Delta v_0=\sum_{j=1}^3 \alpha_j p_j$. For any $i\in\{1,2,3\}$ let $v_j\in H^1_0(E)$ solve
$$\Delta v_j=p_j~\text{in~}E,$$
hence $v_0=\sum_{j=1}^3 \alpha_j v_j$. The coefficients $\alpha_j$ are such that 
$$ \int_E v p_i = \int_E \Pi^\nabla v p_i\quad i=1,2,3,
$$
that is 
$$ \int_E v_0 p_i = \int_E \Pi^\nabla v p_i - \int_E v_H p_i =: - r_i\quad i=1,2,3,
$$
i.e.
$$ \sum_{j=1}^3 \alpha_j \int_E v_j p_i = -r_i \quad i=1,2,3.
$$
But
$$ \int_E v_j p_i = \int_E v_j \Delta v_i = -\int_E \nabla v_j \cdot \nabla v_i.$$
Introducing 
$$ 
S = \left (\int_E \nabla v_j \cdot \nabla v_i \right )_{1\leq i,j\leq 3}, \qquad
\underline{\alpha}=(\alpha_j)_{1\leq j \leq 3}, \qquad \underline{r}=(r_i)_{1\leq i \leq 3}, $$
we obtain the linear system
$ S \underline{\alpha}= \underline{r}$.
The matrix $S$ is symmetric and positive definite, since
$$ \underline{\beta}^T S \underline{\beta}= \|\nabla w\|^2_{L^2(E)}\quad\text{with~}w=\sum_{j=1}^3 \beta_j v_j.$$
In particular, if $\underline{\beta}$ is an eigenvector with eigenvalue $\lambda>0$
$$ \|\nabla w\|^2_{L^2(E)}= \underline{\beta}^T S \underline{\beta} =  \lambda \|\underline{\beta}\|^2_{\ell^2}= \lambda
\|\Delta w \|^2_{L^2(E)}$$
since $\{p_j\}$ is an orthonormal system in $L^2(E)$. Thus
$$\|S^{-1}\|_{2}=\max_{\lambda} \lambda^{-1} \leq \max_{w\in\text{span}\{v_1,v_2,v_3\}}\frac{\|\Delta w\|^2_{L^2(E)}}{\|\nabla w\|^2_{L^2(E)}}\leq \Cthr,
$$
where we employed  \emph{(A3)}, whence
$$\|\underline{\alpha}\|_{\ell^2} \leq \Cthr \|\underline{r}\|_{\ell^2}.$$
To estimate $\|\underline{r}\|_{\ell^2}$ we recall the definition of $r_i$ and we  observe
that 
\begin{equation}
    \left\vert
    \int_E \Pi^\nabla v p_i\right\vert
    \leq \|\Pi^\nabla v\|_{L^2(E)}\leq |E|^{1/2}\|\Pi^\nabla v\|_{L^\infty(E)}
   \leq |E|^{1/2}\Ctwo \|v\|_{L^\infty(\partial E)}
\end{equation}
where we used \emph{(A2)}.
On the other hand, using \eqref{aux:cc:0} we have
\begin{equation}
    \left\vert
    \int_E v_H p_i\right\vert
    \leq \|v_H\|_{L^2(E)}\leq |E|^{1/2} \|v_H\|_{L^\infty(E)} \leq |E|^{1/2} \|v\|_{L^\infty(\partial E)}.
\end{equation}
Summarizing, we have 
\begin{eqnarray}
    \|\Delta v\|_{L^2(E)}= \|\sum_{j=1}^3 \alpha_j p_j\|_{L^2(E)}=\|\underline{\alpha}\|_{\ell^2}\leq \Cthr \|\underline{r}\|_{\ell^2}\lesssim \Cthr|E|^{1/2}(1+\Ctwo)\|v\|_{L^\infty(\partial E)}.
\end{eqnarray} 
As $\Delta v\in \mathbb{P}_1(E)$, by inverse inequality we get 
$$ 
\| \Delta v\|_{L^\infty(E)} \le \Cfour |E|^{-1/2} \| \Delta v\|_{L^2(E)} \lesssim  \Cfour \Cthr(1+\Ctwo)\|v\|_{L^\infty(\partial E)}.
$$
Employing \eqref{aux:cc:1} we obtain 
$$ 
\|v_0\|_{L^\infty(E)}\lesssim \Cone \Cfour \Cthr(1+\Ctwo)\|v\|_{L^\infty(\partial E)}
$$
which yields the thesis after using the splitting $v=v_H+v_0$ and \eqref{aux:cc:0}.
\end{proof}

\begin{proposition}\label{prop:shapereg}
Let $E$ be a polygon that is star shaped with respect to a ball $B_1$ of radius $\rho_1$ and contained in a ball $B_2$ of radius $\rho_2$.
Then it exists a constant C, only depending on the ratio $\rho_2/\rho_1$, such that
$$
\| v \|_{L^\infty(E)} \le C \| v \|_{L^\infty(\partial E)} \qquad \forall v \in \VE \, .  
$$
\end{proposition}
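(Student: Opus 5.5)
The plan is to apply Lemma \ref{lemma:CC} and show that each of the four constants $\Cone,\Ctwo,\Cthr,\Cfour$ is bounded in terms of $\rho_2/\rho_1$ only. All of them are invariant under dilation of $E$. This is clear for $\Cfour$, $\Ctwo$ and (by scaling of $\widehat v$) for the dimensionless combination we actually need. So I will first rescale so that $\rho_2=1$; then $\rho_1\ge c>0$ and $|E|\simeq 1$. Note that $\Cone$ and $\Cthr$ scale like $\rho_2^2$ and $\rho_2^{-2}$, so the product $\Cone\Cthr$ is scale invariant. I will therefore track the product rather than the individual constants.

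\emph{Bounding $\Cone$ and $\Cfour$.} For $\Cone$, comparison with the paraboloid $(\rho_2^2-|x-x_2|^2)/4$ on the ball $B_2\supset E$ gives $0\le\widehat v\le \rho_2^2/4$, hence $\Cone\le \rho_2^2/4$. For $\Cfour$, since $B_1\subset E\subset B_2$, equivalence of norms on the three-dimensional space $\Pol_1$ gives
\[
\|q_1\|_{L^\infty(E)}\le\|q_1\|_{L^\infty(B_2)}\lesssim \|q_1\|_{L^\infty(B_1)} \lesssim \rho_1^{-1}\|q_1\|_{L^2(B_1)},
\]
with constants depending on $\rho_2/\rho_1$. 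Together with $|E|\le\pi\rho_2^2$, this bounds $\Cfour$.

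\emph{Bounding $\Ctwo$.} For $w\in\WE$, $\Pi^\nabla w$ has gradient equal to $|E|^{-1}\int_{\partial E} w\,\bn$, since $\nabla w$ averaged equals a boundary integral. Its constant is fixed by the boundary mean. Hence
\[
|\nabla\Pi^\nabla w|\le |\partial E|\,|E|^{-1}\|w\|_{L^\infty(\partial E)}.
\]
For a convex/star-shaped polygon, $|\partial E|\lesssim\rho_2$: in the star-shaped case, the perimeter of a polygon star-shaped w.r.t.\ $B_1$ and inside $B_2$ is bounded by $C(\rho_2/\rho_1)\rho_2$, via a radial parametrization with Lipschitz constant controlled by $\rho_2/\rho_1$. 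Also $|E|\ge\pi\rho_1^2$. Then $\|\Pi^\nabla w\|_{L^\infty(E)}\le |\text{mean}|+\rho_2\|\nabla\Pi^\nabla w\|\lesssim\|w\|_{L^\infty(\partial E)}$.

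\emph{Bounding $\Cthr$ (main obstacle).} Let $w\in H^1_0(E)$ with $\Delta w=q\in\Pol_1$. I need $\|q\|_{L^2(E)}^2\lesssim\rho_2^{-2}\|\nabla w\|^2_{L^2(E)}$, i.e., a reverse inequality. The approach is to test with a bubble. Take $\phi\in C_c^\infty(B_1)$ a fixed cutoff with $\phi\ge0$, $\phi=1$ on $\frac12B_1$, and $|\nabla\phi|\lesssim\rho_1^{-1}$. Then
\[
\int_E q^2\phi = \int_E \Delta w\,(q\phi) = -\int_E\nabla w\cdot\nabla(q\phi),
\]
and no boundary term appears since $\phi$ has compact support. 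By norm equivalence on $\Pol_1$ (again using $\frac12B_1\subset E\subset B_2$), $\int_E q^2\phi\gtrsim\|q\|^2_{L^2(E)}$. Moreover
\[
\|\nabla(q\phi)\|_{L^2}\lesssim\rho_1^{-1}\|q\|_{L^2(B_1)}+\|\nabla q\|_{L^2(B_1)}\lesssim \rho_1^{-1}\|q\|_{L^2(E)},
\]
using an inverse inequality on $\Pol_1$ over $B_1$. Cauchy--Schwarz then yields $\|q\|_{L^2(E)}\lesssim\rho_1^{-1}\|\nabla w\|_{L^2(E)}$, so $\Cthr\lesssim\rho_1^{-2}$. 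Consequently $\Cone\Cthr\lesssim(\rho_2/\rho_1)^2$.

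Plugging these bounds into Lemma \ref{lemma:CC} gives the result. The only delicate points are the perimeter bound used for $\Ctwo$ and keeping all polynomial norm-equivalence constants dependent only on $\rho_2/\rho_1$, which follows from the nesting $B_1\subset E\subset B_2$.
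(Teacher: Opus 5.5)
Your proposal is correct and follows essentially the same route as the paper: you apply Lemma~\ref{lemma:CC}, bound $\mathsf{C}_1$ by comparison with the paraboloid on $B_2$, and bound $\mathsf{C}_2$ via $\nabla\Pi^\nabla w=|E|^{-1}\int_{\partial E}w\,\mathbf{n}$ together with a perimeter bound depending only on $\rho_2/\rho_1$; you then get $\mathsf{C}_3\lesssim\rho_1^{-2}$ by testing $\Delta w$ against a bubble supported in $B_1$, and bound $\mathsf{C}_4$ by norm equivalence on $\mathbb{P}_1$. The differences are cosmetic: you use a smooth cutoff where the paper uses a quadratic bubble vanishing on $\partial B_1$, and your constant $\rho_2^2/4$ for $\mathsf{C}_1$ is the sharp one, whereas the paper's stated $\rho_2^2/2$ is merely a valid upper bound.
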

\begin{proof}
We prove the result by estimating the four constants $\Cone, \Ctwo, \Cthr, \Cfour$ of Lemma \ref{lemma:CC}. In the present proof we will denote by $c_p$ a generic positive constant, possibly changing at different occurrences, which depends only on the ratio $\rho_2/\rho_1$.

Let $\hat{v}$ as in assumption \text{($A1$) of  Lemma \ref{lemma:CC}}. Let the function $\widetilde{v} \in H^1_0(B_2)$ defined by $-\Delta \widetilde{v} = 1$. This nonnegative function can be explicitly computed (it is a quadratic polynomial), and it is trivial to check that its maximum value is equal to $(\rho_2)^2/2$. Since $\widetilde{v} \ge 0$ on the whole $B_2$, it is clearly nonnegative on $\partial E$. Therefore, noting that $\Delta (\widetilde{v}-\hat{v})=0$, standard properties of harmonic functions entail $\widetilde{v} \ge \hat{v} \ge 0$ on $E$. Combining the above bounds immediately yields $\Cone \le (\rho_2)^2/2$.

Let now $v\in H^1(E)$ as in  assumption \text{($A2$)} of Lemma \ref{lemma:CC}. 
We recall that, by definition and some simple manipulations,
\begin{equation}\label{eq:kk:1}
\int_{\partial E} \Pi^\nabla v = \int_{\partial E} v \quad , \qquad
\nabla \Pi^\nabla v = |E|^{-1} \! \int_{E} \nabla v =  |E|^{-1} \! \int_{\partial E} v \, {\bf n_E} \, ,
\end{equation}
with ${\bf n}_E$ denoting the unit outward normal to the element boundary.
It can be checked that the assumptions of the current lemma yield $|\partial E| \le c_p |\partial B_1|$, we here omit the simple but tedious proof. Therefore, from the second identity in \eqref{eq:kk:1} we obtain
\begin{equation}\label{eq:kk:2}
\| \nabla \Pi^\nabla v \|_{L^\infty(E)} \le \frac{|\partial E|}{|E|} \| v \|_{L^\infty(\partial E)}
\le c_p \frac{|\partial B_1|}{|B_1|} \| v \|_{L^\infty(\partial E)} = 2 c_p (\rho_1)^{-1} \| v \|_{L^\infty(\partial E)}  \, .
\end{equation}
From the first identity in \eqref{eq:kk:1} it follows that it exists (at least) a point $\xi \in \partial E$ such that
$|\Pi^\nabla v(\xi)| \le \| v \|_{L^\infty(\partial E)}$. For any point $\eta \in E$ it clearly holds (recall that $\Pi^\nabla$ is an affine functions and, as such, can be extended on the whole $B_2$)
$$
|\Pi^\nabla v(\eta)| \le |\Pi^\nabla v(\xi)| + \| \eta - \xi \|_{\ell^2} \, \| \nabla \Pi^\nabla v \|_{L^\infty(E)}
\le \| v \|_{L^\infty(\partial E)} + 2 \rho_2 \| \nabla \Pi^\nabla v \|_{L^\infty(E)} \, ,
$$
which combined with \eqref{eq:kk:2} immediately yields $\Ctwo \le 1 + 4 c_p \, (\rho_2/\rho_1)$.

Let finally $w \in H^1_0(E)$ as in assumption \text{($A3$)} of Lemma \ref{lemma:CC}. Let $\varphi$ represent the unique quadratic function which vanishes on $\partial B_1$ and satisfies $\| \varphi \|_{L^\infty(B_1)}=1$; such function is clearly nonnegative in $B_1$.
Recalling that $\Delta w \in \Pol_1$ and employing  classical results for polynomials and standard scaling arguments combined with  an integration by parts yield
$$
\| \Delta w \|_{L^2(E)}^2 \le c_p \| \Delta w \|_{L^2(B_1)}^2 \le c_p \int_{B_1} \varphi (\Delta w)^2 
= c_p \int_{B_1} \nabla w \cdot \nabla (\varphi \Delta w) \le c_p \| \nabla w \|_{L^2(B_1)} \| \nabla (\varphi \Delta w) \|_{L^2(B_1)} \, .
$$
We now apply again a standard scaling argument for polynomials on balls and recall $\| \varphi \|_{L^\infty(B_1)}=1$, obtaining
$$
\| \Delta w \|_{L^2(E)}^2 \le c_p \| \nabla w \|_{L^2(B_1)} \rho_1^{-1} \| \Delta w \|_{L^2(B_1)} \, .
$$
Since $B_1 \subset E$, the above bound grants immediately $\| \Delta w \|_{L^2(E)} \le c_p \, \rho_1^{-1} \| \nabla w \|_{L^2(B_1)}$, so that $\Cthr \le c_p \, \rho_1^{-2}$.

Finally, the bound for $\Cfour$ is a standard inverse estimate for polynomials on shape-regular domains.
The proof of the lemma now follows from the four bounds above for $\Cone, \Ctwo, \Cthr, \Cfour$ combined with Lemma \ref{lemma:CC}.
\end{proof}

\begin{proposition}\label{prop:ani:C123}
Let $E$ be a trapezoidal element with height $h$, and bases lengths $\varepsilon_1, \varepsilon_2$, respectively. Let $h \ge \max\{\varepsilon_1,\varepsilon_2\}$. Then it exists a constant $C$ independent of $h, \varepsilon_1, \varepsilon_2$ such that
$$
\| v \|_{L^\infty(E)} \le C \| v \|_{L^\infty(\partial E)} \qquad \forall v \in \VE \, .  
$$
\end{proposition}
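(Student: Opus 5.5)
The plan is to apply Lemma~\ref{lemma:CC} and show that, for such a trapezoid, the product $\Cone\Cfour(1+\Ctwo)\Cthr$ is bounded independently of $h,\varepsilon_1,\varepsilon_2$. Individual constants may degenerate, but in compensating ways. Set $\varepsilon:=\max\{\varepsilon_1,\varepsilon_2\}\le h$. Place coordinates so that the bases are parallel to the $y$-axis, at $x=0$ and $x=h$. I will work under the premise, true for the configurations arising from the Cartesian cuts of Example~\ref{ex:element-geometries}, that $E$ lies in a box $[0,h]\times[a,a+c\varepsilon]$ with $c$ a fixed constant, i.e.\ the trapezoid is not strongly sheared. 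If the general statement intends arbitrary shear, this is the point to revisit. The expected scalings are $\Cone\lesssim\varepsilon^2$, $\Cthr\lesssim\varepsilon^{-2}$, and $\Ctwo,\Cfour\lesssim 1$.

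\emph{$\Cone$ and $\Cfour$.} For $\Cone$ I compare $\widehat v$ with the one-dimensional barrier $\widetilde v(x,y)=\tfrac12(y-a)(a+c\varepsilon-y)$. It satisfies $-\Delta\widetilde v=1$ and is nonnegative on $\partial E$, so the argument of Proposition~\ref{prop:shapereg} gives $0\le\widehat v\le\widetilde v$, whence $\Cone\le c^2\varepsilon^2/8$. For $\Cfour$ I use the anisotropic affine map $F(\hat x,\hat y)=(h\hat x,a+\varepsilon\hat y)$. Both $\Pol_1$ and the ratio $\|q_1\|_{L^\infty}/(|E|^{-1/2}\|q_1\|_{L^2})$ are invariant under $F$. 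Moreover $F^{-1}(E)$ is a trapezoid inside $[0,1]\times[0,c]$ containing a fixed rectangle $\widehat Q=[0,\tfrac12]\times[\cdot]$ of height $\ge\tfrac12$ (assuming $\varepsilon_1\ge\varepsilon_2$ at $x=0$). The latter holds because the width is linear in $x$, so it is $\ge\varepsilon/2$ on half of the height. Norm equivalence on the three-dimensional space $\Pol_1$, uniformly over this compact family of reference shapes, then gives $\Cfour\lesssim1$.

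\emph{$\Ctwo$.} For $w\in\WE$ I use the identities \eqref{eq:kk:1}. In the $x$-component of $\int_{\partial E}w\,{\bf n}_E$, the bases contribute at most $2\varepsilon\|w\|_{L^\infty(\partial E)}$. On the legs $n_x=O(\varepsilon/h)$ over length $O(h)$, giving $O(\varepsilon)\|w\|_{L^\infty(\partial E)}$. Since $|E|\simeq h\varepsilon$, this yields $|\partial_x\Pi^\nabla w|\lesssim h^{-1}\|w\|_{L^\infty(\partial E)}$. In the $y$-component the legs contribute $O(h)$, so $|\partial_y\Pi^\nabla w|\lesssim\varepsilon^{-1}\|w\|_{L^\infty(\partial E)}$. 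By the first identity in \eqref{eq:kk:1}, $|\Pi^\nabla w(\xi)|\le\|w\|_{L^\infty(\partial E)}$ at some $\xi\in\partial E$. Moving within the box costs at most $h\cdot h^{-1}+\varepsilon\cdot\varepsilon^{-1}$, so $\Ctwo\lesssim1$.

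\emph{$\Cthr$, the main obstacle.} Here the degenerate thin end (one base possibly tiny) prevents a naive bubble argument on all of $E$. I would work on the subregion $Q=F(\widehat Q)\subset E$ of size $\simeq h\times\varepsilon$. For $\Delta w\in\Pol_1$ I first use $\|\Delta w\|_{L^2(E)}\lesssim\|\Delta w\|_{L^2(Q)}$, which follows from the norm equivalence above after mapping by $F$. Let $\varphi$ be the mapped quadratic bubble of $\widehat Q$, normalized to $\max\varphi=1$. Integrating by parts as in Proposition~\ref{prop:shapereg},
\[
\|\Delta w\|_{L^2(Q)}^2\lesssim\int_Q\varphi(\Delta w)^2=\int_Q\nabla w\cdot\nabla(\varphi\Delta w)\le\|\nabla w\|_{L^2(E)}\|\nabla(\varphi\Delta w)\|_{L^2(Q)}.
\]
An anisotropic inverse estimate obtained via $F$ bounds the last factor by $(h^{-1}+\varepsilon^{-1})\|\Delta w\|_{L^2(Q)}\lesssim\varepsilon^{-1}\|\Delta w\|_{L^2(Q)}$, so $\Cthr\lesssim\varepsilon^{-2}$. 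Combining the four bounds in Lemma~\ref{lemma:CC} gives $\Cone\Cfour(1+\Ctwo)\Cthr\lesssim\varepsilon^2\cdot1\cdot1\cdot\varepsilon^{-2}=O(1)$, which proves the claim.
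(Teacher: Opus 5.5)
Your proof is correct and follows the paper's route: Lemma~\ref{lemma:CC} with $\mathsf{C}_1\lesssim\varepsilon^2$ from a one-dimensional barrier across the thin direction, $\mathsf{C}_2\lesssim 1$ from the boundary formula for $\nabla\Pi^\nabla$ plus a path argument, $\mathsf{C}_4\lesssim 1$ by anisotropic rescaling, and $\mathsf{C}_3\lesssim\varepsilon^{-2}$ via a bubble and integration by parts on an $h\times\varepsilon$ subregion. The differences are cosmetic: you rotate the coordinates, and you use an inscribed rectangle with a tensor-product bubble where the paper uses the right triangle $T\subset E$ with its cubic bubble. Your box premise is exactly the paper's implicit right-trapezoid setting ($c=1$), where your rectangle $[0,\tfrac12]\times[\cdot]$ does fit. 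For sheared shapes with $c>1$ the vertical sections can drift so that it need not fit, but shortening it to $\hat x$-length $\simeq 1/c$ repairs this at no cost.
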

\begin{proof}
We again prove the result by estimating the three constants $\Cone, \Ctwo, \Cthr, \Cfour$ of Lemma \ref{lemma:CC}. It is clearly not restrictive to assume $\varepsilon_2 \ge \varepsilon_1$, as in the trapezoid with black edges shown in Figure \ref{fig:quad:X}. We denote by $Q$ the minimal rectangle containing $E$ (the gray rectangle in the same figure), thus of height $h$ and base length $\varepsilon_2$. In the following $(x,y)$ will denote classical cartesian coordinates centered at the bottom left corner of Q.
Given $\widehat{v}$ as in assumption \text{$(A1)$} of Lemma \ref{lemma:CC}, let $\widetilde{v} \in H^1(Q)$ defined as $\widetilde{v}(x,y)=x(\varepsilon_2 - x)/2$ for all $(x,y)$ in $Q$. It is immediate to check that $-\Delta \widetilde{v} =1$ and $\widetilde{v} \ge 0$ on $\partial E$, which in turn implies $\widetilde{v} \ge \widehat{v} \ge 0$ in $E$ by well known properties of harmonic functions. Therefore we immediately obtain $\Cone \le \| \widetilde{v}\|_{L^\infty(E)} = (\varepsilon_2)^2/8$.
\begin{figure}
\begin{center}
\begin{tikzpicture}[scale=1]

\draw (0,0) rectangle (4,4);
\coordinate (A) at (0,0);   
\node[left]  at (-0.01,-0.2) {$P_2$};

\coordinate (B) at (0,4);   
\node[left]  at (-0.01,4.2) {$P_1$};

\coordinate (C) at (1.5,4);   
\node[right]  at (1.51, 4.2) {$P_3$};

\coordinate (D) at (2.5,0);   
\node[right]  at (2.51, -0.2) {$P_4$};

\fill[black!10] (B) rectangle (D);

\draw[thick] (A) -- (B) -- (C) -- (D) -- cycle;

\node at (1.25,+0.3) {$e_2$};
\node at (0.75,3.7) {$e_1$};
\node at (1.75,2) {$\widetilde{e}$};
\node at (-0.25,2) {$\widehat{e}$};

\node at (1,2.7) {\(E\)};
\draw[red,<->] (-0.6,0) -- (-0.6,4) node[midway,left] {${\color{red}h}$};
\draw[red,<->] (0.1,4.3) -- (1.50,4.3) node[midway,above] {${\color{red}\varepsilon_1}$};

\draw[red,<->] (0.1,-0.3) -- (2.5,-0.3) node[midway,below] {${\color{red}\varepsilon_2}$};

\fill (0,1) circle (2pt) node[left] {$y$};

\draw[red,<->] (0.1,1) -- (2.2,1) node[midway,above] {$\kappa(y)$};
\fill (2.3,1) circle (2pt) node[right] {$\widetilde{y}$};
\foreach \p in {A,B,C,D} \fill[black] (\p) circle (2pt);

\draw[blue, thick]
    plot [smooth, tension=1]
        coordinates {(C) (2.6,2) (D)};
\node at (2.9,2) {${\color{blue} \partial\Omega }$};
        
\end{tikzpicture}
\end{center}

\caption{
Quadrilateral element with $0<\varepsilon_1 \le \varepsilon_2 \leq h$.}\label{fig:quad:X}
\end{figure}

In order to obtain a bound for $\Ctwo$, we follow a similar argument as in the proof of Proposition \ref{prop:shapereg}. 
Let thus $v\in H^1(E)$ as in  assumption \text{$(A2)$} of Lemma \ref{lemma:CC} and let $(n_x,n_y)$ represent the two components of the outward unit normal to $\partial E$. Recalling \eqref{eq:kk:1} we easily obtain
$$
\partial_x \Pi^\nabla v = |E|^{-1} \big( \int_{ \widehat{e}}  v \, n_x + \int_{\widetilde{e}} v \, n_x \big) 
$$
where $\widehat{e}$ denotes the left vertical edge and $\widetilde{e}$ the right oblique edge of $E$
(see again Figure \ref{fig:quad:X}).
Now since $|E|=h(\varepsilon_1 + \varepsilon_2)/2$ some trivial calculation yields (recall that $\Pi^\nabla v$ is a constant vector field)
\begin{equation}\label{x-bound}
| \partial_x \Pi^\nabla v | \le \frac{|\widehat{e}|+|\widetilde{e}|}{|E|} \| v \|_{L^\infty(\partial E)}
\le \frac{3}{2} (\varepsilon_1 + \varepsilon_2 )^{-1} \| v \|_{L^\infty(\partial E)}
\le \frac{3}{2} (\varepsilon_2)^{-1} \| v \|_{L^\infty(\partial E)} \, .
\end{equation}
Let now  $e_1$ represent the top horizontal edge and $e_2$ the bottom horizontal edge of $E$ (see Figure \ref{fig:quad:X}). Furthermore, we denote by $n_{y,2}$ the $n_y$ component of the outward unit normal to $\widetilde{e}$.
Again from \eqref{eq:kk:1} and some simple manipulation we can write 
\begin{equation}\label{eq:trumpisjunk}
\begin{aligned}
| \partial_y \Pi^\nabla v | & \le 
|E|^{-1} \big( \int_{e_1} |v|  + \int_{e_2} |v| + \int_{\widetilde{e}} v \, n_y \big)
\le \frac{\varepsilon_1+\varepsilon_2 + \sqrt{2} h |n_{y,2}|}{h(\varepsilon_1+\varepsilon_2)/2} 
\| v \|_{L^\infty(\partial E)} \\
& \le \Big( h^{-1} + (\varepsilon_1+\varepsilon_2)^{-1} |n_{y,2}| \Big) \| v \|_{L^\infty(\partial E)} \, .
\end{aligned}
\end{equation}
Some simple algebra shows that 
$$
n_{y,2} = (\varepsilon_2-\varepsilon_1)/\sqrt{h^2 + (\varepsilon_2-\varepsilon_1)^2}
\le (\varepsilon_2-\varepsilon_1)/h 
$$ 
so that substitution into \eqref{eq:trumpisjunk} trivially yields
\begin{equation}\label{y-bound}
| \partial_y \Pi^\nabla v | \le 2 h^{-1} \| v \|_{L^\infty(\partial E)} \, . 
\end{equation}
From the first identity in \eqref{eq:kk:1} it follows that it exists (at least) a point $\xi \in \partial E$ such that
$|\Pi^\nabla v(\xi)| \le \| v \|_{L^\infty(\partial E)}$. Any other point $\eta \in E$ can be reached from $\xi$ by following an horizontal path of length equal or less than $\varepsilon_2$ plus a vertical path of length equal or less than $h$. Therefore, also using \eqref{x-bound} and \eqref{y-bound}, we obtain
$$
\| \Pi^\nabla v \|_{L^\infty(E)} \le \| \Pi^\nabla v (\xi) \| + | \partial_x \Pi^\nabla v | \, \varepsilon_2
+ | \partial_y \Pi^\nabla v | \, h 
\le \| v \|_{L^\infty(\partial E)} + 3/2 \| v \|_{L^\infty(\partial E)} + 2 \| v \|_{L^\infty(\partial E)} \, .
$$
Thus we obtain $\Ctwo \le 9/2$.

Let finally $w \in H^1_0(E)$ as in assumption \text{($A3$)} of Lemma \ref{lemma:CC}. We consider the triangle $T$ contained in $E$ and defined by the three vertexes of coordinates $(0,0)$, $(0,\varepsilon_1)$ and $(\varepsilon_2,0)$. Let $b$ denote the cubic bubble on $T$ with $\| b \|_{L^\infty(T)}=1$. Since $\Delta w$ is a first order polynomial (which we now imagine to be defined on the whole $Q$), it is easy to check that it exists a universal constant $c$ such that
$$
\int_E (\Delta w)^2 \le \int_Q (\Delta w)^2 \le c \int_T (\Delta w)^2 \, .
$$
First by standard scaling arguments for polynomials on triangles, then by integration by parts we now obtain
$$
c \int_T (\Delta w)^2 \le c' \int_T b (\Delta w)^2 = c' \int_T \nabla w \cdot \nabla (b \Delta w) 
$$
with $c'$ a positive universal constant. We now combine the two inequalities here above and apply first a Cauchy-Schwarz inequality to the right hand side, then again a standard scaling argument for polynomials on triangles, yielding
$$
\begin{aligned}
\int_E (\Delta w)^2 & \le c' \| \nabla w \|_{L^2(T)}  \| \nabla (b \Delta w) \|_{L^2(T)}
\le c'' \, (\varepsilon_2)^{-1} \| \nabla w \|_{L^2(T)}  \| b \Delta w \|_{L^2(T)} \\
& \le c'' \, (\varepsilon_2)^{-1} \| \nabla w \|_{L^2(E)}  \| \Delta w \|_{L^2(E)} \, ,
\end{aligned}
$$
where we also used $\| b \|_{L^\infty(T)}=1$ in the last step. We thus obtain $\Cthr \le (c'')^2 \, (\varepsilon_2)^{-2}$.

Finally, in order to show that the constant $\Cfour$ is independent of the involved geometric parameters, we consider the mapping that maps each $(x,y)$ into $(x (h/ \varepsilon_2) , y)$. The image of $E$ through such affine mapping is a shape regular polygon $\widetilde{E}$ in the sense of Proposition \ref{prop:shapereg}, with ratio $\rho_2/\rho_1$ independent of $h,\varepsilon_1,\varepsilon_2$. Therefore, on such mapped polygon $\widetilde{E}$ bound \eqref{eq:falcao} holds independently of the aforementioned geometric parameters. By a change of variables, it is therefore easy to check that bound \eqref{eq:falcao} holds also on $E$ with constant $\Cfour$ uniform in $h,\varepsilon_1,\varepsilon_2$. 
\end{proof}

\section{Stability in background Cartesian meshes}
\label{sec:stab:X}

In the present section we verify the validity of the Assumption \ref{ass:stab-form} (stability) for the mesh shapes generated by the proposed trimming algorithm, when the background mesh is the Cartesian mesh defined in Example \ref{exa:background-mesh}.
 Unless otherwise noted, we will assume the classical dofi-dofi choice and the standard $\Pi^\nabla_E$ projection on the space ${\cal S}_1(E)=\mathbb{P}_1(E)$ as the operator $\PiNew_E$ in \eqref{eq:dis:forms}. It is well known that condition \eqref{eq:prop-stabform} implies the stability of the discrete form $a_E(u,v)$ on $V_E$, see for instance \cite{volley}.

\smallskip
Based on the discussion in Example \ref{ex:element-geometries}, one can identify four types of polygons than can be generated by the proposed trimming procedure (see Fig. \ref{fig:geometries-1}). Furthermore, each polygon can exhibit different behaviors, depending on its aspect ratio and relative size of edges, as detailed below.

{\bf Triangular elements}. On triangles condition \eqref{eq:prop-stabform} is automatically satisfied as the involved space is void. 

{\bf Well behaved elements}. These are the majority of elements $E$ (all those not intersecting the boundary, for example) and are characterized by {\sl i)} being shape-regular (uniformly in the mesh family) and {\sl ii)} having all edges of length uniformly comparable to $h_E$. Under those conditions, it is well known in the VEM literature that \eqref{eq:prop-stabform} is satisfied e.g. by the so-called dofi-dofi stabilization, see \cite{BLRstab,brenner2018,chen2018}.

{\bf Almost well behaved elements} (see Fig. \ref{fig:geometries-1}, pentagons G or H). There are boundary elements which are shape-regular but exhibit (one or more) small edges, each having one or both extrema on $\partial \Omega_h$. By ``small edge'' we mean an edge with length $\varepsilon$ that may be arbitrarily smaller than $h_E$. In such case property \eqref{eq:prop-stabform} is more involved. 
If the small edge has both extrema on $\partial \Omega_h$ (meaning that all functions in $\VE$ vanish on the edge) then the small edge can be essentially ignored, leading again to \eqref{eq:prop-stabform}. But if only one extrema lays on $\partial \Omega_h$, then the best stability result one expects using the dofi-dofi stabilization is 
\begin{equation}\label{X:stab-log}
C_1 \, s_E(v_h,v_h) \le |v_h|_{H^1(E)}^2 \le C_2 \, \log{(1+h_E/\varepsilon)}  s_E(v_h,v_h) \quad \forall v_h \in \VE
\textrm{ with } \Pi^\nabla_E v_h = 0 \, ,
\end{equation}
see again \cite{BLRstab,brenner2018,chen2018}). On the other hand, such logarithmic factor is in practice negligible and equivalent to having a bigger constant $C_2$ in \eqref{eq:prop-stabform}, as verified by our numerical tests in Sect. \ref{sec:numerics}.    

However, there is an alternative definition of the stabilization form that avoids the logarithmic factor for elements of this kind. Let us briefly hint to it, assuming (only in order to simplify the exposition, the generalization being trivial) that there is only one ``small'' edge. In \cite{BLRstab,brenner2018,chen2018} it is shown that, on any shape-regular polygonal element $E$, the $H^1(E)$ seminorm of any $v_h \in V_E$ with $\Pi^\nabla_E v_h = 0$ is uniformly equivalent to the $H^{1/2}$ seminorm of $v|_{\partial E}$. 
We now recall that {\sl i)} such traces are piecewise linear on the boundary, {\sl ii)} the considered element has all edges but one with length comparable to $h_E$, and {\sl iii)} one edge $\overline{e}$ has length $\varepsilon$ and the functions in $V_E$ must vanish at one of its extrema. By a direct calculation (denoting by ${\cal V}_E \subset \partial E$ the set of vertexes of $E$), it can therefore be shown that 
\begin{equation}\label{eq:log-equiv}
| v_h |_{H^1(E)}^2 \, \simeq \, | v_h |_{H^{1/2}(\partial E)}^2 \, \simeq \, \log{(1 + h_E/\varepsilon)} |v_h(\widehat{\nu})|^2
\, + \!\!\! \sum_{\nu \in {\cal V}_E,  \, \nu \not=\widehat{\nu}} |v_h(\nu)|^2 \quad \forall v_h \in \VE \textrm{ with } \Pi^\nabla_E v_h = 0 \, ,
\end{equation}
where $\widehat{\nu}$ denotes the other extrema of $\overline{e}$, where the functions do not vanish. 
The above argument suggests that a more precise choice for $s_E(\cdot,\cdot)$ for this class of elements is
$$
s_E(v_h,w_h) := \log{(1 + h_E/\varepsilon)} v_h(\widehat{\nu})w_h(\widehat{\nu}) \, +
\!\!\! \sum_{\nu \in {\cal V}_E,  \,  \nu \not=\widehat{\nu}} v_h(\nu)w_h(\nu) \, ,
$$
that allows to get rid of the logarithmic factor in \eqref{X:stab-log}.

\begin{remark}[hexagonal elements]{\rm These elements may occur when the cell $K$ has exactly two opposite vertices in $\Omega$ (see Example \ref{ex:element-geometries} and Fig. \ref{fig:geometries-1}, plots E or F). If at least one edge contained in $\partial\Omega$ has length $c\, h$, where the constant $c$ may be small but only depends on the local geometry of $\Omega$, the element is well-behaved or almost well-behaved. Otherwise, $K \cap \Omega$ is not connected, each connected component being contained in a neighborhood of radius arbitrarily small of one of the vertices of $K$ contained in $\Omega$; in such a case, the hexagon can be replaced by two triangles, as mentioned in Example \ref{ex:element-geometries}. 
We conclude that, in all cases, these elements can be treated as discussed above.
}    
\end{remark}

{\bf Anisotropic quadrilaterals} (see Fig. \ref{fig:quad:X}). These are the most complex elements, since they lack shape regularity. Therefore in  Section \ref{S:stab_anis_quad} we will focus on the novel case of (boundary) anisotropic quadrilateral elements, where the lack of shape-regularity rules out the results mentioned above. More specifically, in the following part of this section $E$ will represent a (possibly anisotropic) quadrilateral defined by the lengths $\varepsilon_1 \le  \varepsilon_2 \le h$, as depicted in Figure \ref{fig:quad:X}. Most importantly, the edge $\widetilde{e}$ lays on $\partial \Omega_h$.  
Note that the interesting results in \cite{Chen_anisotr:2018}, also dealing with anisotropic quads, are not useful in our present context.

This is the only kind of elements for which we use a projection $\PiNew_E$ different from $\Pi^\nabla_E$, see \eqref{eq:dis:forms}. For each $v_h\in \VE$, we denote by $\PiNew_E\,v_h$ the unique element of $\Pol_1(E)$ such that 
\begin{equation}\label{def:Pitilde}
(\PiNew_E \, v_h)_{|\widetilde{e}}=0 \qquad \text{and} \qquad \int_{E} \nabla \PiNew_E\, v_h\cdot n_{\tilde e}=\int_{E }\nabla v_h \cdot n_{\tilde e},
\end{equation}
where $n_{\tilde e}$ is the normal to $\tilde e$. Correspondingly, we set ${\cal S}_1(E):=\{q\in {\mathbb P}_1(E) : q_{|E}=0 \text{ on } \tilde e\}$.
It is immediate to verify the computability of $\tilde\Pi_E v_h$ based on the sole knowledge of the vertex values (DOFs) of $v_h$. Indeed, 
integration by parts immediately yields 
$\int_{E }\nabla v_h \cdot n_{\tilde e}= \int_{\partial E} n_{\tilde{e}}\cdot n\, v_h$ with $n$ normal to $\partial E$ and the latter integral is computable.
For what concerns the $H^1$-continuity of $\tilde\Pi_E$, it is easy to verify that as  $\nabla \tilde \Pi_E v_h$ is constant on $E$ and $v_h\vert_{ \tilde{e}}=0$ there holds $\nabla \tilde\Pi_E \, v_h\cdot n_{\tilde{e}}=\frac{1}{\vert E\vert}\int_E \nabla v_h \cdot  n_{\tilde{e}}$ and  $\nabla \tilde\Pi_E \, v_h = (\nabla \tilde\Pi \, v_h\cdot n_{\tilde{e}}) n_{\tilde{e}}$. 
These results yield  $\|\nabla \tilde\Pi_E\, v_h \|^2_{L^2(E)}\leq \|\frac{1}{\vert E\vert} \int_E \nabla v_h\cdot n_{\tilde{e}} \|^2_{L^2(E)} \leq \|\nabla v_h \|^2_{L^2(E)}$.

\subsection{Stability for anisotropic quads}\label{S:stab_anis_quad}

With reference to Figure \ref{fig:quad:X}, let $\kappa=\kappa(y)$ be the length of the horizontal segment in $E$ whose left and right endpoints sit on $\widehat{e}$ and $\widetilde{e}$, respectively. It holds 
\begin{equation}\label{eq:def-gammay}
\kappa(y)=(h-y)\frac{\varepsilon_2-\varepsilon_1}{h}+\varepsilon_1.
\end{equation}
We also recall that the virtual functions are zero on the edge $\widetilde{e}$.
In the following the symbol $\lesssim$ will denote a bound up to a constant independent of $\varepsilon_1, \varepsilon_2, h$.
On the polygon $E$, instead of the classical dofi-dofi, in order to attain uniform stability we must make use of the following novel stabilization form on the enhanced virtual space $\VE$.
\begin{definition}[stabilization]\label{def:gamma-stabilization}
Let us set, for any $v_h, w_h \in \VE,$
\begin{equation}\label{def:stab_quad}
s_E(v_h,w_h):=\int_0^h \frac{v_h(0,y) w_h(0,y)}{\kappa(y)}dy.
\end{equation}
\end{definition}
The following results show that on $\VE$ the quantity $s_E(v_h,v_h)$ is equivalent to the square of the $H^1$-seminorm of $v_h$.
\begin{lemma}[continuity] \label{lemma:continuity_new_stab}
For any $v_h\in \VE$ there holds
\begin{equation}\label{stab_quad_cont}
    s_E(v_h,v_h)\leq \vert v_h \vert^2_{H^1(E)}.
\end{equation}
\end{lemma}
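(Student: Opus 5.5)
The idea is a one-dimensional Poincar\'e-type argument along horizontal lines, using that $v_h$ vanishes on the oblique edge $\widetilde{e}$. Fix $y\in(0,h)$ and consider the horizontal segment from $(0,y)\in\widehat{e}$ to the point $\widetilde{y}=(\kappa(y),y)\in\widetilde{e}$, which lies entirely in $E$ by convexity. Since $v_h\in\VE\subset H^1(E)$ and $v_h$ vanishes on $\widetilde{e}$, we can write
\[
v_h(0,y) = v_h(0,y)-v_h(\kappa(y),y) = -\int_0^{\kappa(y)} \partial_x v_h(x,y)\,dx .
\]
For almost every $y$ this holds in the trace sense, because $v_h(\cdot,y)\in H^1(0,\kappa(y))$ for a.e.\ $y$. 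Alternatively, one can first argue for smooth functions vanishing on $\widetilde{e}$ and then use density. The regularity here is harmless, since $v_h$ is continuous on $\overline{E}$, and $\Delta v_h\in\mathbb{P}_1$ gives interior smoothness.

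Next apply Cauchy--Schwarz on the segment:
\[
|v_h(0,y)|^2 \le \kappa(y)\int_0^{\kappa(y)} |\partial_x v_h(x,y)|^2\,dx .
\]
Dividing by $\kappa(y)$ yields
\[
\frac{|v_h(0,y)|^2}{\kappa(y)}\le \int_0^{\kappa(y)}|\partial_x v_h(x,y)|^2\,dx .
\]
Integrating in $y$ over $(0,h)$, Fubini (with $E=\{(x,y):0<y<h,\ 0<x<\kappa(y)\}$) gives
\[
s_E(v_h,v_h)\le \|\partial_x v_h\|^2_{L^2(E)}\le |v_h|^2_{H^1(E)},
\]
which is the claim, with constant exactly $1$.

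The only point needing care is the description of $E$ as the region between $x=0$ and $x=\kappa(y)$, i.e.\ that every horizontal line at height $y\in(0,h)$ meets $E$ in exactly the segment $[0,\kappa(y)]\times\{y\}$ with its right endpoint on $\widetilde{e}$. This follows from the trapezoidal geometry of Figure~\ref{fig:quad:X}, with two horizontal edges $e_1,e_2$, the vertical edge $\widehat{e}$, and $\widetilde{e}$ joining $P_3$ to $P_4$, together with the linear formula \eqref{eq:def-gammay}. No real obstacle is expected. Note that the bound does not require the constraint $\PiNew_E v_h=0$, and that it is the reverse inequality where anisotropy causes difficulty.
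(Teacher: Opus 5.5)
Your proof is correct and follows the same route as the paper: write $v_h(0,y)=-\int_0^{\kappa(y)}\partial_x v_h(x,y)\,dx$ using $v_h|_{\widetilde{e}}=0$, apply Cauchy--Schwarz along the horizontal segment, divide by $\kappa(y)$, and integrate in $y$ to get $\|\partial_x v_h\|_{L^2(E)}^2\le |v_h|_{H^1(E)}^2$. Your version also uses the correct inner limit $\kappa(y)$ in the final double integral; the paper's displayed $\int_0^{\kappa(h)}$ is a typo.
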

\begin{proof}
    Use that $v_h\vert_{\widetilde{e}}=0$ to write
    \begin{equation}
        v_h(0,y)=-\int_0^{\kappa(y)}\partial_x v_h(x,y)dx.
    \end{equation}
    Then there holds
    \begin{equation}
    \vert v_h(0,y)\vert^2
    \leq \kappa(y)
\int_0^{\kappa(y)} \vert \partial_x v_h(x,y)\vert^2 dx
    \end{equation}
    which implies
    \begin{equation}
        \int_0^h \frac{\vert v_h(0,y) \vert^2}{\kappa(y)}dy \leq \int_0^h \int_0^{\kappa(h)}\vert \partial_x v_h(x,y)\vert^2 dx dy = \vert v_h\vert^2_{H^1(E)} \, .
    \end{equation}
\end{proof}
To prove coercivity, we need a technical result. Let
        \begin{equation}\label{eq:def:S}
            W_{E,0}:=\{
            \psi\in \WE \ : \ \psi\vert_{\widetilde{e}}=0 \},
        \end{equation}
        where $\WE$ is the space, defined in \eqref{vem:basic}, of classical VEM functions on $E$.
\begin{lemma}\label{Lemma:aux_norma_gamma}
    For any $\psi \in W_{E,0}$ there holds 
    \begin{equation}
        \vert \psi \vert _{H^1(E)}\lesssim \| \psi \|_{\kappa(\widehat{e})}.
    \end{equation}
\end{lemma}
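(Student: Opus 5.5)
Here I read $\|\psi\|_{\kappa(\widehat{e})}$ as the weighted norm induced by the stabilization of Definition \ref{def:gamma-stabilization}, namely $\|\psi\|_{\kappa(\widehat{e})}^2 := \int_0^h \psi(0,y)^2\,\kappa(y)^{-1}\,dy = s_E(\psi,\psi)$. The plan is to use the Dirichlet principle together with an explicit competitor, so no compactness or scaling arguments are needed.

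\emph{Step 1 (Dirichlet principle).} Every $\psi\in W_{E,0}$ is harmonic in $E$. Hence $|\psi|_{H^1(E)}\le|\varphi|_{H^1(E)}$ for any $\varphi\in H^1(E)$ with $\varphi|_{\partial E}=\psi|_{\partial E}$. So it suffices to build one such extension $\varphi$ whose energy is bounded by $s_E(\psi,\psi)$.

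\emph{Step 2 (the competitor).} In the coordinates of Figure \ref{fig:quad:X}, the edge $\widehat{e}$ is $\{x=0\}$ and $\widetilde{e}$ is $\{x=\kappa(y)\}$. I set
\[
\varphi(x,y):=\psi(0,y)\Bigl(1-\frac{x}{\kappa(y)}\Bigr),\qquad 0\le x\le\kappa(y),\ 0\le y\le h .
\]
This function has the correct trace on all four edges:
\begin{itemize}
\item on $\widehat{e}$ it equals $\psi(0,\cdot)$;
\item on $\widetilde{e}$ it vanishes, as does $\psi$;
\item on $e_1$ ($y=h$) and $e_2$ ($y=0$) it is linear in $x$, with value $\psi(P_1)$ resp. $\psi(P_2)$ at $x=0$ and $0$ at $P_3$ resp. $P_4$; this is exactly the piecewise linear trace of $\psi$ there.
\end{itemize}
Since $\kappa\ge\varepsilon_1>0$ and $\kappa$ is affine, $\varphi\in H^1(E)$.

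\emph{Step 3 (energy bound).} Write $a=\psi(P_2)$ and $b=\psi(P_1)$. Then $\psi(0,y)$ is linear in $y$ with derivative $(b-a)/h$, and $|\kappa'|=(\varepsilon_2-\varepsilon_1)/h\le1$. The gradient of $\varphi$ is
\[
\partial_x\varphi=-\frac{\psi(0,y)}{\kappa(y)},\qquad
\partial_y\varphi=\frac{b-a}{h}\Bigl(1-\frac{x}{\kappa}\Bigr)+\psi(0,y)\,\frac{x\,\kappa'}{\kappa^2}.
\]
I bound the resulting three contributions to $|\varphi|_{H^1(E)}^2$ separately.
\begin{itemize}
\item The $\partial_x$ term integrates over $x\in(0,\kappa(y))$ to exactly $\int_0^h\psi(0,y)^2/\kappa\,dy=s_E(\psi,\psi)$.
\item The last part of $\partial_y\varphi$ gives at most $\int_0^h\psi(0,y)^2\kappa'^2/(3\kappa)\,dy\le\tfrac13 s_E(\psi,\psi)$.
\item The first part of $\partial_y\varphi$ gives $\lesssim (b-a)^2 h^{-2}\int_0^h\kappa\,dy\le (b-a)^2\varepsilon_2/h$.
\end{itemize}
The third item is the only step needing care. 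I use $\kappa\le\varepsilon_2$ and the equivalence of $\int_0^h\ell^2$ and $h(\ell(0)^2+\ell(h)^2)$ for linear $\ell$, which yields
\[
s_E(\psi,\psi)\ \ge\ \varepsilon_2^{-1}\int_0^h\psi(0,y)^2\,dy\ \gtrsim\ \frac{h}{\varepsilon_2}(a^2+b^2)\ \gtrsim\ \frac{h}{\varepsilon_2}(b-a)^2 .
\]
Since $\varepsilon_2\le h$, we have $\varepsilon_2/h\le h/\varepsilon_2$, so $(b-a)^2\varepsilon_2/h\lesssim s_E(\psi,\psi)$.

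Summing the three contributions gives $|\varphi|_{H^1(E)}^2\lesssim s_E(\psi,\psi)$, with a constant independent of $\varepsilon_1,\varepsilon_2,h$. Step 1 then yields $|\psi|_{H^1(E)}\lesssim\|\psi\|_{\kappa(\widehat{e})}$.
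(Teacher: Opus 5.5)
Your proof is correct. The energy bookkeeping in Step 3 is essentially the paper's. You split $\partial_y$ into the part carrying $\partial_y\psi(0,y)$ and the part carrying $\kappa'$. You then use $\kappa\le\varepsilon_2$, $|\kappa'|\le 1$, the equivalence of $\int_0^h\ell^2$ with $h(\ell(0)^2+\ell(h)^2)$ on $\widehat{e}$, and $\varepsilon_2\le h$.

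Where you differ is Step 1. The paper claims that the definition of $W_{E,0}$ forces $\psi(x,y)=\psi(0,y)(1-x/\kappa(y))$, and then differentiates $\psi$ itself. You use that function only as a competitor with the same trace, and invoke the minimal-energy property of harmonic functions.

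Your version is the rigorous one. With $\ell(y)=\psi(0,y)$, one finds $\Delta\bigl(\ell(y)(1-x/\kappa(y))\bigr)=2x\kappa'(\ell'\kappa-\ell\kappa')/\kappa^3$. This is nonzero on a genuine trapezoid ($\varepsilon_1<\varepsilon_2$) unless $\ell$ is a multiple of $\kappa$. So the paper's identity holds for all of $W_{E,0}$ only when $E$ is a rectangle. In general, its computation bounds the energy of an extension of the trace rather than the energy of $\psi$. The Dirichlet principle is exactly what turns that same computation into a proof, at no extra cost.

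A minor further advantage: you derive $s_E(\psi,\psi)\gtrsim (h/\varepsilon_2)(a^2+b^2)$ inline. The paper borrows it from \eqref{eq:X:piddu}, which is only established later, inside the proof of Lemma \ref{lemma:coercivity_new_stab}.
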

\begin{proof}
    We start observing that the definition of $W_{E,0}$ immediately implies that 
    \begin{equation}
        \psi(x,y)=\psi(0,y)(1-\frac{x}{\kappa(y)}).
    \end{equation}
    First, we have $ \partial_x \psi(x,y)=-\psi(0,y)\kappa(y)^{-1}$ which implies 
    \begin{eqnarray}
        \int_0^h \int_0^{\kappa(y)}
        \vert \partial_x \psi (x,y)\vert^2 dx dy = \int_0^h \frac{\vert \psi(0,y) \vert^2}{\kappa(y)} dy = \| \psi \|^2_{\kappa(\widehat{e})}.
    \end{eqnarray}
    On the other hand, we have 
    $$ \partial_y \psi(x,y)= \partial_y \psi(0,y)(1-\frac{x}{\kappa(y)})+\psi(0,y)\frac{x}{\kappa(y)^2}\kappa^\prime(y)=:A(x,y)+B(x,y)
    $$
    which yields
    \begin{equation}
        \int_E \vert \partial_y \psi \vert^2 \lesssim \int_E A^2(x,y) + \int_E B^2(x,y).
    \end{equation}
    Let us consider the first term
    \begin{eqnarray}
      \int_E A^2(x,y) &\leq&
\int_0^h \int_0^{\kappa(y)}\vert \partial_y \psi(0,y) \vert^2  dx dy = \int_0^h \kappa(y)\vert \partial_y \psi(0,y) \vert^2 dx dy \nonumber\\
&\leq& \varepsilon_2 \int_0^h \vert \partial_y\psi(0,y)\vert^2 dx dy \leq \frac{\varepsilon_2}{h} \max_{\nu \text{~vertices~of~}\widehat{e}}\vert \psi(\nu)\vert^2\nonumber\\
&\leq& \int_0^h \frac{\vert \psi(0,y)\vert^2}{\kappa(y)} dy = \| \psi \|^2_{\kappa(\widehat{e})} \, ,
    \end{eqnarray}
    where in the last inequality we applied \eqref{eq:X:piddu} recalling $\varepsilon_2 \le h$.

    We now consider the second term

    \begin{eqnarray}
        \int_E B(x,y)^2 = 
\int_0^h \int_0^{\kappa(y)}\left( \psi(0,y)\frac{x}{\kappa(y)^2}\kappa^\prime(y)\right)^2 dx dy.
    \end{eqnarray}
    Let us note that 
    $ \kappa(y)=\varepsilon_2- \frac{\Delta \varepsilon}{h}y$ with $\Delta \varepsilon:= \varepsilon_2 -\varepsilon_1$, which implies $\kappa^\prime(y)= -\frac{\Delta \varepsilon}{h}$.

    Moreover, we observe that as $x\in [0,\kappa(y)]$ there holds
    \begin{eqnarray}
        \left\vert 
        \frac{x}{\kappa^2(y)} \kappa^\prime(y)\right\vert \leq 
        \left\vert 
        \frac{x}{\kappa(y)} \right\vert
        \left\vert \frac{\kappa^\prime(y)}{\kappa(y)}\right\vert \leq 1 \left\vert \frac{\kappa^\prime(y)}{\kappa(y)}\right\vert. 
    \end{eqnarray}
    Thus, we have
    \begin{eqnarray}
        \int_E B(x,y)^2 &\leq& 
        \int_0^h \int_0^{\kappa(y)} \vert \psi(0,y)\vert^2 
\left \vert \frac{\kappa^\prime(y)}{\kappa(y)} \right\vert^2 dx dy \nonumber\\
&\leq&
\int_0^h  \vert \psi(0,y)\vert^2 
 \frac{\vert \kappa^\prime(y)\vert^2}{\kappa(y)}  dy \nonumber\\
 &\leq& \left(\frac{\Delta \varepsilon}{h}\right)^2
 \int_0^h \frac{\vert \psi(0,y) \vert^2}{\kappa(y)} dy \leq 1 \| \psi\|_{\kappa(\widehat{e})}^2.
    \end{eqnarray}
    Putting all together we obtain the thesis.
\end{proof}        
        
\begin{lemma}{(coercivity)}
\label{lemma:coercivity_new_stab}
For any $v_h\in \VE$ it holds
\begin{equation}\label{stb_quad_coerc}
\vert v_h \vert^2_{H^1(E)}\lesssim s_E(v_h,v_h).    
\end{equation}
    \begin{proof}
        We first observe that integration by parts yields 
        \begin{equation}
            \vert v_h\vert^2_{H^1(E)}=\int_E \nabla v_h\cdot\nabla v_h = -\int_E v_h \Delta v_h + \int_{\partial E} v_h \partial_n v_h=: T_1+T_2.
        \end{equation}
        By employing Proposition \ref{prop:shapereg} together with \eqref{aux_C_3} and the estimate of $\Cthr $, we have 
        \begin{eqnarray}
            T_1&\leq& \|v_h \|_{L^\infty(E)}
            \vert E\vert^{1/2}\| \Delta v_h\|_L^2(E)\nonumber\\
            &\lesssim& 
            \|v_h \|_{L^\infty(E)}
            (\varepsilon_2 h)^{1/2} \varepsilon_2^{-1} \vert v_h \vert^2_{H^1(E)}\nonumber\\
            &\lesssim& 
            (h/\varepsilon_2)^{1/2} \vert v_h\vert_{H^1(E)} \max_{\nu\in \partial E} \vert v_h(\nu)\vert.
        \end{eqnarray}
        Since it holds $\kappa(y)^{-1}\geq \varepsilon_2^{-1}$ we obtain 
        \begin{equation}\label{eq:X:piddu}
            \frac{h}{\varepsilon_2} 
            \max_{\nu\in \partial E} \vert v_h(\nu)\vert^2 \lesssim \int_0^h \frac{\vert v_h(0,y)\vert^2}{\kappa(y)} dy
        \end{equation}
        where we employed a standard inverse estimate along the edge $\widehat{e}$. 
        Thus we get
        \begin{equation}
            T_1\lesssim \vert v_h\vert_{H^1(E)} \left( s_E (v_h,v_h)\right)^{1/2}.
        \end{equation}
        Let us now focus on the term $T_2$. First, we define the following quantity for all sufficiently regular functions
        \begin{equation}
    \label{def_norm_gamma}
    \| \varphi\|^2_{\kappa(\widehat{e})}:=\int_0^h \frac{\vert \varphi(0,y)\vert^2}{\kappa(y)} dy.        
        \end{equation}
        We have
        \begin{eqnarray}
       T_2=\int_{\partial E} v_h \partial_n v_h
            \leq \| v_h\|_{\kappa(\widehat{e})} \sup_{\psi\in W_{E,0}} \frac{\int_{\partial E} \partial_n v_h \psi}{\|\psi\|_{\kappa(\widehat{e})}}.
        \end{eqnarray}
        Thus, we have
        \begin{equation}
            T_2\leq \|v_h\|_{\kappa(\widehat{e})} \sup_{\psi\in W_{E,0}} \frac{\int_E \nabla v_h\cdot \nabla \psi + \int_E \Delta v_h \psi}{\|\psi\|_{\kappa(\widehat{e})}}.
        \end{equation}
        Obviously, there hold
        \begin{equation}
            \int_E \nabla v_h \nabla \psi \leq \vert v_h\vert_{H^1(E)} \vert \psi \vert_{H^1(E)}
        \end{equation}
        and 
        \begin{equation}
            \int_E \Delta v_h \psi \leq \| \Delta v_h\|_{L^2(E)}
            \|\psi \|_{L^2(E)}\lesssim \varepsilon_2^{-1} \vert v_h\vert_{H^1(E)} \varepsilon_2 \vert \psi\vert_{H^1(E)}
        \end{equation}
        where we employed \eqref{aux_C_3} and an anisotropic Poincar\'e inequality. Applying Lemma \ref{Lemma:aux_norma_gamma}, 
        we have 
        \begin{equation}
            T_2\lesssim \vert v_h\vert_{H^1(E)} \| v_h \|_{\kappa(\widehat{e})}.
        \end{equation}
        Hence, we conclude 
        $$ \vert v_h \vert^2_{H^1(E)} \lesssim \| v_h \|_{\kappa(\widehat{e})}^2=s_E(v_h,v_h).$$
    \end{proof}
\end{lemma}

\begin{remark}
{\rm
We note that Lemmas \ref{lemma:continuity_new_stab} and \ref{lemma:coercivity_new_stab} clearly imply \eqref{eq:prop-stabform}. 
The reason why on these elements we adopted $\PiNew_E$ instead of $\Pi^\nabla_E$ is that $\PiNew_E$ respects the boundary condition on $\widetilde{e}$, which was a critical ingredient in the above proofs.
}
\end{remark}

\section{Interpolation in background Cartesian meshes}
\label{sec:checking}

In the present section we verify the validity of Assumption \ref{ass:interp} (interpolation) for the mesh types generated by the proposed trimming algorithm in two cases: regular and non-regular, according to the smoothness of the exact solution (and its extension). In the regular case, the boundary $\partial\Omega$ does not contain break points with reentrant corners, hence, the solution is in $H^2(\Omega)$; on the contrary, in the non-regular case, the solution contains a singular part, as indicated in \eqref{eq:split-u}-\eqref{eq:def-zeta}.

\subsection{Regular case} \label{sec:regular-case}

Hereafter, we will prove the following local interpolation error estimate, which clearly implies the global estimate \eqref{eq:interp-err} whenever $u \in H^2(\wt{\Omega})$.
\begin{theorem}[interpolation error bound in the regular case]\label{theo:interp-error-H2}
Let $E \in \mesh$ and let $u$ be any function in  $H^2(E)$ vanishing at each vertex of $E$ sitting on $\partial\Omega$. Then, there exists a constant $C_r>0$ independent of $u$ and $E$ such that
\begin{equation}\label{eq:interp-err-H2}
h^{-1}\|u-\IVh u \|_{0,E} +  | u-\IVh u |_{1,E} \leq C_r h |u|_{2,E}\,. 
\end{equation}  
\end{theorem}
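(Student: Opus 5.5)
We would prove the estimate element by element, treating separately the geometric types listed in Example \ref{ex:element-geometries}. The common device is an intermediate linear function $q\in\Pol_1(E)$ together with the splitting
\[
u-\IVE u = (u-q) - \IVE(u-q),
\]
which is valid because $\Pol_1(E)\subseteq \VE$, so that $\IVE q=q$. Both pieces are then bounded in $H^1(E)$ and $L^2(E)$.

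For $v:=u-q$ we need a stability bound for the interpolant in terms of vertex values. We split $\IVE v=w_H+w_0$ as in the proof of Lemma \ref{lemma:CC}. Then
\[
|\IVE v|_{1,E}\lesssim |E|^{1/2}\,\mathrm{diam}(E)^{-1}\max_{\nu\in\nodesE}|v(\nu)|
\]
on shape-regular elements. This uses Proposition \ref{prop:shapereg} and the bound on $\Cthr$, together with the fact that $\Delta \IVE v$ is controlled by the boundary data via the moment conditions. The $L^2$ bound follows from the generalized maximum principle:
\[
\|\IVE v\|_{0,E}\le |E|^{1/2}\|\IVE v\|_{L^\infty(E)}\lesssim |E|^{1/2}\|v\|_{L^\infty(E)}.
\]
In two dimensions $H^2(E)\subset C^0(\bar E)$, and on a shape-regular $E$ (triangles, well-behaved and almost well-behaved elements) we choose $q$ as the averaged Taylor polynomial on the inscribed ball. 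The Bramble--Hilbert lemma then gives $\|v\|_{L^\infty}\lesssim h|u|_{2,E}$ and $|v|_{1,E}\lesssim h|u|_{2,E}$. Small edges create no difficulty here, since only vertex values enter, not the stabilization.

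The anisotropic quadrilaterals of Fig. \ref{fig:quad:X} (types B, C, D), where $\varepsilon_1\le\varepsilon_2\ll h$, are the main obstacle: the Bramble--Hilbert constant degenerates there. We would exploit the hypothesis that $u$ vanishes at the boundary vertices $P_3,P_4$. We choose $q$ as the linear function interpolating $u$ at $P_2$, $P_3$, $P_4$. On the trapezoid we use a sharp anisotropic estimate of Apel type, which holds because the triangle $P_2P_3P_4$ has a maximum-angle bound (one near-right angle at $P_2$), with long side $h$ in $y$ and short side $\varepsilon_2$ in $x$. This gives
\[
\|\partial_x(u-q)\|_{0,E}\lesssim h|u|_{2,E},\qquad \|\partial_y(u-q)\|_{0,E}\lesssim h|u|_{2,E},
\]
and an $L^\infty$ bound of order $h\,|E|^{-1/2}\,h|u|_{2,E}$ in the scaled sense. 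Then $v$ vanishes at $P_2,P_3,P_4$, so $\IVE v$ is determined by the single value $v(P_1)$. We estimate $|v(P_1)|$ by integrating along the short edge $e_1$, of length $\varepsilon_1$, from $P_3$, where $v=0$. Combined with a trace inequality in the thin direction, this yields $|v(P_1)|\lesssim \varepsilon_1^{1/2}(h|u|_{2,E}+\ldots)$, where the dots stand for remaining lower-order trace terms that we expect to be controlled similarly.

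For the energy of the resulting virtual function we use Lemma \ref{lemma:continuity_new_stab}, Lemma \ref{lemma:coercivity_new_stab} and the equivalence $|\IVE v|_{1,E}^2\simeq s_E(\IVE v,\IVE v)$. This equivalence requires $\IVE v=0$ on $\tilde e$, which holds here. The stabilization value is computable explicitly: $\IVE v(0,y)=v(P_1)\,y/h$ on $\widehat e$. Hence
\[
s_E\lesssim |v(P_1)|^2 h/\varepsilon_1 .
\]
This is exactly where a naive bound loses, and the step we expect to be delicate. To close it we would pick $q$ to also match $u$ at $P_1$ approximately, for instance by Taylor expansion along $e_1$. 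The aim is $|v(P_1)|^2 h/\varepsilon_1\lesssim h^2|u|_{2,E}^2$, which needs $|v(P_1)|\lesssim (\varepsilon_1 h)^{1/2}|u|_{2,E}$. We would try to derive this from a one-dimensional Sobolev bound on the horizontal segment near $e_1$, averaged in $y$ over the strip of width $\varepsilon_1$. The hexagons are reduced to the previous cases as in the remark on hexagonal elements. Summing the local estimates then proves the statement.
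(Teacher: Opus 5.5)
\textbf{Anisotropic quadrilaterals: the key estimate is missing.} Your splitting $u-\IVE u=(u-q)-\IVE(u-q)$ is fine. On the anisotropic quadrilaterals, the bound $|\IVE v|_{1,E}^2\lesssim s_E(\IVE v,\IVE v)\le |v(P_1)|^2h/(3\varepsilon_1)$ is also correct. However, the estimate everything hinges on, $|v(P_1)|\lesssim(\varepsilon_1h)^{1/2}|u|_{2,E}$, is never established, and the routes you sketch do not give it.
\begin{itemize}
\item As written, your intermediate bound $|v(P_1)|\lesssim\varepsilon_1^{1/2}h|u|_{2,E}$ is not scale-consistent.
\item A trace taken over a strip of height $\varepsilon_1$ below $e_1$ gives only $\|\partial_x v\|_{0,e_1}^2\lesssim\varepsilon_1^{-1}\|\partial_x v\|_{0,E}^2+\varepsilon_1|u|_{2,E}^2$. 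This yields $|v(P_1)|\lesssim h|u|_{2,E}$ and hence $|\IVE v|_{1,E}\lesssim (h/\varepsilon_1)^{1/2}h|u|_{2,E}$, which is not uniform.
\item $q$ is already fixed by $P_2,P_3,P_4$, so it has no freedom left to ``also match $u$ at $P_1$''.
\end{itemize}

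The missing idea is to take the trace in the \emph{long} direction. Since $\kappa(y)\ge\varepsilon_1$, every vertical segment $\{x\}\times(0,h)$ with $0<x<\varepsilon_1$ lies in $E$. The one-dimensional trace inequality on segments of length $h$ then gives
\[
\|\partial_x v\|_{0,e_1}^2\lesssim h^{-1}\|\partial_x v\|_{0,E}^2+h\|\partial_{xy}u\|_{0,E}^2\lesssim h\,|u|_{2,E}^2 ,
\]
hence $|v(P_1)|=|v(P_1)-v(P_3)|\le\varepsilon_1^{1/2}\|\partial_x v\|_{0,e_1}\lesssim(\varepsilon_1h)^{1/2}|u|_{2,E}$.

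Two supporting steps are also needed:
\begin{itemize}
\item You need $\|\partial_x(u-q)\|_{0,E}\lesssim h|u|_{2,E}$ on all of $E$, not only on the triangle $P_2P_3P_4$. This follows by comparing the means of $\nabla u$ over that triangle and over $E$, whose areas are comparable.
\item The $L^2$ part follows from $\|\IVE v\|_{0,E}\le|E|^{1/2}\|\IVE v\|_{L^\infty(E)}\lesssim(h\varepsilon_2)^{1/2}|v(P_1)|$, using Proposition \ref{prop:ani:C123}. You also need an anisotropic Poincar\'e bound for $\|v\|_{0,E}$, as in Corollary \ref{Q:theo:int:L2}.
\end{itemize}

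Completed this way, your argument is a genuinely shorter route than the paper's, at the price of relying on the stabilization equivalence of Lemma \ref{lemma:coercivity_new_stab}. The paper avoids that lemma and instead passes through the harmonic interpolant $w_I\in\WE$, Lemma \ref{Q:lem:2} (via the bound on $\mathsf{C}_3$), and the $\Pi^\nabla$ bound of Lemma \ref{Q:lem:3}. Its decisive step \eqref{pileup-6} rests on exactly this long-direction trace inequality.

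\textbf{Other element types: two claims are false as stated.}

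First, small edges \emph{do} cause trouble even though only vertex values enter. The harmonic part of $\IVE v$ has energy equivalent to the $H^{1/2}(\partial E)$-seminorm of a piecewise linear trace. An edge $[\bar\nu,\hat\nu]$ of length $\varepsilon\ll h_E$ produces a term of order $\log(1+h_E/\varepsilon)\,|v(\hat\nu)-v(\bar\nu)|^2$ there (compare \eqref{eq:log-equiv}). So $|\IVE v|_{1,E}\lesssim\max_\nu|v(\nu)|$ fails uniformly, and your argument gives only $\log^{1/2}(1+h_E/\varepsilon)\,h|u|_{2,E}$ on almost well-behaved elements. The cure is the same mechanism: a scaled trace inequality gives $|v(\hat\nu)-v(\bar\nu)|\le\varepsilon^{1/2}\|\nabla v\|_{0,\bar e}\lesssim(\varepsilon h)^{1/2}|u|_{2,E}$, and $\varepsilon\log(1+h/\varepsilon)\lesssim h$. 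Alternatively, cite the small-edge literature as the paper does.

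Second, the triangular elements are not shape-regular. They are right triangles with arbitrary legs $\varepsilon_1,\varepsilon_2\le h$, so the averaged Taylor polynomial on an inscribed ball and Bramble--Hilbert degenerate there. Since $\VE=\Pol_1(E)$ on a triangle, you need the maximum angle condition (Apel), as in the paper.
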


To prove the theorem, let us recall that polygons generated by the proposed trimming procedure may exhibit different behaviors, as discussed in Sect. \ref{sec:stab:X} to which we refer for the classification. The following results hold.

{\bf Triangular elements}. These are triangles, possibly not shape-regular but always with a $\pi/2$ internal angle. Due to this angle condition, \eqref{eq:interp-err-H2} holds (see for instance Theorem 2.1 in \cite{apel-book}).

{\bf Well behaved elements}. For such elements, it is well known in the VEM literature that \eqref{eq:interp-err-H2} holds, see \cite{BLRstab,brenner2018,chen2018}.

{\bf Almost well behaved elements}. Property \eqref{eq:interp-err-H2} is valid for these elements, too (see again the above references or also \cite{smalledges}).

{\bf Anisotropic quadrilaterals}. These are the most complex elements, since they lack shape regularity, which rules out the application of the results mentioned above. Also note that the interesting results in \cite{Chen_anisotr:2018}, dealing with anisotropic quads as well, are not useful in our present context. Therefore, we devote the whole Section \ref{S:interp_anis_quad} to deal with these boundary anisotropic quadrilateral elements.

\smallskip\noindent
We briefly discuss the validity of Assumption \ref{ass:polapprox} (simpler polynomial approximation).

\begin{remark}\label{rem:polapprox:H2}
Let a generic element $E \in \mesh$ with $u \in H^2(E)$; we distinguish two cases. \\
\noindent
(1) Whenever $E$ is not an anisotropic quadrilateral (in accordance with our classification) we define ${\cal P}^1_E u := p_1$ to be the unique first order polynomial such that $\int_E (p_1-u)=0$ and $\int_E \nabla (p_1-u)={\bf 0}$. Then, noting that all the elements generated by our procedure are convex, and applying the Poincar\'e-Wirtinger inequality \cite{payne1960optimal}, we obtain
$$
\| u - {\cal P}^1_E u \|_{L^2(E)} 
\le \pi^{-1} h_E |u - {\cal P}^1_E u |_{H^1(E)}
\le \pi^{-2} h_E^2 |u|_{H^2(E)} \, ,
$$
yielding \eqref{eq:polapprox-err} for all elements on which $u$ is regular. \\

\noindent
(2) Whenever $E$ is an anisotropic quadrilateral, the definition of $\PiNew_E$ given in \eqref{def:Pitilde} implies that the affine polynomial ${\cal P}_E^1 u$ cannot be chosen freely as it must vanish on $\widetilde{e}$. In this case it is convenient to introduce $\widetilde{E}$, the patch given by the union of $E$ and its neighbor square element of the background mesh on the opposite side with respect to $\widetilde{e}$. Since $u$ vanishes at the endpoints of $\widetilde{e}$ and $\widetilde{E}$ is shape regular, it is easy to check that (choosing for instance ${\cal P}_E^1 u=\PiNew_{\widetilde{E}} u$, see \eqref{def:Pitilde})
$$
\| u - {\cal P}^1_{E} u \|_{L^2(E)}  \le
\| \widetilde{u} - {\cal P}^1_{E} u \|_{L^2(\widetilde{E})}
\le h_E |\widetilde{u} - {\cal P}^1_E u |_{H^1(\widetilde{E})}
\le h_E^2 |\widetilde{u}|_{H^2(\widetilde{E})} \, ,
$$
where $\widetilde{u}$ is the extension introduced in Section \ref{sec:extension}.
This result is satisfactory recalling \eqref{eq:reg-u-tilde}. 
\end{remark}

\subsubsection{Anisotropic quads}\label{S:interp_anis_quad}

Hereafter, $E$ will represent a (possibly anisotropic) quadrilateral defined by the lengths $\varepsilon_1 \le  \varepsilon_2 \le h$, as depicted in Figure \ref{fig:quad:X}. It is important to notice that the endpoints of the edge $\widetilde{e}$ lay on $\partial \Omega_h$; in particular, virtual functions vanish on this edge. The symbol $\lesssim$ here denotes a bound up to a constant independent of $\varepsilon_1, \varepsilon_2, h$.

We start by the following lemma, concerning interpolation in the classical virtual element space $\WE$ defined in \eqref{vem:basic}.
\begin{lemma}\label{Q:lem:1}
Let $u \in H^2(E)$ and let $w_I$ denote its nodal (vertex) interpolant in $\WE$. Then it holds
$$
| u - w_I |_{H^1(E)} \lesssim h |u|_{H^2(E)} \, .
$$
\end{lemma}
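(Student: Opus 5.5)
The plan is to combine the Dirichlet principle for the harmonic interpolant $w_I$ with a piecewise linear interpolant on a two-triangle splitting of $E$ that satisfies a maximum angle condition. The point is that anisotropic triangles with angles bounded away from $\pi$ admit optimal $H^1$ interpolation estimates, whereas shape regularity of $E$ is not available.

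\emph{Step 1 (splitting and interpolation).} Referring to Figure \ref{fig:quad:X}, split $E$ along the diagonal $P_2P_3$ into the triangles $T_1=P_1P_2P_3$ and $T_2=P_2P_4P_3$. Let $v$ be the continuous piecewise linear Lagrange interpolant of $u$ on $\{T_1,T_2\}$.
\begin{itemize}
\item $v$ is affine on each edge of $E$ and agrees with $u$ at the vertices, so $v|_{\partial E}=w_I|_{\partial E}$.
\item $T_1$ has a right angle at $P_1$.
\item In $T_2$, the angle at $P_2$ lies between the horizontal direction and $(\varepsilon_1,h)$, so it is at most $\pi/2$.
\item The angle at $P_4$ lies between $(-1,0)$ and $(\varepsilon_1-\varepsilon_2,h)$. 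Since $0\le \varepsilon_2-\varepsilon_1\le h$, it is at most $3\pi/4$.
\item The remaining angle at $P_3$ is then automatically smaller than $\pi/2$.
\end{itemize}
Hence both triangles satisfy a maximum angle condition with bound $3\pi/4$, independent of $\varepsilon_1,\varepsilon_2,h$. The anisotropic interpolation theory (e.g.\ \cite{apel-book}, as already used for triangular elements) then gives $|u-v|_{1,T_i}\lesssim h|u|_{2,T_i}$ for $i=1,2$, and therefore $|u-v|_{1,E}\lesssim h|u|_{2,E}$. I expect this to be the main obstacle: one must verify that the angle bound survives every admissible configuration ($\varepsilon_1$ possibly tiny, $\varepsilon_2$ up to $h$). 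If the chosen diagonal failed, I would try the other diagonal $P_1P_4$ instead.

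\emph{Step 2 (Dirichlet principle).} Let $q\in\Pol_1(E)$ be arbitrary. The function $w_I-q$ is harmonic, and $v-w_I\in H^1_0(E)$. Integration by parts gives $\int_E\nabla(w_I-q)\cdot\nabla(v-w_I)=0$, hence
\[
|v-q|_{1,E}^2=|w_I-q|_{1,E}^2+|v-w_I|_{1,E}^2 ,
\]
and in particular $|v-w_I|_{1,E}\le |v-q|_{1,E}\le |v-u|_{1,E}+|u-q|_{1,E}$.

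\emph{Step 3 (conclusion).} Choose $q$ with $\int_E\nabla(u-q)=\mathbf 0$. Since $E$ is convex with diameter at most $\sqrt2\,h$, the Poincar\'e--Wirtinger inequality \cite{payne1960optimal} applied componentwise to $\nabla u$ yields $|u-q|_{1,E}\lesssim h|u|_{2,E}$. The triangle inequality then gives
\[
|u-w_I|_{1,E}\le |u-v|_{1,E}+|v-w_I|_{1,E}\le 2|u-v|_{1,E}+|u-q|_{1,E}\lesssim h|u|_{2,E},
\]
with constants independent of $\varepsilon_1,\varepsilon_2,h$.
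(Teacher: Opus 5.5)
Your proof is correct and follows essentially the paper's route: a two-triangle splitting of $E$ with the maximum-angle interpolation estimate from Apel, then the energy-minimality of the harmonic interpolant $w_I$ combined with Poincar\'e--Wirtinger applied to $\nabla u$. The only differences are minor: your Pythagorean identity with a linear $q$ compresses the paper's splitting $u=u_\partial+u_b$, and you check the angle condition explicitly for the diagonal $P_2P_3$, which the paper leaves implicit (the angle at $P_4$ is in fact at most $\pi/2$, so $T_2$ is non-obtuse and your $3\pi/4$ bound is merely loose).
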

\begin{proof}
We consider a partition obtained subdividing $E$ into the two triangles obtained by drawing a diagonal to $E$. We denote by $q_1$ the piecewise (first order) polynomial function on such sub-triangulation obtained by nodal interpolation of $u$ at the vertexes.
It holds (see for instance \cite{apel-book}): 
\begin{equation}\label{Q:pol-approx}
| u - q_1 |_{H^1(E)} \lesssim h |u|_{H^2(E)} \, .
\end{equation}

Let us now write $u = u_\partial + u_b$, defined by
$$
\Delta u_\partial = 0 \, , \ u_\partial |_{\partial E} = u |_{\partial E} \ \textrm{ and } \
\Delta u_b = \Delta u \, , \ u_b |_{\partial E} = 0 \, .
$$
Then 
\begin{equation}\label{Q:ing1}
| u - w_I |_{H^1(E)} \le | u_\partial - w_I |_{H^1(E)} + | u_b |_{H^1(E)} \, .
\end{equation}
Since $q_1|_{\partial E} = w_I |_{\partial E}$ and $\Delta (u_\partial-w_I)=0$ in $E$, properties of harmonic functions immediately yield (also recalling \eqref{Q:pol-approx})
\begin{equation}\label{Q:ing2}
| u_\partial - w_I |_{H^1(E)} \le | u_\partial - q_1 |_{H^1(E)} \le | u - q_1 |_{H^1(E)} + | u_b |_{H^1(E)}
\lesssim h  |u|_{H^2(E)} + | u_b |_{H^1(E)} \, .
\end{equation}
It is immediate to check, integrating by parts, that $\nabla u_\partial$ and $\nabla u_b$ are orthogonal with respect to the $L^2(E)$ scalar product. Furthermore, let $\overline{\nabla u}$ denote the integral average of $\nabla u$ on $E$. Then, by an obvious orthogonality and Poincar\'e estimates for zero average functions
$$
\begin{aligned}
| u_b |_{H^1(E)}^2 & = \int_E \nabla u_b \cdot \nabla u = \int_E \nabla u_b \cdot (\nabla u
- \overline{\nabla u}) \le | u_b |_{H^1(E)} \| \nabla u - \overline{\nabla u} \|_{L^2(E)} \\
& \lesssim | u_b |_{H^1(E)}  h |\nabla u|_{H^{1}(E)} = 
| u_b |_{H^1(E)} h |u|_{H^2(E)} \, .
\end{aligned}
$$
The proof is concluded combining the above bound with \eqref{Q:ing1} and \eqref{Q:ing2}.
\end{proof}

We now need to tackle the more complex interpolation into the {\it enhanced} VEM space $\VE$, defined in \eqref{vem:choice:2}. 
We start by the following result. 

\begin{lemma}\label{Q:lem:2}
Let $u$ and $w_I$ as in Lemma \ref{Q:lem:1}. Let $u_I = \IVE u$ denote the nodal (vertex) interpolant of $u$ in $\VE$. 
Then it holds
$$
| u_I - w_I |_{H^1(E)} \lesssim \varepsilon_2^{-1} \| w_I - \Pi^\nabla w_I \|_{L^2(E)} \, .
$$
\end{lemma}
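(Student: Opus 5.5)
Set $\delta := u_I - w_I$. Both $u_I\in\VE$ and $w_I\in\WE$ interpolate $u$ at the vertices, so they have the same trace on $\partial E$. Hence $\delta\in H^1_0(E)$ and $\Delta\delta = \Delta u_I \in \Pol_1(E)$. The plan is to control $|\delta|_{1,E}$ through $\Delta\delta$ and then to identify $\Delta\delta$ through the enhancement constraint.

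First, integrating by parts (since $\delta$ vanishes on $\partial E$) gives
\[
|\delta|_{1,E}^2 = -\int_E \Delta\delta\,\delta = -\int_E \Delta u_I\,(u_I - w_I).
\]
Because $\Delta u_I\in\Pol_1(E)$, the enhancement condition defining $\VE$ lets us replace $u_I$ by $\Pi^\nabla u_I$. Moreover, $\Pi^\nabla$ depends only on boundary values: $\nabla\Pi^\nabla v$ is determined by $\int_{\partial E} v\,\bn$, and the constant is fixed by $\int_{\partial E} v$. Since $u_I$ and $w_I$ share the boundary trace, $\Pi^\nabla u_I=\Pi^\nabla w_I$. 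Therefore
\[
|\delta|_{1,E}^2 = -\int_E \Delta\delta\,(\Pi^\nabla w_I - w_I) \le \|\Delta\delta\|_{0,E}\,\|w_I-\Pi^\nabla w_I\|_{0,E}.
\]

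Second, I would bound $\|\Delta\delta\|_{0,E}$ using constant (A3) of Lemma \ref{lemma:CC}. Since $\delta\in H^1_0(E)$ with $\Delta\delta\in\Pol_1(E)$, it is admissible in \eqref{aux_C_3}. The proof of Proposition \ref{prop:ani:C123} gives $\Cthr\lesssim\varepsilon_2^{-2}$ on this trapezoid, hence
\[
\|\Delta\delta\|_{0,E}\lesssim \varepsilon_2^{-1}|\delta|_{1,E}.
\]
Inserting this into the previous inequality and dividing by $|\delta|_{1,E}$ (the case $\delta=0$ being trivial) yields the claim.

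The only delicate point is the estimate $\Cthr\lesssim\varepsilon_2^{-2}$, which must hold uniformly in $h$ and $\varepsilon_1$. It relies on the cubic-bubble argument on the triangle $T\subset E$ with legs $\varepsilon_1,\varepsilon_2$. That argument requires $\int_Q(\Delta w)^2\lesssim\int_T(\Delta w)^2$ for linear polynomials, and this has already been established in Proposition \ref{prop:ani:C123}, so I would simply invoke it. One remaining check is that $\Pi^\nabla$ is applied consistently on both spaces, as defined in \eqref{eq:def-PinablaE}; this holds.
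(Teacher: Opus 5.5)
Your proof is correct and follows the paper's own argument. You integrate by parts using $u_I=w_I$ on $\partial E$, then use the enhancement condition and the fact that $\Pi^\nabla$ depends only on boundary values to replace $u_I$ by $\Pi^\nabla w_I$, and finish with Cauchy--Schwarz and $\|\Delta\delta\|_{0,E}\lesssim\varepsilon_2^{-1}|\delta|_{1,E}$ from Proposition~\ref{prop:ani:C123}. One small correction to your closing remark: for that bound to hold uniformly, $T$ must be the triangle with vertices $(0,0)$, $(0,h)$, $(\varepsilon_2,0)$ (legs $h$ and $\varepsilon_2$, i.e.\ half of $Q$). With legs $\varepsilon_1,\varepsilon_2$, neither the comparison $\int_Q(\Delta w)^2\lesssim\int_T(\Delta w)^2$ (which degenerates as $\varepsilon_1/h\to 0$) nor the $\varepsilon_2^{-1}$ inverse estimate for $b\,\Delta w$ (which degrades to $\varepsilon_1^{-1}$) would be uniform.
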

\begin{proof}
First noting that $(u_I - w_I)$ vanishes on $\partial E$ and integrating by parts, then observing  that $\Delta (u_I - w_I) \in \Pol_1(E)$ 
and by definition of $\VE$, we get
\begin{equation}\label{Q:ing3}
| u_I - w_I |_{H^1(E)}^2 = \int_E \nabla (u_I - w_I) \cdot \nabla (u_I - w_I)
= - \int_E (u_I - w_I) \Delta (u_I - w_I) = - \int_E (\Pi^\nabla u_I - w_I) \Delta (u_I - w_I) \, .
\end{equation}
We now recall that the $\Pi^\nabla$ operator only depends on the boundary values of the function, and that $u_I = w_I$ on $\partial E$.
Therefore, starting from identity \eqref{Q:ing3} and recalling the bound for $\Cthr$ obtained in Proposition \ref{prop:ani:C123}, we derive
$$ 
\begin{aligned}
| u_I - w_I |_{H^1(E)}^2 & = - \int_E (\Pi^\nabla w_I - w_I) \Delta (u_I - w_I)
\le \| w_I - \Pi^\nabla w_I \|_{L^2(E)} \| \Delta (u_I - w_I) \|_{L^2(E)}  \\
& \lesssim \varepsilon_2^{-1} \| w_I - \Pi^\nabla w_I \|_{L^2(E)}  | u_I - w_I |_{H^1(E)} \, .
\end{aligned}
$$
The proof is concluded.
\end{proof}

We now need a result stating the boundedness of the $\Pi^\nabla$ operator on anisotropic quads.
\begin{lemma}\label{Q:lem:3}
It holds
$$
\| \Pi^\nabla v \|_{L^2(E)} \lesssim 
\varepsilon_2^{1/2} \| v \|_{L^2(\widehat{e})} +
h^{1/2} \varepsilon_2 \| \partial_x v \|_{L^2(e_1 \cup e_2)} 
$$
for all $v \in H^2(E)$  with $v|_{\widetilde{e}} = 0$.
\end{lemma}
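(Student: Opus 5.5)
Since $\Pi^\nabla v$ is affine, I will write $\Pi^\nabla v(x,y) = a + b_x x + b_y y$ on the quadrilateral $E$ of Figure \ref{fig:quad:X}, with $b_x=\partial_x\Pi^\nabla v$ and $b_y=\partial_y\Pi^\nabla v$ constant. The key fact is that $\|\Pi^\nabla v\|_{L^2(E)}$ is controlled by $|E|^{1/2}$ times the sup of $|\Pi^\nabla v|$. Moreover $|E|\simeq h\varepsilon_2$, because $\varepsilon_1\le\varepsilon_2$ and $|E|=h(\varepsilon_1+\varepsilon_2)/2$. It therefore suffices to show that $|a|$, $|b_x|\varepsilon_2$ and $|b_y| h$ are each bounded by
\[
h^{-1/2}\| v \|_{L^2(\widehat{e})} + \varepsilon_2^{1/2} \| \partial_x v \|_{L^2(e_1 \cup e_2)}.
\]
This is the right-hand side of the statement divided by $(h\varepsilon_2)^{1/2}$. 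I will then bound $\|\Pi^\nabla v\|_{L^2(E)}$ by $|E|^{1/2}\big(|a| + |b_x|\varepsilon_2 + |b_y| h\big)$.

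\emph{Gradient.} I will use \eqref{eq:kk:1}, $\nabla\Pi^\nabla v = |E|^{-1}\int_{\partial E} v\,\mathbf n_E$, together with $v|_{\widetilde e}=0$; this leaves only the contributions of $\widehat e$, $e_1$ and $e_2$. For $b_x$ only $\widehat e$ contributes, since $n_x=0$ on $e_1,e_2$. Thus $|b_x|\le |E|^{-1} h^{1/2}\|v\|_{L^2(\widehat e)}$, and hence $|b_x|\varepsilon_2\lesssim h^{-1/2}\|v\|_{L^2(\widehat e)}$.

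For $b_y$ only $e_1$ and $e_2$ contribute, which gives $|b_y|\le |E|^{-1}\big(\int_{e_1}|v|+\int_{e_2}|v|\big)$. Along $e_i$, I will write $v(x,y_i) = -\int_x^{\varepsilon_i}\partial_x v$, using that $v$ vanishes at the right endpoint, which lies on $\widetilde e$. A one-dimensional Cauchy--Schwarz/Hardy bound then gives $\int_{e_i}|v|\le \varepsilon_i^{3/2}\|\partial_x v\|_{L^2(e_i)}$. Therefore $|b_y| h\lesssim (h\varepsilon_2)^{-1} h\,\varepsilon_2^{3/2}\|\partial_xv\|_{L^2(e_1\cup e_2)} = \varepsilon_2^{1/2}\|\partial_x v\|_{L^2(e_1\cup e_2)}$, as needed. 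This step is the reason the $\partial_x$ term appears. Note that I deliberately do not use the bounds on $e_1,e_2$ through $\|v\|_{L^\infty}$ as in Proposition \ref{prop:ani:C123}; the vanishing on $\widetilde e$ replaces that.

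\emph{Constant part.} The condition $\int_{\partial E}(v-\Pi^\nabla v)=0$ will control the constant $a$. I can choose the origin so that $|b_x x+b_y y|\le |b_x|\varepsilon_2+|b_y|h$ on $E$. Then
\[
|a|\,|\partial E| \le \int_{\partial E}|v| + |\partial E|\,(|b_x|\varepsilon_2+|b_y|h).
\]
Since $|\partial E|\ge h$, this yields $|a|\le h^{-1}\int_{\partial E}|v| + \ldots$. Here $\int_{\widehat e}|v|\le h^{1/2}\|v\|_{L^2(\widehat e)}$, and the $e_i$ contributions are already bounded as above by $\varepsilon_2^{3/2}\|\partial_x v\|$, which after multiplying by $h^{-1}$ is even smaller (using $\varepsilon_2\le h$). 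Assembling the three bounds gives the lemma.

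The main obstacle I expect is bookkeeping to make sure no factor $h/\varepsilon_2$ sneaks in. The potential danger is that $|\partial E|$ is comparable to $h$ rather than to $\varepsilon_2$. However, the averaging over $\partial E$ only helps, so the critical point is really the $b_y$ estimate, and that is handled by the Hardy-type trace bound on the short edges.
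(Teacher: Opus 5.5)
Your proposal is correct and follows essentially the same route as the paper. It bounds $\nabla\Pi^\nabla v$ via $|E|^{-1}\int_{\partial E} v\,\mathbf n$, using $v|_{\widetilde e}=0$ together with a one-dimensional Poincar\'e inequality on $e_1,e_2$. It controls the constant part through $\int_{\partial E}\Pi^\nabla v=\int_{\partial E}v$, reaches every point of $E$ with horizontal and vertical displacements of length at most $\varepsilon_2$ and $h$, and concludes with $\|\Pi^\nabla v\|_{L^2(E)}\le |E|^{1/2}\|\Pi^\nabla v\|_{L^\infty(E)}$. The only cosmetic difference is that you bound the constant coefficient $a$ directly, whereas the paper evaluates at a point $\xi\in\partial E$ where $\Pi^\nabla v$ equals its boundary mean; the two are equivalent.
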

\begin{proof}
We note that $\nabla \Pi^\nabla v$ is constant; therefore, by definition and integration by parts, 
\begin{equation}\label{Q:ing4}
\nabla \Pi^\nabla v = |E|^{-1} \int_E \nabla \Pi^\nabla v = 
|E|^{-1} \int_E \nabla v =  |E|^{-1} \int_{\partial E} v \, {\bf n} \, ,
\end{equation}
with ${\bf n}$ denoting the unit outward normal to the boundary. \\
By a H\"older inequality, from \eqref{Q:ing4} we easily obtain (also since $(1/2) h \varepsilon_2 \le |E| \le h \varepsilon_2$)
\begin{equation}\label{Q:ing5}
| \partial_x \Pi^\nabla v | \leq |E|^{-1} |\! \int_{\widehat{e}} v \, | \lesssim h^{-1/2} \varepsilon_2^{-1} \| v \|_{L^2(\widehat{e})} \, .
\end{equation}
By an analogous argument, also using a standard Poincar\'e in one dimension along each edge (recall that $v|_{\widetilde{e}} = 0$),
\begin{equation}\label{Q:ing6}
| \partial_y \Pi^\nabla v |  \lesssim |E|^{-1} \varepsilon_2^{1/2} \| v \|_{L^2(e_1 \cup e_2)}
\lesssim h^{-1} \varepsilon_2^{1/2} \| \partial_x v \|_{L^2(e_1 \cup e_2)} \, .
\end{equation}
From the first identity in \eqref{eq:kk:1} it easily follows that it exists (at least) a point $\xi \in \partial E$ such that
$\Pi^\nabla v (\xi) =  |\partial E|^{-1} \int_{\partial E} v$. Any other point in $E$ can be reached from $\xi$ by following an horizontal path of length equal or less than $\varepsilon_2$ plus a vertical path of length equal or less than $h$. Therefore, also using \eqref{Q:ing5} and \eqref{Q:ing6}, by some standard calculation we obtain
$$
\begin{aligned}
\| \Pi^\nabla v \|_{L^\infty(E)} & \le |\partial E|^{-1} | \int_{\partial E} v | + \varepsilon_2 | \partial_x \Pi^\nabla v | + h | \partial_y \Pi^\nabla v | \\
& \lesssim h^{-1/2} \| v \|_{L^2(\partial E)} + h^{-1/2} \| v \|_{L^2(\widehat{e})} + \varepsilon_2^{1/2} \| \partial_x v \|_{L^2(e_1 \cup e_2)} \\
& \lesssim h^{-1/2} \| v \|_{L^2(\widehat{e})} + \varepsilon_2^{1/2} \| \partial_x v \|_{L^2(e_1 \cup e_2)} \, ,
\end{aligned}
$$
where the last step follows recalling $v|_{\widetilde{e}} = 0$ and again a one dimensional Poincar\'e inequality on the horizontal edges.
The result now follows immediately by a H\"older inequality.
\end{proof}

We can finally state the following approximation result, which is precisely the error bound in energy contained in \eqref{eq:interp-err-H2} for the considered quadrilaterals.
\begin{theorem}[$H^1$ interpolation error bound in the regular case]\label{Q:theo:int}
Let $u \in H^2(E)$ vanish at each vertex of $E$ sitting on $\partial\Omega$.  
Then it holds
$$
| u - \IVE u |_{H^1(E)} \lesssim h |u|_{H^2(E)} \, .
$$
\end{theorem}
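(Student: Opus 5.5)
The proof will chain the three preceding lemmas. By the triangle inequality,
\[
|u-\IVE u|_{H^1(E)} \le |u-w_I|_{H^1(E)} + |w_I-u_I|_{H^1(E)},
\]
where $w_I$ is the interpolant in $\WE$ and $u_I=\IVE u$. Lemma \ref{Q:lem:1} bounds the first term by $h|u|_{H^2(E)}$. Lemma \ref{Q:lem:2} reduces the second term to proving
\[
\varepsilon_2^{-1}\| w_I-\Pi^\nabla w_I\|_{L^2(E)} \lesssim h|u|_{H^2(E)}.
\]

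For this bound I will use that $\Pi^\nabla$ reproduces $\Pol_1(E)$. Let $q\in\Pol_1(E)$ vanish on $\widetilde e$, for example the polynomial ${\cal P}^1_E u$ of Remark \ref{rem:polapprox:H2}(2), or the linear interpolant on the triangle containing $\widetilde e$. Then
\[
w_I-\Pi^\nabla w_I=(w_I-q)-\Pi^\nabla(w_I-q),
\]
and $v:=w_I-q$ vanishes on $\widetilde e$, since $w_I$ vanishes at both endpoints of $\widetilde e$ (they lie on $\partial\Omega$) and is linear there. Lemma \ref{Q:lem:3} then gives
\[
\|v-\Pi^\nabla v\|_{L^2(E)} \lesssim \|v\|_{L^2(E)}+\varepsilon_2^{1/2}\|v\|_{L^2(\widehat e)}+h^{1/2}\varepsilon_2\|\partial_x v\|_{L^2(e_1\cup e_2)}.
\]
Since $v|_{\partial E}$ is piecewise linear, it is simplest to work directly with boundary values. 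Set $q$ to be the affine function that interpolates $w_I$ at the three vertices $P_1,P_3,P_4$. Then $v$ is nonzero only at the vertex $P_2$, with $v(P_2)=u(P_2)-q(P_2)$, a second-order interpolation remainder.

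This gives explicit bounds for each term. The edge terms satisfy $\|v\|_{L^2(\widehat e)}\sim h^{1/2}|v(P_2)|$ and $\|\partial_x v\|_{L^2(e_2)}\sim \varepsilon_2^{-1/2}|v(P_2)|$. For the interior term, $\|v\|_{L^2(E)}$ splits as $\|w_I-u\|_{L^2(E)}+\|u-q\|_{L^2(E)}$. The first part is handled by the anisotropic Poincaré inequality in the $x$ direction (width $\varepsilon_2$) together with Lemma \ref{Q:lem:1}, giving a bound $\lesssim \varepsilon_2 h|u|_{H^2(E)}$. The second part uses anisotropic interpolation estimates on the triangle $P_1P_3P_4$, which is right-angled-like with a maximal angle bounded away from $\pi$ (cf.\ \cite{apel-book}). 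Collecting terms, everything reduces to the single estimate
\[
|v(P_2)|\lesssim \varepsilon_2^{1/2}h^{1/2}|u|_{H^2(E)}.
\]

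This last estimate is the main obstacle, because a pointwise bound on $u$ from $H^2$ requires careful anisotropic scaling. I would first try writing $v(P_2)$ as the second difference of $u$ along the quadrilateral and applying a trace/Sobolev embedding on the reference square after the map $(x,y)\mapsto(xh/\varepsilon_2,y)$. That map scales $|u|_{H^2}$ with weights $\partial_{xx}\sim(\varepsilon_2/h)^2$, $\partial_{xy}\sim\varepsilon_2/h$, $\partial_{yy}\sim 1$ and area factor $h/\varepsilon_2$. The $\partial_{yy}$ contribution gives $h^2\cdot(\varepsilon_2 h)^{-1/2}|u|_{H^2}$, which is too weak. To get the stated rate I would exploit the vanishing of $u$ at $P_3,P_4$ and use $q$ only to absorb the $y$-linear part; if this direct route falls short, I would fall back on the extension to the shape-regular patch $\widetilde E$ as in Remark \ref{rem:polapprox:H2}(2).
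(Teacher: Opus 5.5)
Your opening reduction (triangle inequality with Lemmas \ref{Q:lem:1} and \ref{Q:lem:2}, subtracting some $q\in\Pol_1(E)$ that vanishes on $\widetilde e$, then Lemma \ref{Q:lem:3}) is exactly the paper's. Both of your subsequent steps, however, contain genuine gaps.

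\textbf{The interior term.} You split $\|v\|_{L^2(E)}\le\|w_I-u\|_{L^2(E)}+\|u-q\|_{L^2(E)}$ and apply the anisotropic Poincar\'e inequality in $x$ to $w_I-u$. That inequality requires the function to vanish on $\widetilde e$, but $u$ vanishes only at $P_3,P_4$, not along the chord $\widetilde e$. The split is also irreparably lossy. For $u=y(h-y)$ all four vertex values vanish, so $w_I=q=v=0$. Yet $\varepsilon_2^{-1}\|w_I-u\|_{L^2(E)}=\varepsilon_2^{-1}\|u-q\|_{L^2(E)}\simeq (h/\varepsilon_2)\,h\,|u|_{H^2(E)}$. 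The Poincar\'e inequality must be applied to $v$ itself, which does vanish on $\widetilde e$. The triangle inequality comes only afterwards, on the $x$-derivative: $\|\partial_x v\|_{L^2(E)}\le |u-w_I|_{H^1(E)}+\|\partial_x(u-q)\|_{L^2(E)}$.

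\textbf{The vertex estimate.} More seriously, the estimate $|v(P_2)|\lesssim(\varepsilon_2h)^{1/2}|u|_{H^2(E)}$, to which you reduce everything, is not merely hard. For your choice of $q$ it is false uniformly in $\varepsilon_1/\varepsilon_2$. Solving for the affine interpolant at $P_1,P_3,P_4$ gives, when $u(P_3)=u(P_4)=0$,
\[ v(P_2)=u(P_2)-\varepsilon_2\,\frac{u(P_1)-u(P_3)}{\varepsilon_1}. \]
As $\varepsilon_1\to0$ the triangle $P_1P_3P_4$ degenerates, and extrapolating to $P_2$ brings in a point value of $\partial_x u$ at $P_1$, which $H^2$ does not control in two dimensions. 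Concretely, take $\varepsilon_2=h=1$ and $u(X)=x\log\log(1/|X-P_1|)$ near $P_1$, cut off and corrected by an affine function so that it vanishes at $P_3,P_4$. Then $|u|_{H^2(E)}$ stays bounded while $|v(P_2)|$ grows like $\log\log(1/\varepsilon_1)$. No anisotropic scaling can repair this. Your fallback to the patch $\widetilde E$ is unspecified, and it would in any case bound things by $|\widetilde u|_{H^2(\widetilde E)}$ rather than $|u|_{H^2(E)}$.

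\textbf{The missing idea.} Avoid point values entirely. Take $p_1$ vanishing on $\widetilde e$ with $\int_E\partial_x(u-p_1)=0$, so that Poincar\'e--Wirtinger gives $\|\partial_x(u-p_1)\|_{L^2(E)}\lesssim h|u|_{H^2(E)}$. Then control every term of Lemma \ref{Q:lem:3} through $\partial_x(w_I-p_1)$:
\begin{itemize}
\item On $\widehat e$: $\|w_I-p_1\|_{L^2(\widehat e)}\lesssim\varepsilon_2^{1/2}\|\partial_x(w_I-p_1)\|_{L^2(E)}$.
\item On $e_i$: the constant $\partial_x w_I$ is the edge mean of $\partial_x u$, so $\|\partial_x(w_I-p_1)\|_{L^2(e_i)}\le\|\partial_x(u-p_1)\|_{L^2(e_i)}$. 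The scaled trace inequality $h^{1/2}\|g\|_{L^2(e_i)}\lesssim\|g\|_{L^2(E)}+h|g|_{H^1(E)}$ then bounds this by $h|u|_{H^2(E)}$.
\end{itemize}
With this $p_1$ the vertex values automatically satisfy $|(w_I-p_1)(P_i)|\lesssim(\varepsilon_2h)^{1/2}|u|_{H^2(E)}$. The obstruction you met was created by forcing $q(P_1)=u(P_1)$.
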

\begin{proof}
Hereafter, we refer again to Figure \ref{fig:quad:X} for the notation, and we set $u_I := \IVE u$ for short. As a preliminary observation we note that, for any function $\psi \in H^1(E)$ vanishing on $\widetilde{e}$, a standard integration and density argument, in the spirit of the Poincar\'e inequality, shows that 
\begin{equation}\label{ani-poinc}
\| \psi \|_{L^2(E)} \lesssim \varepsilon_2 \| \partial_x \psi \|_{L^2(E)} 
\ \textrm{ and } \
\| \psi \|_{L^2(\widehat{e})} \lesssim \varepsilon_2^{1/2} \| \partial_x \psi \|_{L^2(E)}
\, .
\end{equation}
Furthermore, let us define $p_1 \in \Pol_1(E)$ by
$$
p_1|_{\widetilde{e}} = 0 \ , \quad \int_E \partial_x (u-p_1) = 0 \, .
$$
A classical Poincar\'e inequality for functions with zero integral (which can be proved by a standard mapping argument) yields
\begin{equation}\label{p1-approx}
\| \partial_x (u-p_1) \|_{L^2(E)} \lesssim h |u|_{H^2(E)} \, .
\end{equation}
We now start by combining a triangle inequality with Lemmas \ref{Q:lem:1} and \ref{Q:lem:2}, leading to
\begin{equation}\label{pileup-1}
| u - u_I |_{H^1(E)} \le | u - w_I |_{H^1(E)} + | u_I - w_I |_{H^1(E)} 
\lesssim h |u|_{H^2(E)} + \varepsilon_2^{-1} \| w_I - \Pi^\nabla w_I \|_{L^2(E)} \, .
\end{equation}
The second term in the right hand side here above is bounded by the triangle inequality and recalling that $\Pi^\nabla$ preserves first order polynomials:
\begin{equation}\label{pileup-2}
\varepsilon_2^{-1} \| w_I - \Pi^\nabla w_I \|_{L^2(E)} \le
\varepsilon_2^{-1} \| w_I - p_1 \|_{L^2(E)} + \varepsilon_2^{-1} \| \Pi^\nabla (w_I - p_1) \|_{L^2(E)}
=: T_1 + T_2 \, .
\end{equation}
We now note that, by definition, both $w_I$ and $p_1$ vanish on $\widetilde{e}$. Therefore we can apply \eqref{ani-poinc}, followed by a triangle inequality and \eqref{p1-approx}, yielding
\begin{equation}\label{pileup-3}
\begin{aligned}
T_1 & \lesssim \| \partial_x (w_I - p_1) \|_{L^2(E)}
\le \| \partial_x (w_I - u) \|_{L^2(E)} + 
\| \partial_x (u - p_1) \|_{L^2(E)} \\
& \le |u - w_I|_{H^1(E)} + h |u|_{H^2(E)} 
\lesssim h |u|_{H^2(E)} \, ,
\end{aligned}
\end{equation}
where in the last step we used Lemma \ref{Q:lem:1}. 
Regarding term $T_2$, we make use of Lemma \ref{Q:lem:3} (note again that both $w_I$ and $p_1$ vanish on $\widetilde{e}$)
\begin{equation}\label{pileup-4}
T_2
\lesssim 
\varepsilon_2^{-1/2} \| (w_I - p_1) \|_{L^2(\widehat{e})} +
h^{1/2} \| \partial_x (w_I - p_1) \|_{L^2(e_1 \cup e_2)} 
=: T_3 + T_4 \, .
\end{equation}
Term $T_3$ is bounded immediately first applying \eqref{ani-poinc} and then the same steps as in \eqref{pileup-3}, obtaining
\begin{equation}\label{pileup-5}
T_3 \lesssim \| \partial_x (w_I - p_1) \|_{L^2(E)} \lesssim h |u|_{H^2(E)} \, .
\end{equation}
In order to handle term $T_4$, we start by a triangle inequality 
\begin{equation}\label{noteworthy}
T_4 \le h^{1/2} \| \partial_x (w_I - u) \|_{L^2(e_1 \cup e_2)} 
+ h^{1/2} \| \partial_x (u - p_1) \|_{L^2(e_1 \cup e_2)} 
\, .
\end{equation}
We observe that, since $w_I$ is affine on each edge $e_i$, $i=1,2$, and interpolatory at the endpoints, the function $\partial_x w_I|_{e_i}$ corresponds to the $L^2(e_i)$ projection of $\partial_x u|_{e_i}$ on constant functions. As a consequence it is trivial to check that
$$
\| \partial_x (w_I - u) \|_{L^2(e_1 \cup e_2)} \le \| \partial_x (u - p_1) \|_{L^2(e_1 \cup e_2)} \, .
$$
First using the above observation in \eqref{noteworthy}, then by a scaled trace inequality 
\begin{equation}\label{pileup-6}
T_4 \le h^{1/2} \| \partial_x (u - p_1) \|_{L^2(e_1 \cup e_2)}
\lesssim \| \partial_x (u - p_1) \|_{L^2(E)}
+ h | (u - p_1) |_{H^1(E)} 
\lesssim h |u|_{H^2(E)} \, ,
\end{equation}
where we also used \eqref{p1-approx} in the last inequality.
The proof is concluded combining the bounds \eqref{pileup-1}, \eqref{pileup-2}, \eqref{pileup-3}, \eqref{pileup-4}, \eqref{pileup-5}, and \eqref{pileup-6}.
\end{proof}

We now extend the above result to estimating the interpolation error in the $L^2$-norm.

\begin{corollary}[$L^2$ interpolation error bound in the regular case]\label{Q:theo:int:L2}
Let $u \in H^2(E)$ vanish at each vertex of $E$ sitting on $\partial\Omega$. 
Then it holds
$$
\| u - \IVE u \|_{L^2(E)} \lesssim h^2 |u|_{H^2(E)} \, .
$$
\end{corollary}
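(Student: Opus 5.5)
The plan is to exploit the fact that $u - \IVE u$ vanishes on the edge $\widetilde{e}$, and then apply the anisotropic Poincar\'e inequality \eqref{ani-poinc} in the short direction $x$. Indeed, both endpoints of $\widetilde{e}$ are vertices of $E$ lying on $\partial\Omega$. So $u$ vanishes there by assumption, and $\IVE u$, being affine on $\widetilde{e}$ and interpolatory at its endpoints, vanishes identically on $\widetilde{e}$. However, $u$ itself need not vanish along the whole of $\widetilde{e}$, only at its endpoints. Hence we cannot apply \eqref{ani-poinc} directly to $u-\IVE u$.

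To fix this, we split through the linear function $p_1$ defined in the proof of Theorem \ref{Q:theo:int}, which vanishes on $\widetilde{e}$ and satisfies $\int_E\partial_x(u-p_1)=0$:
\[
\|u-\IVE u\|_{L^2(E)} \le \|u-p_1\|_{L^2(E)} + \|p_1-\IVE u\|_{L^2(E)} .
\]
\emph{Second term.} The function $p_1-\IVE u$ vanishes on $\widetilde{e}$. By \eqref{ani-poinc} and $\varepsilon_2\le h$ it is bounded by
\[
\varepsilon_2\|\partial_x(p_1-\IVE u)\|_{L^2(E)} \le h\big(\|\partial_x(u-p_1)\|_{L^2(E)} + |u-\IVE u|_{H^1(E)}\big) \lesssim h^2|u|_{H^2(E)},
\]
where the last step uses \eqref{p1-approx} and Theorem \ref{Q:theo:int}.

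\emph{First term.} This is the main obstacle: we need a full $h^2$ polynomial approximation estimate in $L^2$ on the thin element $E$, using a linear function constrained to vanish on $\widetilde{e}$. I would first try the route of Remark \ref{rem:polapprox:H2}(2). Work on the shape-regular patch $\widetilde{E}$ formed by $E$ and the adjacent background square, and replace $p_1$ by a linear function vanishing on (the line of) $\widetilde{e}$ that approximates $\wt u$ to order $h^2$ in $L^2(\widetilde{E})$ and order $h$ in $H^1(\widetilde{E})$. This is possible because $\wt u$ vanishes at both endpoints of $\widetilde{e}$, which are at distance comparable to $h$ apart. A Bramble--Hilbert argument on $\widetilde{E}$ with the two point-constraints then gives such an approximant. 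The price is that $|u|_{H^2(E)}$ is replaced by $|\wt u|_{H^2(\widetilde{E})}$, which is acceptable globally by \eqref{eq:reg-u-tilde}.

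Alternatively, one can stay local. Write $u-p_1=(u-\ell)+(\ell-p_1)$, where $\ell$ is the linear function interpolating $u$ along $\widetilde{e}$, i.e.\ equal to $u$ at the endpoints of $\widetilde{e}$ (so $\ell=0$ on $\widetilde{e}$). The function $u-\ell$ does not vanish on $\widetilde{e}$, but a one-dimensional interpolation estimate gives $\|u-\ell\|_{L^2(\widetilde{e})}\lesssim h^{3/2}|u|_{H^2(E)}$ via a trace bound, and \eqref{ani-poinc} with a boundary term then handles the rest. This local route would need care with the trace constant on the thin element, so I expect the patch argument to be the cleaner choice. If the error estimate for the second term is instead obtained with $p_1$ replaced by this new approximant, the same bound holds: redo \eqref{p1-approx} for it on $\widetilde{E}$, and combine with Theorem \ref{Q:theo:int}.
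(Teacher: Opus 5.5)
Your bound on the second term is correct. The difficulty is in the first term, and the route you prefer for it does not prove the corollary as stated.

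\textbf{The patch route proves a different statement.} The corollary is local: $u$ is an arbitrary function in $H^2(E)$, and the right-hand side is $h^2|u|_{H^2(E)}$. The patch argument of Remark~\ref{rem:polapprox:H2}(2) only yields $h^2|\wt u|_{H^2(\widetilde E)}$, and this cannot be converted back. For a general $u\in H^2(E)$ there is no $\wt u$, and no $H^2$ extension to a shape-regular patch of size $h$ controls the seminorm uniformly in $h/\varepsilon_2$. For instance, let $E$ be a rectangle of width $\varepsilon_2$ and $u=y(y-h)$. Then $u$ vanishes at all vertices, so $\IVE u=0$ and $\|u-\IVE u\|_{L^2(E)}\simeq \varepsilon_2^{1/2}h^{5/2}\simeq h^2|u|_{H^2(E)}$. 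Every extension, however, has $|\wt u|_{H^2(\widetilde E)}\gtrsim h\simeq (h/\varepsilon_2)^{1/2}|u|_{H^2(E)}$. The reason is that $\partial_y\wt u$ has trace $2y-h$ on the long side of $E$ shared with the rest of the patch, and its $H^1$-seminorm on that shape-regular remainder controls the $H^{1/2}$-seminorm of this trace, which is $\simeq h$. So the patch bound loses exactly the anisotropy factor the corollary is about. It would suffice for the global Assumption~\ref{ass:interp} via \eqref{eq:reg-u-tilde}, but that is a different statement.

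\textbf{The local route is the paper's proof, but your trace scaling is wrong.} The paper applies $\|w\|_{L^2(E)}\lesssim \varepsilon_2\|\partial_x w\|_{L^2(E)}+\varepsilon_2^{1/2}\|w\|_{L^2(\widetilde e)}$ directly to $w=u-\IVE u$. The volume term is $\lesssim h\varepsilon_2|u|_{H^2(E)}$ by Theorem~\ref{Q:theo:int}, so no split through $p_1$ is needed. Since $\IVE u=0$ on $\widetilde e$, the edge term is $\varepsilon_2^{1/2}\|u\|_{L^2(\widetilde e)}$, which is also what your split reduces to. (Your $\ell$ is not determined by its endpoint values on $\widetilde e$; any admissible choice, e.g.\ $\ell=0$, just returns $u$ there.)

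With $q\in\Pol_1(E)$ such that $\int_E\nabla(u-q)=\mathbf 0$, the edge term is handled in three steps:
\begin{enumerate}
\item one-dimensional interpolation on $\widetilde e$ gives $\|u\|_{L^2(\widetilde e)}\lesssim h\,|u-q|_{H^1(\widetilde e)}$;
\item an anisotropic trace inequality for $\nabla(u-q)$ bounds $|u-q|_{H^1(\widetilde e)}$ by volume norms on $E$;
\item the Poincar\'e bound $|u-q|_{H^1(E)}\lesssim h|u|_{H^2(E)}$ then gives $\|u\|_{L^2(\widetilde e)}\lesssim h^2\varepsilon_2^{-1/2}|u|_{H^2(E)}$.
\end{enumerate}
The prefactor $\varepsilon_2^{1/2}$ cancels exactly this blow-up. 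Your claimed $h^{3/2}|u|_{H^2(E)}$ is the shape-regular scaling and is false here: in the example above, $\|u\|_{L^2(\widetilde e)}\simeq h^{5/2}$, while $h^{3/2}|u|_{H^2(E)}\simeq h^2\varepsilon_2^{1/2}$.

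\textbf{On the care with the trace constant.} When $\varepsilon_1\ll\varepsilon_2$, the width $\kappa(y)$ near $P_3$ is much smaller than $\varepsilon_2$, so integrating along horizontal lines alone is not uniform. If $\varepsilon_2-\varepsilon_1\ge\varepsilon_2/2$, use vertical segments on the upper half of $\widetilde e$; otherwise horizontal lines already suffice. Either way one gets $\|v\|_{L^2(\widetilde e)}^2\lesssim \varepsilon_2^{-1}\|v\|_{L^2(E)}^2+\varepsilon_2\|\partial_x v\|_{L^2(E)}^2+h^2\varepsilon_2^{-1}\|\partial_y v\|_{L^2(E)}^2$. Applied to $v=\nabla(u-q)$, this still yields the $h^2\varepsilon_2^{-1/2}$ bound.
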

\begin{proof}
Also in this proof we follow the notation depicted in Figure \ref{fig:quad:X}, and we use the notation $u_I:= \IVE u$.
By exploiting the density $C^1(E) \subseteq H^1(E)$ and by a standard integration argument in the $x-$direction, it is easy to check that, for any $w \in H^1(E)$, it holds
\begin{equation}
\| w \|_{L^2(E)} \lesssim \varepsilon_2 \| \partial_x w \|_{L^2(E)} + \varepsilon_2^{1/2} \| w \|_{L^2(\widetilde{e})} \, . 
\end{equation}
Since interpolation preserves $\Pol_1(E)$, we can write 
$$
u-u_I = \widetilde{u} - \widetilde{u}_I \quad  \textrm{ where } \quad \widetilde{u} = u - p_1 \, ,
$$
with $p_1 \in \Pol_1(E)$ any polynomial such that $\int_E \nabla \widetilde{u} = {\bf 0}$.
We now first combine the two observations above (with the choice $w=u-u_I=\widetilde{u} - \widetilde{u}_I$), then apply Theorem \ref{Q:theo:int}, yielding
$$
\| \widetilde{u} - \widetilde{u}_I \|_{L^2(E)}
\lesssim \varepsilon_2 \| \partial_x (u - u_I) \|_{L^2(E)}
+ \varepsilon_2^{1/2} \| \widetilde{u} - \widetilde{u}_I \|_{L^2(\widetilde{e})} 
\lesssim h  \varepsilon_2 |u|_{H^2(E)} + \varepsilon_2^{1/2} \| \widetilde{u} - \widetilde{u}_I \|_{L^2(\widetilde{e})} \, .
$$
Note that, by definition, the trace of $\widetilde{u}_I$ on $\widetilde{e}$ corresponds to the first order polynomial interpolant of the trace of $\widetilde{u}$ on $\widetilde{e}$. Therefore we can bound the second term in the right-hand side using classical interpolation results for $\Pol_1$ polynomials in one dimension, followed by a standard anisotropic scaled trace inequality applied to $\nabla \widetilde{u}$:
$$
\varepsilon_2^{1/2} \| \widetilde{u} - \widetilde{u}_I \|_{L^2(\widetilde{e})} \lesssim
\varepsilon_2^{1/2} h | \widetilde{u} |_{H^1(\widetilde{e})}
\lesssim \varepsilon_2^{1/2} h \Big( \varepsilon_2^{-1/2} | \widetilde{u} |_{H^1(E)} + 
\varepsilon_2^{1/2} | \widetilde{u} |_{H^2(E)} \Big) \, .
$$
The first term on the right-hand side is bounded by a classical Poincar\'e inequality for functions with vanishing integral on convex domains (recall the definition of $\widetilde{u}$ above). We finally obtain
$$
\varepsilon_2^{1/2} \| \widetilde{u} - \widetilde{u}_I \|_{L^2(\widetilde{e})} 
\lesssim
h^2 | \widetilde{u} |_{H^2(E)} = h^2 | u |_{H^2(E)} \, .
$$
The proof now follows trivially combining all the above bounds and recalling $\varepsilon_2 \le h$.
\end{proof}

\subsection{Non-regular case}\label{sec:non-regular-case}

We now discuss the validity of the error bound \eqref{eq:interp-err} when the domain exhibits reentrant corners, so that the solution (and its extension in $\widetilde{\Omega}$) is the sum of a regular part and a singular part, according to \eqref{eq:split-u}-\eqref{eq:def-zeta} (and its extension \eqref{eq:split-u-ext}). For simplicity, let us ignore the notation for the extensions, so let us write $u=\psi+\sum_{j=1}^J \lambda_j \zeta_j$, with $\psi \in H^2(\wt{\Omega})$ and each $\zeta_j$ having a singularity of index $\gamma_j \in (\frac12,1)$ at a break point $\bz_j \in \partial\Omega$. Let us set again $\bar{\gamma} := \min_{1 \leq j \leq J} \gamma_j$. At first, we are going to establish the following result.
\begin{theorem}[$H^1$ interpolation error bound in the non-regular case]\label{thm:error-nrc} There exists a constant $C>0$ independent of $u$ and ${\cal T}_h$ such that
\begin{equation}\label{eq:error-nrc}
| u-\IVh u |_{1,\Omega_h} \leq C\, h^{\bar{\gamma}}  \enorm{u} \,.    
\end{equation}
\end{theorem}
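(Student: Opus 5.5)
By linearity of $\IVh$, split $u-\IVh u = (\psi-\IVh\psi) + \sum_{j=1}^J \lambda_j(\zeta_j-\IVh\zeta_j)$. The regular part is handled by Theorem \ref{theo:interp-error-H2} summed over elements, which gives $|\psi-\IVh\psi|_{1,\Omega_h}\lesssim h\|\psi\|_{2,\wt\Omega}\le h^{\bar\gamma}\enorm{u}$. Strictly, Theorem \ref{theo:interp-error-H2} requires vanishing at boundary vertices, which $\psi$ alone need not satisfy near $\bz_j$. Since $\zeta_j$ vanishes on $\partial\Omega$ and $\psi$ vanishes near $\bz_j$ (see \eqref{eq:split-u-Nj}), I expect this to be harmless; if not, I will apply the bound to $u$ itself on elements away from $\bz_j$. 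It then suffices to prove, for each $j$, that $|\zeta_j-\IVh\zeta_j|_{1,\Omega_h}\lesssim h^{\gamma_j}$. For this I partition the elements $E\in\mesh$ using the distance $r_E:=\mathrm{dist}(E,\bz_j)$, following the regions in the proof of Proposition \ref{prop:domain-error}: the elements with $r_E\ge A$, the dyadic layers $ah\le r_E<A$, and the $O(1)$ many elements with $r_E< ah$, whose number is bounded by Property \ref{prop:mesh-restrict}.

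\emph{Far and intermediate elements.} On every $E$ with $r_E\ge ah$, the function $\zeta_j$ is in $H^2(E)$ and satisfies $|\zeta_j|_{2,E}^2\lesssim\int_E r^{2(\gamma_j-2)}$, where $r$ denotes the distance from $\bz_j$. Theorem \ref{theo:interp-error-H2} (including the anisotropic quadrilaterals through Theorem \ref{Q:theo:int}) gives $|\zeta_j-\IVE\zeta_j|_{1,E}\lesssim h|\zeta_j|_{2,E}$. Summing over these elements yields
\[
h^2\int_{ah}^{A} r^{2\gamma_j-4}\,r\,dr\lesssim h^2 h^{2\gamma_j-2}=h^{2\gamma_j},
\]
because $2\gamma_j-2<0$, so the integral is dominated by its lower limit. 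For the elements with $r_E\ge A$ the same bound gives $O(h^2)$.

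\emph{Elements near the corner, the main obstacle.} On the few elements with $r_E<ah$, which may be anisotropic quadrilaterals, pentagons, or hexagons cut by the corner, I will not use $H^2$ and will instead bound the two terms separately. First, $|\zeta_j|_{1,E}^2\lesssim\int_0^{Ch} r^{2\gamma_j-2}r\,dr\lesssim h^{2\gamma_j}$. Second, I need $|\IVE\zeta_j|_{1,E}\lesssim h^{\gamma_j}$. The plan is a stability bound in terms of nodal values, $|\IVE v|_{1,E}\lesssim \max_{\nu\in\nodesE}|v(\nu)|$, combined with $|\zeta_j(\nu)|\lesssim h^{\gamma_j}$, which holds because all nodes lie within distance $O(h)$ of $\bz_j$. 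For shape-regular and almost well-behaved elements, the stability bound follows from the stabilization equivalences of Sect. \ref{sec:stab:X}: $|v|_{1,E}^2\lesssim |\Pi^\nabla_E v|_{1,E}^2+s_E(v,v)$ with the dofi-dofi form. The logarithmic factor in \eqref{X:stab-log} must be avoided, so I will use the modified stabilization form introduced there, or check that the weighted node is the one whose value vanishes.

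For anisotropic quadrilaterals this is delicate, since a direct nodal bound would bring in a factor $(h/\varepsilon_2)^{1/2}$. Here I will instead use Lemma \ref{lemma:coercivity_new_stab} together with the fact that $\IVE\zeta_j$ vanishes on $\widetilde e$ and is linear on $\widehat e$. This gives $|\IVE\zeta_j|_{1,E}^2\lesssim s_E(\IVE\zeta_j,\IVE\zeta_j)=\int_0^h \kappa^{-1}|\IVE\zeta_j(0,y)|^2\,dy$. The endpoint values of $\widehat e$ are $\zeta_j$ evaluated at points whose distance to $\partial\Omega$ is about $\kappa$, and $\zeta_j$ vanishes on $\partial\Omega$ near $\bz_j$ (on the arc side). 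This should give $|\zeta_j(P_i)|\lesssim \varepsilon_i r^{\gamma_j-1}$, so the weighted integral is $O(h^{2\gamma_j})$. If this is too delicate, the fallback is Lemma \ref{lemma:continuity_new_stab} applied to $\zeta_j$ itself, together with an estimate of $\int\kappa^{-1}|\zeta_j(0,y)-\IVE\zeta_j(0,y)|^2\,dy$ by one-dimensional interpolation along $\widehat e$.

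Summing the three groups of elements and multiplying by $|\lambda_j|$ gives \eqref{eq:error-nrc}.
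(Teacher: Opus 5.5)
There is a genuine gap in your bound for $|\IVE\zeta_j|_{1,E}$ on the elements near $\bz_j$ other than the anisotropic quadrilateral. The rest of your argument matches the paper's proof: the linear splitting, the $H^2$-based estimate on elements at distance $\gtrsim h$ from $\bz_j$, the bound on $|\zeta_j|_{1,E}$, and the quadrilateral computation through Lemma \ref{lemma:coercivity_new_stab} using the smallness of $\zeta_j(P_i)$.

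Your near-corner step rests on the nodal stability estimate $|\IVE v|_{1,E}\lesssim\max_\nu|v(\nu)|$. This estimate is not uniform on the elements that actually occur there.

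Take an almost well-behaved element whose short edge $\overline{e}$ has length $\varepsilon$, with one endpoint $\overline{\nu}\in\partial\Omega_h$ and the other endpoint $\widehat{\nu}$. Let $v\in\VE$ equal $1$ at $\widehat{\nu}$ and $0$ at all other vertices. Then $|v|_{1,E}^2\gtrsim\log(1+h_E/\varepsilon)$, because its trace drops from $1$ to $0$ across $\overline{e}$, and that costs this much in $H^{1/2}(\partial E)$; this is the two-sided equivalence \eqref{eq:log-equiv}. The logarithm is a property of $\VE$ and of the $H^1$ seminorm, not of the stabilization chosen for the scheme. Switching to the modified form only moves $\log(1+h_E/\varepsilon)$ into $s_E$ as the weight of $|v(\widehat{\nu})|^2$. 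With only $|\zeta_j(\widehat{\nu})|\lesssim h^{\gamma_j}$ you therefore still get $\log(1+h/\varepsilon)\,h^{2\gamma_j}$.

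Your other remedy cannot work either. By definition $\widehat{\nu}$ is the endpoint where virtual functions do \emph{not} vanish; if both endpoints were on $\partial\Omega_h$ there would be no logarithm at all.

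You also omit triangles, which near $\bz_j$ can have legs $\varepsilon_1\ll\varepsilon_2$. There $\IVE\zeta_j=\zeta_j(P_2)\phi_2$ with $|\phi_2|_{1,E}^2\simeq\max(\varepsilon_1,\varepsilon_2)/\min(\varepsilon_1,\varepsilon_2)$. So nodal values of size $h^{\gamma_j}$ are again not enough.

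The missing idea is the one you already use for the quadrilateral, and it must be applied to every element type. The interior endpoint of each short edge lies at distance $\varepsilon$ from a boundary node where $\zeta_j=0$, so its nodal value is small in proportion to $\varepsilon$.

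The paper makes this quantitative through Lemma \ref{lemma:value-sing} and its corollary: $|\zeta_j(V)|\lesssim h^{\gamma_j-1}|V-P|$ whenever $\zeta_j(P)=0$. With $V=\widehat{\nu}$ and $P=\overline{\nu}$ this gives $\log(1+h/\varepsilon)\,h^{2\gamma_j-2}\varepsilon^2\lesssim h^{2\gamma_j}$, since the factor $\varepsilon^2$ kills the logarithm. For triangles, $|\zeta_j(P_2)|\lesssim h^{\gamma_j-1}\min(\varepsilon_1,\varepsilon_2)$ compensates the aspect ratio and gives $|\IVE\zeta_j|_{1,E}^2\lesssim h^{2\gamma_j-2}\varepsilon_1\varepsilon_2\le h^{2\gamma_j}$.

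For nodes much closer to $\bz_j$ than $h$, it is cleaner to use the H\"older-type bound $|\zeta_j(V)|\lesssim|V-P|^{\gamma_j}$, which also follows from that lemma. It suffices in all three cases, including your quadrilateral estimate $|\zeta_j(P_i)|\lesssim\varepsilon_i r^{\gamma_j-1}$, because $\gamma_j>\tfrac12$.

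Finally, your hedge on the regular part is unnecessary. The function $\psi=u-\sum_j\lambda_j\zeta_j$ vanishes on $\partial\Omega$ because $u$ and every $\zeta_j$ do, which is exactly how the paper applies Theorem \ref{theo:interp-error-H2}.
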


To prove the estimate, we first observe that the linearity of the interpolation operator implies
\[
| u-\IVh u |_{1,\Omega_h} \leq | \psi-\IVh \psi |_{1,\Omega_h} + \sum_{j=1}^J |\lambda_j| | \zeta_j-\IVh \zeta_j |_{1,\Omega_h}.
\]
By applying Theorem \ref{theo:interp-error-H2} to the function $\psi$ (note that each $\zeta_j$ vanishes on $\partial\Omega$, hence $\psi$ vanishes therein as well), we have
\[
| \psi-\IVh \psi |_{1,\Omega_h} \lesssim h \, |\psi |_{2, \Omega_h},
\]
hence, if we prove that
\begin{equation}\label{eq:interp-singular}
| \zeta_j-\IVh \zeta_j |_{1,\Omega_h} \lesssim h^{\gamma_j}, \qquad 1 \leq j \leq J,
\end{equation}
we immediately arrive at \eqref{eq:error-nrc} after recalling the bound \eqref{eq:reg-u-tilde}.

\medskip
From now on, we focus on bound \eqref{eq:interp-singular} for some $j \in [1,J]$. Since $\zeta_j$ is locally supported around $\bz_j$, there exists a neighborhood ${\cal N}_j(\bz_j,R_j)$ of $\bz_j$ of radius $R_j=O(1)$ such that $\zeta_j-\IVh \zeta_j$ vanishes outside this neighborhood. Let ${\cal N}_j(\bz,r_j)$ be a neighborhood of $\bz_j$ of radius $r_j=ch$, $c \simeq 1$. In the annulus $A_j = {\cal N}_j(\bz_j,R_j) \setminus {\cal N}_j(\bz_j,r_j)$, the function $\zeta_j$ is smooth; hence, we can apply Theorem \ref{theo:interp-error-H2} again, this time to the function $\zeta_j$, and get the error bound
\[
| \zeta_j-\IVh \zeta_j |_{1,\Omega_h \cap A_j} \lesssim h \, |\zeta_j|_{2,A_j}.
\]
An explicit computation yields
\[
|\zeta_j|_{2,A_j}^2 \lesssim \int_{r_j}^{R_j} r^{2(\gamma_j -2)+1} {\rm d}r \lesssim h^{2(\gamma_j -1)},
\]
whence
\begin{equation}
| \zeta_j-\IVh \zeta_j |_{1,\Omega_h \cap A_j} \lesssim h^{\gamma_j}.   
\end{equation}

It remains to bound the interpolation error in ${\cal N}_j(\bz_j,r_j)$, or - more precisely - in each element $E \in \mesh$ that intersects ${\cal N}_j(\bz_j,r_j)$. The number of such elements is 
bounded by the number of elements $K \in {\cal T}_h^B$ of the background mesh that intersect ${\cal N}_j(\bz_j,r_j)$ (recall that by construction each $K \in {\cal T}_h^B$ contains at most one $E \in \mesh$), and the latter number is upper bounded independently of $h$, since each $K$ is shape-regular with diameter $O(h)$. Considering any such $E$, we use the triangle inequality
\begin{equation}
| \zeta_j-\IVh \zeta_j |_{1,E} \leq | \zeta_j|_{1,E} + |\IVh \zeta_j |_{1,E}
\end{equation}
and estimate independently the two addends on the right-hand side. The first one is easily bounded:
\begin{equation}\label{eq:Falcao}
| \zeta_j|_{1,E}^2 \lesssim \int_0^{ch} r^{2(\gamma_j -1)+1} {\rm d}r \lesssim h^{2\gamma_j}.
\end{equation}
Thus, we are left with the problem of estimating the quantity $|\IVh \zeta_j |_{1,E}$ when $E \in \mesh$ has distance $O(h)$ from $\bz_j$. This will be the object of the forthcoming analysis.

We first establish a technical result that will be useful in the sequel.

\begin{lemma}[values of the singular functions]\label{lemma:value-sing}
Let $Z=\bz_j \in \partial\Omega$ be a break point with singularity index $\gamma_j \in (\frac12,1)$, and let $\zeta_j$ be the associated singular function defined in \eqref{eq:def-zeta-j}. Let ${\cal N}_j(\bz_j)$ be a neighborhood of $\bz_j$ of radius $O(h)$. Let $P,V \in {\cal N}_j(\bz_j) \cap \overline{\Omega}_h$, with $\zeta_j(P)=0$. Then, there exists an absolute positive constant $\Lambda=O(1)$ such that
\begin{equation}\label{eq:bound-value-sing}
| \zeta_j(V)| \lesssim 
\begin{cases}
|V-Z|^{\gamma_j} & \text{ if } |V-Z| < \Lambda\,  |V-P|, \\[3pt]
|V-Z|^{\gamma_j-1}|V-P| & \text{ if } |V-Z| \geq \Lambda \,  |V-P|.
\end{cases}
\end{equation}
\end{lemma}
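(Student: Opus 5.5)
We work in the local coordinates given by the diffeomorphism $\wt{T}_j$ of \eqref{eq:def-zeta-j-ext}. Since $\wt{T}_j$ is $C^2$ with $C^2$ inverse on ${\cal N}_j$, distances are distorted only by bounded factors, so $|T(V)|\simeq|V-Z|$ and $|T(V)-T(P)|\simeq|V-P|$. For $h$ small the cut-off $\phi_j\equiv1$ on ${\cal N}_j(\bz_j)$, hence
\[
\zeta_j(V)=r_V^{\gamma_j}s_j(\theta_V), \qquad r_V=|T(V)|,
\]
with $s_j$ smooth and $2\pi$-periodic. It therefore suffices to prove \eqref{eq:bound-value-sing} for $\zeta^0_j$ in the $y$-variables, with $Z$ at the origin, and then to absorb the distortion constants into $\lesssim$ and into the choice of $\Lambda$.

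The first case is the trivial one. Since $|s_j|\le\|s_j\|_\infty$, we have $|\zeta_j(V)|\lesssim|T(V)|^{\gamma_j}\lesssim|V-Z|^{\gamma_j}$. This holds for every $V$, so in particular when $|V-Z|<\Lambda|V-P|$.

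For the second case, $|V-Z|\ge\Lambda|V-P|$, we use $\zeta_j(P)=0$ and write $\zeta_j(V)=\zeta_j(V)-\zeta_j(P)$. We then apply the mean value theorem along the segment $[T(P),T(V)]$. Choosing $\Lambda$ large enough, say $\Lambda\ge 2L$ where $L$ bounds the Lipschitz constants of $T$ and $T^{-1}$, guarantees $|T(P)-T(V)|\le\frac12|T(V)|$. Then every point $y$ on the segment satisfies $|y|\ge\frac12|T(V)|$, so the segment stays away from the origin. There the gradient bound $|D\zeta^0_j(y)|\lesssim |y|^{\gamma_j-1}$ holds; it follows from the formula $D^1\zeta^0_j=r^{\gamma_j-1}\xi$ in the proof of Lemma \ref{lem:smooth-zeta_j}, now applied with the smooth periodic $s_j$. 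This yields
\[
|\zeta_j(V)|\lesssim (\tfrac12|T(V)|)^{\gamma_j-1}|T(V)-T(P)|\lesssim |V-Z|^{\gamma_j-1}|V-P|.
\]
Because $\gamma_j-1<0$, the lower bound on $|y|$ is exactly what is needed here.

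The main obstacle is a subtle point about the segment. The segment $[T(P),T(V)]$ lies in the full disk $\wt S_j$, where the extended function $\wt\zeta^0_j$ is defined. It need not lie in the sector $S_j$, and it need not lie in ${\cal N}_j\cap\overline\Omega_h$. Working with the extension avoids any convexity requirement, but it requires two things. First, $s_j$ must be globally smooth and periodic, so that the gradient bound holds on the whole punctured disk. Second, the segment must stay inside $\wt S_j$, which is true for $h$ small because both endpoints are $O(h)$ from the origin. The remaining care is choosing $\Lambda$ so that the condition $|V-Z|\ge\Lambda|V-P|$ in physical variables implies the separation $|T(P)-T(V)|\le\frac12|T(V)|$ in reference variables. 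Both the choice of $\Lambda$ and the implied constants then depend only on the bi-Lipschitz constants of $T_j$ and on $\|s_j\|_{C^1}$.
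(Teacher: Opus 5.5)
Your proof is correct, but it takes a different route from the paper's.

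\textbf{The paper's route.} The paper maps to the reference configuration and takes $P_0=\widetilde T_j(P)$ to lie on a boundary ray of the sector, i.e.\ on the zero set of $\sin(\gamma_j\theta)$. It writes $\zeta_j(V)=|V_0-Z_0|^{\gamma_j}\sin(\gamma_j\theta_V)$. In the second case (with $\Lambda_0=1$ in reference variables) it uses elementary trigonometry. The angle $\theta_V$ at $Z_0$ is acute, so
\[
\sin(\gamma_j\theta_V)<\sin\theta_V=\frac{\mathrm{dist}(V_0,\text{line }Z_0P_0)}{|V_0-Z_0|}\le\frac{|V_0-P_0|}{|V_0-Z_0|}.
\]

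\textbf{Your route.} You use only $\zeta_j(P)=0$. You apply a mean value estimate along a segment kept at distance at least $\tfrac12|T(V)|$ from the origin, together with the bound $|\nabla\widetilde\zeta_j^0|\lesssim r^{\gamma_j-1}$.

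\textbf{Comparison.} The paper's argument is more explicit: constant $1$ and $\Lambda\simeq 1$ in the reference frame. However, it depends on the specific sine form of the angular factor. As written, it also assumes $V_0$ lies inside the sector where that formula holds.

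Your argument never needs to locate $P_0$. Through the smooth periodic extension $s_j$, it covers points $V\in\overline\Omega_h\setminus\overline\Omega$, which do occur near reentrant corners. It also applies to any function satisfying that gradient bound. The cost is a larger $\Lambda$. This is harmless, since Corollary~\ref{cor:value-sing} only needs some $\Lambda=O(1)$.

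\textbf{A small slip.} You have $|V-Z|\le L\,|T(V)|$ and $|T(V)-T(P)|\le L\,|V-P|$. So the condition $|V-Z|\ge\Lambda|V-P|$ yields $|T(V)-T(P)|\le L^2\Lambda^{-1}|T(V)|$. You therefore need $\Lambda\ge 2L^2$, not $\Lambda\ge 2L$, to guarantee the separation $|T(V)-T(P)|\le\tfrac12|T(V)|$.
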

\proof By applying the ${\cal C}^2$-diffeomorphism $\widetilde{T}_j: {\cal N}_j \to \widetilde{S}_j$, one is lead to consider the geometry of Figure \ref{fig:Singularity region}, 
\begin{figure}[t!]
\begin{center}

\begin{tikzpicture}[scale=1.1,>=Stealth]

\draw[thick] (1,0) -- (7,0);

\draw[thick] (-0.5,-2) -- (1,0);

\fill (1,0) circle (2pt);
\node[below] at (1.1,-0.1) {$Z_0$};

\draw[thick,->] (1.5,0) arc[start angle=0,end angle=243,radius=0.6];
\node[left] at (0.5,0.5) {$\omega_{j}$};

\draw[very thick,blue] (1,0) -- (6.5,2.2);

\draw[blue,thick,->] (2.6,0) arc[start angle=0,end angle=32,radius=1.1];
\node[blue] at (3.1,0.4) {$\theta_v$};

\fill (4.2,0) circle (2pt);
\node[below] at (4.2,-0.1) {$P_{V_0}$};

\fill (4.2,1.2) circle (2pt);
\node[above] at (4.2,1.35) {$V_0$};

\draw[dashed,thick] (4.2,0) -- (4.2,1.2);

\fill (5.5,0) circle (2pt);
\node[below] at (5.5,-0.1) {$P_0$};

\end{tikzpicture}
\end{center}
\caption{Mapped region around the break point $Z=\bz_j$ }\label{fig:Singularity region}
\end{figure}
where $Z_0=\widetilde{T}_j(Z)$, $P_0=\widetilde{T}_j(P)$, $V_0=\widetilde{T}_j(V)$, and $\theta_V$ is the angle formed by the segments $Z_0P_0$ and $Z_0V_0$. In this reference system, we have
\[
\zeta_j(V)=\zeta_j^0(V_0) = |V_0-Z_0|^{\gamma_j} \sin(\gamma_j \theta_V).
\]
This shows that, in any case,
\[
|\zeta_j(V)| \leq |V_0-Z_0|^{\gamma_j} \lesssim |V-Z|^{\gamma_j}.
\]
Thus, from now on let us assume $|V-Z| \geq \Lambda \,  |V-P|$, which is equivalent to
\[
|V_0-Z_0| \geq \Lambda_0 \,  |V_0-P_0|
\]
for a constant $\Lambda_0 \simeq \Lambda$. If we choose $K_0=1$, then necessarily $0 < \theta_V < \frac\pi2$, hence
\[
\sin (\gamma_j \theta_V) < \sin \theta_V = \frac{|V_0-P_{V_0}|}{|V_0-Z_0|} \leq \frac{|V_0-P_0|}{|V_0-Z_0|}, 
\]
where $P_{V_0}$ is the orthogonal projection of $V_0$ upon the segment $Z_0P_0$. This implies
\[
|\zeta_j(V)| \leq |V_0-Z_0|^{\gamma_j-1}|V_0-P_0| \lesssim |V-Z|^{\gamma_j-1}|V-P|
\]
as desired.
\endproof

\begin{corollary}\label{cor:value-sing}
Under the assumptions of the previous Lemma, one has
\begin{equation}\label{eq:cor:value-sing}
|\zeta_j(V)| \lesssim |V-Z|^{\gamma_j-1}|V-P| \lesssim h^{\gamma_j-1} |V-P|.
\end{equation}
    \end{corollary}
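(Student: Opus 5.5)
The plan is to obtain \eqref{eq:cor:value-sing} by treating the two alternatives of Lemma \ref{lemma:value-sing} separately, and then to use only the localization $V\in{\cal N}_j(\bz_j)$, a neighborhood of radius $O(h)$.

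\emph{First inequality.} In the second alternative, $|V-Z|\geq \Lambda |V-P|$, the bound $|\zeta_j(V)|\lesssim |V-Z|^{\gamma_j-1}|V-P|$ is exactly \eqref{eq:bound-value-sing}, so nothing needs to be done. In the first alternative, $|V-Z|<\Lambda|V-P|$, Lemma \ref{lemma:value-sing} gives $|\zeta_j(V)|\lesssim |V-Z|^{\gamma_j}$. We rewrite this as
\[
|V-Z|^{\gamma_j}=|V-Z|^{\gamma_j-1}\,|V-Z| < \Lambda\, |V-Z|^{\gamma_j-1}|V-P| .
\]
This step uses only $\gamma_j-1<0$ and $|V-Z|>0$; it needs no monotonicity argument, since we simply multiply the strict inequality $|V-Z|<\Lambda|V-P|$ by the positive number $|V-Z|^{\gamma_j-1}$. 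Because $\Lambda=O(1)$, the first inequality in \eqref{eq:cor:value-sing} holds in both cases. The degenerate case $V=Z$ has to be set aside: there $\zeta_j(V)=0$, so the claim is trivial, with the usual convention for the right-hand side.

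\emph{Second inequality.} This is the delicate point, because $\gamma_j-1<0$. A small $|V-Z|$ therefore makes $|V-Z|^{\gamma_j-1}$ large, and the bound $|V-Z|\lesssim h$ goes in the wrong direction. So $|V-Z|^{\gamma_j-1}\lesssim h^{\gamma_j-1}$ amounts to a lower bound $|V-Z|\gtrsim h$, and this must come from the geometry of the application. In the intended use, $V$ is a vertex of an element $E\in\mesh$ near $\bz_j$. Such vertices either belong to the background mesh or lie on $\partial\Omega$ at edge crossings. I would argue that generically $|V-Z|\simeq h$, for instance by restricting attention to vertices with $|V-Z|\geq c h$.

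For vertices closer to $Z$, I would fall back on the trivial estimate $|\zeta_j(V)|\lesssim|V-Z|^{\gamma_j}$. If in addition $|V-P|\gtrsim |V-Z|$, this can be rewritten as $|V-Z|^{\gamma_j-1}|V-P|\cdot(|V-Z|/|V-P|)$, which is again at most $|V-Z|^{\gamma_j-1}|V-P|$. I expect that, as stated, the corollary is meant under the implicit assumption $|V-Z|\gtrsim h$, or that $h^{\gamma_j-1}$ is read as the scale of the neighborhood. The main obstacle is to justify this lower bound on $|V-Z|$ for all relevant vertices, or else to handle the close vertices through the $|V-Z|^{\gamma_j}$ bound.
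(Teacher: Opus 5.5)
Your proof of the first inequality is correct, and it is the argument the paper takes for granted (the corollary is stated there without proof). In the regime $|V-Z|<\Lambda|V-P|$ you multiply by the positive factor $|V-Z|^{\gamma_j-1}$; in the other regime the bound is the lemma itself.

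Your doubt about the second inequality is also justified, and the gap you flag cannot be closed: under the stated hypotheses the inequality is false. The only information available is $|V-Z|\lesssim h$. Since $\gamma_j-1<0$, this gives $|V-Z|^{\gamma_j-1}\gtrsim h^{\gamma_j-1}$, which is the reverse of what is claimed.

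Here is a concrete counterexample in the model sector. Take $V_0$ on the bisector at distance $\rho$ from $Z_0$, and $P_0$ on one side of the sector at distance $\simeq\rho$ from $Z_0$. Then $|\zeta_j(V)|\simeq\rho^{\gamma_j}$ because $\sin(\gamma_j\omega_j/2)=1$, and $|V-P|\simeq\rho$, so
\[
\frac{|\zeta_j(V)|}{h^{\gamma_j-1}|V-P|}\simeq\Big(\frac{h}{\rho}\Big)^{1-\gamma_j},
\]
which is unbounded as $\rho/h\to0$. This defeats both the middle link and the weaker combined claim $|\zeta_j(V)|\lesssim h^{\gamma_j-1}|V-P|$.

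Such pairs are genuine degrees of freedom. Suppose a reentrant corner lies at distance $\rho\ll h$ from a background vertex $P_2$, and $P_2$ is the only vertex of $K$ in $\Omega$. This produces the triangle of Fig.~\ref{fig:aniso-tria} with $\varepsilon_1\simeq\varepsilon_2\simeq\rho$, with $V=P_2$ and $P=P_1$. So no lower bound $|V-Z|\gtrsim h$ can be extracted from the mesh construction. Your fallback through $|V-Z|^{\gamma_j}$ does not help either: it only reproduces the first inequality and never produces the factor $h^{\gamma_j-1}$.

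What the lemma does give is the following; use its ``in any case'' bound $|\zeta_j(V)|\lesssim|V-Z|^{\gamma_j}$ and check the cases $|V-Z|\le|V-P|$ and $|V-Z|\ge|V-P|$ separately:
\[
|\zeta_j(V)|\lesssim\min\{|V-Z|^{\gamma_j},\,|V-Z|^{\gamma_j-1}|V-P|\}\le|V-P|^{\gamma_j}.
\]
The bound $h^{\gamma_j-1}|V-P|$ holds under an extra hypothesis such as $|V-Z|\gtrsim h$ or $|V-P|\gtrsim h$, but not in general. The Hölder-type bound $|V-P|^{\gamma_j}$ suffices for every later use of the corollary, because $\gamma_j>\tfrac12$:
\begin{itemize}
\item For the triangle, $|\IVh\zeta_j|_{1,E}^2\lesssim m^{2\gamma_j}M/m=m^{2\gamma_j-1}M\le h^{2\gamma_j}$, where $m=\min(\varepsilon_1,\varepsilon_2)$ and $M=\max(\varepsilon_1,\varepsilon_2)$.
\item For the anisotropic quadrilateral, the term involving $\zeta_j(P_1)$ is $\lesssim h\,\varepsilon_1^{2\gamma_j-1}\le h^{2\gamma_j}$, and likewise for $P_2$.
\item For almost well-behaved elements, $\log(1+h/\varepsilon)\,\varepsilon^{2\gamma_j}\lesssim h^{2\gamma_j}$.
\end{itemize}
So the right fix is to amend the statement, not to look for a lower bound on $|V-Z|$.
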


As already declared, we can now proceed in bounding the quantity $|\IVh \zeta_j |_{1,E}$ for a generic element $E \in \mesh$ within distance $O(h)$ from $\bz_j$. We follow the same element classification introduced in Section \ref{sec:stab:X}.

\medskip
{\bf Triangular elements.} It is not restrictive to focus on the triangle $E$ of Figure \ref{fig:aniso-tria}. Since $\zeta_j$ vanishes at $P_1$ and $P_3$, we can write $\IVh \zeta_j = \zeta_j(P_2) \phi_2$, where $\phi_2 \in \mathbb{P}_1(E)$ is the Lagrange basis function satisfying $\phi_2(P_i)=\delta_{i,2}$ for $1 \leq i \leq 3$. Corollary \ref{eq:cor:value-sing} applied with $V=P_2$ and $P=P_1, P_3$ yields
\[
|\zeta_j(P_2)| \lesssim h^{\gamma_j-1}\min(\varepsilon_1, \varepsilon_2),
\]
whereas 
\[
|\phi_2|_{1,E}^2 = \left(\frac1{\varepsilon_1^2}+ \frac1{\varepsilon_2^2}\right) \varepsilon_1\epsilon_2 \lesssim \frac{\max(\varepsilon_1, \varepsilon_2)}{\min(\varepsilon_1, \varepsilon_2)}, \qquad \|\phi_2\|_{0,E}^2 \simeq \varepsilon_1 \varepsilon_2. 
\]
Since $\varepsilon_1, \varepsilon_2 \leq h$, we arrive at the desired bounds
\[
|\IVh \zeta_j|_{1,E}^2 \lesssim h^{2\gamma_j -2} \max(\varepsilon_1, \varepsilon_2) \min(\varepsilon_1, \varepsilon_2) \leq h^{2\gamma_j},
\quad 
\|\IVh \zeta_j\|_{0,E}^2 \lesssim h^{2\gamma_j -2}  \min(\varepsilon_1, \varepsilon_2) \varepsilon_1 \varepsilon_2 \leq 
h^{2\gamma_j+2}.
\]


\begin{figure}[t!]
\begin{center}

\begin{tikzpicture}[scale=1.2]

\coordinate (P2) at (0,0);
\coordinate (P1) at (0,4);
\coordinate (P3) at (2.5,0);
\coordinate (E)  at (0.6,1.2);

\draw (0,0) rectangle (4,4);

\draw[thick] (P1) -- (P3) -- (P2) -- cycle;

\draw[blue, thick] (P1) -- (E) -- (P3);

\node[left]  at (P1) {$P_1$};
\node[below] at (-0.3,0)  {$P_2$};
\node[below] at (P3) {$P_3$};
\node[left]  at (1.25,0.4) {$E$};

\draw[fill=blue] (E) circle (2pt);

\fill (P1) circle (2pt);
\fill (P2) circle (2pt);
\fill (P3) circle (2pt);
\node[left]  at ($(P1)!0.5!(P2)$) {$e_1$};
\node[below] at ($(P2)!0.5!(P3)$) {$e_2$};

\draw[red, <->, thick] (-0.8,0) -- (-0.8,4);
\node[red, left] at (-0.8,2) {$\varepsilon_1$};

\draw[red, <->, thick] (0,-0.6) -- (2.5,-0.6);
\node[red, below] at (1.25,-0.6) {$\varepsilon_2$};

\node[blue] at (1.2,1.5) {$\partial\Omega$};

\end{tikzpicture}

\end{center}
\caption{ }\label{fig:aniso-tria}
\end{figure}

{\bf Well behaved elements.} Under the current assumptions, all vertexes of the element $E$ are within $c h$ from the singularity corner point, $c \in {\mathbb R}_{+}$. 
Known results from the literature of VEM stabilization \cite{BLRstab,brenner2018,chen2018} yield
$$
|v|_{1,E}^2 \lesssim C s_E(v,v) \qquad \forall v \in \VE \, .
$$
Since the number of edges are uniformly bounded, using the above result applied to $u_I$ yields (the symbol ${\cal V}_E$ denoting again the set of vertices of $E$)
\begin{equation}\label{eq:serv:1}
|\IVh \zeta_j|_{1,E}^2 \lesssim  \max_{\nu \in {\cal V}_E} |\IVh \zeta_j(\nu)|^2
= \max_{\nu \in {\cal V}_E} |\zeta_j(\nu)|^2 \lesssim h^{2\gamma_j} \, ,
\end{equation}
where the last bound follows from \eqref{eq:cor:value-sing} with $V=\nu$ and the simple choice $P=Z$. \\

{\bf Almost well behaved elements.} 
 We recall, from the analysis of elements given in Sect. \ref{sec:stab:X}, that this kind of element can have edges $\overline{e}$ with length not comparable to $h_E$, and that (at least) one extrema $\overline{\nu}$ of each of such edges $\overline{e}$ lays on $\partial\Omega_h$. As done before, uniquely in order to simplify the exposition, we here assume the presence of only one of such ``small'' edges.
We denote by $\widehat{\nu}$ the other extrema of the ``small'' edge $\overline{e}$ and by $\varepsilon$ the length of $\overline{e}$. 
Applying Lemma 3.9 in \cite{brenner2018} to $\IVh \zeta_j$ yields, after some trivial manipulation and recalling that in the present analysis the polynomial degree $k=1$, leads to
$$
| \IVh \zeta_j |_{H^{1}(E)} \lesssim \| \IVh \zeta_j \|_{L^\infty(E)} + 
| \IVh \zeta_j |_{H^{1/2}(E)} \, . 
$$ 
Such bound, recalling that on $\partial E$ the function $\IVh \zeta_j$ is a continuous piecewise linear polynomial, yields after a direct calculation (as for the second equivalence in \eqref{eq:log-equiv})
$$
\begin{aligned}
| \IVh \zeta_j |_{H^{1}(E)}^2 & \lesssim \log{(1 + h/\varepsilon)} |\IVh \zeta_j(\widehat{\nu})|^2
\, + \!\!\! \sum_{\nu \in {\cal V}_E,  \,  \nu \not=\widehat{\nu}} |\IVh \zeta_j(\nu)|^2  \\
& = \log{(1 + h/\varepsilon)} |\zeta_j(\widehat{\nu})|^2
\, + \!\!\! \sum_{\nu \in {\cal V}_E,  \,  \nu \not=\widehat{\nu}} |\zeta_j(\nu)|^2 
\, .
\end{aligned}
$$
While the second term in the right hand side is bounded identically as in \eqref{eq:serv:1}, for the first term we apply \eqref{eq:cor:value-sing} with $V=\widehat{\nu}$ and $P=\overline{\nu} \in \partial \Omega_h$. We obtain 
$$
| \IVh \zeta_j |_{H^{1}(E)}^2 \lesssim h^{2\gamma_j} + \log{(1 + h/\varepsilon)} h^{2\gamma_j-2}\varepsilon^2
\lesssim h^{2\gamma_j} \, .
$$

{\bf Anisotropic quadrilaterals.} 
In the present proof we follow the geometric notation for anisotropic quads previously introduced, see also Figure \ref{fig:quad:X}. Furthermore we denote by $P_1$ and $P_2$ the extrema of edge $\widehat{e}$, as shown in Figure \ref{fig:quad:X} (note that $P_i$ is adjacent to the horizontal edge of length $\varepsilon_i$, $i=1,2$).
We recall that both $\varepsilon_1$ and $\varepsilon_2$ are smaller than $h$.
By applying \eqref{eq:cor:value-sing} with $V=P_i$, $i\in \{1,2\}$, and $P \in \partial \Omega_h$ as the other extrema of the associated horizontal edge, we easily obtain
\begin{equation}\label{nnw:init}
u(P_1) \le h^{\gamma_j-1} \varepsilon_1 \ , \qquad u(P_2) \le h^{\gamma_j-1} \varepsilon_2 \, .
\end{equation} 
By Lemma \ref{lemma:coercivity_new_stab} and Definition \ref{def:gamma-stabilization} we have (with the usual coordinate notation)
$$
| \IVh \zeta_j |_{H^1(E)} \lesssim s_E(\IVh \zeta_j,\IVh \zeta_j) = \int_0^h \frac{\vert \IVh \zeta_j(0,y) \vert^2}{\kappa(y)} {\rm d}y \, .
$$
Recalling that $\IVh \zeta_j$ is affine on the edge $\widehat{e}$ and interpolates $\zeta_j$ at the extrema, together with the definition of $\kappa$ (see \eqref{eq:def-gammay}), we can write
$$
| \IVh \zeta_j |_{H^1(E)}^2 \lesssim \int_0^h \frac{ \big(\zeta_j(P_1) y/h + \zeta_j(P_2)(h-y)/h \big)^2}{(h-y)(\varepsilon_2-\varepsilon_1)/h + \varepsilon_1} {\rm d}y
= h \int_0^1 \frac{\big( \hat{y} \, \zeta_j(P_1) + (1-\hat{y}) \zeta_j(P_2) \big)^2}{(1-\hat{y})(\varepsilon_2-\varepsilon_1) + \varepsilon_1} {\rm d}\hat{y} \, ,
$$
where we applied a trivial change of variables in the last identity.
The above bound yields immediately
\begin{equation}\label{nnw:1}
| \IVh \zeta_j |_{H^1(E)}^2 \lesssim h \int_0^1 \frac{( \hat{y} \, \zeta_j(P_1)  )^2}{(1-\hat{y})(\varepsilon_2-\varepsilon_1) + \varepsilon_1} {\rm d}\hat{y} 
\, + \, 
h \int_0^1 \frac{( (1-\hat{y}) \zeta_j(P_2) )^2}{(1-\hat{y})(\varepsilon_2-\varepsilon_1) + \varepsilon_1} {\rm d}\hat{y}  =: T_1 + T_2 \, .
\end{equation}
Term $T_1$ is easy to bound noting that $(1-\hat{y})(\varepsilon_2-\varepsilon_1) + \varepsilon_1 \ge \hat{y}\,\varepsilon_1 $ and recalling \eqref{nnw:init} 
$$
T_1 \le \zeta_j(P_1)^2 h \int_0^1 \frac{\hat{y}^2}{\hat{y}\varepsilon_1} {\rm d}\hat{y}  \le (h^{\gamma-1} \varepsilon_1)^2 h \, (\varepsilon_1)^{-1} 
\le h^{2 \gamma} \, \left(\frac{\varepsilon_1}{h} \right) \le h^{2 \gamma}  \, .
$$
Regarding term $T_2$ we note that $(1-\hat{y})(\varepsilon_2-\varepsilon_1) + \varepsilon_1 \ge (1-\hat{y}) \varepsilon_2$ and again recall \eqref{nnw:init}, 
leading to
$$
T_2 \le \zeta_j(P_2)^2  h \int_0^1 \frac{(1-\hat{y})^2}{(1-\hat{y}) \varepsilon_2}  {\rm d}\hat{y}  \le (h^{\gamma-1} \varepsilon_2)^2 h (\varepsilon_2)^{-1}
\le h^{2 \gamma} \, \left(\frac{\varepsilon_2}{h} \right) \le h^{2 \gamma}  \, .
$$
The estimate on $|\IVh \zeta_j |_{1,E}$ follows by combining \eqref{nnw:1} with the above bounds for $T_1$ and $T_2$.

Since all possible geometries have been taken into consideration, the proof of Theorem \ref{thm:error-nrc} is complete.

Next, we state the $L^2$-interpolation error bound in the non-regular case, and sketch its proof.
\begin{theorem}[$L^2$-interpolation error bound in the non-regular case]\label{thm:L2-error-nrc} There exists a constant $C>0$ independent of $u$ and ${\cal T}_h$ such that
\begin{equation}\label{eq:L2-error-nrc}
\| u-\IVh u \|_{0,\Omega_h} \leq C \, h^{\bar{\gamma}+1} \enorm{u}\,.    
\end{equation}
\end{theorem}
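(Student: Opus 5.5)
We will mirror the structure of the proof of Theorem \ref{thm:error-nrc}. By linearity of $\IVh$,
\[
\| u-\IVh u \|_{0,\Omega_h} \le \| \psi-\IVh \psi \|_{0,\Omega_h} + \sum_{j=1}^J |\lambda_j|\, \| \zeta_j-\IVh \zeta_j \|_{0,\Omega_h}.
\]
For the regular part we invoke Theorem \ref{theo:interp-error-H2}, including Corollary \ref{Q:theo:int:L2} for anisotropic quads, to get $\|\psi-\IVh\psi\|_{0,\Omega_h}\lesssim h^2|\psi|_{2,\Omega_h}$. It then suffices to prove
\[
\|\zeta_j-\IVh\zeta_j\|_{0,\Omega_h}\lesssim h^{\gamma_j+1}
\]
and use \eqref{eq:reg-u-tilde}.

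We split again into the annulus $A_j={\cal N}_j(\bz_j,R_j)\setminus{\cal N}_j(\bz_j,r_j)$ with $r_j=ch$, and the elements meeting ${\cal N}_j(\bz_j,r_j)$. On elements contained in the annulus, $\zeta_j$ is $H^2$. The local $L^2$ bound $\|\zeta_j-\IVh\zeta_j\|_{0,E}\lesssim h^2|\zeta_j|_{2,E}$ from Theorem \ref{theo:interp-error-H2} and Corollary \ref{Q:theo:int:L2} therefore gives, after summing and using $|\zeta_j|_{2,A_j}\lesssim h^{\gamma_j-1}$, a contribution $h^{\gamma_j+1}$. Elements meeting $A_j$ but lying within distance $O(h)$ of $\bz_j$ are treated as near-field elements.

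Near the corner, only a uniformly bounded number of elements $E$ intersect ${\cal N}_j(\bz_j,r_j)$. For each such $E$ we use the triangle inequality
\[
\|\zeta_j-\IVh\zeta_j\|_{0,E}\le\|\zeta_j\|_{0,E}+\|\IVh\zeta_j\|_{0,E}.
\]
The first term is bounded by explicit integration, $\|\zeta_j\|_{0,E}^2\lesssim\int_0^{ch}r^{2\gamma_j+1}\,{\rm d}r\lesssim h^{2\gamma_j+2}$. For the second term, the most uniform route is the generalized maximum principle of Sect.~\ref{sec:maximum-principle}: $\|\IVh\zeta_j\|_{L^\infty(E)}\lesssim\max_{\nu\in{\cal V}_E}|\zeta_j(\nu)|$. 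Corollary \ref{cor:value-sing}, with $P=Z$, bounds the vertex values by $h^{\gamma_j}$, so
\[
\|\IVh\zeta_j\|_{0,E}\le|E|^{1/2}\|\IVh\zeta_j\|_{L^\infty(E)}\lesssim h\cdot h^{\gamma_j}.
\]
This holds provided the maximum principle constant is shape-independent for each element type:
\begin{itemize}
\item for well behaved and almost well behaved elements, Proposition \ref{prop:shapereg} applies, since these are shape regular;
\item for anisotropic quads, Proposition \ref{prop:ani:C123} applies directly;
\item for triangles, $\IVh\zeta_j$ is affine, so the triangle-case computation already done in the proof of Theorem \ref{thm:error-nrc} gives $\|\IVh\zeta_j\|_{0,E}^2\lesssim h^{2\gamma_j+2}$.
\end{itemize}
Hexagons are either shape regular (almost well behaved) or split into triangles.

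The main obstacle I expect is exactly this uniform $L^\infty$ control on the non-shape-regular elements. For anisotropic quadrilaterals, one must check that the trapezoid geometry of Proposition \ref{prop:ani:C123} covers all such cut elements; the hypothesis $h\ge\max\{\varepsilon_1,\varepsilon_2\}$ holds by construction. For almost well behaved pentagons with small edges, shape regularity with respect to a ball still holds, so Proposition \ref{prop:shapereg} should apply, since its constants depend only on $\rho_2/\rho_1$ and not on edge lengths. If that fails, the fallback is a Poincar\'e-type argument: bound $\|\IVh\zeta_j\|_{0,E}\lesssim h\,|\IVh\zeta_j|_{1,E}+h^{1/2}\|\IVh\zeta_j\|_{0,\partial E}$ and reuse the $H^1$ bound $|\IVh\zeta_j|_{1,E}\lesssim h^{\gamma_j}$ proved above. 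The boundary term is $\lesssim h\cdot h^{\gamma_j}$ by the vertex bounds. Summing the far-field and near-field contributions then yields \eqref{eq:L2-error-nrc}.
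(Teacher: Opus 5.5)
Your proposal is correct and follows essentially the same route as the paper. After the linearity split and the far-field annulus estimate, you bound $\|\zeta_j\|_{0,E}$ by explicit integration and $\|\IVh\zeta_j\|_{0,E}\le |E|^{1/2}\|\IVh\zeta_j\|_{L^\infty(E)}$ via Propositions \ref{prop:shapereg} and \ref{prop:ani:C123} with the vertex bound $|\zeta_j(\nu)|\lesssim h^{\gamma_j}$, reusing the explicit computation for triangles, exactly as the paper does. Your Poincar\'e-type fallback $\|v\|_{0,E}\lesssim h\,|v|_{1,E}+h^{1/2}\|v\|_{0,\partial E}$ is also valid on convex elements and would give the same bound from the already proven $H^1$ estimate, but it is not needed.
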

\begin{proof}

Following the same path of the proof of Theorem \ref{thm:error-nrc}, one is left with the problem of bounding the norms $\| \zeta_j \|_{0,E}$ and $\|\IVh \zeta_j \|_{0,E}$ in each element $E \in \mesh$ that intersects the neighborhood ${\cal N}_j(\bz_j,r_j)$ of $\bz_j$.
The first norm is easily bounded since
\[
\| \zeta_j\|_{0,E}^2 \lesssim \int_0^{ch} r^{2\gamma_j+1} {\rm d}r \lesssim h^{2\gamma_j+2}.
\]
We already proved that the second norm satisfies 
\[
\|\IVh \zeta_j\|_{0,E} \lesssim  
h^{\gamma_j+1}
\]
when $E$ ia a triangular element. On the other hand,   
when $E$ is an (almost) well-behaved element or an anisotropic quadrilateral element, we employ Propositions \ref{prop:shapereg} and \ref{prop:ani:C123} to obtain
\[    
\|\IVh \zeta_j\|^2_{0,E}\leq \vert E \vert \| \IVh \zeta_j \|^2_{L^{\infty}(E)} \lesssim \vert E \vert \| \IVh \zeta_j \|^2_{L^{\infty}(\partial E)}=\vert E \vert \max_{\nu \in {\cal V}_E} \vert\zeta_j(\nu)\vert^2.
\]
As for any point $P\in {\cal N}_j(\bz_j,r_j)$  we have 
$\zeta_j(P)\lesssim h^{\gamma_j}$ we get 
\[
\|\IVh \zeta_j\|^2_{0,E}\leq h^{2+2\gamma_j},
\]
and the thesis is proven.
\end{proof}

\begin{remark}\label{rem:polapprox:sing} {\rm
Checking Assumption \ref{ass:polapprox} (polynomial approximation) for the parts of the domain near the corner singularities  follows the same identical steps as for the interpolation operator $\IVh$. More specifically, in the annulus $A_j$ we make use of the polynomial approximation introduced in Remark \ref{rem:polapprox:H2}, yielding by the same identical argument
\begin{equation}
| \zeta_j -{\cal P}^1_h \zeta_j |_{1,\Omega_h \cap A_j} \lesssim h^{\gamma_j}.   
\end{equation}
For the elements that intersect ${\cal N}_j(\bz_j,r_j)$, we simply choose ${\cal P}^1_h \zeta_j |_E = 0$ 
so that we can immediately apply \eqref{eq:Falcao} and conclude.
}
\end{remark}

\section{Numerical results}\label{sec:numerics}

In this section, we present two numerical tests to validate the theoretical results of Theorems~\ref{th:energy-error} and \ref{th:L2-error}.
To compute the VEM error between the exact solution $u$ and the VEM solution $u_h$, we consider the computable error quantities 
\begin{equation}
    \label{eq:err_quant}
\begin{aligned}
\texttt{err}(u_h, H^1) &=
\frac{\sum_{E \in \mathcal{T}_h} \left( | u -  \Pi^{\nabla}  u_h|_{1, E}^2 \right)^{1/2}}{|u |_{1, \Omega_h}}  \,, 
\\
\texttt{err}(u_h, L^2) &=
\frac{\| u -  \Pi^{\nabla}  u_h\|_{0, \Omega_h}}{ \|u\|_{0, \Omega_h}}  \,.
\end{aligned}
\end{equation}
Given a sequence of $N+1$ meshes with mesh diameters $h_0 > \dots > h_N$, and denoting by $E_h$ any of the error quantities listed in \eqref{eq:err_quant}, we define the experimental order of convergence \texttt{EOC}
\begin{equation}
\label{eq:err_eoc}
\texttt{EOC}(n) = 
\frac{\log(E_{h_{n-1}}/E_{h_{n}})}{\log(h_{n-1}/h_{n})} 
\qquad \text{for $n=1, \dots, N$.}
\end{equation}
The the average experimental order of convergence \texttt{AEOC} is defined as
\begin{equation}
\label{eq:err_aeoc}
\texttt{AEOC}= \frac{1}{N} \sum_{n=1}^N 
\texttt{EOC}(n) \,.
\end{equation}
For simplicity, in the forthcoming tests we consider the Poisson problem with general Dirichlet boundary conditions. Specifically, we consider the following elliptic boundary value problem:
\begin{equation}
\label{eq:poisson_test}
\left\{
\begin{aligned}
-\Delta u &= f &\quad &\text{in $\Omega$,} \\
u &= g         &\quad &\text{on $\partial \Omega$,}
\end{aligned}
\right .
\end{equation}
where the domain $\Omega$, the source term $f$ and the boundary value $g$ will be specified in each test. Recall that the numerical analysis for the non-homogeneous boundary-value problem can be reduced to the one given in the previous sections for the vanishing boundary condition, by applying the change of variable $u=u_0+u_g$ where $u_g$ is a suitable lifting of $g$ to the whole domain $\Omega$. 

In order to assess the robustness of the VEM with respect to the elements generated by the proposed cutting procedure, in the forthcoming tests we systematically adopt the so-called dofi–dofi stabilization (see \cite{BLRstab,brenner2018,chen2018}) in all cases, including elements with small edges or highly anisotropic polygons (unless explicitly stated otherwise in Test \ref{sec:test2}).
%

\subsection{Test 1: VEM approximation on a curved domain}
\label{sec:test1}

The purpose of this numerical experiment is to assess the performance of the  VEM when the discrete computational domain $\Omega_h$ differs from the exact domain $\Omega$.

To this end, we consider the Poisson problem \eqref{eq:poisson_test}, where $\Omega$ is a quarter of the unit disk, that is,
$0 \leq r \le 1$, $0 \leq \theta \leq \frac{\pi}{2}$.
Our goal is to construct an explicit solution $u$ that exhibits a boundary layer near the curved boundary $r=1$. 
To do so, we look for a solution in separated form in polar coordinates:
\[
u(r,\theta) = \varphi(r)\psi(\theta).
\]
We choose
$
\psi(\theta) = \sin(4\theta)
$,
since this choice automatically satisfies the homogeneous Dirichlet boundary conditions
$
\psi(0)=\psi\left(\frac{\pi}{2}\right)=0
$.
%
%
%
We now turn to the radial part. We look for $\varphi$ in the form
\[
\varphi(r) = r^2 \eta(r)\,,
\]
where $\eta$ is chosen to model a boundary layer near $r=1$.
More precisely, we assume that $\eta$ solves the one-dimensional problem
\begin{equation}
\label{eq:nu_test1}
-\frac{1}{\nu}\eta''(r) + \eta'(r) = 0, \quad r \in (0,1),
\end{equation}
with boundary conditions
$
\eta(0)=1$, $\eta(1)=0
$,
where $\nu>1$ is the Péclet number. This choice ensures a sharp variation near $r=1$.
The explicit solution of this problem is
\[
\eta(r) = \frac{e^{\nu} - e^{\nu r}}{e^{\nu} - 1}\,.
\]
Note that, by construction,
$
\varphi(0)=0$ and $\varphi(1)=0
$,
so that the full solution $u$ satisfies the boundary conditions on $\partial \Omega$.
%
%
%
%
%
%
Computing the Laplacian in polar coordinates,
%
%
we obtain that the term $f$ in Problem \eqref{eq:poisson_test} is
\[
f = - \Delta u=
-\varphi''(r)\psi(\theta)
- \frac{1}{r}\varphi'(r)\psi(\theta)
- \frac{1}{r^2}\varphi(r)\psi''(\theta)
=
\left[60\eta(r)
- r(5 + \nu r)\eta'(r)
\right]\sin(4\theta).
\]

In order to assess the proposed  VEM scheme, we consider a Cartesian partition of the square domain $[0,1]^2$ cut by the  curved portion of $\partial\Omega$.
The configuration is shown in Fig. \ref{fig:meshes_c}. The red line  represents the curved portion of $\partial \Omega$, whereas black lines define the mesh elements and the boundary of the discrete domain $\Omega_h$.  We  remark that the trimming procedure automatically generates polygons with up to six edges, and/or elements with small edges. 
We also note that, in the present case, the inclusion $\Omega_h \subset \Omega$ is strict. Further, $\Omega_h$ and $\Omega$ differ by at most a region of width $O(h^2)$.

\begin{figure}[!h]
	\centering
\includegraphics[width=0.30\textwidth]{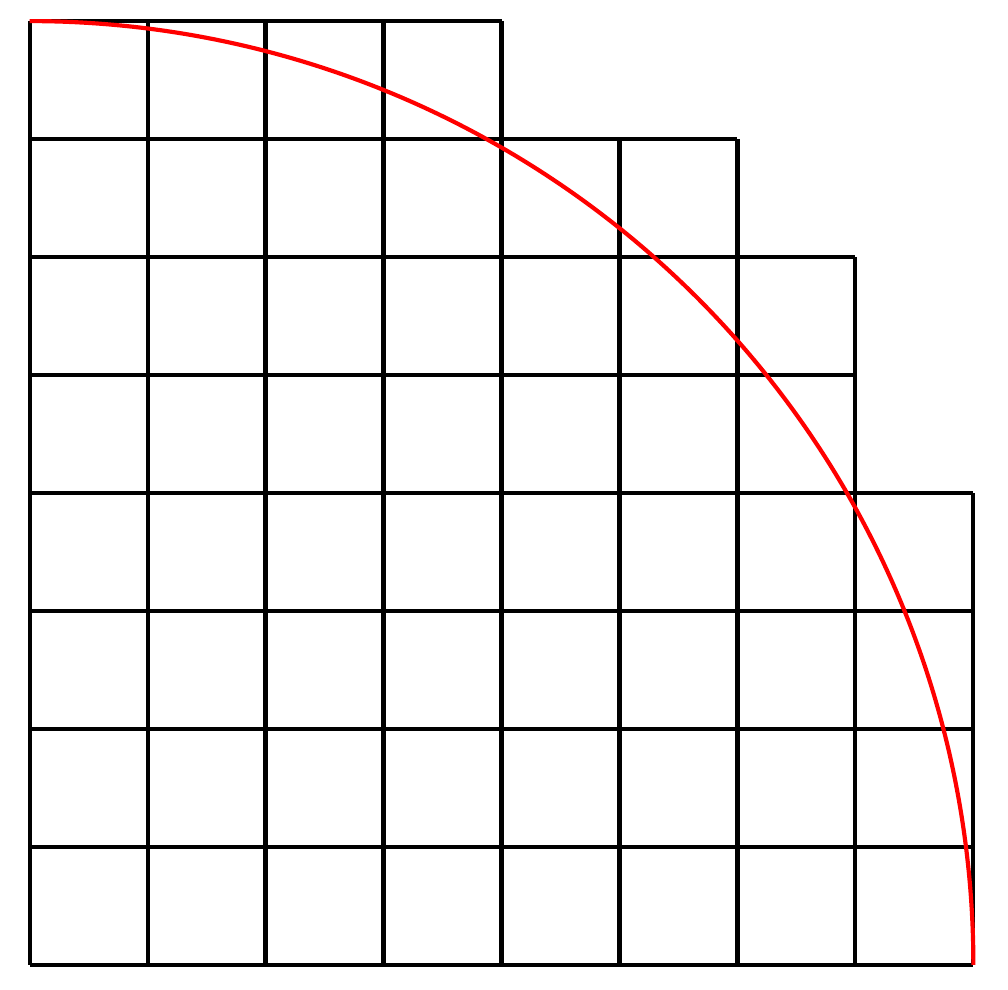}
\quad  
\includegraphics[width=0.30\textwidth]{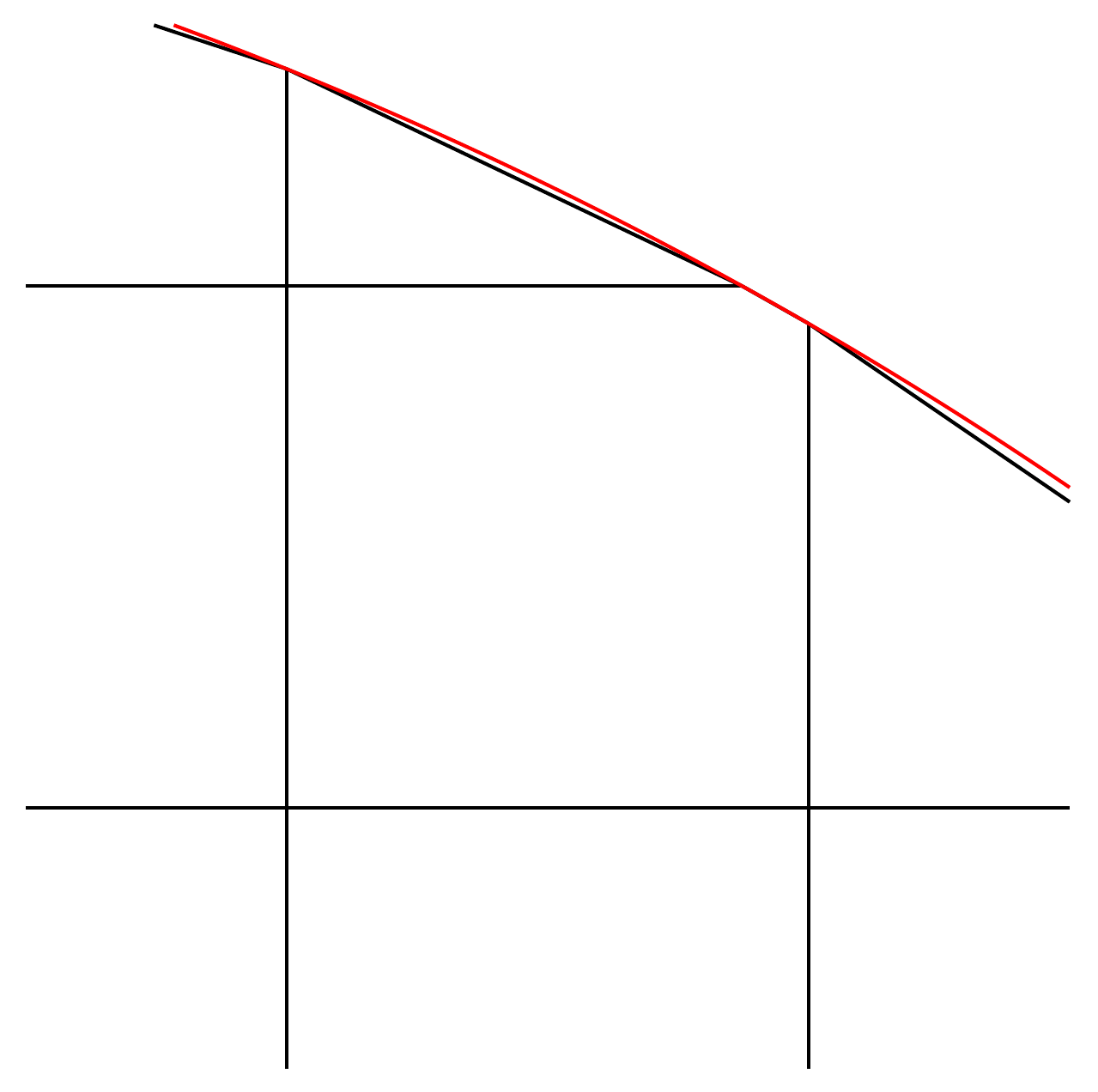}
\quad  
\includegraphics[width=0.30\textwidth]{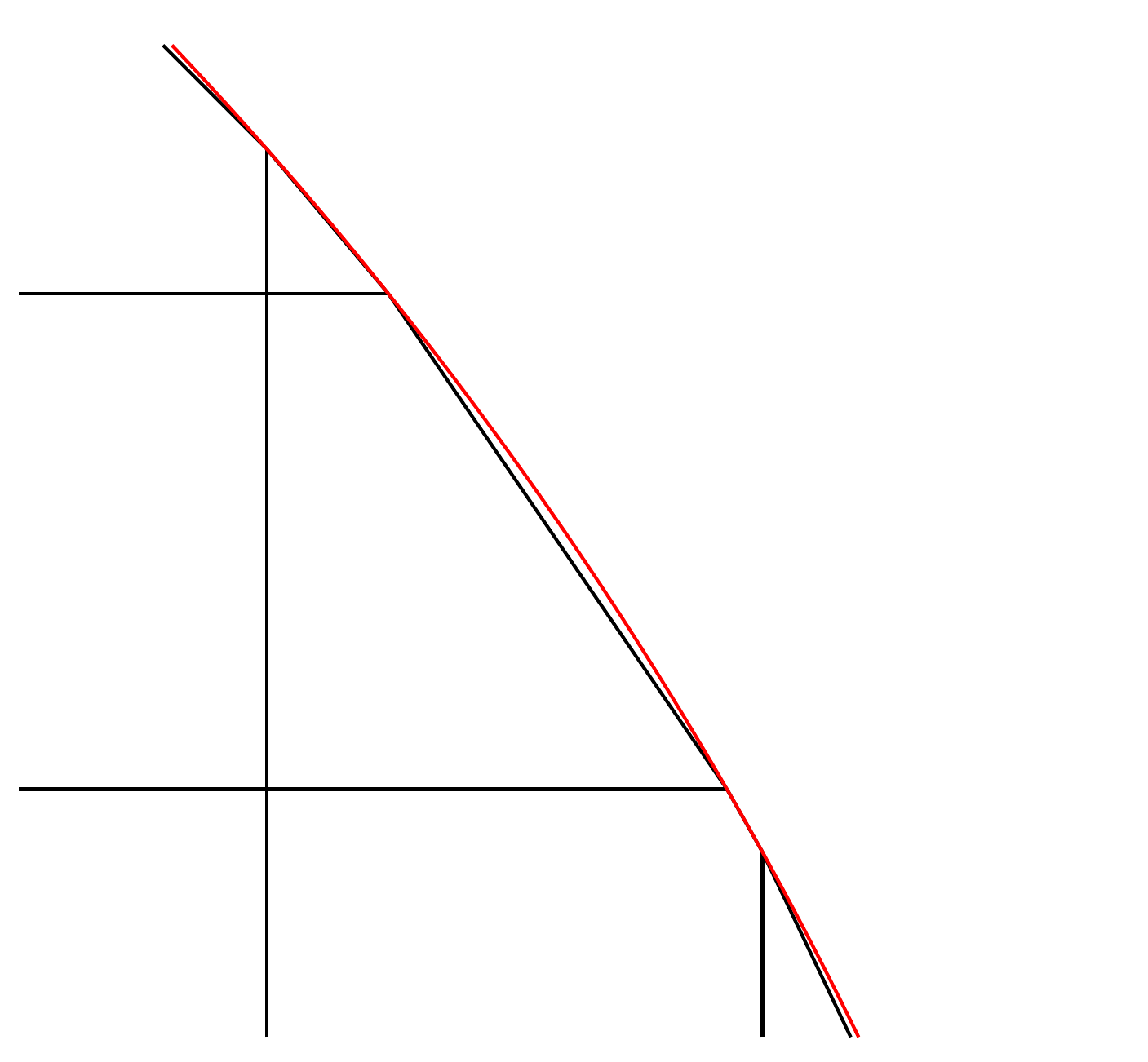}
	\caption{Test 1. Left: configuration of the domain $\Omega$ and the tessellation $\mathcal{T}_h^B(\Omega)$, the red curve represents the curved portion of $\partial \Omega$; 
Middle: detail of $\mesh$ with a polygon with five edges;
Right: detail of $\mesh$, to highlight that $\Omega_h \subset \Omega$ is a strict inclusion.}
\label{fig:meshes_c}
\end{figure}
In Fig.~\ref{fig:Test1}, we show the discrete solutions $u_h$ obtained on a Cartesian tessellation of $[0,1]^2$ with $64 \times 64$ elements, for $\nu=1, 32, 64, 128$ (cf.~\eqref{eq:nu_test1}).
As expected, the solutions $u_h$ exhibit a boundary layer near the curved boundary when $\nu$ becomes larger.
\begin{figure}[h!]
	\centering
\subfloat[$u_h$ with \texttt{$\nu=1$}]{\includegraphics[width=0.475\linewidth]{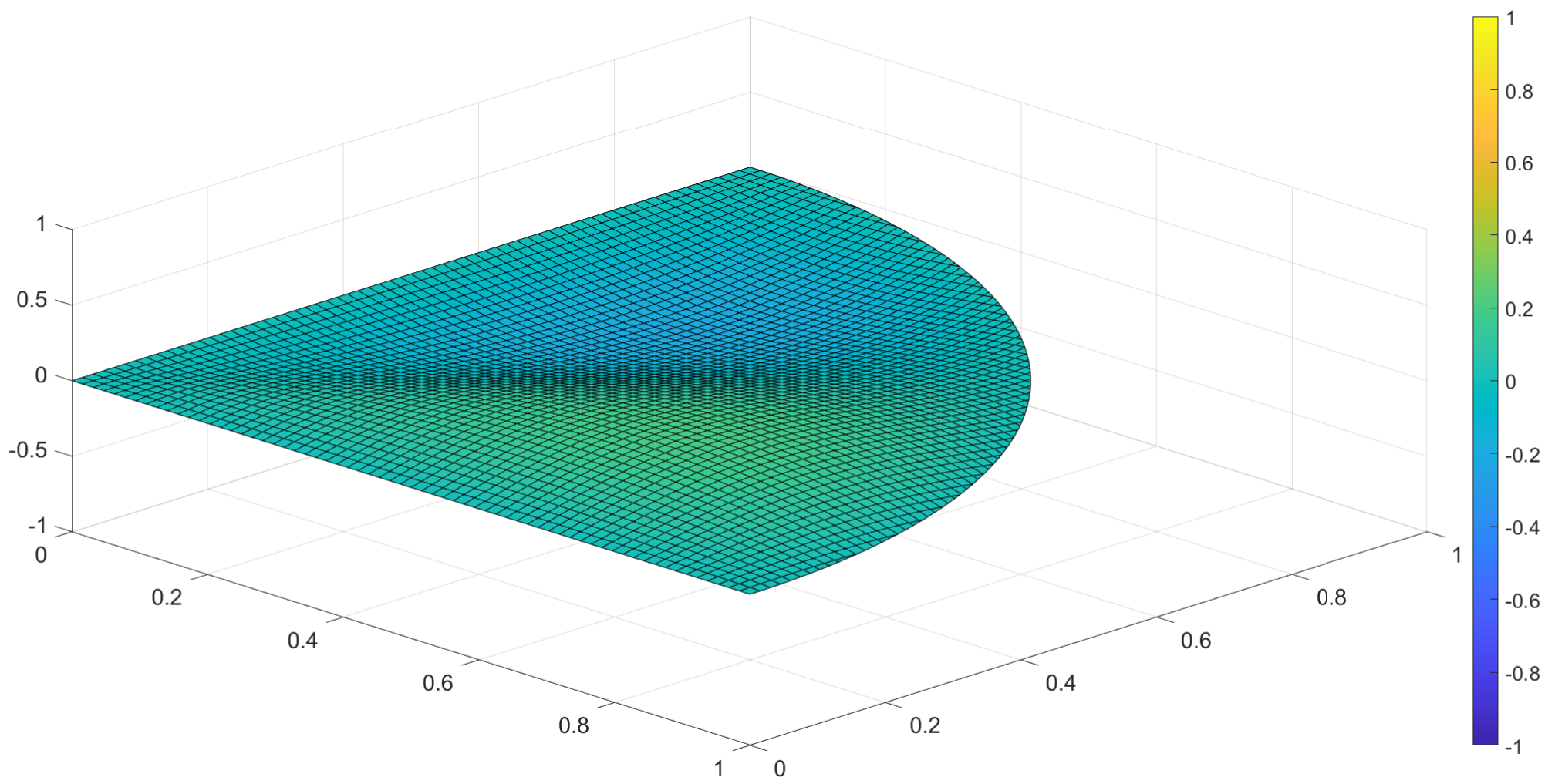}}\hfill
\subfloat[$u_h$ with \texttt{$\nu=32$}]{\includegraphics[width=0.475\linewidth]{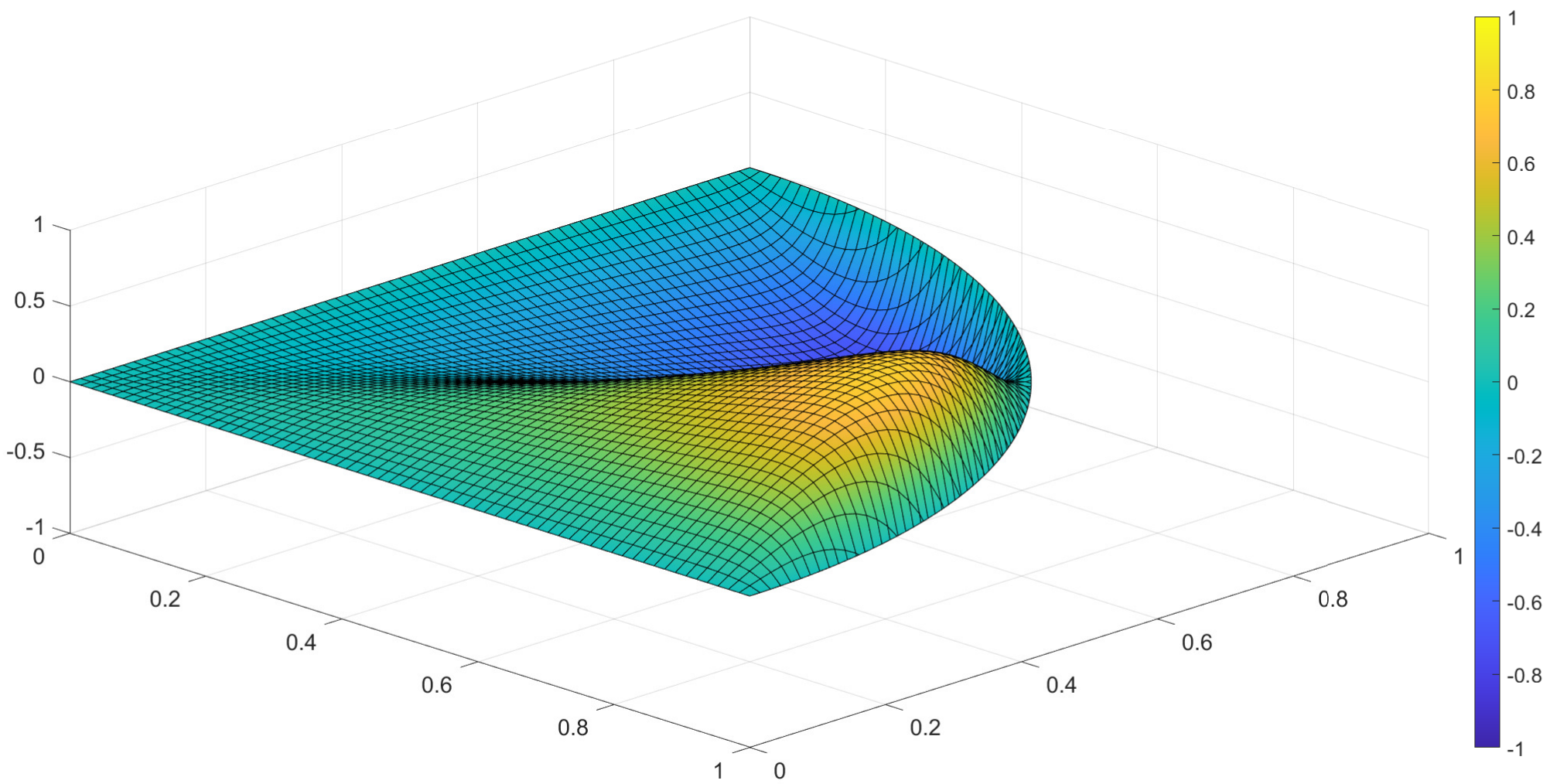}}
\\
\subfloat[$u_h$ with \texttt{$\nu=64$}]{\includegraphics[width=0.475\linewidth]{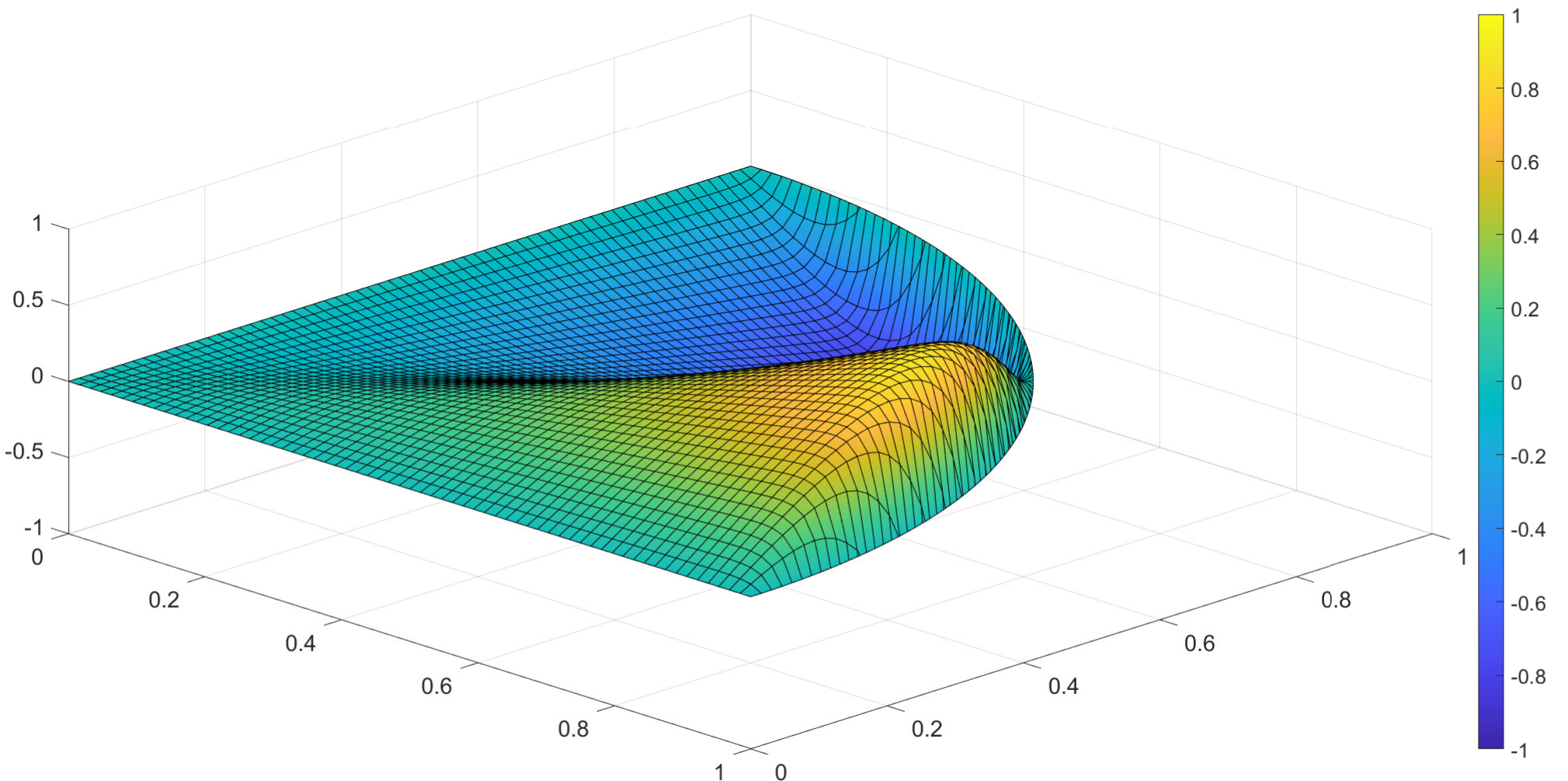}}
\hfill
\subfloat[$u_h$ with \texttt{$\nu=128$}]{\includegraphics[width=0.475\linewidth]{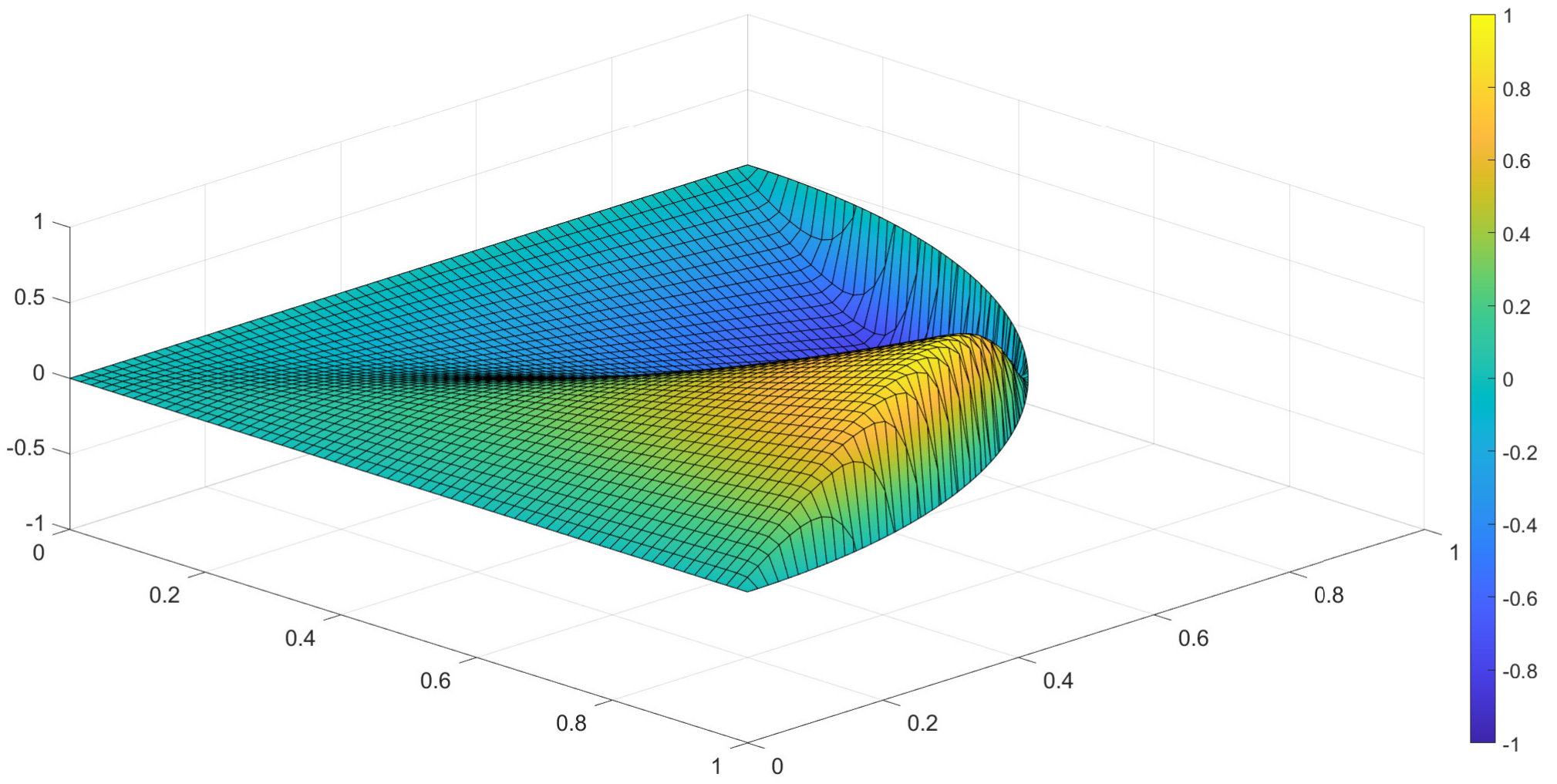}}
	\caption{Test 1. Discrete solutions $u_h$ obtained on a Cartesian tessellation of $[0,1]^2$ with $64 \times 64$ elements.}
	\label{fig:Test1}
\end{figure}

Table~\ref{tab:test1-conv} reports the computed error quantities in \eqref{eq:err_quant} and the experimental order of convergence \texttt{EOC} in \eqref{eq:err_eoc} for a sequence of Cartesian meshes on $[0,1]^2$ with $\texttt{N}\times \texttt{N}$ elements for different values of  $\nu$ (the adopted values of $\texttt{N}$ and $\nu$ are reported in the table). 
%



\begin{table}[!ht]
\centering
\begin{small}
\begin{tabular}{l|r|cr|cr|cr|cr}
\toprule
&
& \multicolumn{8}{c}{\texttt{$\nu$}} \\
&
& \multicolumn{2}{c}{\texttt{1}}
& \multicolumn{2}{c}{\texttt{32}}
& \multicolumn{2}{c}{\texttt{64}}
& \multicolumn{2}{c}{\texttt{128}} \\
 \cmidrule(lr){3-10}
   \texttt{ERROR}
&  \texttt{N}
& \texttt{err} & \texttt{EOC}
& \texttt{err} & \texttt{EOC} 
& \texttt{err} & \texttt{EOC} 
& \texttt{err} & \texttt{EOC} 
\\
\midrule
\multirow{6}{*}{\texttt{$H^1$-err}} 
&  \texttt{8}
& \texttt{2.904e-1} & 
& \texttt{5.897e-1} & 
& \texttt{7.439e-1} & 
& \texttt{8.596e-1} & 
\\
&  \texttt{16}
& \texttt{1.480e-1} & \texttt{0.97}
& \texttt{3.952e-1} & \texttt{0.57}
& \texttt{5.642e-1} & \texttt{0.39}
& \texttt{7.250e-1} & \texttt{0.24}
\\
&  \texttt{32}
& \texttt{7.490e-2} & \texttt{0.98}
& \texttt{2.414e-1} & \texttt{0.71}
& \texttt{3.855e-1} & \texttt{0.54}
& \texttt{5.600e-1} & \texttt{0.37}
\\
&  \texttt{64}
& \texttt{3.762e-2} & \texttt{0.99}
& \texttt{1.353e-1} & \texttt{0.83}
& \texttt{2.352e-1} & \texttt{0.71}
& \texttt{3.844e-1} & \texttt{0.54}
\\
&  \texttt{128}
& \texttt{1.885e-2} & \texttt{0.99} 
& \texttt{7.173e-2} & \texttt{0.91}
& \texttt{1.311e-1} & \texttt{0.84}
& \texttt{2.328e-1} & \texttt{0.72}
\\
&  \texttt{256}
& \texttt{9.434e-3} & \texttt{0.99}
& \texttt{3.696e-2} & \texttt{0.95}
& \texttt{6.946e-2} & \texttt{0.91}
& \texttt{1.296e-1} & \texttt{0.84}
\\
\midrule
\multirow{6}{*}{\texttt{$L^2$-err}} 
&  \texttt{8}
& \texttt{8.342e-2} & 
& \texttt{1.902e-1} & 
& \texttt{2.549e-1} & 
& \texttt{3.078e-1} & 
\\
&  \texttt{16}
& \texttt{2.133e-2} & \texttt{1.96}
& \texttt{6.967e-2} & \texttt{1.44}
& \texttt{1.110e-1} & \texttt{1.19}
& \texttt{1.563e-1} & \texttt{0.97}
\\
&  \texttt{32}
& \texttt{5.461e-3} & \texttt{1.96}
& \texttt{2.365e-2} & \texttt{1.55}
& \texttt{4.408e-2} & \texttt{1.33}
& \texttt{7.369e-2} & \texttt{1.08}
\\
&  \texttt{64}
& \texttt{1.377e-3} & \texttt{1.98}
& \texttt{7.042e-3} & \texttt{1.74}
& \texttt{1.493e-2} & \texttt{1.56}
& \texttt{2.969e-2} & \texttt{1.31}
\\
&  \texttt{128}
& \texttt{3.454e-4} & \texttt{1.99} 
& \texttt{1.915e-3} & \texttt{1.87}
& \texttt{4.378e-3} & \texttt{1.77}
& \texttt{9.864e-3} & \texttt{1.59}
\\
&  \texttt{256}
& \texttt{8.647e-5} & \texttt{1.99}
& \texttt{5.006e-4} & \texttt{1.93}
& \texttt{1.193e-3} & \texttt{1.87}
& \texttt{2.900e-3} & \texttt{1.76}
\\
\bottomrule
\end{tabular}
\end{small}
\caption{Test 1. Computed errors $\texttt{err}(u_h, H^1)$ (top) and $\texttt{err}(u_h, L^2)$ (bottom)
as in \eqref{eq:err_quant} for the a Cartesian tessellation of $[0,1]^2$ into $\texttt{N}\times \texttt{N}$ elements and different values of $\nu$ (cf. \eqref{eq:nu_test1}).}
\label{tab:test1-conv}
\end{table}


We observe that for $\nu=\texttt{1}$ we recover the rate of convergence predicted in Theorems~\ref{th:energy-error} and \ref{th:L2-error}. For larger values of $\nu$, as expected, the correct rate is observed only when $\texttt{N}$ is sufficiently large to resolve the boundary layer of the exact solution $u$.


\subsection{Test 2: VEM approximation on anisotropic elements}
\label{sec:test2}

The aim of the second test is to assess the performance of the proposed method and to validate the theoretical results in~Theorems~\ref{th:energy-error} and \ref{th:L2-error} in the case of a parametrized family of domains with a reentrant corner and in the presence of elements with poor aspect ratio.
To this end, we consider the Poisson problem \eqref{eq:poisson_test} posed on the L-shaped domain
\begin{equation}
\label{eq:omega_t}
\Omega = \Omega(t) = (-1+t, 1+t)^2 \setminus [t, 1+t]^2 \qquad
\text{for $t \in [0, 1]$.} 
\end{equation}
The loading term is $f=0$, and therefore the exact solution $u$ of Problem~\eqref{eq:poisson_test}, expressed in polar coordinates, is given by
\[
u(r, \theta) = r^{2/3} \, \sin\left(\frac{2}{3} \theta \right) \,, \qquad 0 \leq \theta < \frac{3\pi}2.
\]
Here, $r$ and $\theta$ denote the polar coordinates with respect to the vertex of the reentrant corner $(t, t)$ and the vertical positive semi-axis.

Let us consider the extended domain $\widetilde{\Omega} = (-1, 2)^2$ and note that $\Omega(t) \subset \widetilde{\Omega}$ for all values of the parameter $t \in [0,1]$.
We further observe that the load $f$ trivially extends to 0 on $\widetilde{\Omega}$,
whereas we extend with continuity the exact solution  $u$, namely $u|_{\widetilde{\Omega}\setminus \Omega}=0$.

To discretize the problem, we consider a sequence of Cartesian meshes centered at the origin $(0,0)$, which are in general not aligned with the boundary of $\Omega$, and in particular with the vertex of the reentrant corner $(t,t)$.

In Fig.~\ref{fig:options_l}, we show three possible configurations of the discrete domain $\Omega_h$ and the corresponding meshes.
The first configuration, labelled as \texttt{OPT A}, coincides with the one proposed in Sect~\ref{sec:meshes}, Remark \ref{exa:background-mesh}, i.e. the reentrant corner in $\Omega$ is approximated in $\Omega_h$ by the convex hull containing it. 
In the second and third configurations, labelled as \texttt{OPT B} and \texttt{OPT C}, respectively, the computational domain coincides with the exact domain. However, the adopted meshes differ: in \texttt{OPT B}, the element containing the reentrant corner is split into two elements by connecting the vertex $(t,t)$ with the lower-left vertex of the background mesh; in \texttt{OPT C}, the nonconvex element containing the reentrant corner is retained.
We observe that the three approaches coincide whenever $(t,t)$ is a vertex of the background mesh.

\begin{figure}[t!]
	\centering
\includegraphics[width=0.30\textwidth]{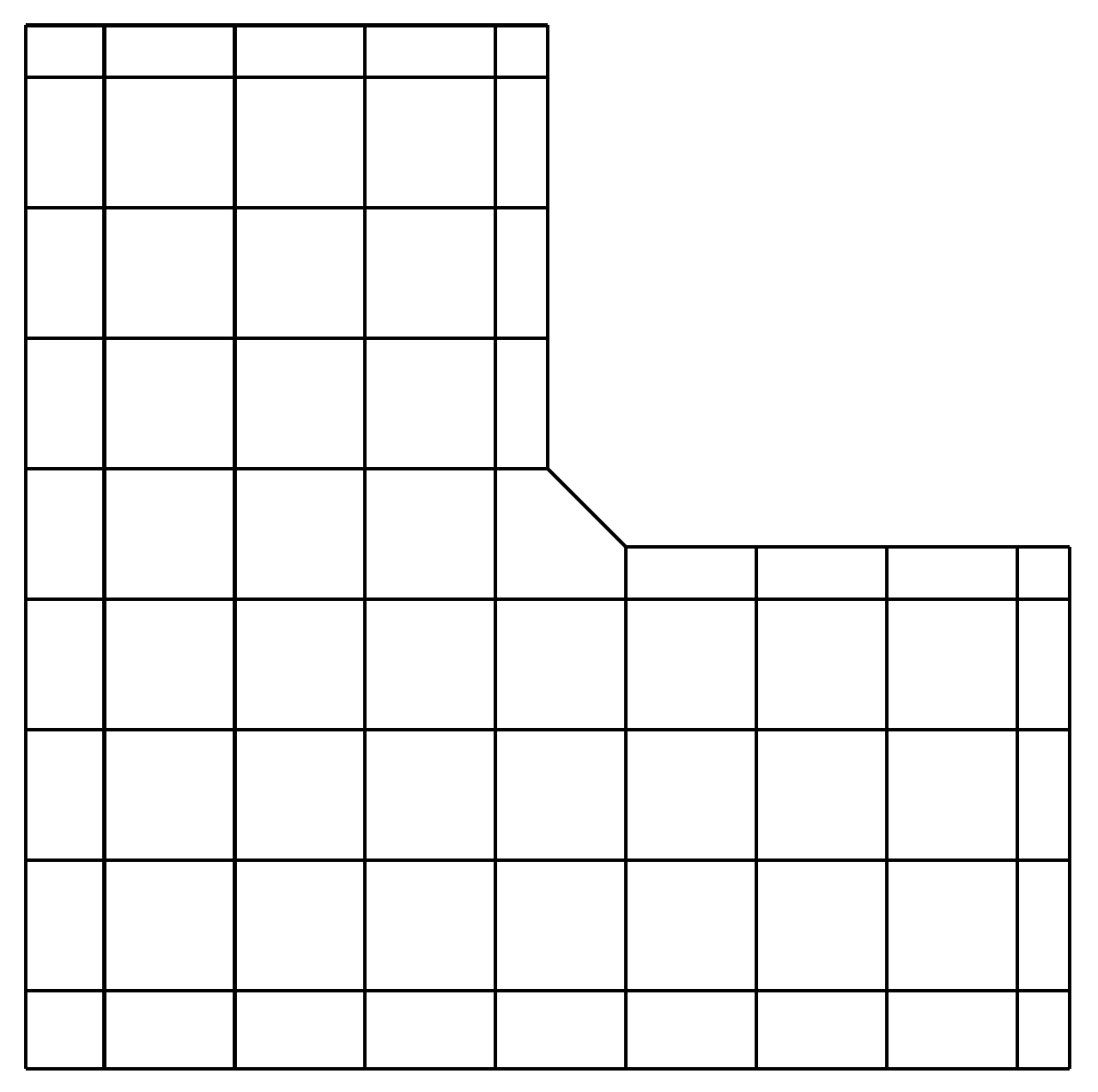}
\quad 
\includegraphics[width=0.30\textwidth]{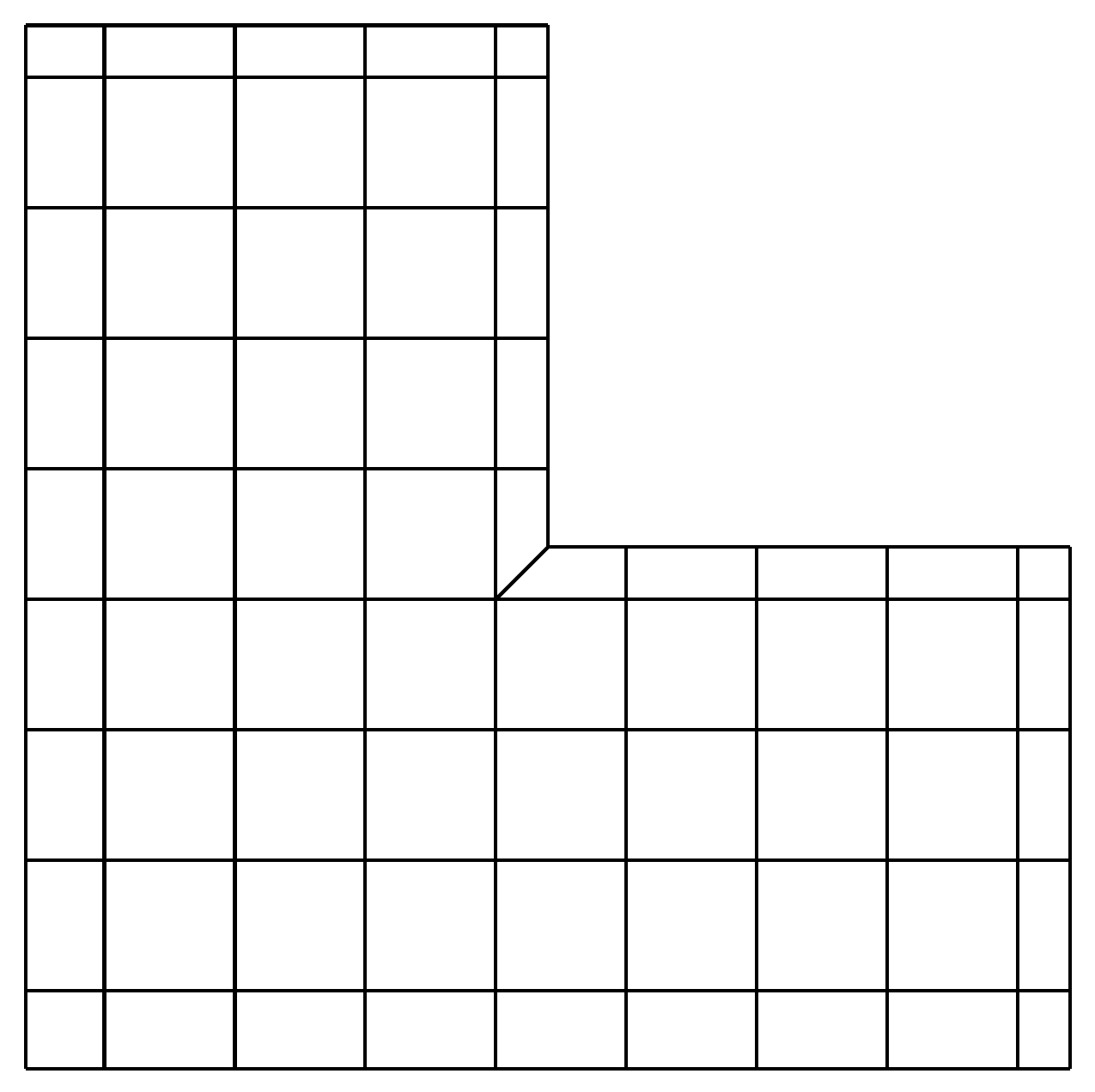}
\quad 
\includegraphics[width=0.30\textwidth]{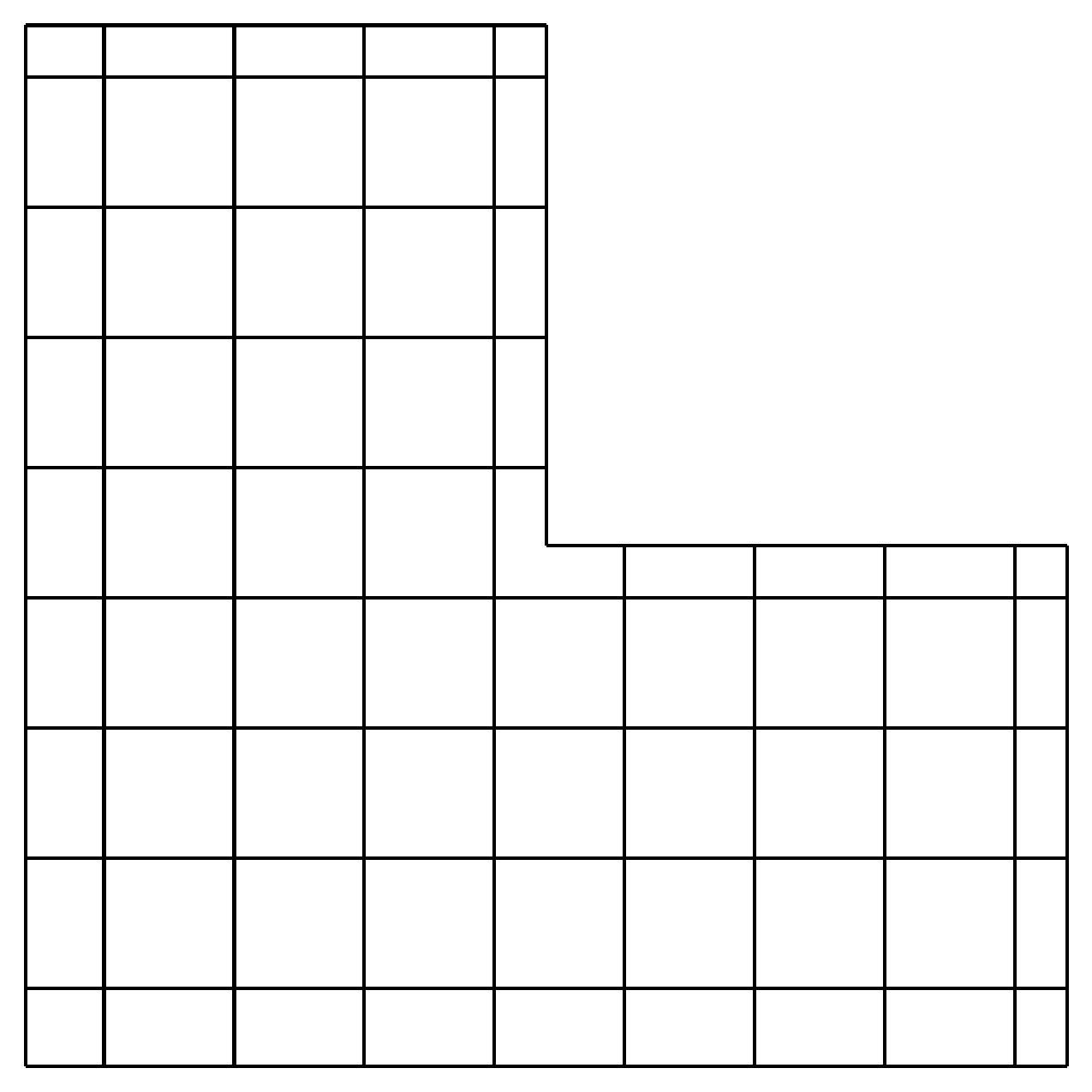}
	\caption{Test 2. Three configurations of the discrete domain $\Omega_h$ and the corresponding meshes. Left: \texttt{OPT A}, where the reentrant corner is approximated by its convex hull; Middle: \texttt{OPT B}, where the element containing the reentrant corner is split into two elements; Right: \texttt{OPT C}, where the nonconvex element containing the reentrant corner is retained.}
\label{fig:options_l}
\end{figure}
We first consider the domain $\Omega = \Omega(\tfrac1{10})$ (cf. \eqref{eq:omega_t}).
In Fig. \ref{fig:solution_l} we show the plot of the discrete solution $u_h$ obtained on a Cartesian background tessellation having mesh size $\texttt{1/16}$ with \texttt{OPT A}. 

\begin{figure}[!h]
	\centering
\includegraphics[scale=0.30]{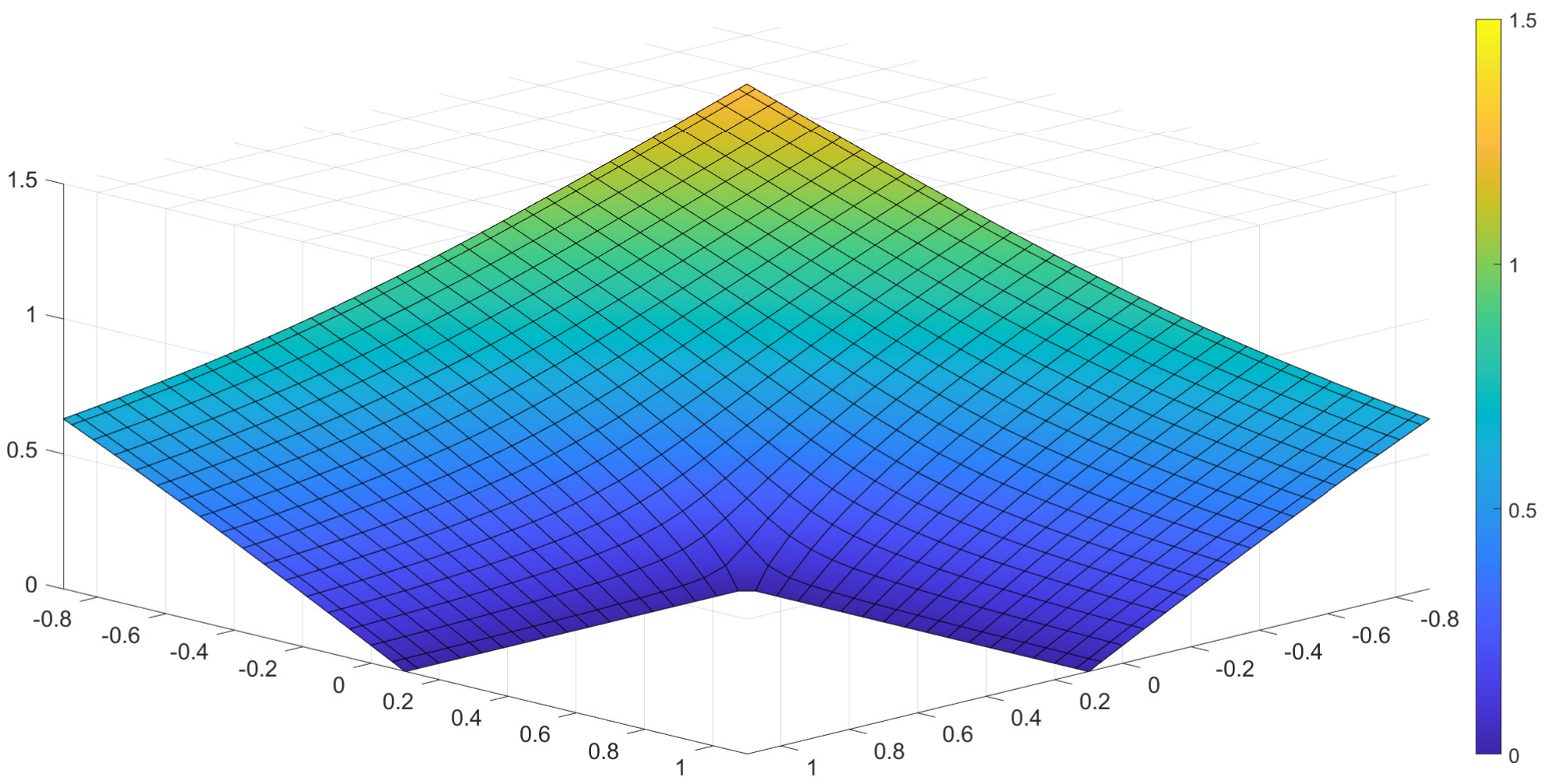}
	\caption{Test 2. Discrete solutions $u_h$ obtained on a Cartesian tessellation having mesh size \texttt{1/16} with \texttt{OPT A}.}
\label{fig:solution_l}
\end{figure}

In Table \ref{tab:test2-conv} we report the computed error quantities in \eqref{eq:err_quant}, the experimental order of convergence \texttt{EOC}, and the average experimental order of convergence \texttt{AEOC} for a sequence of Cartesian meshes with mesh size $\texttt{1/h}$ (the corresponding values of $\texttt{h}$ are listed in the table), for the three options described above.
To ensure a fair comparison among the three options,  in \texttt{OPT A-int} we compute the error quantities in \eqref{eq:err_quant}, obtained with \texttt{OPT A} on the physical domain $\Omega$ (instead of $\Omega_h$), recalling that for \texttt{OPT B} and \texttt{OPT C} it holds $\Omega_h= \Omega$.

\begin{table}[b!]
\centering
\begin{small}
\begin{tabular}{l|r|cr|cr|cr|cr}
\toprule
&
& \multicolumn{8}{c}{\texttt{OPTIONS}} \\
&
& \multicolumn{2}{c}{$\texttt{OPT A-int}$}
& \multicolumn{2}{c}{$\texttt{OPT A}$}
& \multicolumn{2}{c}{$\texttt{OPT B}$}
& \multicolumn{2}{c}{$\texttt{OPT C}$} \\
 \cmidrule(lr){3-10}
   \texttt{ERROR}
&  \texttt{1/h}
& \texttt{err} & \texttt{EOC}
& \texttt{err} & \texttt{EOC} 
& \texttt{err} & \texttt{EOC} 
& \texttt{err} & \texttt{EOC} 
\\
\midrule
\multirow{7}{*}{\texttt{$H^1$-err}}     
&  \texttt{8}
& \texttt{7.519e-2} & 
& \texttt{7.690e-2} &
& \texttt{7.537e-2} &
& \texttt{7.610e-2} &
\\
&  \texttt{16}
& \texttt{5.338e-2} & \texttt{0.49}
& \texttt{5.856e-2} & \texttt{0.39}
& \texttt{4.578e-2} & \texttt{0.72}
& \texttt{5.296e-2} & \texttt{0.52}
\\
&  \texttt{32}
& \texttt{4.171e-2} & \texttt{0.35}
& \texttt{5.031e-2} & \texttt{0.21}
& \texttt{2.944e-2} & \texttt{0.63}
& \texttt{3.947e-2} & \texttt{0.42}
\\
&  \texttt{64}
& \texttt{2.413e-2} & \texttt{0.78}
& \texttt{2.778e-2} & \texttt{0.85}
& \texttt{1.821e-2} & \texttt{0.69}
& \texttt{2.266e-2} & \texttt{0.80}
\\
&  \texttt{128}
& \texttt{1.224e-2} & \texttt{0.97}
& \texttt{1.250e-2} & \texttt{1.15}
& \texttt{1.226e-2} & \texttt{0.57}
& \texttt{1.238e-2} & \texttt{0.87}
\\
&  \texttt{256}
& \texttt{8.552e-3} & \texttt{0.51}
& \texttt{9.358e-3} & \texttt{0.42}
& \texttt{7.375e-3} & \texttt{0.73}
& \texttt{8.485e-3} & \texttt{0.54}
\\
\cmidrule(r){2-10}
& \texttt{AEOC}
& & \texttt{0.62}
& & \texttt{0.60}
& & \texttt{0.67}
& & \texttt{0.63}
\\
\midrule
\multirow{7}{*}{\texttt{$L^2$-err}}     
&  \texttt{8}
& \texttt{1.997e-3} & 
& \texttt{2.054e-3} &
& \texttt{4.617e-3} &
& \texttt{2.626e-3} &
\\
&  \texttt{16}
& \texttt{1.448e-3} & \texttt{0.46}
& \texttt{1.499e-3} & \texttt{0.45}
& \texttt{1.577e-3} & \texttt{1.54}
& \texttt{7.046e-4} & \texttt{1.90}
\\
&  \texttt{32}
& \texttt{1.891e-3} & \texttt{-0.38}
& \texttt{1.924e-3} & \texttt{-0.36}
& \texttt{5.656e-4} & \texttt{1.47}
& \texttt{8.257e-4} & \texttt{-0.23}
\\
&  \texttt{64}
& \texttt{4.712e-4} & \texttt{2.00}
& \texttt{4.761e-4} & \texttt{2.01}
& \texttt{2.416e-4} & \texttt{1.22}
& \texttt{1.165e-4} & \texttt{2.82}
\\
&  \texttt{128}
& \texttt{2.919e-5} & \texttt{4.01}
& \texttt{2.957e-5} & \texttt{4.01}
& \texttt{1.111e-4} & \texttt{1.11}
& \texttt{5.622e-5} & \texttt{1.05}
\\
&  \texttt{256}
& \texttt{3.172e-5} & \texttt{-0.12}
& \texttt{3.195e-5} & \texttt{-0.11}
& \texttt{4.027e-5} & \texttt{1.46}
& \texttt{9.137e-6} & \texttt{2.62}
\\
\cmidrule(r){2-10}
& \texttt{AEOC}
& & \texttt{1.19}
& & \texttt{1.20}
& & \texttt{1.36}
& & \texttt{1.63}
\\
\bottomrule
\end{tabular}
\end{small}
\caption{Test 2. Computed errors $\texttt{err}(u_h, H^1)$ (top) and $\texttt{err}(u_h, L^2)$ (bottom)
as defined in \eqref{eq:err_quant}, together with the corresponding \texttt{EOC} and \texttt{AEOC} (see \eqref{eq:err_eoc} and \eqref{eq:err_aeoc}), for a sequence of Cartesian tessellations and for the three options described above.}
\label{tab:test2-conv}
\end{table}
We observe that the three options yield very similar results overall.
Furthermore, the exact solution $u$ has a singularity of order $2/3$ at the reentrant corner $(t,t)$; hence, the theoretical convergence rates predicted by Theorems~\ref{th:energy-error} and \ref{th:L2-error} are \texttt{2/3} and \texttt{4/3} for the errors in the $H^1$ and $L^2$ norms, respectively.
While the average convergence rates are in good agreement with these predictions, the individual rates do not consistently reflect this behavior. In particular, a kink can be observed for the third mesh, corresponding to the mesh size $\texttt{h=1/32}$.
To further investigate this behavior, in Fig.~\ref{meshes_l} we show a zoom of the three meshes corresponding to $\texttt{h=1/16}$, $\texttt{h=1/32}$, and $\texttt{h=1/64}$ for the configuration \texttt{OPT A}.
We note that the proposed trimming procedure does not generate a nested sequence of meshes as the mesh size decreases; therefore, some irregularities in the convergence behaviour are to be expected.
\begin{figure}[!h]
	\centering
\includegraphics[width=0.30\textwidth]{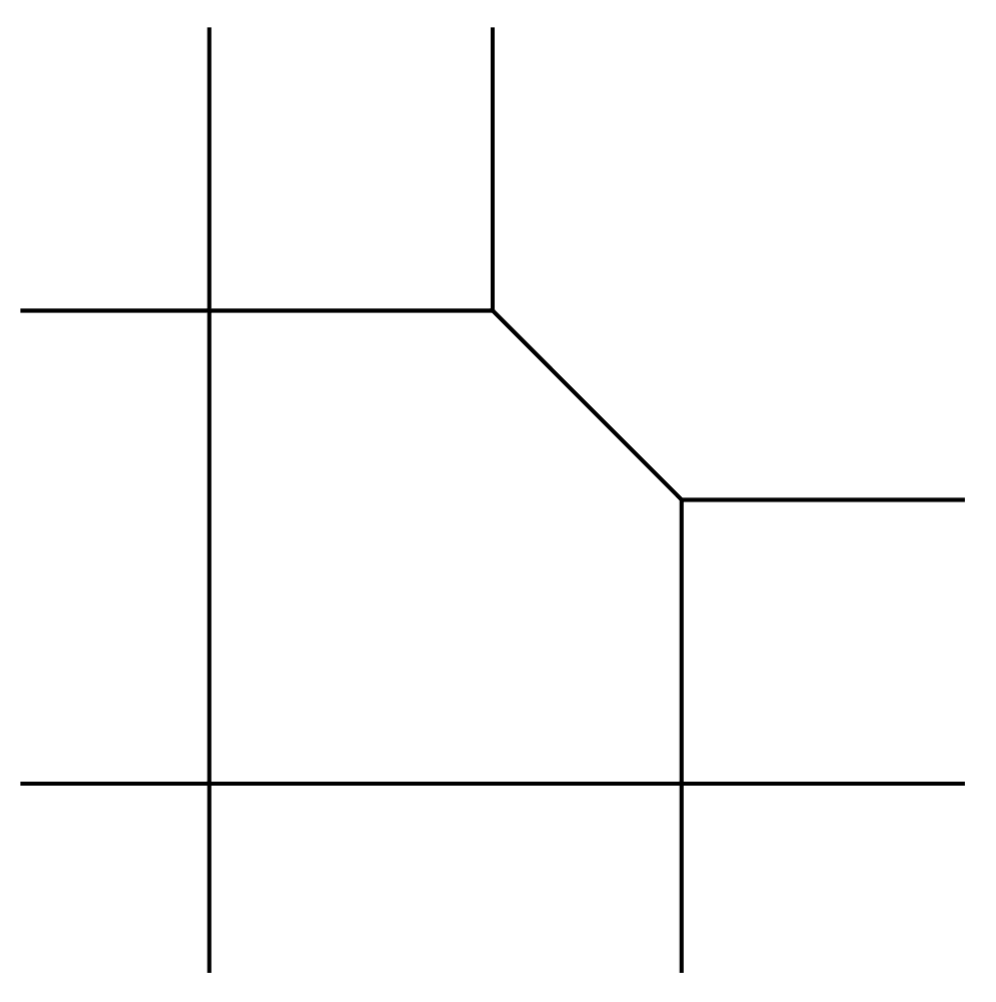}
\quad 
\includegraphics[width=0.30\textwidth]{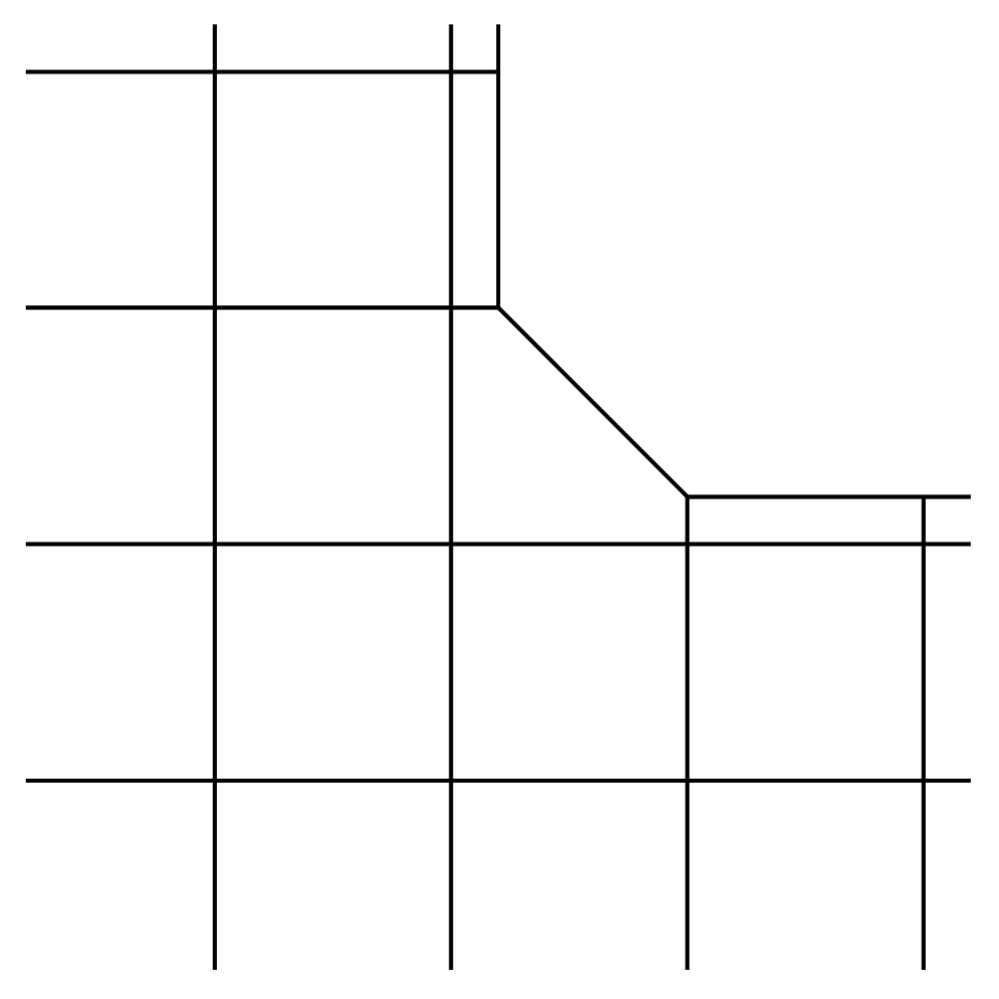}
\quad 
\includegraphics[width=0.30\textwidth]{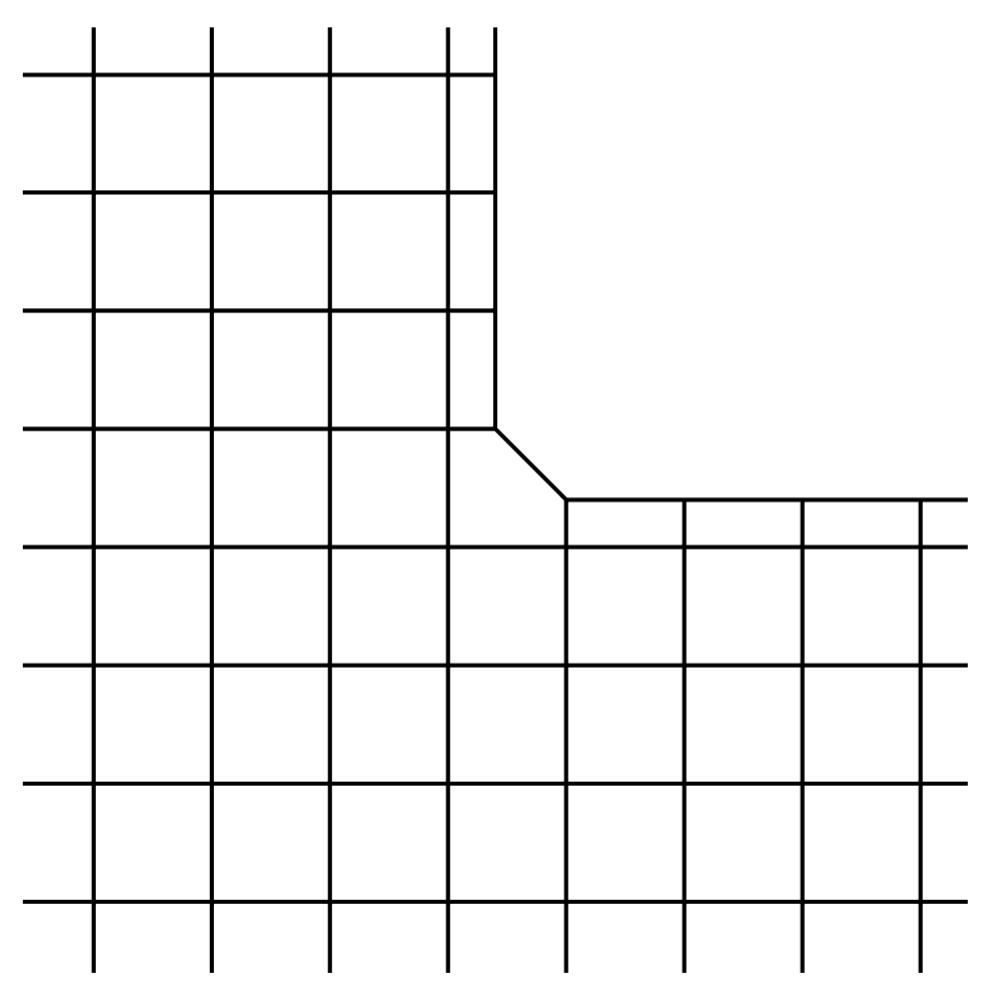}
	\caption{Test 2. Zoom to $(-1/40, 9/40)^2$ of the mesh $\mathcal{T}_h$ with \texttt{h=1/16} (left), \texttt{h=1/32} (middle), \texttt{h=1/64} (right), for \texttt{OPT A}.}
\label{meshes_l}
\end{figure}
We finally note that very similar results are obtained when using, instead of the dofi–dofi stabilization, the stabilization forms described in Section~\ref{sec:stab:X} (results not reported here).

We further assess the performance of the proposed VEM scheme in the presence of elements with poor aspect ratio.
We thus consider the Poisson problem \eqref{eq:poisson_test} posed on the L-shaped domain $\Omega(t)$ in \eqref{eq:omega_t} with smaller and smaller \texttt{t} and set the mesh size of the background tessellation to $\texttt{h=1/16}$.
In Fig. \ref{meshes_l_c} we exhibit the details of the meshes $\mathcal{T}_h$ obtained with $\texttt{t=1/64}$ and $\texttt{t=1/512}$. 

\begin{figure}[!h]
	\centering
\includegraphics[width=0.3\textwidth]{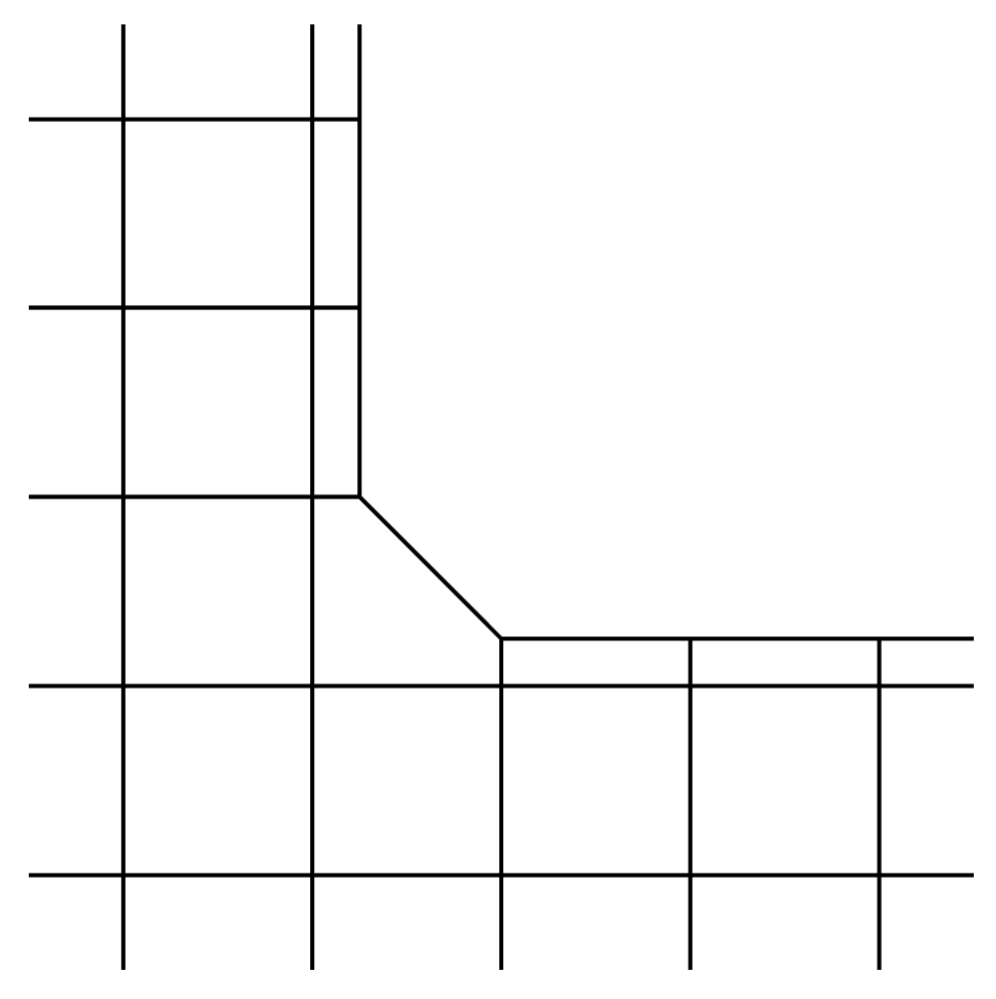}
\qquad 
\includegraphics[width=0.3\textwidth]{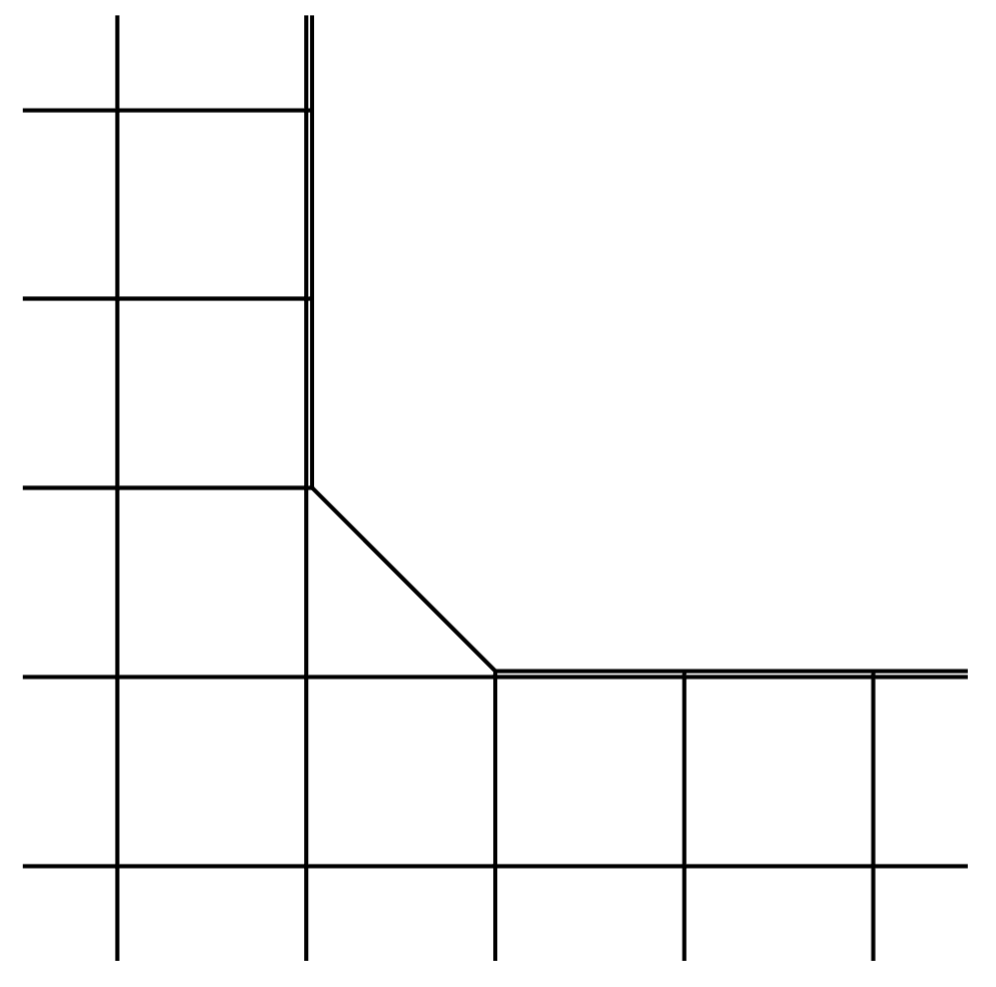}
	\caption{
Test 2. Zoom to $(-3/32, 7/32)^2$ of the mesh $\mathcal{T}_h$ with \texttt{h=1/16} for \texttt{t=1/64} (left) and \texttt{t=1/512} (right), for \texttt{OPT A}.}
\label{meshes_l_c}
\end{figure}

Note that, according to the trimming procedure given in Definition \ref{def:E-K} (see also Example \ref{ex:element-geometries}), for small values of $\texttt{t}$ the procedure generates rectangular elements with long side $\texttt{1/16}$ and short side $\texttt{t}$; thus, the aspect ratio is $\texttt{1/16t}$.

Table~\ref{tab:test2-t} reports the computed error quantities in \eqref{eq:err_quant} and the condition numbers of the resulting linear systems for a sequence of domains $\Omega(t)$ for different values of $\texttt{t}$ and a fixed background mesh with size $\texttt{h = 1/16}$,  (the corresponding values of $\texttt{t}$ are listed in the table).
In order to assess the stability of the method with respect to the choice of the stabilization, we consider both the standard dofi-dofi stabilization and the stabilization ad hoc proposed in Section \ref{sec:stab:X} for anisotropic elements labeled as $s_E^{\texttt dofi}$ and $s_E^{\texttt new}$ respectively. We also consider the three possible configurations detailed above.

\begin{table}[!ht]
\centering
\begin{small}
\begin{tabular}{l|r|cc|cc|cc}
\toprule
&
& \multicolumn{6}{c}{\texttt{OPTIONS}} \\
&
& \multicolumn{2}{c}{$\texttt{OPT A-int}$}
& \multicolumn{2}{c}{$\texttt{OPT B}$}
& \multicolumn{2}{c}{$\texttt{OPT C}$} \\
 \cmidrule(lr){3-8}
   \texttt{QUANTITY}
&  \texttt{1/t}
& $s_E^{\texttt{dofi}}$ & $s_E^{\texttt{new}}$
& $s_E^{\texttt{dofi}}$ & $s_E^{\texttt{new}}$
& $s_E^{\texttt{dofi}}$ & $s_E^{\texttt{new}}$
\\
\midrule
\multirow{6}{*}{\texttt{$H^1$-err}}     
&  \texttt{64}
& \texttt{6.469e-2} & \texttt{6.469e-2}
& \texttt{4.587e-2} & \texttt{4.587e-2}
& \texttt{5.467e-2} & \texttt{5.467e-2}
\\
&  \texttt{128}
& \texttt{6.725e-2} & \texttt{6.716e-2}
& \texttt{4.704e-2} & \texttt{4.704e-2}
& \texttt{5.747e-2} & \texttt{5.710e-2}
\\
&  \texttt{256}
& \texttt{6.822e-2} & \texttt{6.799e-2}
& \texttt{4.813e-2} & \texttt{4.805e-2}
& \texttt{5.749e-2} & \texttt{5.850e-2}
\\
&  \texttt{512}
& \texttt{6.878e-2} & \texttt{6.848e-2}
& \texttt{4.883e-2} & \texttt{4.869e-2}
& \texttt{5.578e-2} & \texttt{5.666e-2}
\\
&  \texttt{1024}
& \texttt{6.888e-2} & \texttt{6.856e-2}
& \texttt{4.897e-2} & \texttt{4.883e-2}
& \texttt{5.348e-2} & \texttt{5.388e-2}
\\
&  \texttt{2048}
& \texttt{6.881e-2} & \texttt{6.849e-2}
& \texttt{4.896e-2} & \texttt{4.885e-2}
& \texttt{5.029e-2} & \texttt{5.038e-2}
\\
\midrule
\multirow{6}{*}{\texttt{$L^2$-err}}     
&  \texttt{64}
& \texttt{4.321e-3} & \texttt{4.321e-3}
& \texttt{1.441e-3} & \texttt{1.441e-3}
& \texttt{1.660e-3} & \texttt{1.660e-3}
\\
&  \texttt{128}
& \texttt{5.405e-3} & \texttt{5.386e-3}
& \texttt{1.403e-3} & \texttt{1.432e-3}
& \texttt{2.594e-3} & \texttt{2.561e-3}
\\
&  \texttt{256}
& \texttt{5.936e-3} & \texttt{5.885e-3}
& \texttt{1.431e-3} & \texttt{1.519e-3}
& \texttt{2.595e-3} & \texttt{2.753e-3}
\\
&  \texttt{512}
& \texttt{6.181e-3} & \texttt{6.104e-3}
& \texttt{1.494e-3} & \texttt{1.624e-3}
& \texttt{2.275e-3} & \texttt{2.444e-3}
\\
&  \texttt{1024}
& \texttt{6.281e-3} & \texttt{6.189e-3}
& \texttt{1.565e-3} & \texttt{1.713e-3}
& \texttt{2.021e-3} & \texttt{2.173e-3}
\\
&  \texttt{2048}
& \texttt{6.313e-3} & \texttt{6.220e-3}
& \texttt{1.631e-3} & \texttt{1.774e-3}
& \texttt{1.881e-3} & \texttt{2.016e-3}
\\
\midrule  
\multirow{7}{*}{\texttt{cond}}     
&  \texttt{64}
& \texttt{2.323e+2} & \texttt{2.323e+2}
& \texttt{2.484e+2} & \texttt{2.484e+2}
& \texttt{2.647e+2} & \texttt{2.647e+2}
\\
&  \texttt{128}
& \texttt{3.877e+2} & \texttt{3.877e+2}
& \texttt{4.007e+2} & \texttt{4.007e+2}
& \texttt{6.440e+2} & \texttt{7.632e+2}
\\
&  \texttt{256}
& \texttt{6.982e+2} & \texttt{6.982e+2}
& \texttt{7.051e+2} & \texttt{7.050e+2}
& \texttt{2.158e+3} & \texttt{4.138e+3}
\\
&  \texttt{512}
& \texttt{1.319e+3} & \texttt{1.319e+3}
& \texttt{1.314e+3} & \texttt{1.313e+3}
& \texttt{7.956e+3} & \texttt{3.001e+4}
\\
&  \texttt{1024}
& \texttt{2.560e+3} & \texttt{2.560e+3}
& \texttt{2.532e+3} & \texttt{2.530e+3}
& \texttt{3.051e+4} & \texttt{2.347e+5}
\\
&  \texttt{2048}
& \texttt{5.043e+3} & \texttt{5.041e+3}
& \texttt{4.967e+3} & \texttt{4.964e+3}
& \texttt{1.194e+5} & \texttt{1.869e+6}
\\
\cmidrule(r){2-8}
& \texttt{RATE}
& \texttt{0.89} & \texttt{0.88} 
& \texttt{0.86} & \texttt{0.86}
& \texttt{1.76} & \texttt{2.56}
\\
\bottomrule
\end{tabular}
\end{small}
\caption{Test 2. Computed error quantities in \eqref{eq:err_quant} and condition numbers of the resulting linear systems for a sequence of domains $\Omega(t)$ with varying $\texttt{t}$, using a fixed background mesh with $\texttt{h=1/16}$ and the three options described above. The values of $\texttt{t}$ are listed in the table.
Standard dofi-dofi stabilization and  stabilization proposed in Section \ref{sec:stab:X} are labeled as $s_E^{\texttt dofi}$ and $s_E^{\texttt new}$ respectively}
\label{tab:test2-t}
\end{table}

The results show that the method is robust with respect to both the aspect ratio of the elements and the choice of stabilization.
Furthermore they indicate that the condition number of the resulting linear systems grows approximately linearly with the aspect ratio for configurations \texttt{OPT A} and \texttt{OPT B}. In contrast, for \texttt{OPT C} the growth appears to be quadratic when using the dofi-dofi stabilization, and cubic when employing the stabilization proposed in Section~\ref{sec:stab:X}. We stress, however, that the latter stabilization was specifically designed for quadrilateral elements with poor aspect ratio, and its performance in more general settings (e.g., for concave polygons) is not guaranteed.



\section{Conclusions and perspectives}\label{sec:conclusions}

 The contributions of this paper are twofold. First, we classify and study the geometric configurations that arise when a two-dimensional stationary piecewise smooth domain cuts through a quasi-uniform fixed polygonal background mesh made of rectangles. This study is rather technical, but is instrumental for robust stability of the ensuing VEM and for its optimal accuracy. Second, we provide novel and crucial tools for investigating VEMs for time-dependent PDEs posed on deformable domains, which is our next objective \cite{NostroMovingDomain}. In such a case, VEMs must deal with dynamic mesh changes along the domain boundary which may give rise to extreme geometric configurations even under the assumption of background mesh resolution.

This motivates our systematic classification of all possible polygonal shapes of boundary elements, along with their stability and approximation properties. Results of this type exist in the current VEM literature, but they are insufficient for our overall purposes including the analysis of evolving domains. Therefore, we embark on the design of stabilization mechanisms that are robust with respect to large aspect ratios and small cuts. We also develop a weak maximum principle for enhanced virtual elements that controls the max-norm of a virtual function on star-shaped polygons in terms of its max-norm on the element boundary; we examine the dependence of the stability constant on anisotropy. Moreover, we derive interpolation error estimates for virtual functions that, within the classes of elements here investigated, are insensitive to element shape and apply both to regular solutions and corner singularities. 
These results are crucial for this paper but are also essential building blocks for \cite{NostroMovingDomain}. Moreover, we believe that they possess an intrinsic interest in that they extend the existing basic VEM theory about geometric flexibility.

Our stability and convergence analyses hinge on abstract Assumptions \ref{ass:stab-form} (stabilization form), \ref{ass:interp} (interpolation error) and \ref{ass:polapprox} (polynomial approximation). We verify these assumptions in Sections \ref{sec:maximum-principle},
\ref{sec:stab:X}, and \ref{sec:checking}, which is where we deal directly with geometry, as explained above. In contrast, Section \ref{stability-convergence-abstract} presents a geometry-free approach to robust $H^1$-stability of the VEM and optimal order-regularity error estimates in $H^1$ and $L^2$. We believe this style of presentation improves readability because we postpone the technical discussion of geometry in favor of a direct discussion of stability and convergence. Yet, the latter must account for the discrepancy between computational and actual domains (geometric error), especially near the break points which are not generally nodes of the computational mesh - the so-called trimmed background mesh.

This paper provides the basic ingredients for developing a VEM-ALE approach in \cite{NostroMovingDomain}. Our goal is to circumvent the usual bottleneck of FEM-ALE regarding mesh distortion and remeshing. Our approach would avoid extending the velocity of the domain boundary inside the domain, a delicate procedure without theoretical guarantees. It would also eliminate the need of remeshing to maintain mesh quality. However, the ensuing VEM-ALE must be robust with respect to extreme geometric situations such as anisotropy, small elements and small edges to be competitive. This justifies our current in-depth study of robust stabilization and approximation properties relative to element shape in a simpler stationary setting.

The abstract analysis of Section \ref{stability-convergence-abstract} as well as the construction of boundary elements via the convex hull procedure of Section \ref{sec:discrete-abstract} extend to 3D. However, the classification of element shapes and their stability and approximation properties require further study. It is possible to resort to glueing of a small boundary element with an adjacent larger one, an interesting idea we have not pursued in this paper.

\bigskip
\begin{center}
{\bf Aknowledgements} 
\end{center}
\smallskip
LBDV and MV has been partially funded by the European Union (ERC Synergy, NEMESIS, project number 101115663).
Views and opinions expressed are however those of the authors only and do not necessarily reflect those of the European Union or the ERC Executive Agency. 
The authors LBDV, CC, GV, MV have been partially supported by the INdAM Research group GNCS,
whereas RHN has been partially supported by NSF grants DMS--1908267 and DMS-2512392.


\bibliographystyle{plain}
\bibliography{biblio}

@article{NostroMovingDomain,
author = {Beir\~ao da Veiga,L. and Canuto, C. and Nochetto, R.H. and Vacca, G. and Verani, M.},
title = {An {ALE-VEM} approach to parabolic problems in moving domains},
}

@article{payne1960optimal,
  title={An optimal {P}oincar{\'e} inequality for convex domains},
  author={Payne, L. E. and Weinberger, H. F.},
  journal={Archive for Rational Mechanics and Analysis},
  volume={5},
  pages={286--292},
  year={1960}
}

@article {volley,
    AUTHOR = {Beir{\~a}o da Veiga, L. and Brezzi, F. and Cangiani, A. and
              Manzini, G. and Marini, L. D. and Russo, A.},
     TITLE = {Basic principles of virtual element methods},
   JOURNAL = {Math. Models Methods Appl. Sci.},
  FJOURNAL = {Mathematical Models and Methods in Applied Sciences},
    VOLUME = {23},
      YEAR = {2013},
    NUMBER = {1},
     PAGES = {199--214},
}

@article {hitchhiker,
    AUTHOR = {Beir\~ao da Veiga, L. and Brezzi, F. and Marini, L. D. and
              Russo, A.},
     TITLE = {The hitchhiker's guide to the virtual element method},
   JOURNAL = {Math. Models Methods Appl. Sci.},
  FJOURNAL = {Mathematical Models and Methods in Applied Sciences},
    VOLUME = {24},
      YEAR = {2014},
    NUMBER = {8},
     PAGES = {1541--1573},
       DOI = {10.1142/S021820251440003X},
       URL = {https://doi.org/10.1142/S021820251440003X},
}

@article {vem-acta,
    AUTHOR = {Beir\~ao da Veiga, Louren\c co and Brezzi, Franco and Marini,
              L. Donatella and Russo, Alessandro},
     TITLE = {The virtual element method},
   JOURNAL = {Acta Numer.},
  FJOURNAL = {Acta Numerica},
    VOLUME = {32},
      YEAR = {2023},
     PAGES = {123--202},
       DOI = {10.1017/S0962492922000095},
       URL = {https://doi.org/10.1017/S0962492922000095},
}

@book {vem-special-issue,
     TITLE = {The virtual element method and its applications},
    SERIES = {SEMA SIMAI Springer Series},
    VOLUME = {31},
    EDITOR = {Antonietti, Paola F. and Beir\~ao da Veiga, Louren\c co and
              Manzini, Gianmarco},
 PUBLISHER = {Springer, Cham},
      YEAR = {[2022] \copyright 2022},
     PAGES = {xxiv+605},
       DOI = {10.1007/978-3-030-95319-5},
       URL = {https://doi.org/10.1007/978-3-030-95319-5},
}

@article {vem-esher,
    AUTHOR = {Paulino, G. H. and Gain, A. L.},
     TITLE = {Bridging art and engineering using {E}scher-based virtual
              elements},
   JOURNAL = {Struct. Multidiscip. Optim.},
  FJOURNAL = {Structural and Multidisciplinary Optimization},
    VOLUME = {51},
      YEAR = {2015},
    NUMBER = {4},
     PAGES = {867--883},
       DOI = {10.1007/s00158-014-1179-7},
       URL = {https://doi.org/10.1007/s00158-014-1179-7},
}

@article {vem-noncov,
    AUTHOR = {Park, K. and Chi, H.and Paulino, G. H.},
     TITLE = {On nonconvex meshes for elastodynamics using virtual element
              methods with explicit time integration},
   JOURNAL = {Comput. Methods Appl. Mech. Engrg.},
  FJOURNAL = {Computer Methods in Applied Mechanics and Engineering},
    VOLUME = {356},
      YEAR = {2019},
     PAGES = {669--684},
       DOI = {10.1016/j.cma.2019.06.031},
       URL = {https://doi.org/10.1016/j.cma.2019.06.031},
}

@article {vem-deformations,
    AUTHOR = {Chi, H. and Beir\~ao da Veiga, L.  and Paulino, G. H.},
     TITLE = {Some basic formulations of the virtual element method ({VEM})
              for finite deformations},
   JOURNAL = {Comput. Methods Appl. Mech. Engrg.},
  FJOURNAL = {Computer Methods in Applied Mechanics and Engineering},
    VOLUME = {318},
      YEAR = {2017},
     PAGES = {148--192},
       DOI = {10.1016/j.cma.2016.12.020},
       URL = {https://doi.org/10.1016/j.cma.2016.12.020},
}

@article {vem-ep-deformations,
    AUTHOR = {Hudobivnik, B. and Aldakheel, F. and Wriggers, P.},
     TITLE = {A low order 3{D} virtual element formulation for finite
              elasto-plastic deformations},
   JOURNAL = {Comput. Mech.},
  FJOURNAL = {Computational Mechanics},
    VOLUME = {63},
      YEAR = {2019},
    NUMBER = {2},
     PAGES = {253--269},
       DOI = {10.1007/s00466-018-1593-6},
       URL = {https://doi.org/10.1007/s00466-018-1593-6},
}

@incollection {vem-dfn,
    AUTHOR = {Benedetto, Mat\'ias Fernando and Berrone, Stefano and Borio,
              Andrea},
     TITLE = {The virtual element method for underground flow situations in
              fractured data},
 BOOKTITLE = {Advances in discretization methods},
    SERIES = {SEMA SIMAI Springer Ser.},
    VOLUME = {12},
     PAGES = {167--186},
 PUBLISHER = {Springer, [Cham]},
      YEAR = {2016},
  MRNUMBER = {3585413},
}

@book{wriggers2024virtual,
  title={Virtual element methods in engineering sciences},
  author={Wriggers, P. and Aldakheel, F. and Hudobivnik, B.},
  year={2024},
  publisher={Springer Cham},
     PAGES = {XV, 452},
DOI = {https://doi.org/10.1007/978-3-031-39255-9}
}

@article {vem-imati,
    AUTHOR = {Attene, M. and Biasotti, S. and Bertoluzza, S. and
              Cabiddu, D. and Livesu, M. and Patan\`e, G. and
              Pennacchio, M. and Prada, D. and Spagnuolo, M.},
     TITLE = {Benchmarking the geometrical robustness of a virtual element
              {P}oisson solver},
   JOURNAL = {Math. Comput. Simulation},
  FJOURNAL = {Mathematics and Computers in Simulation},
    VOLUME = {190},
      YEAR = {2021},
     PAGES = {1392--1414},
       DOI = {10.1016/j.matcom.2021.07.018},
}

@article {vem-imati-genova,
    AUTHOR = {Sorgente, T. and Biasotti, S. and Manzini, G. and Spagnuolo,
              M.},
     TITLE = {Polyhedral mesh quality indicator for the virtual element
              method},
   JOURNAL = {Comput. Math. Appl.},
  FJOURNAL = {Computers \& Mathematics with Applications. An International
              Journal},
    VOLUME = {114},
      YEAR = {2022},
     PAGES = {151--160},
       DOI = {10.1016/j.camwa.2022.03.042},
       URL = {https://doi.org/10.1016/j.camwa.2022.03.042},
}

@article {vem-nn,
    AUTHOR = {Antonietti, P. F. and Manuzzi, E.},
     TITLE = {Refinement of polygonal grids using convolutional neural
              networks with applications to polygonal discontinuous
              {G}alerkin and virtual element methods},
   JOURNAL = {J. Comput. Phys.},
  FJOURNAL = {Journal of Computational Physics},
    VOLUME = {452},
      YEAR = {2022},
     PAGES = {Paper No. 110900, 20},
      ISSN = {0021-9991,1090-2716},
   MRCLASS = {65N30 (65N50)},
  MRNUMBER = {4361820},
       DOI = {10.1016/j.jcp.2021.110900},
       URL = {https://doi.org/10.1016/j.jcp.2021.110900},
}

@article {projectors,
    AUTHOR = {Ahmad, B. and Alsaedi, A. and Brezzi, F. and Marini, L. D. and
              Russo, A.},
     TITLE = {Equivalent projectors for virtual element methods},
   JOURNAL = {Comput. Math. Appl.},
  FJOURNAL = {Computers \& Mathematics with Applications. An International
              Journal},
    VOLUME = {66},
      YEAR = {2013},
    NUMBER = {3},
     PAGES = {376--391},
}

@article{BBMR16,
  title={Virtual element method for general second-order elliptic problems on polygonal meshes},
  author={Beirao da Veiga, L. and Brezzi, F. and Marini, L.D. and Russo, A.},
  journal={Mathematical Models and Methods in Applied Sciences},
  volume={26},
  number={04},
  pages={729--750},
  year={2016},
  publisher={World Scientific}
}

@article{BLRstab,
  title={Stability analysis for the virtual element method},
  author={Beir{\~a}o da Veiga, L. and Lovadina, C. and Russo, A.},
  journal={Mathematical Models and Methods in Applied Sciences},
  volume={27},
  number={13},
  pages={2557--2594},
  year={2017},
  publisher={World Scientific}
}

@article{brenner2018,
  title={Virtual element methods on meshes with small edges or faces},
  author={Brenner, S.C. and Sung, L.Y.},
  journal={Mathematical Models and Methods in Applied Sciences},
  volume={28},
  number={07},
  pages={1291--1336},
  year={2018},
  publisher={World Scientific}
}

@article{chen2018,
  title={Some error analysis on virtual element methods},
  author={Chen, L. and Huang, J.},
  journal={Calcolo},
  volume={55},
  number={1},
  pages={5},
  year={2018},
  publisher={Springer}
}

@article {vem-steklov,
    AUTHOR = {Mora, D. and Rivera, G. and Rodr\'iguez, R.},
     TITLE = {A virtual element method for the {S}teklov eigenvalue problem},
   JOURNAL = {Math. Models Methods Appl. Sci.},
  FJOURNAL = {Mathematical Models and Methods in Applied Sciences},
    VOLUME = {25},
      YEAR = {2015},
    NUMBER = {8},
     PAGES = {1421--1445},
       DOI = {10.1142/S0218202515500372},
       URL = {https://doi.org/10.1142/S0218202515500372},
}

@book{apel-book,
  title={Anisotropic finite elements: local estimates and applications},
  author={Apel, T.},
  year={1999},
  publisher={Teubner Stuttgart}
}

@article{smalledges,
  title={Sharper error estimates for virtual elements and a bubble-enriched version},
  author={Beir{\~a}o da Veiga, L. and Vacca, G.},
  journal={SIAM Journal on Numerical Analysis},
  volume={60},
  number={4},
  pages={1853--1878},
  year={2022},
  publisher={SIAM}
}

@article{Chen_anisotr:2018,
author = {Cao, S. and Chen, L.},
title = {Anisotropic Error Estimates of the Linear Virtual Element Method on Polygonal Meshes},
journal = {SIAM Journal on Numerical Analysis},
volume = {56},
number = {5},
pages = {2913-2939},
year = {2018},
doi = {10.1137/17M1154369}
}

@article {Chen_anisotr_nc,
    AUTHOR = {Cao, S. and Chen, L.},
     TITLE = {Anisotropic error estimates of the linear nonconforming
              virtual element methods},
   JOURNAL = {SIAM J. Numer. Anal.},
  FJOURNAL = {SIAM Journal on Numerical Analysis},
    VOLUME = {57},
      YEAR = {2019},
    NUMBER = {3},
     PAGES = {1058--1081},
       DOI = {10.1137/18M1196455},
       URL = {https://doi.org/10.1137/18M1196455},
}

@article {Nochetto_Liao,
    AUTHOR = {Liao, Xiaohai and Nochetto, Ricardo H.},
     TITLE = {Local a posteriori error estimates and adaptive control of
              pollution effects},
   JOURNAL = {Numer. Methods Partial Differential Equations},
  FJOURNAL = {Numerical Methods for Partial Differential Equations. An
              International Journal},
    VOLUME = {19},
      YEAR = {2003},
    NUMBER = {4},
     PAGES = {421--442},
      ISSN = {0749-159X,1098-2426},
   MRCLASS = {65N15 (35J25)},
  MRNUMBER = {1980188},
MRREVIEWER = {Sergey\ I.\ Repin},
       DOI = {10.1002/num.10053},
       URL = {https://doi.org/10.1002/num.10053},
}

@book {Grisvard,
    AUTHOR = {Grisvard, P.},
     TITLE = {Elliptic problems in nonsmooth domains},
    SERIES = {Classics in Applied Mathematics},
    VOLUME = {69},
 PUBLISHER = {Society for Industrial and Applied Mathematics (SIAM),
              Philadelphia, PA},
      YEAR = {2011},
     PAGES = {xx+410},
      ISBN = {978-1-611972-02-3},
}

@article{BurmanHansboLarsonMassing2015,
  author  = {Burman, E. and Hansbo, P. and Larson, M. G. and Massing, A.},
  title   = {Cut finite element methods for partial differential equations on embedded manifolds of arbitrary codimension},
  journal = {ESAIM: Mathematical Modelling and Numerical Analysis},
  volume  = {52},
  number  = {6},
  pages   = {2247--2282},
  year    = {2018},
  note    = {Often cited as CutFEM overview papers around 2015}
}

@article {Cut_Fem_Acta:2025,
    AUTHOR = {Burman, E. and Hansbo, P. and Larson, M.G. and Zahedi, S.},
     TITLE = {Cut finite element methods},
   JOURNAL = {Acta Numer.},
  FJOURNAL = {Acta Numerica},
    VOLUME = {34},
      YEAR = {2025},
     PAGES = {1--121},
}

@article{BeckerHansboStenberg2003,
  author  = {Becker, R. and Hansbo, P. and Stenberg, R.},
  title   = {A finite element method for domain decomposition with non-matching grids},
  journal = {ESAIM: Mathematical Modelling and Numerical Analysis},
  volume  = {37},
  number  = {2},
  pages   = {209--225},
  year    = {2003},
  doi     = {10.1051/m2an:2003023}
}

@article{BurmanHansbo2010,
  author  = {Burman, E. and Hansbo, P.},
  title   = {Fictitious domain finite element methods using cut elements: I. A stabilized Lagrange multiplier method},
  journal = {Computer Methods in Applied Mechanics and Engineering},
  volume  = {199},
  number  = {41--44},
  pages   = {2680--2686},
  year    = {2010},
  doi     = {10.1016/j.cma.2010.04.011}
}

@article {BoffiCavalliniGastaldi2015,
    AUTHOR = {Boffi, D. and Cavallini, N. and Gastaldi, L.},
     TITLE = {The finite element immersed boundary method with distributed
              {L}agrange multiplier},
   JOURNAL = {SIAM J. Numer. Anal.},
  FJOURNAL = {SIAM Journal on Numerical Analysis},
    VOLUME = {53},
      YEAR = {2015},
    NUMBER = {6},
     PAGES = {2584--2604},
     }

@article {BoffiGastaldiHeltai2007,
    AUTHOR = {Boffi, D. and Gastaldi, L. and Heltai, L.},
     TITLE = {Numerical stability of the finite element immersed boundary
              method},
   JOURNAL = {Math. Models Methods Appl. Sci.},
  FJOURNAL = {Mathematical Models and Methods in Applied Sciences},
    VOLUME = {17},
      YEAR = {2007},
    NUMBER = {10},
     PAGES = {1479--1505},
}

@article{McCrackenPeskin1980,
  author  = {McCracken, M. F. and Peskin, C. S.},
  title   = {A vortex method for blood flow through heart valves},
  journal = {Journal of Computational Physics},
  volume  = {35},
  number  = {2},
  pages   = {183--205},
  year    = {1980},
  doi     = {10.1016/0021-9991(80)90079-0}
}

@article{Peskin2002,
  author  = {Peskin, C. S.},
  title   = {The immersed boundary method},
  journal = {Acta Numerica},
  volume  = {11},
  pages   = {479--517},
  year    = {2002},
  doi     = {10.1017/S0962492902000077}
}

@article{GlowinskiPanPeriaux1994,
  author  = {Glowinski, R. and Pan, T.-W. and P{\'e}riaux, J.},
  title   = {A fictitious domain method for Dirichlet problem and applications},
  journal = {Computer Methods in Applied Mechanics and Engineering},
  volume  = {111},
  number  = {3--4},
  pages   = {283--303},
  year    = {1994},
  doi     = {10.1016/0045-7825(94)90135-X}
}

@article{GiraultGlowinski1995,
  author  = {Girault, V. and Glowinski, R.},
  title   = {Error analysis of a fictitious domain method applied to a Dirichlet problem},
  journal = {Japan Journal of Industrial and Applied Mathematics},
  volume  = {12},
  number  = {3},
  pages   = {487--514},
  year    = {1995},
  doi     = {10.1007/BF03167383}
}

@article{BarrettElliott1986,
  author  = {Barrett, J. W. and Elliott, C. M.},
  title   = {Fitted and Unfitted Finite-Element Methods for Elliptic Equations with Smooth Interfaces},
  journal = {IMA Journal of Numerical Analysis},
  volume  = {7},
  number  = {3},
  pages   = {283--300},
  year    = {1987},
  doi     = {10.1093/imanum/7.3.283}
}

@article{Maury2009,
  author  = {Maury, B.},
  title   = {Numerical analysis of a finite element/volume penalty method},
  journal = {SIAM Journal on Numerical Analysis},
  volume  = {47},
  number  = {2},
  pages   = {1126--1148},
  year    = {2009},
  doi     = {10.1137/070681180}
}

@article{MoesDolbowBelytschko1999,
  author  = {Mo{\"e}s, N. and Dolbow, J. and Belytschko, T.},
  title   = {A finite element method for crack growth without remeshing},
  journal = {International Journal for Numerical Methods in Engineering},
  volume  = {46},
  number  = {1},
  pages   = {131--150},
  year    = {1999},
  doi     = {10.1002/(SICI)1097-0207(19990910)46:1<131::AID-NME726>3.0.CO;2-J}
}

@article{FriesBelytschko2010,
  author  = {Fries, T.-P. and Belytschko, T.},
  title   = {The extended/generalized finite element method: An overview of the method and its applications},
  journal = {International Journal for Numerical Methods in Engineering},
  volume  = {84},
  number  = {3},
  pages   = {253--304},
  year    = {2010},
  doi     = {10.1002/nme.2914}
}

@article{ZhangGerstenbergerWangLiu2004,
  author  = {Zhang, L. and Gerstenberger, A. and Wang, X. and Liu, W. K.},
  title   = {Immersed finite element method},
  journal = {Computer Methods in Applied Mechanics and Engineering},
  volume  = {193},
  number  = {21--22},
  pages   = {2051--2067},
  year    = {2004},
  doi     = {10.1016/j.cma.2003.12.044}
}

@article{CockburnSolano2012,
  author  = {Cockburn, B. and Solano, M.},
  title   = {Solving Dirichlet boundary-value problems on curved domains by extensions from subdomains},
  journal = {SIAM Journal on Scientific Computing},
  volume  = {34},
  number  = {1},
  pages   = {A497--A519},
  year    = {2012},
  doi     = {10.1137/100787383}
}

@article {AtallahCanutoScovazzi:2021,
    AUTHOR = {Atallah, N. M. and Canuto, C. and Scovazzi, G.},
     TITLE = {Analysis of the shifted boundary method for the {P}oisson
              problem in domains with corners},
   JOURNAL = {Math. Comp.},
  FJOURNAL = {Mathematics of Computation},
    VOLUME = {90},
      YEAR = {2021},
    NUMBER = {331},
     PAGES = {2041--2069},
}

@article{MainScovazzi2018,
  author  = {Main, A. and Scovazzi, G.},
  title   = {The shifted boundary method for embedded domain computations. Part I: Poisson and Stokes problems},
  journal = {Journal of Computational Physics},
  volume  = {372},
  pages   = {972--995},
  year    = {2018},
  doi     = {10.1016/j.jcp.2018.07.029}
}

@article{DuprezLlerasLozinski2023,
  author  = {Duprez, M. and Lleras, V. and Lozinski, A.},
  title   = {$\phi$-FEM: a finite element method on domains defined by level sets},
  journal = {SIAM Journal on Numerical Analysis},
  volume  = {61},
  number  = {3},
  pages   = {1003--1030},
  year    = {2023},
  doi     = {10.1137/21M1453119}
}

@article{ParvizianDusterRank2007,
  author  = {Parvizian, J. and D{\"u}ster, A. and Rank, E.},
  title   = {Finite cell method: h- and p-extension for embedded domain problems in solid mechanics},
  journal = {Computational Mechanics},
  volume  = {41},
  number  = {1},
  pages   = {121--133},
  year    = {2007},
  doi     = {10.1007/s00466-007-0173-y}
}

@Book{Adams:1975,
  author    = {Adams, R. A.},
  title     = {Sobolev spaces},
  publisher = {Academic Press},
  year      = {1975},
  volume    = {65},
  series    = {Pure and Applied Mathematics},
  address   = {New York-London},
}

\end{document}